\tikzset{%
  % Stil für linke geschweifte Klammern mit Beschriftung:
  mleftdelimiter/.style={inner ysep=0pt, inner xsep=1ex,left delimiter=\{,label={[label distance=3mm]left:#1}}
}
\theoremstyle{plain}
\newtheorem{theorem}{Theorem}[section]
\newtheorem{proposition}[theorem]{Proposition}
\newtheorem{corollary}[theorem]{Corollary}
\newtheorem{lemma}[theorem]{Lemma}
\theoremstyle{remark}
\newtheorem{remark}[theorem]{Remark}
\newtheorem{assumption}[theorem]{Assumption}
\newtheorem{example}[theorem]{Example}
\newcommand{\cmark}{\textcolor{green}{\ding{51}}} % Grüner Haken
\newcommand{\xmark}{\textcolor{red}{\ding{55}}}   % Rotes Kreuz
\newcommand{\E}{\mathbb{E}}
\newcommand{\N}{\mathbb{N}}
\newcommand{\R}{\mathbb{R}}
\newcommand{\UU}{\mathbf{U}}
\newcommand{\vv}{\mathbf{v}}
\newcommand{\VV}{\mathbf{V}}
\newcommand{\WW}{\mathbf{W}}
\newcommand{\XX}{{\mathbf{X}}}
\newcommand{\YY}{\mathbf{Y}}
\newcommand{\ZZ}{{\mathbf{Z}}}
\newcommand{\xx}{\mathbf{x}}
\newcommand{\yy}{\mathbf{y}}
\newcommand{\zz}{\mathbf{z}}
\newcommand{\cD}{\mathcal{D}}
\newcommand{\cL}{\mathcal{L}}
\newcommand{\cA}{\mathcal{A}}
\newcommand{\cS}{\mathcal{S}}
\newcommand{\eqd}{\stackrel{\mathrm{d}}=}
\newcommand{\1}{\mathds{1}}
\newcommand{\de}{\mathrm{\,d}}
\newcommand{\Ran}{\mathsf{Ran}}
\newcommand{\rank}{\mathsf{rank}}
\newcommand{\Cor}{\mathrm{Cor}}
\newcommand{\var}{\mathrm{Var}}
\newcommand{\Var}{\mathrm{Var}}
\newcommand{\cov}{\mathrm{Cov}}
\DeclareMathOperator{\Cov}{Cov}
\DeclareMathOperator{\v@r}{V@R}
\DeclareMathOperator{\av@r}{AV@R}
\definecolor{light-gray}{gray}{0.95}
\definecolor{darkblue}{rgb}{0,0,.5}
\definecolor{foxred}{rgb}{0.7, 0.11, 0.11}
\begin{document}

\def\spacingset#1{\renewcommand{\baselinestretch}%
{#1}\small\normalsize} \spacingset{1}

%%%%%%%%%%%%%%%%%%%%%%%%%%%%%%%%%%%%%%%%%%%%%%%%%%%%%%%%%%%%%%%%%%%%%%%%%%%%%%%%%%%%%%%%

\title{\bf A direct extension of Azadkia \& Chatterjee's rank correlation to multi-response vectors}
  % 
  % to a vector of response variables
  %\title{A sample article title with some additional note\thanksref{T1}}
  \author{Jonathan Ansari\hspace{.2cm}
    \\
    Department for Artificial Intelligence and Human Interfaces, \\
    University of Salzburg \\
    and \\
    Sebastian Fuchs \\
    Department for Artificial Intelligence and Human Interfaces, \\
    University of Salzburg}
\maketitle

\bigskip
\begin{abstract}
Recently, \citet{chatterjee2023} recognized the lack of a direct generalization of his rank correlation $\xi$ in \citet{chatterjee2021} to a multi-dimensional response vector. As a natural solution to this problem, we here propose an extension of $\xi$ to a set of $q \geq 1$ response variables, where our approach builds upon converting the original vector-valued problem into a univariate problem and then applying the rank correlation $\xi$ to it. Our novel measure $T$ quantifies the scale-invariant extent of functional dependence of a response vector $\mathbf{Y} = (Y_1,\dots,Y_q)$ on predictor variables $\mathbf{X} = (X_1, \dots,X_p)$, characterizes independence of $\mathbf{X}$ and $\mathbf{Y}$ as well as perfect dependence of $\mathbf{Y}$ on $\mathbf{X}$ and hence fulfills all the characteristics of a measure of predictability. Aiming at maximum interpretability, we provide various invariance results for $T$ as well as a closed-form expression in multivariate normal models. Building upon the graph-based estimator for $\xi$ in \citep{chatterjee2021}, we obtain a non-parametric, strongly consistent estimator for $T$ and show---as a main contribution---its asymptotic normality. 
Based on this estimator, we develop a model-free and rank-based feature ranking and forward feature selection for multiple-outcome data that works without any tuning parameters. Simulation results and real case studies illustrate $T$'s broad applicability.
\end{abstract}

\noindent%
{\it Keywords:}  
conditional dependence,
information gain inequality,
multi-output feature selection,
nonparametric measures of association
\vfill

\newpage
%\spacingset{1.9} % DON'T change the spacing!
%%%%%%%%%%%%%%%%%%%%%%%%%%%%%%%%%%%%%%%%%%%%%%%%%%%%%%%%%%%%%%%%%%%%%%%%%%%%%%%%%%%%%%%%
%%%%%%%%%%%%%%%%%%%%%%%%%%%%%%%%%%%%%%%%%%%%%%%%%%%%%%%%%%%%%%%%%%%%%%%%%%%%%%%%%%%%%%%%

\section{Introduction}

In regression analysis the main objective is to estimate the functional relationship $\YY={\bf f}(\XX,\boldsymbol{\varepsilon})$  between a set of $q \geq 1$ response variables $\YY = (Y_1, \dots, Y_q)$ and a set of $p \geq 1$ predictor variables $\XX = (X_1, \dots, X_p)$ where \(\boldsymbol{\varepsilon}\) is a model-dependent error.
In view of constructing a good model,
the question naturally arises to what extent \(\YY\) can be predicted from the information provided by the multivariate predictor variable \(\XX\), and which of the predictor variables \(X_1, \dots, X_p\) are relevant for the model at all.

We refer to $\kappa$ as a \emph{measure of predictability} 
for the random vector \(\YY\) given the random vector \(\XX\)
if it satisfies the following axioms, cf. \citep{Renyi-1959} and \citep{siburg2013}: 
\begin{enumerate}[({A}1)]
\item \label{prop1} $0 \leq \kappa(\YY,\XX) \leq 1$,
\item \label{prop2} $\kappa(\YY,\XX) = 0$ if and only if $\YY$ and $\XX$ are independent,
\item \label{prop3} $\kappa(\YY,\XX) = 1$ if and only if $\YY$ is perfectly dependent on $\XX$,
i.e., there exists some measurable function ${\bf f}: \R^p \to \R^q$ such that $\YY = {\bf f}(\XX)$ almost surely.
\end{enumerate}
In addition to the above-mentioned three axioms, it is desirable that additional information improves the predictability of \(\YY\,.\)
This yields the following two closely related properties 
%of a measure of predictability $\kappa$ 
which appear to be crucial for this paper
%\textcolor{blue}{diese Eigenschaften wurden bei Chatterjee auch schon genutzt. Gibt es weitere/ältere Literatur dazu, die man zitieren kann?}
(cf., e.g.,  \citep{chatterjee2021,sfx2022phi,fgwt2021,deb2020b,strothmann2022}): 
\begin{enumerate}[({P}1)]
\item \label{prop.IGI} \emph{Information gain inequality}: 
$\kappa(\YY,\XX) \leq \kappa(\YY,(\XX,\ZZ))$ for all $\XX$, $\ZZ$ and $\YY$.
\item \label{prop.CI} \emph{Characterization of conditional independence}:
$\kappa(\YY,\XX) = \kappa(\YY,(\XX,\ZZ))$ if and only if $\YY$ and $\ZZ$ are conditionally independent given $\XX$.
\end{enumerate}

To the best of our knowledge and according to \citet{chatterjee2023},
so far the only measure of predictability applicable to a vector 
$\YY = (Y_1, \dots,Y_q)$ of $q > 1$ response variables has been introduced by \citet{deb2020b} and employs the vector-valued structure of $\YY$ for its evaluation.
In contrast to that, in the present paper, we take a different approach by converting the original vector-valued problem into a univariate problem and then applying to it a measure of predictability for a single response variable capable of characterizing conditional independence.
A particularly suitable candidate for such a single response measure has been recently introduced by \citet{chatterjee2021}:
Their so-called `simple measure of conditional dependence' $\xi$ is defined for \(q=1\) by
\begin{align} \label{Tcond}
  \xi(Y, \XX | \ZZ)
	:= \frac{\int_{\mathbb{R}} \E( {\rm var} (P(Y \geq y \, | \, \XX,\ZZ) \,|\, \ZZ) ) \; \mathrm{d} P^{Y}(y)}
					{\int_{\mathbb{R}} \E( {\rm var} (\mathds{1}_{\{Y \geq y\}} \,|\, \ZZ) ) \; \mathrm{d} P^{Y}(y)}\,,
\end{align}
with its unconditional counterpart denoted by 
\(\xi(Y,\XX) 
  := \xi(Y,\XX|\emptyset).
  %= \frac{\int_{\mathbb{R}} {\rm var} (P(Y \geq y \, | \, \XX)) \; \mathrm{d} P^{Y}(y)}{\int_{\mathbb{R}} {\rm var} (\mathds{1}_{\{Y \geq y\}}) \; \mathrm{d} P^{Y}(y)}\,.
\) 
The functional \(\xi\) in \eqref{Tcond} has attracted a lot of attention in the past few years; see, e.g., 
\citep{auddy2021,bickel2022,bickel2020,deb2020,bernoulli2021,fan2022A,deb2020b,han2022limit,shi2021normal,shi2021,strothmann2022,wiesel2022}. 
Due to the variance decomposition, \(\xi\) can be expressed in terms of its unconditional counterpart as
\begin{align} \label{Tcond-uncond}
  \xi(Y, \XX | \ZZ) 
  & = \frac{\xi(Y, (\XX,\ZZ) \, | \, \emptyset) - \xi(Y, \ZZ \, | \, \emptyset)}{1 - \xi(Y, \ZZ \, | \, \emptyset)}\,,
\end{align} 
thus bringing the investigation of the unconditional version 
\begin{align} \label{Tuniv}
  \xi(Y,\XX) = \xi(Y,\XX|\emptyset) 
  & = \frac{\int_{\mathbb{R}} {\rm var} (P(Y \geq y \, | \, \XX)) \; \mathrm{d} P^{Y}(y)}
					{\int_{\mathbb{R}} {\rm var} (\mathds{1}_{\{Y \geq y\}}) \; \mathrm{d} P^{Y}(y)}
\end{align}
to the fore, that is based on \citep{chatterjee2020, siburg2013} and is also known as \emph{Dette-Siburg-Stoimenov's} dependence measure. $\xi$ in \eqref{Tuniv} captures the variability of the conditional distributions in various ways \citep{Ansari-LFT-2023}. It quantifies the scale-invariant extent of \emph{functional} (or \emph{monotone regression}) \emph{dependence} of the single response variable $Y$ (i.e., $q=1$) on the predictor variables $X_1, \dots, X_p$ and fulfills the above-mentioned characteristics of a measure of predictability for a single response variable.

As mentioned by \citet{chatterjee2023}, a direct generalization of \(\xi\) in \eqref{Tuniv} to higher-dimensional spaces, i.e., to arbitrary \(q \in \mathbb{N}\), has not been proposed so far:
A naive way of extending $\xi$ to 
%a measure of predictability for 
a vector $(Y_1, \dots,Y_q)$ of $q > 1$ response variables %,
%i.e., for $(p+q)$-dimensional random vectors $(\XX,\YY)$, 
would be to sum up the individual amounts of predictability, namely, to consider the quantity
\begin{equation} \label{TSigma}
  T^\Sigma (\YY,\XX) := \frac{1}{q} \sum_{i=1}^{q} \xi(Y_i,\XX).
\end{equation}
It is clear that $T^\Sigma$ satisfies axioms (A\ref{prop1}) and (A\ref{prop3}).
%of a measure of predictability, 
However, it fails to characterize independence between the vectors $\XX$ and $\YY$;
see, e.g., Example \ref{Ex.Cube} in the Supplementary Material.
A more promising approach involves combining $\xi$ in \eqref{Tcond} and 
the chain rule for conditional independence to define the quantity
%and considering the quantity
\begin{align}\label{KappaAverage}
  \kappa^{\boldsymbol{\alpha}} (\YY,\XX) 
  & := \sum_{i=1}^{q} \alpha_i \, \xi(Y_i, \XX \,|\, (Y_{i-1},\dots,Y_{1}))\,, \quad \sum_{i=1}^q \alpha_i =1\,, ~\boldsymbol{\alpha} = (\alpha_1,\ldots,\alpha_q)\in (0,1)^q\,,
\end{align}
that turns out to be a proper measure of predictability in the sense of axioms (A\ref{prop1}) to (A\ref{prop3}).
%is able to characterize independence between \(\YY\) and \(\XX\), and it also measures the strength of dependence of \(\YY\) on \(\XX\,.\) 
As a consequence of Eq. \eqref{Tcond-uncond},
%representation of the conditional version of $\xi$ through its unconditional counterpart in \eqref{Tcond-uncond}, 
$\kappa^{\boldsymbol{\alpha}}$ can be written as
\begin{align} \label{KappaAverage2}
  \kappa^{\boldsymbol{\alpha}} (\YY,\XX) 
  & = \sum_{i=1}^{q} \alpha_i \; 
  \frac{\xi(Y_i, (\XX,Y_{i-1},\dots,Y_{1})) - \xi(Y_i, (Y_{i-1},\dots,Y_{1}))}
  {1 - \xi(Y_i, (Y_{i-1},\dots,Y_{1}))}\,.
\end{align}
However, if the weights $\alpha_i$ are static (i.e., they neither depend on $\XX$ nor $\YY$), then $\kappa^{\boldsymbol{\alpha}}$ suffers from two severe disadvantages: 
(i) Whenever there exists some $i \in \{1,\dots,d-1\}$ such that $Y_i$ is perfectly dependent on $(Y_{i-1},\dots,Y_{1})$, then $\kappa^{\boldsymbol{\alpha}}$ is not defined;
(ii) The estimator of $\kappa^{\boldsymbol{\alpha}}$ may become extremely sensitive to the dependence structure among the response variables $\YY$ due to the denominator in \eqref{KappaAverage2}.
Figure \ref{Fig.Sim} in the Supplementary Material illustrates this sensitivity for the multivariate normal distribution (i.e. for continuous data),
%random variables), 
for which the nearest neighbor-based estimator of \(\kappa^{\boldsymbol{\alpha}}\) may attain arbitrarily large negative values. Due to the lack of interpretability of these values, any practical use of convex combinations \(\kappa^{\boldsymbol{\alpha}}\) with static weights \(\boldsymbol{\alpha}\) becomes obsolete.
Instead, choosing in \eqref{KappaAverage2} the weights 
\begin{align*}
  \alpha_i (\YY) 
  & := \frac{1 - \xi(Y_i,(Y_{i-1},\dots,Y_{1}))}
            {\sum_{i=1}^q \big[ 1 - \xi(Y_i,(Y_{i-1},\dots,Y_{1})) \big]}
\end{align*}
incorporating dependencies among the responses, yields the functional \(T\) defined by 
\begin{align}
  \label{defmdm}
  T (\YY,\XX)
  & := \sum_{i=1}^q \; \frac{\xi(Y_i,(\XX,Y_{i-1},\dots,Y_{1}))-\xi(Y_i,(Y_{i-1},\dots,Y_{1}))}
           {\sum_{k=1}^q \big[ 1 - \xi(Y_k,(Y_{k-1},\dots,Y_{1})) \big]}
  \\ 
  \label{defmdm2}	 
  & \phantom{:} = 1 - \frac{q - \sum_{i=1}^{q} \xi(Y_i, (\XX,Y_{i-1},\dots,Y_{1}))}{q - \sum_{i=1}^{q} \xi(Y_i, (Y_{i-1},\dots,Y_{1}))}\,, 
   \qquad \text{with}~~~ \xi(Y_1,\emptyset):=0\,.
\end{align}
Interestingly, \(T(\YY,\XX)\) is a measure of predictability with various outstanding properties:
 \begin{enumerate}[(1)]
    \item $T(\YY,\XX)$ is defined for any random vectors \(\YY\) and \(\XX\), in particular, for any type of dependence that may occur among the response variables. The only inevitable restriction is that the components of \(\YY\) are non-degenerate.
    \(T\) exhibits a simple expression, is merely rank-based and thus  fully non-parametric without any tuning parameters.
     For the multivariate normal distribution, we obtain a closed-form expression for \(T\) (Proposition \ref{propChatformmGau}).
    Further, \(T\) fulfills the information gain inequality (P\ref{prop.IGI}), 
    characterizes conditional independence (P\ref{prop.CI}),  
    satisfies the so-called 
    data processing inequality (see, e.g., \citep{cover2006}),
    is self-equitable (cf., e.g., \citep{kinney2014}),
    and exhibits numerous invariance properties (Subsections \ref{Main.FundProp.} and \ref{Main.Invariance}).
    \item \(T\) has a strongly consistent, nearest neighbor-based estimator \(T_n\) which can be computed in $O(n \log n)$ time and which is given by a transformation of Azadkia \& Chatterjee's graph-based estimator $\xi_n$ for $\xi$ (Theorem \ref{theT.Consistency}).
    The estimator \(T_n\) is not affected by the extreme sensitivity to the dependence structure of \(\YY\) as observed for static/general convex combinations \(\kappa^{\boldsymbol{\alpha}}\) (see again Figure \ref{Fig.Sim}).
    As a main contribution of this paper (Theorem \ref{AN.Thm:AN}), we prove asymptotic normality of \(\sqrt{n} (T_n - \E[T_n])/\sqrt{\Var(T_n)}\) noting that such a result has not been proved for related multi-output measures like the kernel partial correlation \cite{deb2020b}. Our technical proof in Section \ref{App.Sect.AN} extends the ideas in \citet{han2022limit} by using a modification of the nearest neighbor-based normal approximation in \citet[Theorem 3.4.]{chatterjee2008}. 
    %Our adapting the ideas by \citep{han2022limit} in combination with a modification of the nearest neighbor-based normal approximation in \citet[Theorem 3.4.]{chatterjee2008}. \textcolor{blue}{Vergleich zu dCor und KPC}
    In Section \ref{secroc}, we give a rate of convergence for the bias of \(T_n\,.\)
    \item As an important application, \(T\) allows for a model-free, merely rank-based feature ranking and forward feature selection without any tuning parameters for data with multiple outcomes. 
    In particular, our algorithm which we call \emph{multivariate feature ordering by conditional independence} (MFOCI) extends the variable selection algorithm FOCI in \citep{chatterjee2021} to multi-output data, noting that related model-free methods such as KFOCI \cite{deb2020b} depend on various tuning parameters.
    %Inspired by the variable selection algorithm FOCI in \citep{chatterjee2021}, our algorithm which we call \emph{multivariate feature ordering by conditional independence} (MFOCI) 
    MFOCI is consistent in the sense that the subset of selected predictor variables is sufficient with high probability, thus facilitating the identification of the relevant predictor variables, see Proposition \ref{FS.Consistency}.
    An implementation of MFOCI is provided by the R package \texttt{didec}, available on CRAN \citep{fuchs2024R}, where also a new variable clustering method 
    %based on the permutation invariant version \(\overline{T}\) 
    is implemented.
    Several simulation studies and real-data examples for multi-response data from medicine, meteorology and finance illustrate \(T\)'s broad applicability and the superior performance of MFOCI in various settings in comparison to existing procedures, see Section \ref{secappl} and Section \ref{App.Add.secappl} in the Supplementary Material. 
 \end{enumerate}
 
A comparison of \(T\) with competing multi-output dependence measures such as the kernel partial correlation (KPC) in \citep{deb2020b} and the distance correlation (dCor) in \citep{Szekely-2007} is given in Section \ref{seccomparison} and Table \ref{tabpropkpct}.
     
Additional results including a geometric illustration of $T$'s most important properties and the construction principle underlying $T$ are available in the Supplementary Material. 
%and also at \url{https://arxiv.org/abs/2212.01621}.
Throughout the paper, we assume that \(\XX=(X_1,\ldots,X_p)\,,\) \(\YY=(Y_1,\ldots,Y_q)\,,\) and \(\ZZ=(Z_1,\ldots,Z_r)\) are \(p\)-, \(q\)-, and \(r\)-dimensional random vectors, respectively, defined on a common probability space \((\Omega,\cA,P)\,,\) with \(p,q,r\in \N=\{1,2,\ldots\}\) being arbitrary. Variables not in bold type refer to one-dimensional real-valued quantities.
We denote \(\YY\) as the response vector which is always assumed to have non-degenerate components, i.e., for all \(i\in \{1,\ldots,q\}\,,\) the distribution of \(Y_i\) does not follow a one-point distribution. 
This equivalently means that \(\var(Y_i)>0\,,\) which ensures that \(\xi(Y_i,\,\cdot)\) in \eqref{Tuniv} and, hence, \(T(\YY,\,\cdot\,)\) in \eqref{defmdm} are well-defined. Note that the assumption of non-degeneracy is inevitable because, otherwise, \(Y_i\) is constant and thus both independent from \(\XX\) and a function of \(\XX\,.\) However, in this case, there is no measure satisfying both the axioms (A\ref{prop2}) and (A\ref{prop3}).

%%%%%%%%%%%%%%%%%%%%%%%%%%%%%%%%%%%%%%%%%%%%%%%%%%%%%%%%%%%%%%%%%%%%%%%%%%%%%%%%%%%%%%%%
\section{Properties of the Extension \(T\)}\label{secmainres}   %Main results

In this section, various basic properties of the measure \(T\) in \eqref{defmdm} are established.
The first part focuses on fundamental characteristics, 
showing that \(T\) can be viewed as a natural extension of Azadkia \& Chatterjee's rank correlation \(\xi\) to a measure of predictability for a vector $\YY=(Y_1, \dots,Y_q)$ of response variables. 
Then the so-called data processing inequality as well as self-equitability of \(T\) are discussed and important invariance properties such as distribution invariance are derived.
It is further shown that \(\xi\) is invariant with respect to a dimension reduction principle preserving the key information about the extent of functional dependence of the response variables on the predictor vector---a principle that ensures a fast estimation for \(\xi\) and \(T\).

%%%%%%%%%%%%%%%%%%%%%%%%%%%%%%%%%%%%%%%%%%%%%%%%%%%%%%%%%%%%%%%%%%%%%%%%%%%%%%%%%%%%%%%%
\subsection{Fundamental Properties of \(T\)} \label{Main.FundProp.}

As a first result, the following theorem states that \(T\) in \eqref{defmdm} indeed extends \(\xi\) to a measure of predictability for multi-response data with the desired additional properties.

\begin{theorem}[\(T\) as a measure of predictability] \label{theT}\label{theT4}\label{theT5}~\\
The map \(T\) defined by \eqref{defmdm} 
\begin{enumerate}[(i)]
\item satisfies the axioms (A\ref{prop1}), (A\ref{prop2}) and (A\ref{prop3}) of a measure of predictability,
\item fulfills the information gain inequality (P\ref{prop.IGI}), and
\item characterizes conditional independence (P\ref{prop.CI}).
\end{enumerate}
\end{theorem}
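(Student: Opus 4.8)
The plan is to reduce everything to the properties of Chatterjee's $\xi$ recalled in the Introduction, together with the elementary calculus of conditional independence. I would write $a_i:=\xi(Y_i\mid(Y_{i-1},\dots,Y_1))$, $b_i:=\xi(Y_i\mid(\XX,Y_{i-1},\dots,Y_1))$ and $c_i:=\xi(Y_i\mid(\XX,\ZZ,Y_{i-1},\dots,Y_1))$ for $i=1,\dots,q$, with $a_1=\xi(Y_1\mid\emptyset):=0$; these quantities are well defined by the standing non-degeneracy assumption on the components of $\YY$. I would record once and for all the following facts about $\xi$ (cf.\ \citet{chatterjee2021} and the Introduction): (a) $0\le\xi\le 1$; (b) $\xi(W\mid\UU)=0$ iff $W$ and $\UU$ are independent, and $\xi(W\mid\UU)=1$ iff $W$ is almost surely a measurable function of $\UU$; (c) the information gain inequality $\xi(W\mid\UU)\le\xi(W\mid(\UU,\VV))$; and (d) $\xi(W\mid\UU)=\xi(W\mid(\UU,\VV))$ iff $W$ and $\VV$ are conditionally independent given $\UU$; on the side of conditional independence I would use the (semi-)graphoid axioms of symmetry, decomposition, weak union and contraction. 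The observation that makes the proof short is that the denominator $D:=\sum_{i=1}^q(1-a_i)=q-\sum_{i=1}^q a_i$ of $T$ involves $\YY$ only: its $i=1$ summand equals $1$ and the remaining summands are non-negative by (a), so $D\in[1,q]$ is strictly positive, $T(\YY\mid\XX)=\bigl(\sum_i b_i-\sum_i a_i\bigr)/D$ is well defined, and every (in)equality that has to be checked concerns the numerator alone.

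For part (i), the bounds in (A\ref{prop1}) follow from (c) applied coordinatewise ($b_i\ge a_i$, hence the numerator is $\ge 0$) and from (a) ($\sum_i b_i\le q$, hence the numerator is $\le D$). For (A\ref{prop2}), since $D>0$ and the numerator is a sum of non-negative terms, $T(\YY\mid\XX)=0$ iff $b_i=a_i$ for every $i$, i.e.\ by (d) iff $Y_i$ and $\XX$ are conditionally independent given $(Y_1,\dots,Y_{i-1})$ for every $i$; I would then show by induction on $i$, using contraction in one direction and decomposition together with weak union in the other, that this family of statements is equivalent to $\XX\perp(Y_1,\dots,Y_q)=\YY$. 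For (A\ref{prop3}) I would use the form \eqref{defmdm2}: $T(\YY\mid\XX)=1$ iff $\sum_i b_i=q$, iff $b_i=1$ for every $i$ by (a), iff $Y_i$ is almost surely a function of $(\XX,Y_1,\dots,Y_{i-1})$ for every $i$ by (b); a short induction --- substituting the already obtained representations $Y_1=g_1(\XX),\dots,Y_{i-1}=g_{i-1}(\XX)$ into the representation of $Y_i$ --- then yields $\YY=\mathbf{f}(\XX)$ almost surely for a measurable $\mathbf{f}\colon\R^p\to\R^q$, and the reverse implication is trivial.

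For parts (ii) and (iii), the point is that replacing $\XX$ by $(\XX,\ZZ)$ does not change $D$, while $c_i\ge b_i$ for every $i$ by (c) (using that $\xi$ does not depend on the order of its conditioning coordinates); hence $T(\YY\mid(\XX,\ZZ))-T(\YY\mid\XX)=\sum_i(c_i-b_i)/D\ge 0$, which is (P\ref{prop.IGI}). Moreover equality holds iff $c_i=b_i$ for every $i$, i.e.\ by (d) iff $Y_i$ and $\ZZ$ are conditionally independent given $(\XX,Y_1,\dots,Y_{i-1})$ for every $i$, and a chain-rule argument relative to the conditioning vector $\XX$ --- the same induction as for (A\ref{prop2}) but now carried out ``given $\XX$'', again via symmetry, decomposition, weak union and contraction --- shows this to be equivalent to $\YY\perp\ZZ\mid\XX$, which is (P\ref{prop.CI}).

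I expect the only genuinely delicate point to be the measure-theoretic chain rule invoked in (A\ref{prop2}) and (P\ref{prop.CI}): one must justify, without assuming the existence of densities, that the iterated conditional independences $Y_i\perp\XX\mid(Y_1,\dots,Y_{i-1})$ (respectively $Y_i\perp\ZZ\mid(\XX,Y_1,\dots,Y_{i-1})$) genuinely assemble into the single statement $\XX\perp\YY$ (respectively $\YY\perp\ZZ\mid\XX$). This is precisely what the contraction and weak-union axioms of probabilistic conditional independence provide, so it can be cited off the shelf; everything else is bookkeeping. The one hypothesis to keep an eye on throughout is the non-degeneracy of the $Y_i$, which is what guarantees that each $\xi(Y_i\mid\cdot)$, and hence $T$, is well defined in the first place.
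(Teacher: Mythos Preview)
Your proposal is correct and follows essentially the same route as the paper's own proof: reduce all five claims to the corresponding properties of $\xi$, use the non-negativity of each summand $b_i-a_i$ (resp.\ $c_i-b_i$) to turn $T=0$ and $T(\YY\mid\XX)=T(\YY\mid(\XX,\ZZ))$ into coordinatewise equalities, and then invoke the chain rule for conditional independence (the paper cites \cite[Proposition 6.8]{Kallenberg-2002}, you phrase it via the graphoid axioms --- same content). The inductive unrolling for (A\ref{prop3}) and the observation $D\in[1,q]$ also appear verbatim in the paper.
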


Due to properties (P1) and (P2) in the above theorem, it immediately follows that $T$ fulfills the so-called
\emph{data processing inequality} which states that a transformation of 
the predictor variables cannot enhance the degree of predictability;
see \citep{cover2006} for a detailed discussion and \citep{chatterjee2022estimating} for a data processing inequality with respect to \(\xi\,.\)

\begin{corollary}[Data processing inequality]\label{cor.T.DPI}~\\
The map \(T\) defined by \eqref{defmdm} fulfills the data processing inequality, 
i.e.,
%\begin{eqnarray*}
\(T(\YY,\ZZ) \leq T(\YY,\XX)\)
%\end{eqnarray*}
for all $\XX,\YY,\ZZ$ such that $\YY$ and $\ZZ$ are conditionally independent given $\XX$.
In particular, 
\begin{eqnarray} \label{Eq.DPI}
  T(\YY,{\bf h}(\XX)) \leq T(\YY,\XX)
\end{eqnarray}
for all $\XX,\YY$ and all measurable functions ${\bf h}$. 
\end{corollary}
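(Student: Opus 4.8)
The plan is to derive both assertions directly from the information gain inequality (P\ref{prop.IGI}) and the characterization of conditional independence (P\ref{prop.CI}) established in Theorem \ref{theT}, so that essentially no new computation with the defining formula \eqref{defmdm} is needed. First I would observe that the hypothesis ``\(\YY\) and \(\ZZ\) are conditionally independent given \(\XX\)'' is precisely the condition under which part (iii) of Theorem \ref{theT} yields the equality \(T(\YY|\XX) = T(\YY|(\XX,\ZZ))\). Combining this with the information gain inequality (P\ref{prop.IGI}) applied to the predictor vectors \(\ZZ\) and \((\ZZ,\XX)\), namely \(T(\YY|\ZZ) \le T(\YY|(\ZZ,\XX)) = T(\YY|(\XX,\ZZ))\), one immediately gets
\[
  T(\YY|\ZZ) \;\le\; T(\YY|(\XX,\ZZ)) \;=\; T(\YY|\XX),
\]
which is the first claim. (One should be mildly careful that (P\ref{prop.IGI}) is stated for appending a block of variables, and that reordering the components of the conditioning vector does not change \(T\); this ordering-invariance is a basic invariance property of \(T\) — it follows since \(\xi(Y_i|\,\cdot\,)\) depends only on the \(\sigma\)-algebra generated by its conditioning argument, and should be quoted from Subsection \ref{Main.FundProp.} or noted in passing.)

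For the second assertion \eqref{Eq.DPI}, the plan is to reduce it to the first by taking \(\ZZ := \XX\) and replacing the role of the conditioning vector \(\XX\) there by \({\bf h}(\XX)\): more precisely, I would apply the already-proved inequality with the pair \((\text{new }\XX, \text{new }\ZZ) = ({\bf h}(\XX), \XX)\), for which the required conditional independence ``\(\YY\) and \(\XX\) are conditionally independent given \({\bf h}(\XX)\)'' must be checked. But \({\bf h}(\XX)\) is \(\sigma(\XX)\)-measurable, so conditioning on \(({\bf h}(\XX),\XX)\) is the same as conditioning on \(\XX\); hence trivially \(\YY \perp \XX \mid ({\bf h}(\XX),\XX)\) in the degenerate sense, or — cleaner — one simply notes \(\sigma({\bf h}(\XX)) \subseteq \sigma(\XX)\) and invokes the information gain inequality together with the fact that \(T(\YY|(\XX,{\bf h}(\XX))) = T(\YY|\XX)\) by the measurability/invariance just mentioned. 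Either route gives \(T(\YY|{\bf h}(\XX)) \le T(\YY|\XX)\).

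The only genuine subtlety — the ``hard part,'' though it is quite mild — is bookkeeping about which invariances of \(T\) are already available: specifically that \(T(\YY|\,\cdot\,)\) is invariant under permutations of, and under adding \(\sigma\)-measurable-redundant components to, the conditioning vector. These are consequences of the definition of \(\xi\) via conditional distributions \(P(Y_i \ge y \mid \cdot)\), which depend on the conditioning variable only through the generated \(\sigma\)-algebra, and they are collected among the invariance properties announced for Subsection \ref{Main.FundProp.}; I would cite them there rather than reprove them here. With those in hand, both parts of the corollary are immediate from (P\ref{prop.IGI}) and (P\ref{prop.CI}), and no estimate involving the ratio in \eqref{defmdm} is required.
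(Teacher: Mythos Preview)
Your approach for the first assertion is exactly the paper's: apply (P\ref{prop.IGI}) to get $T(\YY|\ZZ) \le T(\YY|(\XX,\ZZ))$ and then (P\ref{prop.CI}) to turn the right-hand side into $T(\YY|\XX)$. The permutation-invariance worry is harmless but unnecessary --- the paper notes it is immediate from the definition of $T$.

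For the second assertion there is a slip: your first route assigns $(\text{new }\XX,\text{new }\ZZ)=({\bf h}(\XX),\XX)$, which would require $\YY \perp \XX \mid {\bf h}(\XX)$ and would yield the inequality in the \emph{wrong} direction, $T(\YY|\XX)\le T(\YY|{\bf h}(\XX))$. The correct instantiation --- and the one the paper uses --- is $\ZZ:={\bf h}(\XX)$ with $\XX$ unchanged, relying on the trivially true conditional independence $\YY \perp {\bf h}(\XX)\mid \XX$. Your ``cleaner'' alternative via (P\ref{prop.IGI}) plus $T(\YY|(\XX,{\bf h}(\XX)))=T(\YY|\XX)$ is correct and amounts to the same argument unpacked, so the proof goes through once you fix the role assignment.
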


The data processing inequality implies several interesting invariance properties for \(T\).
The first one is the so-called \emph{self-equitability} introduced in \citep{kinney2014} (see also \citep{ding2017}).
According to \citep{reshef2011} self-equitability states that
``the statistic should give similar scores to equally noisy relationships of different types''.
In an additive regression setting 
 \(\YY = {\bf f} (\XX) + \boldsymbol{\varepsilon}\) (where \(\boldsymbol{\varepsilon}\) is not necessarily independent of \(\XX\)),  
it means that
the measure of predictability \(T\) ``depends only on the strength of the noise \(\boldsymbol{\varepsilon}\) and not on the specific form of \({\bf f}\)'' \citep{strothmann2022}.

\begin{corollary}[Self-equitability]\label{cor.T.SE}~\\
The map \(T\) defined by \eqref{defmdm} is \emph{self-equitable}, 
i.e., \(T(\YY,{\bf h}(\XX)) = T(\YY,\XX)\)
for all $\XX$, $\YY$ and all measurable functions ${\bf h}$ such that $\YY$ and $\XX$ are conditionally independent given ${\bf h}(\XX)$.
\end{corollary}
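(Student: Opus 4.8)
The plan is to sandwich $T(\YY\,|\,{\bf h}(\XX))$ between two expressions that both equal $T(\YY\,|\,\XX)$. The two ingredients are: (a) $T$ is unchanged when the predictor $\XX$ is enlarged by a deterministic transformation ${\bf h}(\XX)$ of itself (and when the components of the conditioning vector are permuted); (b) under the stated hypothesis that $\YY$ and $\XX$ are conditionally independent given ${\bf h}(\XX)$, the characterization of conditional independence from Theorem~\ref{theT}(iii) allows $\XX$ to be removed from the conditioning set $({\bf h}(\XX),\XX)$.

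First I would establish (a), i.e.\ $T(\YY\,|\,\XX)=T(\YY\,|\,(\XX,{\bf h}(\XX)))$. The inequality ``$\le$'' is the data processing inequality \eqref{Eq.DPI} applied with predictor $(\XX,{\bf h}(\XX))$ and the coordinate projection $(\xx,{\bf h}(\xx))\mapsto\xx$, since $\XX$ is a measurable function of $(\XX,{\bf h}(\XX))$; the reverse inequality ``$\ge$'' is \eqref{Eq.DPI} applied with predictor $\XX$ and the measurable map $\xx\mapsto(\xx,{\bf h}(\xx))$. The same two-sided use of \eqref{Eq.DPI} for the bijective coordinate permutation (or, alternatively, the observation that $\xi$ in \eqref{Tuniv}, and hence $T$ in \eqref{defmdm}, depends on the predictor only through the $\sigma$-algebra it generates) yields $T(\YY\,|\,(\XX,{\bf h}(\XX)))=T(\YY\,|\,({\bf h}(\XX),\XX))$.

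Next I would invoke (b): since $\YY$ and $\XX$ are conditionally independent given ${\bf h}(\XX)$, applying Theorem~\ref{theT}(iii) with $\XX$ replaced by ${\bf h}(\XX)$ and $\ZZ$ replaced by $\XX$ gives $T(\YY\,|\,({\bf h}(\XX),\XX))=T(\YY\,|\,{\bf h}(\XX))$. Concatenating this with the identities of the previous paragraph produces $T(\YY\,|\,\XX)=T(\YY\,|\,(\XX,{\bf h}(\XX)))=T(\YY\,|\,({\bf h}(\XX),\XX))=T(\YY\,|\,{\bf h}(\XX))$, which is the assertion. There is no genuine difficulty beyond this bookkeeping; the only step deserving care is the invariance in (a) under adjoining and permuting deterministic functions of the predictor, which is why I would isolate it and justify it both via \eqref{Eq.DPI} and via the $\sigma$-algebra dependence of $\xi$. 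Note finally that Corollary~\ref{cor.T.DPI} already supplies the easy inequality $T(\YY\,|\,{\bf h}(\XX))\le T(\YY\,|\,\XX)$, so once (a) is available, step (b) is precisely what upgrades this to an equality.
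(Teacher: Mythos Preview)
Your argument is correct and is essentially the paper's own proof, just unpacked: the paper applies Corollary~\ref{cor.T.DPI} twice to obtain $T(\YY|{\bf h}(\XX)) \leq T(\YY|\XX) \leq T(\YY|{\bf h}(\XX))$, and if one unwinds the proof of that corollary (which uses the information gain inequality and the characterization of conditional independence), the resulting chain is exactly your $T(\YY|\XX)=T(\YY|(\XX,{\bf h}(\XX)))=T(\YY|({\bf h}(\XX),\XX))=T(\YY|{\bf h}(\XX))$. The only cosmetic difference is that you establish the equality in step~(a) via two uses of \eqref{Eq.DPI} rather than via (P\ref{prop.IGI}), which is equivalent here since $(\XX,{\bf h}(\XX))$ and $\XX$ generate the same $\sigma$-algebra.
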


%%%%%%%%%%%%%%%%%%%%%%%%%%%%%%%%%%%%%%%%%%%%%%%%%%%%%%%%%%%%%%%%%%%%%%%%%%%%%%%%%%%%%%%%
\subsection{Invariance Properties for \(T\)} \label{Main.Invariance}

We make use of the data processing inequality to gain insights into important invariance properties of \(T(\YY,\XX)\) concerning the distributions of \(\XX\) and \(\YY\).
The next result shows that the value \(T(\YY,\XX)\) remains unchanged when transforming the predictor variables \(X_1, \dots, X_p\) or the response variables \(Y_1, \dots, Y_q\) by their individual distribution functions, 
i.e., the predictor and response variables can be replaced by the individual ranks.

\begin{proposition}[Distribution invariance]\label{Cor.T.DistTrans}~~\\ 
The map \(T\) defined by \eqref{defmdm} fulfills
\( T(\YY,\XX) 
   = T \, ({\bf F}_{\YY}(\YY),{\bf F}_{\XX}(\XX)) \)
where \({\bf F}_{\XX}=(F_{X_1},\ldots,F_{X_p})\) and \({\bf F}_{\YY}=(F_{Y_1},\ldots,F_{Y_q})\,.\)
\end{proposition}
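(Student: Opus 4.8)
The plan is to establish the two one-sided identities $T(\YY|\XX)=T(\YY|{\bf F}_{\XX}(\XX))$ and $T(\YY|\XX)=T({\bf F}_{\YY}(\YY)|\XX)$ separately and then chain them, applying the first identity once more to the response vector ${\bf F}_{\YY}(\YY)$ in place of $\YY$. For the predictor side I would use the data processing inequality \eqref{Eq.DPI} of Corollary~\ref{cor.T.DPI} in both directions. Taking the (coordinatewise, measurable) map ${\bf h}={\bf F}_{\XX}=(F_{X_1},\dots,F_{X_p})$ in \eqref{Eq.DPI} gives $T(\YY|{\bf F}_{\XX}(\XX))\le T(\YY|\XX)$. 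For the reverse inequality, let ${\bf g}=(F_{X_1}^{-1},\dots,F_{X_p}^{-1})$ denote the coordinatewise left-continuous generalized inverses; since $F_{X_j}^{-1}(F_{X_j}(X_j))=X_j$ almost surely for each $j$, we have ${\bf g}({\bf F}_{\XX}(\XX))=\XX$ a.s. Applying \eqref{Eq.DPI} with ${\bf F}_{\XX}(\XX)$ now playing the role of the predictor and ${\bf g}$ the role of the transformation yields $T(\YY|\XX)=T(\YY|{\bf g}({\bf F}_{\XX}(\XX)))\le T(\YY|{\bf F}_{\XX}(\XX))$, where the first equality holds because $T$ is a functional of the joint law of $(\XX,\YY)$ only. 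Hence $T(\YY|\XX)=T(\YY|{\bf F}_{\XX}(\XX))$, and as this argument never used the specific response vector, it also gives $T({\bf F}_{\YY}(\YY)|\XX)=T({\bf F}_{\YY}(\YY)|{\bf F}_{\XX}(\XX))$.

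For the response side, write $\tilde Y_i:=F_{Y_i}(Y_i)$; it is enough to show that every summand of \eqref{defmdm} is unchanged under $\YY\mapsto(\tilde Y_1,\dots,\tilde Y_q)$, i.e.\ that $\xi(\tilde Y_i\,|\,(\XX,\tilde Y_1,\dots,\tilde Y_{i-1}))=\xi(Y_i\,|\,(\XX,Y_1,\dots,Y_{i-1}))$ and likewise with $\XX$ removed. I would split this into two observations. First, the conditioning is unaffected: $\sigma(\tilde Y_j)\subseteq\sigma(Y_j)$ is trivial, and $Y_j=F_{Y_j}^{-1}(\tilde Y_j)$ a.s.\ with $F_{Y_j}^{-1}$ measurable gives the reverse inclusion up to null sets, so the conditioning $\sigma$-algebras $\sigma(\XX,\tilde Y_1,\dots,\tilde Y_{i-1})$ and $\sigma(\XX,Y_1,\dots,Y_{i-1})$ coincide; since $\xi(Z\,|\,\cdot\,)$ in \eqref{Tuniv} depends on the conditioning vector only through the $\sigma$-algebra it generates, this lets us replace the transformed coordinates in the conditioning by the original ones. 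Second, $\xi$ is invariant under the probability integral transform of the response, $\xi(F_{Y_i}(Y_i)\,|\,\WW)=\xi(Y_i\,|\,\WW)$ for every $\WW$: for continuous $Y_i$ this is the usual invariance of $\xi$ under the strictly increasing map $F_{Y_i}$, and in general it follows from \eqref{Tuniv} by the change of variables $t\mapsto F_{Y_i}^{-1}(t)$, using the $P^{\tilde Y_i}$-a.e.\ identity $\{\tilde Y_i\ge t\}=\{Y_i\ge F_{Y_i}^{-1}(t)\}$ to match the numerators and the denominators term by term. (One checks along the way, using that $Y_i$ is non-degenerate, that $\tilde Y_i$ is non-degenerate, so \eqref{defmdm} remains well defined.) Together these give $T({\bf F}_{\YY}(\YY)|\XX)=T(\YY|\XX)$.

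Chaining the two identities yields $T(\YY|\XX)=T({\bf F}_{\YY}(\YY)|\XX)=T({\bf F}_{\YY}(\YY)|{\bf F}_{\XX}(\XX))$, which is the claim. The predictor side is essentially immediate once the data processing inequality and the quantile identity $F^{-1}(F(X))=X$ a.s.\ are in hand; the step that needs genuine care is the second observation on the response side---invariance of $\xi$ under the probability integral transform \emph{without} a continuity assumption, where $F_{Y_i}$ may be flat on gaps of the support, so the change of variables has to be carried out through the generalized inverse rather than by inverting a strictly increasing function. If an invariance lemma for $\xi$ of this kind is available earlier or borrowed from \citet{chatterjee2021}/\citet{siburg2013}, that step collapses to a citation together with the short $\sigma$-algebra remark.
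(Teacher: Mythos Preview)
Your proposal is correct and follows essentially the same route as the paper's proof. On the predictor side the paper also sandwiches via the data processing inequality, only it phrases the recovery of $X_k$ from $F_{X_k}(X_k)$ as ``$T(X_k\mid F_{X_k}(X_k))=1$ implies $X_k=f_k(F_{X_k}(X_k))$ a.s.''\ rather than writing $f_k=F_{X_k}^{-1}$ explicitly; on the response side the paper carries out exactly the change of variables $u\mapsto F_{Y_i}^{-1}(u)$ you describe, using the Galois identity $\{F_{Y_i}(Y_i)\ge u\}=\{Y_i\ge F_{Y_i}^{-1}(u)\}$ together with $P^{F_{Y_i}^{-1}\circ F_{Y_i}\circ Y_i}=P^{Y_i}$, and then invokes the already–proven predictor invariance to handle the transformed conditioning coordinates (your $\sigma$-algebra remark does the same job).
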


An extension of Proposition \ref{Cor.T.DistTrans} to invariance of \(T\) under the multivariate distributional transform (i.e. the generalized Rosenblatt transform) is examined in Section \ref{App.Add.SecMainRes} in the Supplementary Material. 
Invariance of $T$ under permutations (and bijective transformations in general) within the conditioning vector $\XX$ is immediate from the definition of \(T\) in \eqref{defmdm} (cf. Theorem \ref{theT6}).
Sufficient conditions on the underlying dependence structure for the invariance of $T$ under permutations within the response vector $\YY$ are presented in detail in Section \ref{App.Add.SecMainRes} in the Supplementary Material.

%%%%%%%%%%%%%%%%%%%%%%%%%%%%%%%%%%%%%%%%%%%%%%%%%%%%%%%%%%%%%%%%%%%%%%%%%%%%%%%%%%%%%%%%
\subsection{Dimension Reduction Principle for \(T\)}
\label{Subsect.DimReduct.Princ.}

As we study in the sequel, the measure \(\xi\) is invariant with respect to a dimension reduction principle that preserves the key information about the extent of functional dependence of the response variables on the predictor vector.  The construction of \(T\) is based on this principle, which allows a fast estimation, as we discuss in Section \ref{secconasy}.

For \(\XX\) and \(Y\) consider the integral transform
\begin{align}\label{deftrafpsi}
  \psi_{Y|\XX}(y,y')
  & := \int_\Omega \E\left[\1_{\{Y\leq y\}} \mid \XX\right] \E\left[\1_{\{Y\leq y'\}}\mid \XX\right]  \de P 
     = \int_{\R^p} F_{Y|\XX=\xx}(y)F_{Y|\XX=\xx}(y')\de P^{\XX}(\xx)
\end{align}
for \(y,y'\in \R\,,\) where \(F_{Y|\XX=\xx}\) denotes the conditional distribution function of \(Y\) given \(\XX=\xx\,.\) 
Then \(\psi_{Y|\XX}\) is the distribution function of a bivariate random vector \((Y,Y^*)\) with \((\XX,Y^*)\) and \((\XX,Y)\) having the same distribution such that \(Y\) and \(Y^*\) are conditionally independent given \(\XX\,.\)
Due to the following result, Azadkia \& Chatterjee's rank correlation coefficient \(\xi(Y,\XX)\) only depends on the diagonal of the function \(\psi_{Y|\XX}\) and on the range of the distribution function of \(Y\):

\begin{proposition}[Dimension reduction principle]~~~\label{lemrepT} \\
The integral transform \(\psi_{Y|\XX}\) defined by \eqref{deftrafpsi} is a bivariate distribution function. Further, the measure \(\xi\) defined by \eqref{Tuniv} fulfills
\begin{align}\label{lemrepT1}
  \xi(Y,\XX) 
  & = a\int_{\R} \lim_{t\uparrow y} \psi_{Y|\XX}(t,t)\de P^Y(y) - b
\end{align}
for positive constants \(a := (\int_{\R} \Var(\1_{\{Y\geq y\}})\de P^Y(y))^{-1}\) and \(b := a \int_{\R} \lim_{z\uparrow y} F_Y(z)^2 \de P^Y(y)\,,\) both depending only on the range of the distribution function \(F_Y\).
\end{proposition}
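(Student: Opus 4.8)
The plan is to establish the two assertions in turn: first that $\psi_{Y|\XX}$ is a genuine bivariate distribution function, and second the representation \eqref{lemrepT1} for $\xi(Y|\XX)$.

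For the first part I would exhibit an explicit random vector with distribution function $\psi_{Y|\XX}$, as the statement already hints. Let $(\XX, Y)$ be given on $(\Omega, \cA, P)$. Using a standard coupling construction, enlarge the probability space (or work on $\Omega \times [0,1]$ with an independent uniform) to produce a random variable $Y^*$ such that, conditionally on $\XX$, $Y^*$ has law $F_{Y|\XX}$ and is independent of $Y$; concretely one sets $Y^* = F_{Y|\XX}^{-1}(U)$ for an independent uniform $U$, using the generalized inverse. Then $(\XX, Y^*) \eqd (\XX, Y)$ by construction, $Y \perp Y^* \mid \XX$, and the joint distribution function of $(Y, Y^*)$ at $(y,y')$ is, by conditioning on $\XX$ and using conditional independence,
\[
  P(Y \le y, \, Y^* \le y') = \E\big[ P(Y \le y \mid \XX)\, P(Y^* \le y' \mid \XX) \big] = \E\big[ F_{Y|\XX}(y)\, F_{Y|\XX}(y') \big],
\]
which is exactly $\psi_{Y|\XX}(y,y')$. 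Since any function arising as the joint CDF of a random vector is automatically a bivariate distribution function (right-continuity, the rectangle inequality, and the correct limits at $\pm\infty$ all transfer), this settles the first claim without a direct verification of Fréchet--Hoeffding-type conditions.

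For the representation \eqref{lemrepT1}, the key identity to unwind is the numerator of \eqref{Tuniv}. For fixed $y$, using $\E[\1_{\{Y\ge y\}}\mid\XX] = 1 - \E[\1_{\{Y < y\}}\mid \XX]$ and the variance decomposition,
\[
  \var\big(P(Y \ge y \mid \XX)\big) = \E\big[P(Y < y\mid\XX)^2\big] - \big(\E[P(Y<y\mid\XX)]\big)^2 = \E\big[F_{Y|\XX}(y-)^2\big] - F_Y(y-)^2,
\]
where $F_{Y|\XX}(y-) = \lim_{t\uparrow y} F_{Y|\XX}(t)$ and similarly for $F_Y$; one must be a little careful to take left limits so that $\1_{\{Y \ge y\}} = 1 - \1_{\{Y<y\}}$ matches the conditional expectation (for continuous $Y$ the distinction is immaterial, but the claimed formula is stated with the left limit, so I keep it). Now observe that, by dominated convergence applied inside the integral defining $\psi_{Y|\XX}$,
\[
  \lim_{t\uparrow y}\psi_{Y|\XX}(t,t) = \lim_{t\uparrow y} \E\big[F_{Y|\XX}(t)^2\big] = \E\big[F_{Y|\XX}(y-)^2\big].
\]
Substituting these two displays, the numerator of \eqref{Tuniv} becomes $\int_\R \big(\lim_{t\uparrow y}\psi_{Y|\XX}(t,t)\big)\de P^Y(y) - \int_\R F_Y(y-)^2 \de P^Y(y)$; dividing by the denominator $\int_\R \var(\1_{\{Y\ge y\}})\de P^Y(y) = a^{-1}$ yields precisely \eqref{lemrepT1} with $a$ and $b$ as stated, and both constants manifestly depend only on the law of $Y$, i.e.\ on $F_Y$ (equivalently, on its range, since $P^Y$ and $F_Y^2$ are determined by $F_Y$).

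The main obstacle I anticipate is not conceptual but bookkeeping: getting the left-limits / right-continuity conventions consistent between $\1_{\{Y \ge y\}}$, $F_{Y|\XX}$, and the diagonal $\psi_{Y|\XX}(t,t)$, and justifying the interchange of limit and conditional expectation (monotone or dominated convergence, using that $F_{Y|\XX}(t)^2 \le 1$ is a uniform bound). Once those measure-theoretic details are handled, the identity is a short algebraic rearrangement of the variance term, and the fact that $a,b$ depend only on $F_Y$ is immediate from their definitions.
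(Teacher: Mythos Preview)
Your proposal is correct and follows essentially the same route as the paper: the paper, too, expands $\Var(P(Y\ge y\mid\XX))$ as $\E[\lim_{z\uparrow y}F_{Y|\XX}(z)^2]-\lim_{z\uparrow y}F_Y(z)^2$, identifies the first term with $\lim_{z\uparrow y}\psi_{Y|\XX}(z,z)$ (using monotone convergence where you invoke dominated convergence---both work), and divides by the denominator. Your treatment of the first assertion (constructing $(Y,Y^*)$ via the conditional quantile transform) is in fact more explicit than the paper's, which states the coupling in the text preceding the proposition but does not re-derive it in the proof.
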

Note that if \(\XX\) and \(Y\) in Proposition \ref{lemrepT} have continuous distribution functions, then 
the representation of \(\xi(Y,\XX)\) in \eqref{lemrepT1} simplifies to
\begin{align}\label{lemrepT.Cop}
  \xi(Y,\XX) 
  & = 6 \int_{\R} \psi_{Y|\XX}(y,y)\de P^Y(y) - 2 
    = 6 \int_{[0,1]} \psi_{F_Y(Y)|{\bf F}_{\XX}(\XX)}(u,u)\de \lambda(u) - 2
  %= T(F_Y(Y)|\XX)
\end{align}
where \({\bf F}_{\XX}=(F_{X_1},\ldots,F_{X_p})\) and \(\psi_{F_Y(Y)|{\bf F}_{\XX}(\XX)}\) is the bivariate copula associated with \((Y,Y^*)\); 
see \citep[Theorem 2]{siburg2013} for \(p=1\), compare \citep[Theorem 4]{sfx2022phi} for general \(p\,.\)
Thus, in the case of continuous marginal distribution functions and in accordance with Corollary \ref{Cor.T.DistTrans}, 
\(\xi\) and \(T\) are solely copula-based and hence margin-free.
%We make use of this fact in Section \ref{Subsec.Cont.} when studying continuity properties for \(T\).

%%%%%%%%%%%%%%%%%%%%%%%%%%%%%%%%%%%%%%%%%%%%%%%%%%%%%%%%%%%%%%%%%%%%%%%%%%%%%%%%%%%%%%%%
\subsection{A Permutation Invariant Version}

Since Azadkia \& Chatterjee's rank correlation \(\xi\) is a measure of directed dependence, it is not symmetric, i.e., \(\xi(Y_2,Y_1)\ne \xi(Y_1,Y_2)\) in general. 
This implies that the multivariate extension \(T\) is in general not invariant under permutations of the response variables,
i.e., \(T((Y_1,Y_2),\XX)\ne T((Y_2,Y_1),\XX)\) in general.
However, permutation invariance w.r.t. the components of \(\YY\) can be achieved by defining the map
\begin{align}\label{defmdmav}
    \overline{T}(\YY,\XX) := \frac{1}{q!} \sum_{\boldsymbol{\sigma}\in S_q} T(\YY_{\boldsymbol{\sigma}},\XX)\,,
\end{align}
where \(S_q\) denotes the set of permutations of \(\{1,\ldots,q\}\) and where \(\YY_{\boldsymbol{\sigma}}:= (Y_{\sigma_1},\ldots,Y_{\sigma_q})\) for \(\boldsymbol{\sigma}=(\sigma_1,\ldots,\sigma_q)\in S_q\,.\)
As an immediate consequence of Theorem \ref{theT}  and Corollaries \ref{cor.T.DPI} \& \ref{cor.T.SE}, 
the permutation invariant version  \(\overline{T}\) defines a measure of predictability that inherits all the aforementioned properties from \(T\,.\)

\begin{corollary}[\(\overline{T}\) as a measure of predictability]~
\label{corppivv}\\
The map \(\overline{T}\) defined by \eqref{defmdmav} 
\begin{enumerate}[(i)]
\item \label{corppivv1} 
satisfies the axioms (A\ref{prop1}), (A\ref{prop2}) and (A\ref{prop3}) of a measure of predictability.
\item \label{corppivv2} 
fulfills the information gain inequality (P\ref{prop.IGI}).
\item \label{corppivv3} 
characterizes conditional independence (P\ref{prop.CI}).
\end{enumerate}
In addition, \(\overline{T}\) fulfills the data processing inequality, is self-equitable and distribution invariant.
\end{corollary}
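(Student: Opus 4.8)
The plan is to exploit the fact that $\overline{T}$ in \eqref{defmdmav} is a \emph{uniform convex combination} of the maps $T(\YY_{\boldsymbol{\sigma}}|\,\cdot\,)$, $\boldsymbol{\sigma}\in S_q$, each of which already satisfies all the required properties by Theorem \ref{theT} and Corollaries \ref{cor.T.DPI} and \ref{cor.T.SE}. The only extra ingredient is the elementary observation, which I would record at the outset, that permuting the components of the response vector leaves each of the structural conditions appearing in the axioms unchanged: $\YY$ and $\XX$ are independent if and only if $\YY_{\boldsymbol{\sigma}}$ and $\XX$ are; $\YY$ is almost surely a measurable function of $\XX$ if and only if $\YY_{\boldsymbol{\sigma}}$ is; and $\YY$ and $\ZZ$ are conditionally independent given $\XX$ if and only if $\YY_{\boldsymbol{\sigma}}$ and $\ZZ$ are. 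Each of these holds because permuting coordinates within $\YY$ is a bijective transformation of $\YY$.

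For part (i): since $0\le T(\YY_{\boldsymbol{\sigma}}|\XX)\le 1$ for every $\boldsymbol{\sigma}$ by axiom (A\ref{prop1}) for $T$, the average $\overline{T}(\YY|\XX)$ lies in $[0,1]$, giving (A\ref{prop1}). For (A\ref{prop2}), $\overline{T}(\YY|\XX)$ is a sum of nonnegative terms, hence vanishes if and only if $T(\YY_{\boldsymbol{\sigma}}|\XX)=0$ for all $\boldsymbol{\sigma}$ (it suffices to read off the identity permutation for one direction); by (A\ref{prop2}) for $T$ and the permutation invariance of independence, this is equivalent to $\YY$ and $\XX$ being independent. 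For (A\ref{prop3}), each term is at most $1$, so $\overline{T}(\YY|\XX)=1$ if and only if $T(\YY_{\boldsymbol{\sigma}}|\XX)=1$ for all $\boldsymbol{\sigma}$, which by (A\ref{prop3}) for $T$ and the permutation invariance of functional dependence is equivalent to $\YY$ being almost surely a function of $\XX$.

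For part (ii): applying the information gain inequality (P\ref{prop.IGI}) for $T$ termwise yields $T(\YY_{\boldsymbol{\sigma}}|\XX)\le T(\YY_{\boldsymbol{\sigma}}|(\XX,\ZZ))$ for all $\boldsymbol{\sigma}$, and averaging preserves the inequality, giving $\overline{T}(\YY|\XX)\le\overline{T}(\YY|(\XX,\ZZ))$. For part (iii), this same termwise domination shows that the two averages coincide if and only if $T(\YY_{\boldsymbol{\sigma}}|\XX)=T(\YY_{\boldsymbol{\sigma}}|(\XX,\ZZ))$ for every $\boldsymbol{\sigma}$ (equality of averages of dominated terms forces equality of each term); by the characterization of conditional independence (P\ref{prop.CI}) for $T$ together with permutation invariance of conditional independence, this holds if and only if $\YY$ and $\ZZ$ are conditionally independent given $\XX$. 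Finally, the data processing inequality and self-equitability for $\overline{T}$ follow either by averaging the corresponding statements of Corollaries \ref{cor.T.DPI} and \ref{cor.T.SE} applied to each $T(\YY_{\boldsymbol{\sigma}}|\,\cdot\,)$, again using that the conditional-independence hypotheses are unaffected by permuting the components of $\YY$, or, equivalently, by repeating the deductions of Corollaries \ref{cor.T.DPI} and \ref{cor.T.SE} now from properties (P\ref{prop.IGI}) and (P\ref{prop.CI}) already established for $\overline{T}$. The only point requiring a moment's care — the closest thing to an obstacle — is the reduction, in the ``only if'' directions of (A\ref{prop2}), (A\ref{prop3}) and (P\ref{prop.CI}), from a statement about the averaged quantity to a statement about each summand; this is immediate once one invokes nonnegativity for (A\ref{prop2}), the uniform upper bound $1$ for (A\ref{prop3}), and termwise domination via (P\ref{prop.IGI}) for (P\ref{prop.CI}). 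Everything else is routine.
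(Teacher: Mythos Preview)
Your proposal is correct and takes essentially the same approach as the paper, which simply declares the result an ``immediate consequence of Theorem \ref{theT} and Corollaries \ref{cor.T.DPI} \& \ref{cor.T.SE}'' without spelling out details. Your write-up is in fact more explicit than the paper's, carefully isolating the one nontrivial step---passing from equality of the average to equality of each summand in the ``only if'' directions---and justifying it via nonnegativity, the uniform upper bound, and termwise domination, respectively.
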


%%%%%%%%%%%%%%%%%%%%%%%%%%%%%%%%%%%%%%%%%%%%%%%%%%%%%%%%%%%%%%%%%%%%%%%%%%%%%%%%%%%%%%%%
\subsection{\(T\) for the Multivariate Normal Distribution}

If \((\XX,\YY)\) follows a multivariate normal distribution, then \(T(\YY,\XX)\) has a closed-form expression due to the following representation of Chatterjee's rank correlation.

\begin{proposition}[Closed-form expression for the multivariate normal distribution]\label{propChatformmGau}~~\\
    Assume that \((\XX,Y)\sim N({\bf 0},\Sigma)\) has covariance matrix \(\Sigma = \left(\begin{smallmatrix} 
  \Sigma_{11} & \Sigma_{12} \\
  \Sigma_{21} & \sigma_Y^2  \end{smallmatrix}\right)\) with \(\sigma_Y>0\,.\) Then
\begin{align}\label{eqpropChatformmGau}
  \xi(Y,\XX) = \frac 3 \pi \arcsin\left(\frac{1+\rho^2}{2}\right)-\frac 1 2 \,, 
  \qquad \text{with } \rho = \sqrt{\Sigma_{21} \Sigma_{11}^{-} \Sigma_{12} / \sigma_Y^2}\,,
\end{align}
where \(\Sigma_{11}^-\) denotes a generalized inverse of \(\Sigma_{11}\) such as the Moore–Penrose inverse.
\end{proposition}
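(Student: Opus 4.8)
The plan is to reduce the $p$-dimensional expression to a bivariate Gaussian orthant probability via the dimension reduction representation of Proposition~\ref{lemrepT}.

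First I would record the conditional law. Since $\Sigma$ is positive semi-definite, $\Sigma_{12}$ lies in the range of $\Sigma_{11}$; consequently $\rho^{2}=\Sigma_{21}\Sigma_{11}^{-}\Sigma_{12}\in[0,1]$ is well defined (independent of the choice of generalized inverse), and
\[
  Y\mid\XX=\xx \;\sim\; N\!\big(\Sigma_{21}\Sigma_{11}^{-}\xx,\;1-\rho^{2}\big),
\]
so that $F_{Y\mid\XX=\xx}(y)=\Phi\!\big((y-\Sigma_{21}\Sigma_{11}^{-}\xx)/\sqrt{1-\rho^{2}}\,\big)$ depends on $\xx$ only through the scalar projection $\Sigma_{21}\Sigma_{11}^{-}\xx$, which is $N(0,\rho^{2})$-distributed. (In particular, $Y$ and $\XX$ are conditionally independent given $\Sigma_{21}\Sigma_{11}^{-}\XX$, so self-equitability, Corollary~\ref{cor.T.SE}, already gives $\xi(Y|\XX)=\xi(Y|\Sigma_{21}\Sigma_{11}^{-}\XX)$, reducing matters to the known bivariate-Gaussian value of $\xi$; I will nonetheless compute directly.)

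Next, since $Y\sim N(0,1)$ has a continuous distribution function, Proposition~\ref{lemrepT} gives $a=6$ and $b=2$, and the map $y\mapsto\psi_{Y|\XX}(y,y)$ is continuous, whence $\xi(Y|\XX)=6\int_{\R}\psi_{Y|\XX}(y,y)\,\de P^{Y}(y)-2$ with $\psi_{Y|\XX}(y,y)=\E[F_{Y\mid\XX}(y)^{2}]$. I would then write $\Phi(\cdot)^{2}$ as a two-dimensional orthant probability: taking $Z\sim N(0,\rho^{2})$, $W_{1},W_{2}\sim N(0,1)$ and $Y'\sim N(0,1)$ mutually independent,
\[
  \int_{\R}\psi_{Y|\XX}(y,y)\,\de P^{Y}(y)
  = P\!\big(\sqrt{1-\rho^{2}}\,W_{1}+Z\le Y',\ \sqrt{1-\rho^{2}}\,W_{2}+Z\le Y'\big).
\]
Setting $V_{i}:=\sqrt{1-\rho^{2}}\,W_{i}+Z-Y'$, the pair $(V_{1},V_{2})$ is centered bivariate normal with $\var(V_{i})=2$ and $\cov(V_{1},V_{2})=1+\rho^{2}$, i.e.\ correlation $\tfrac{1+\rho^{2}}{2}$. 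Applying the classical bivariate-normal orthant identity $P(V_{1}\le0,\,V_{2}\le0)=\tfrac14+\tfrac{1}{2\pi}\arcsin(\mathrm{corr})$ and substituting into the displayed formula for $\xi$ yields the claim after elementary arithmetic.

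The substantive input is just the Gaussian conditioning formula together with the representation~\eqref{lemrepT1}; the remaining effort is bookkeeping. The two points that need care are (i) verifying that the generalized inverse causes no ambiguity, which rests on $\Sigma_{12}\in\Ran(\Sigma_{11})$ and the identity $\Sigma_{21}\Sigma_{11}^{-}\Sigma_{11}\Sigma_{11}^{-}\Sigma_{12}=\Sigma_{21}\Sigma_{11}^{-}\Sigma_{12}$ (so that $\var(\Sigma_{21}\Sigma_{11}^{-}\XX)=\rho^{2}$), and (ii) the boundary cases $\rho\in\{0,1\}$, where the projection $Z$ or the conditional variance degenerates — but the orthant computation of the previous paragraph holds verbatim there as well, so no separate treatment is required. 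I expect that orthant-probability reduction to be the crux of the argument.
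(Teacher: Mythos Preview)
Your argument is correct. Both you and the paper hinge on the same observation that the conditional law $Y\mid\XX=\xx$ depends on $\xx$ only through the scalar $\Sigma_{21}\Sigma_{11}^{-}\xx\sim N(0,\rho^{2})$, thereby collapsing the problem to a bivariate Gaussian. The execution differs: the paper defines the one-dimensional summary $S=A\XX$ with $A=\Sigma_{21}\Sigma_{11}^{-}/(\Sigma_{21}\Sigma_{11}^{-}\Sigma_{12})$, verifies $F_{Y\mid S=s}=F_{Y\mid\XX=\xx}$, and then \emph{cites} the bivariate identity $\xi(Y|S)=\tfrac{3}{\pi}\arcsin(\tfrac{1+\rho^{2}}{2})-\tfrac{1}{2}$ from \cite{sfx2022phi}. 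You instead evaluate $\int\psi_{Y|\XX}(y,y)\,\de P^{Y}$ directly as the orthant probability $P(V_{1}\le0,V_{2}\le0)$, which is exactly how that cited bivariate formula is proved in the first place. So your route is not really different in spirit, but it is more self-contained; the paper's version is shorter at the cost of an external reference. Your remarks on the generalized inverse (via $\Sigma_{12}\in\Ran(\Sigma_{11})$) and on the degenerate endpoints $\rho\in\{0,1\}$ are also handled correctly and mirror the care the paper takes with the null-matrix case.
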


As a consequence of the above result, \(T(\YY,\XX)\) depends on all pairwise correlations, i.e., both on the correlations between the components of \(\XX\) and \(\YY\) as well as on the correlations within the vector \(\XX\) and within \(\YY\,.\) Further, since \(T\) is invariant under scaling, \(T(\YY,\XX)\) does not depend on the diagonal elements of the covariance matrix---at least if the latter is positive definite.

The next result characterizes the extreme values for \(T\) in the multivariate normal model, noting that perfect dependence of \(\YY\) on \(\XX\) in this case corresponds to perfect linear dependence. 

\begin{proposition}[Characterization of extreme cases]\label{propExtcasemn}~\\
    Let \((\XX,\YY)\) be multivariate normal with covariance matrix \(\Sigma = (\sigma_{ij}) = \left(\begin{smallmatrix} 
  \Sigma_{11} & \Sigma_{12} \\
  \Sigma_{21} & \Sigma_{22}
  \end{smallmatrix}\right)\).  Then
    \begin{enumerate}[(i)]
        \item \label{propExtcasemn1} \(T(\YY,\XX) =0 \) if and only if \(\Sigma_{12}\) is the null matrix (i.e., \(\sigma_{ij}=0\) for all \((i,j)\in \{1,\ldots,p\}\times \{p+1,\ldots,p+q\}\)),
        \item\label{propExtcasemn2} \(T(\YY,\XX)=1\) if and only if \(\rank(\Sigma)=\rank(\Sigma_{11})\,.\)
    \end{enumerate}
\end{proposition}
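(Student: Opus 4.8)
The plan is to reduce both claims to the abstract properties of $T$ already established in Theorem~\ref{theT}(i) --- namely the equivalences (A\ref{prop2}) and (A\ref{prop3}) --- and then to translate the resulting conditions into statements about $\Sigma$ using standard Gaussian linear algebra. For part~(\ref{propExtcasemn1}) this is immediate: since $T$ satisfies (A\ref{prop2}), $T(\YY|\XX)=0$ holds exactly when $\XX$ and $\YY$ are independent, and for the jointly Gaussian vector $(\XX,\YY)$ independence of the two blocks is equivalent to the vanishing of the cross-covariance block $\Sigma_{12}$.

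For part~(\ref{propExtcasemn2}), since $T$ satisfies (A\ref{prop3}), we have $T(\YY|\XX)=1$ if and only if $\YY=\mathbf f(\XX)$ almost surely for some measurable $\mathbf f\colon\R^p\to\R^q$. I would show that this is equivalent to the vanishing of the generalized Schur complement $\Sigma/\Sigma_{11}:=\Sigma_{22}-\Sigma_{21}\Sigma_{11}^{-}\Sigma_{12}$, observing first that $\Sigma/\Sigma_{11}$ does not depend on the choice of generalized inverse since $\Sigma\succeq 0$ forces the column space of $\Sigma_{12}$ to lie in that of $\Sigma_{11}$. For the ``if'' direction: when $\Sigma/\Sigma_{11}=0$ the vector $\YY$ coincides almost surely with its conditional mean $\E[\YY\mid\XX]$, which is an affine (hence measurable) function of $\XX$. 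For the ``only if'' direction: if $\YY=\mathbf f(\XX)$ a.s., then conditioning on $\XX$ forces, for $P^{\XX}$-almost every $\xx$, the conditional law $N\bigl(\E[\YY\mid\XX=\xx],\,\Sigma/\Sigma_{11}\bigr)$ of $\YY$ given $\XX=\xx$ to be the Dirac mass at $\mathbf f(\xx)$, which is possible only if $\Sigma/\Sigma_{11}=0$. It then remains to invoke the Guttman--Haynsworth rank-additivity identity $\rank(\Sigma)=\rank(\Sigma_{11})+\rank(\Sigma/\Sigma_{11})$, valid here because $\Sigma\succeq 0$ supplies the consistency relations $\Sigma_{11}\Sigma_{11}^{-}\Sigma_{12}=\Sigma_{12}$ and $\Sigma_{21}\Sigma_{11}^{-}\Sigma_{11}=\Sigma_{21}$, so that a congruence of $\Sigma$ with the invertible block matrices $\left(\begin{smallmatrix} I & 0\\ -\Sigma_{21}\Sigma_{11}^{-} & I\end{smallmatrix}\right)$ and $\left(\begin{smallmatrix} I & -\Sigma_{11}^{-}\Sigma_{12}\\ 0 & I\end{smallmatrix}\right)$ brings it to the block-diagonal form $\diag(\Sigma_{11},\Sigma/\Sigma_{11})$. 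Since only the zero matrix has rank zero, $\rank(\Sigma/\Sigma_{11})=0$ is equivalent to $\Sigma/\Sigma_{11}=0$, and chaining all equivalences gives $T(\YY|\XX)=1\iff\rank(\Sigma)=\rank(\Sigma_{11})$.

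The step I expect to need the most care is the rank-deficient case in part~(\ref{propExtcasemn2}): one must state the Schur complement and the rank identity correctly in terms of generalized inverses rather than tacitly assuming $\Sigma_{11}$ or $\Sigma$ invertible, and one must check that $N(\cdot,\Sigma/\Sigma_{11})$ really is the conditional law even when $(\XX,\YY)$ is a singular Gaussian vector. If one prefers to avoid conditional distributions, an equivalent route writes $(\XX,\YY)=AZ$ with $Z\sim N(\mathbf 0,\Id)$ and $A=\left(\begin{smallmatrix}A_1\\A_2\end{smallmatrix}\right)$, notes that $\YY$ is $\sigma(\XX)$-measurable iff $\operatorname{rowspace}(A_2)\subseteq\operatorname{rowspace}(A_1)$ iff $\rank(A)=\rank(A_1)$, and then uses $\rank(\Sigma)=\rank(AA^{\top})=\rank(A)$ together with $\rank(\Sigma_{11})=\rank(A_1A_1^{\top})=\rank(A_1)$. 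A third option, staying closer to Proposition~\ref{propChatformmGau}, is to note that $T(\YY|\XX)=1$ forces $\xi\bigl(Y_i\,\big|\,(\XX,Y_{i-1},\dots,Y_1)\bigr)=1$ for every $i$ --- equivalently, by \eqref{eqpropChatformmGau}, that the squared multiple correlation of $Y_i$ on $(\XX,Y_{i-1},\dots,Y_1)$ equals one --- and then to telescope to the conclusion that $\YY$ is almost surely affine in $\XX$.
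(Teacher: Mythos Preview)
Your proposal is correct and follows essentially the same route as the paper: both parts are reduced to (A\ref{prop2}) and (A\ref{prop3}) of Theorem~\ref{theT}, with part~(\ref{propExtcasemn2}) then handled via the conditional law $N(\mu_{\xx},\Sigma^*)$ with $\Sigma^*=\Sigma_{22}-\Sigma_{21}\Sigma_{11}^{-}\Sigma_{12}$ and the rank identity $\rank(\Sigma)=\rank(\Sigma_{11})+\rank(\Sigma^*)$. The paper simply asserts this rank identity (citing the known stochastic representation), whereas you supply an explicit block-congruence argument and note the well-definedness of the Schur complement; your two alternative routes are not in the paper but are valid as well.
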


%%%%%%%%%%%%%%%%%%%%%%%%%%%%%%%%%%%%%%%%%%%%%%%%%%%%%%%%%%%%%%%%%%%%%%%%%%%%%%%%%%%%%%%%
\section{Estimation} \label{secconasy}

We propose strongly consistent estimators \(T_n\) and \(\overline{T}_n\) for $T$ and $\overline{T}$ defined by \eqref{defmdm} and  \eqref{defmdmav}. Both estimators are consistent with the underlying construction principle and rely on Azadkia \& Chatterjee's graph-based estimator $\xi_n$ for $\xi$ that are used as plug-ins in \eqref{defmdm} and  \eqref{defmdmav}. The properties of $\xi_n$ imply strong consistency and 
%an asymptotic 
a computation time of $O(n \log n)$ for the estimators $T_n$ and $\overline{T}_n$, respectively. 
As the main result of this section, we show asymptotic normality for $T_n$. Further, we provide rates of convergence for \(T_n\) and \(\overline{T}_n\,.\)
In Example \ref{exlimcasmnd} in the Supplementary Material we give evidence that the proposed estimators perform well in the multivariate normal model where closed-form expressions for \(T\) are known due to Proposition \ref{propChatformmGau}. 
%We refer to Section \ref{App.Add.secappl} in the Supplementary Material for several applications to real-world data. 
The last part of this section contains a comparison of \(T\) and its estimator with related multi-output dependence measures (Table \ref{tabpropkpct}).

%%%%%%%%%%%%%%%%%%%%%%%%%%%%%%%%%%%%%%%%%%%%%%%%%%%%%%%%%%%%%%%%%%%%%%%%%%%%%%%%%%%%%%%%
\subsection{Consistency}

In the following, we consider a $(p+q)$-dimensional random vector $(\XX,\YY)$
with i.i.d. copies $(\XX_1,\YY_1)$, $\dots$, $(\XX_n,\YY_n)$.
Recall that \(\YY\) is assumed to have non-degenerate components.
%, i.e., for all \(i\in \{1,\ldots,q\}\,,\) the distribution of \(Y_i\) does not follow a one-point distribution.

As an estimator for $T$ we propose the statistic $T_n$
%and as estimator for $\overline{T^q}$, we propose the statistic $\overline{T_n^q}$,
given by
\begin{align} \label{def.estimator}
  T_n
  & =  T_n(\YY,\XX)
    := 1 - \frac{q - \sum_{i=1}^{q} \xi_n(Y_i , (\XX,Y_{i-1},\dots,Y_{1}))}{q - \sum_{i=2}^{q} \xi_n(Y_i , (Y_{i-1},\dots,Y_{1}))}     %\notag
  %\\ 
  %\nonumber \overline{T_n^q}
  %& =  \overline{T_n^q}(\YY,\XX)
   % := \frac{1}{q!} \sum_{\sigma\in S_q} T_n^q(\YY_\sigma,\XX)
 \end{align}
with $\xi_n$ being the estimator proposed by \citet{chatterjee2021} and given by
\begin{eqnarray} \label{def.estimator.T}
  \xi_n(Y,\XX)
  & = & \frac{\sum_{k=1}^{n} (n \, \min\{R_k,R_{N(k)}\}-L_k^2)}{\sum_{k=1}^{n} L_k \,(n - L_k)},
\end{eqnarray}
where $R_k$ denotes the rank of $Y_k$ among $Y_1, \dots, Y_n$, i.e., the number of $\ell$ such that $Y_\ell \leq Y_k$, 
and $L_k$ denotes the number of $\ell$ such that $Y_\ell \geq Y_k$. Further, for each $k$, the number $N(k)$ denotes the index $\ell$ such that $\XX_\ell$ is the nearest neighbor of $\XX_k$ with respect to the Euclidean metric on $\R^p$.
Since there may exist several nearest neighbors of $\XX_k$, ties are broken uniformly at random.
From the definition of \(\xi_n\) in \eqref{def.estimator.T}, 
it is apparent that the estimator \(T_n\) is built on the dimension reduction principle explained in Subsection \ref{Subsect.DimReduct.Princ.} (see also \citep[Section 11]{chatterjee2021} where the authors prove that \(Y_k\) and \(Y_{N(k)}\) are asymptotically conditionally independent given \(\XX\)),
which is key to a fast estimation of \(\xi\) and hence \(T\).

As an estimator for the permutation invariant version \(\overline{T}\,,\) we propose the statistic \(\overline{T}_n\) given by
\begin{align}\label{estTnqq}
    \overline{T}_n
  =  \overline{T}_n(\YY,\XX)
    := \frac{1}{q!} \sum_{\sigma\in S_q} T_n(\YY_\sigma,\XX)
\end{align}
where \(T_n\) is defined by \eqref{def.estimator}.

\citet{chatterjee2021} proved that $\xi_n$ is a strongly consistent estimator for $\xi$.
%which can be computed in $O(n \log n)$ time.
As a direct consequence, we obtain strong consistency of \(T_n\) and \(\overline{T}_n\) as follows.
%Later, \citep{han2022limit} showed that this estimator is asymtotically normal. 
%From the properties of the estimator $T_n$ we can directly infer that $T_n^q$ and $\overline{T_n^q}$ are strongly consistent.

\begin{theorem}[Consistency]\label{theT.Consistency}~\\
It holds that $\lim_{n \to \infty} T_n = T$ almost surely and $\lim_{n \to \infty} \overline{T}_n = \overline{T}$ 
almost surely.
\end{theorem}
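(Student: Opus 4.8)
The plan is to reduce the consistency of $T_n$ and $\overline{T}_n$ to the strong consistency of the Azadkia--Chatterjee estimator $\xi_n$, which is cited from \citet{chatterjee2021}, via the continuous mapping theorem. First I would record the key input: for any random vector $(\XX',Y')$ with $Y'$ non-degenerate, $\xi_n(Y'|\XX') \to \xi(Y'|\XX')$ almost surely as $n\to\infty$, where $\xi_n$ is computed from i.i.d. copies of $(\XX',Y')$. This applies in particular to each of the finitely many pairs $\big(Y_i,\,(\XX,Y_{i-1},\dots,Y_1)\big)$ and $\big(Y_i,\,(Y_{i-1},\dots,Y_1)\big)$ for $i\in\{1,\dots,q\}$ appearing in \eqref{def.estimator}; note that the non-degeneracy hypothesis on the components of $\YY$ is exactly what makes each such $\xi_n$ and $\xi$ well-defined, and that the estimators $\xi_n(Y_i|\cdots)$ for different $i$ are all built from the \emph{same} sample $(\XX_1,\YY_1),\dots,(\XX_n,\YY_n)$, so we may intersect the finitely many almost-sure convergence events to get a single event of probability one on which all of them converge simultaneously.

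Second I would invoke the continuous mapping argument. On that almost-sure event, the vector $\big(\xi_n(Y_1|\XX),\dots,\xi_n(Y_q|(\XX,Y_{q-1},\dots,Y_1)),\,\xi_n(Y_2|Y_1),\dots,\xi_n(Y_q|(Y_{q-1},\dots,Y_1))\big)$ converges to the corresponding vector with $\xi$ in place of $\xi_n$. The map sending this vector to the right-hand side of \eqref{def.estimator} is
$$
(a_1,\dots,a_q,b_2,\dots,b_q)\ \longmapsto\ 1-\frac{q-\sum_{i=1}^q a_i}{q-\sum_{i=2}^q b_i},
$$
which is continuous at the limit point provided the denominator $q-\sum_{i=2}^q b_i$ is nonzero there. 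Since each $\xi(Y_i|\cdot)\in[0,1]$ and, because $Y_i$ is non-degenerate, $\xi(Y_i|(Y_{i-1},\dots,Y_1))<1$ strictly --- indeed $\xi(Y_i|\WW)=1$ would force $Y_i$ to be a function of $\WW$, contradicting non-degeneracy unless $Y_i$ is constant --- we have $q-\sum_{i=2}^q\xi(Y_i|(Y_{i-1},\dots,Y_1))\ge 1>0$, so the limiting denominator is bounded away from zero; this is precisely the observation already made in the excerpt that the denominator of $T$ lies in $[1,q]$. Hence the map is continuous at the limit and $T_n\to T$ almost surely. For $\overline{T}_n$, the argument is the same applied to each of the $q!$ permutations $\YY_\sigma$: on the (finite) intersection of the corresponding almost-sure events we have $T_n(\YY_\sigma|\XX)\to T(\YY_\sigma|\XX)$ for every $\sigma\in S_q$, and since $\overline{T}_n$ and $\overline{T}$ are the arithmetic means $\tfrac{1}{q!}\sum_\sigma T_n(\YY_\sigma|\XX)$ and $\tfrac{1}{q!}\sum_\sigma T(\YY_\sigma|\XX)$, the convergence $\overline{T}_n\to\overline{T}$ follows immediately.

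There is essentially no hard part here: the result is a routine corollary of the cited strong consistency of $\xi_n$ together with the continuous mapping theorem, the only point requiring a word of care being the non-vanishing of the limiting denominator, which is supplied by the standing non-degeneracy assumption on $\YY$ (and which was already flagged in the discussion following \eqref{def.estimator.T}). One should also note in passing that the plug-in estimator $T_n$ is consistent with the construction principle behind $T$ --- i.e. it is literally \eqref{defmdm2} with each $\xi$ replaced by the graph-based $\xi_n$ --- and that, since each $\xi_n$ is computable in $O(n\log n)$ time and only finitely many of them (respectively $q!$ times finitely many, for $\overline{T}_n$) are needed, the stated computational complexity follows; but these are remarks rather than part of the consistency proof proper.
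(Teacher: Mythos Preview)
Your approach is correct and essentially the same as the paper's: the paper simply states the result as ``a direct consequence'' of the strong consistency of $\xi_n$ from \citet{chatterjee2021}, and your continuous-mapping argument spells this out.

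One minor reasoning error, however, deserves correction. You claim that non-degeneracy of $Y_i$ forces $\xi(Y_i|(Y_{i-1},\dots,Y_1))<1$ strictly, because $\xi(Y_i|\WW)=1$ would make $Y_i$ a function of $\WW$, ``contradicting non-degeneracy''. This is false: $Y_i$ can perfectly well be a non-constant (hence non-degenerate) measurable function of $(Y_{i-1},\dots,Y_1)$ --- e.g.\ $Y_2=Y_1$ --- in which case $\xi(Y_i|(Y_{i-1},\dots,Y_1))=1$. Fortunately you do not need strict inequality at all: the sum in the denominator runs only over $i=2,\dots,q$, so using merely $\xi\in[0,1]$ gives $q-\sum_{i=2}^q\xi(Y_i|(Y_{i-1},\dots,Y_1))\ge q-(q-1)=1$, which is exactly the observation in the paper that the denominator of $T$ lies in $[1,q]$. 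With this correction your argument is complete.
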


\begin{remark}\label{remconst}
\begin{enumerate}[(i)]
    \item From the properties of the estimator $\xi_n$, see \citep[Remark (1)]{chatterjee2021}, it follows that also $T_n$ and $\overline{T}_n$ can be computed in $O(n \log n)$ time, using that the denominator in \eqref{defmdm} and \eqref{defmdm2} takes values only in the interval \([1,q]\,.\) 
    %\item \textcolor{teal}{Kommentar einfügen, wie die Laufzeit von $\overline{T}_n$ verkürzt werden kann.}
    \item The estimators \(T_n\) and \(\overline{T}_n\) are model-free and merely rank-based estimators for \(T\) and \(\overline{T}\) without any tuning parameters and
    being consistent in full generality, 
    %(with the only restriction that \(\YY\) has non-degenerate components), 
    %under the non-restrictive assumption that \(\YY\) is assumed to have non-degenerate components, 
    compare \citep{chatterjee2021}.
    \item All properties also apply to measures that are constructed and estimated as in \eqref{estTnqq} and \eqref{defmdmav} by averaging over a subset of permutations. Simulations indicate that averaging over cyclic permutations such as \((1,2,\ldots,q),(2,\ldots,q,1),\ldots\) yields overall good results for the variable selection in Section \ref{RDE.Sect.FS}. 
\end{enumerate}
\end{remark}

%%%%%%%%%%%%%%%%%%%%%%%%%%%%%%%%%%%%%%%%%%%%%%%%%%%%%%%%%%%%%%%%%%%%%%%%%%%%%%%%%%%%%%%%
\subsection{Asymptotic Normality}\label{secasynor}

If the underlying distribution function of \((\XX,Y)\) is continuous and if \(Y\) is not perfectly dependent on \(\XX\,,\) then the estimator \(\xi_n\) (which coincides with \(T_n\) for \(q=1\)) behaves asymptotically normal, see \citep{han2022limit}.
%and thus the individual terms of the estimator $T_n$ in \eqref{def.estimator} behave asymptotically normal, see \citep{han2022limit}. 
Using this result and a modification of the nearest neighbor-based normal approximation in \citep[Theorem 3.4]{chatterjee2008}, it can be shown that also the linear combinations \(\Lambda_n\) and \(\alpha_n\) defined by
\begin{align}\label{AN.Def:Lambda}
    \Lambda_n
    & := \sum_{i=1}^{q} \xi_n(Y_i \, , \, (\XX,Y_{i-1},\dots,Y_{1}))
    \quad \textrm{and}
    & \alpha_n 
    & := \sum_{i=2}^q \xi_n(Y_i,(Y_{i-1},\ldots,Y_1))\,, \quad n\in \N\,,
\end{align}
are asymptotically normal.
The following result shows asymptotic normality of 
\begin{align}\label{repT_n}
  T_n = 1 - \frac{q - \Lambda_n}{q-\alpha_n}
\end{align}
for \(q>1\) response variables under some mild regularity conditions on \(\Lambda_n\) and \(\alpha_n\,.\) The detailed proof of Theorem \ref{AN.Thm:AN} is postponed to Section \ref{App.Sect.AN}.

\begin{theorem}[Asymptotic normality, $q > 1$]~~\label{AN.Thm:AN}
Assume that \((\XX,\YY)\) has a continuous distribution function,
that \(\YY\) is not perfectly dependent on \(\XX\,,\) and that there exists some \(i \in \{2,\dots,q\}\) such that \(Y_i\) is not perfectly dependent on \(\{Y_1,\dots,Y_{i-1}\}\).
If, additionally, 
\begin{align} \label{AN.Thm:AN.Cond0}
\limsup_{n \to \infty} \Cor(\Lambda_n,\alpha_n) &< 1 \quad \text{and}\\
\label{AN.Thm:AN.Cond}
    \sup_{n \in \mathbb{N}} \, \E \left( \left| \frac{\Lambda_n - \E[\Lambda_n]}{\sqrt{\var(\Lambda_n)}} \right|^{2+\delta_1} \right) &< \infty\,,    
    \qquad 
    \sup_{n \in \mathbb{N}} \, \E \left[ \left| \frac{\alpha_n - \E[\alpha_n]}{\sqrt{\var(\alpha_n)}} \right|^{2+\delta_2} \right] < \infty
\end{align}
for some \(\delta_1, \delta_2 > 0\),
then
\begin{align*}
  \frac{T_n - \E[T_n]}{\sqrt{\var(T_n)}} \stackrel{d}{\longrightarrow} N(0,1)\,,
\end{align*}
where \(\xrightarrow{ d }\) denotes convergence in distribution.
\end{theorem}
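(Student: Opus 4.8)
The plan is to derive the asymptotic normality of $T_n$ from the joint asymptotic normality of the pair $(\Lambda_n, \alpha_n)$ via a continuous-mapping / delta-method argument, being careful that the normalization is by the actual (finite-sample) standard deviation of $T_n$ rather than by a limiting variance. First I would establish that each of $\Lambda_n$ and $\alpha_n$, after centering and scaling by its own standard deviation, converges to $N(0,1)$. For $\Lambda_n$ this is obtained by writing it as a sum of nearest-neighbour-type statistics $\xi_n(Y_i \mid (\XX, Y_{i-1}, \dots, Y_1))$ and invoking the normal approximation of \citet{han2022limit} together with the modification of \citet[Theorem 3.4]{chatterjee2008} announced in the text; the moment bounds \eqref{AN.Thm:AN.Cond} furnish the Lyapunov-type uniform integrability needed to pass from marginal CLTs for the summands to a CLT for the linear combination, and the hypothesis that some $Y_i$ is not perfectly dependent on $\{Y_1,\dots,Y_{i-1}\}$ guarantees $\alpha_n$ is genuinely random (its variance does not degenerate) so that $\alpha_n$ also satisfies a CLT. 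The continuity of $F_{(\XX,\YY)}$ is what makes the rank-based construction amenable to these tools, and non-perfect dependence of $\YY$ on $\XX$ keeps $\var(\Lambda_n)$ bounded away from $0$.

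Next I would upgrade the two marginal CLTs to a joint CLT for the vector $\big(\tfrac{\Lambda_n - \E\Lambda_n}{\sqrt{\var \Lambda_n}}, \tfrac{\alpha_n - \E\alpha_n}{\sqrt{\var \alpha_n}}\big)$. By the Cramér–Wold device it suffices to show that every fixed linear combination $s\Lambda_n + t\alpha_n$ is asymptotically normal; since $\alpha_n$ depends only on $\YY$ while $\Lambda_n$ involves the nearest-neighbour structure in $\XX$, the same nearest-neighbour normal-approximation machinery applies to $s\Lambda_n + t\alpha_n$, again using \eqref{AN.Thm:AN.Cond} for the Lyapunov condition. The limiting covariance structure of the standardized pair is then governed by $\rho_n := \Cor(\Lambda_n,\alpha_n)$; condition \eqref{AN.Thm:AN.Cond0}, $\limsup_n \rho_n < 1$, ensures that along any subsequence the limiting bivariate normal is non-degenerate in the relevant direction, i.e. the limiting variance of the linearized $T_n$ is strictly positive.

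Then I would apply the delta method to the smooth map $g(\lambda,a) = 1 - \frac{q-\lambda}{q-a}$, whose partial derivatives are $g_\lambda = \frac{1}{q-a}$ and $g_a = -\frac{q-\lambda}{(q-a)^2}$, both finite and non-vanishing in a neighbourhood of $(\E\Lambda_n, \E\alpha_n)$ because the denominator stays in $[1,q]$ bounded away from $0$ (the $i$ for which $Y_i$ is not perfectly dependent on its predecessors forces $q - \alpha_n$ to be bounded below) and $\Lambda_n \le q$. Writing $T_n - g(\E\Lambda_n, \E\alpha_n) = g_\lambda\,(\Lambda_n - \E\Lambda_n) + g_a\,(\alpha_n - \E\alpha_n) + o_P(\text{same order})$, the joint CLT gives asymptotic normality of $T_n$ after centering at $g(\E\Lambda_n,\E\alpha_n)$ and scaling by $\sqrt{\var(s_n\Lambda_n + t_n\alpha_n)}$ with $s_n = g_\lambda$, $t_n = g_a$. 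The final bookkeeping step is to replace this centering and scaling by $\E[T_n]$ and $\sqrt{\var(T_n)}$: one shows $\E[T_n] = g(\E\Lambda_n,\E\alpha_n) + o(\sqrt{\var(T_n)})$ and $\var(T_n)/\var(g_\lambda \Lambda_n + g_a \alpha_n) \to 1$ by a uniform-integrability argument on the (bounded) statistic $T_n \in [0,1]$ together with \eqref{AN.Thm:AN.Cond0} to keep the variance from collapsing, which is exactly why that hypothesis is imposed.

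The main obstacle I expect is precisely the last point: controlling the ratio $\var(T_n) / \var\big(g_\lambda\Lambda_n + g_a\alpha_n\big)$ and the bias term when the normalization is by the true standard deviation of the nonlinear statistic. The delta method delivers convergence in distribution for $(T_n - \mu_n)/\tau_n$ with $\mu_n,\tau_n$ the linearized surrogates, but to get the stated form one needs that the higher-order remainder in the Taylor expansion is $o_P(\tau_n)$ and that $\var(T_n) \sim \tau_n^2$; establishing the latter rigorously requires quantitative moment control on $T_n$ — available from \eqref{AN.Thm:AN.Cond} and the boundedness of $T_n$ — combined with the non-degeneracy guaranteed by \eqref{AN.Thm:AN.Cond0} and the two non-perfect-dependence assumptions. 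The remaining technical work — verifying the Lyapunov condition for the nearest-neighbour sums and checking that the $\chi_n^2$-type conditioning structure of \citet{chatterjee2008} still applies after inserting the extra coordinates $Y_1,\dots,Y_{i-1}$ into the predictor vector — is expected to be routine adaptation of \citet{han2022limit} and \citet{chatterjee2008}.
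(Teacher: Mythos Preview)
Your proposal is essentially the same strategy the paper follows: linearize $T_n = 1 - (q-\Lambda_n)/(q-\alpha_n)$ around the population values, show a CLT for the linearized statistic, and then do the bookkeeping to replace the surrogate centering/scaling by $\E[T_n]$ and $\sqrt{\var(T_n)}$. The paper's $\mu_n := \Lambda_n - \kappa\,\alpha_n$ with $\kappa = (q-\Lambda)/(q-\alpha)$ is exactly your delta-method linearization $g_\lambda\,\Lambda_n + g_a\,\alpha_n$ up to the harmless factor $(q-\alpha)^{-1}$, and the paper's Taylor expansion of $\beta_n = (q-\alpha)/(q-\alpha_n)$ is your remainder control.

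One presentational difference is worth noting. You frame the first step as a \emph{joint} CLT for the standardized pair via Cram\'er--Wold, but in this self-normalized setting no limiting covariance need exist (only $\limsup_n \Cor(\Lambda_n,\alpha_n)<1$ is assumed), so a bivariate limit law is only available along subsequences. The paper sidesteps this entirely: it never proves a joint CLT, but applies the modified \citet{chatterjee2008} nearest-neighbour normal approximation directly to the \emph{single} linear combination $\mu_n$ (their Theorem~\ref{Thm.Modify.Chatterjee} and Corollary~\ref{AN.Cor:AsymptNorm}), obtaining $\tfrac{\mu_n-\E\mu_n}{\sqrt{\var\mu_n}}\to N(0,1)$ without any subsequence argument. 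Since the delta method only needs that one direction, your Cram\'er--Wold step is over-proving and slightly awkward to state; going straight to $\mu_n$ is cleaner. Your identification of the main obstacle --- showing $\var(T_n)/\var(\mu_n)$ stabilizes and the bias is negligible --- is exactly right, and the paper devotes the bulk of its effort (Proposition~\ref{AN.Lemma:TaylorExp} and Lemmas~\ref{AN.Lemma:TaylorExp:Cor}--\ref{AN.Lemma:AsympVar.KappaBeta}) to precisely that, using the moment condition \eqref{AN.Thm:AN.Cond} on $\alpha_n$ to control the Taylor remainder and the correlation condition \eqref{AN.Thm:AN.Cond0} to keep $\liminf_n n\,\var(T_n)>0$.
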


\begin{remark}
\begin{enumerate}[(i)]
\item 
%For \(q=1\,,\) \(T_n\) in \eqref{def.estimator} reduces to \(\xi_n\), whose asymptotic normality is proved for continuous marginal distribution functions in \citep[Theorem 1.1]{han2022limit} under the assumption that \(Y\) is not perfectly dependent on \(\XX\,.\) 
For the proof of Theorem \ref{AN.Thm:AN}, we show asymptotic normality of 
\begin{align}\label{asnormi1}
    & \sqrt n (T_n-\E[T_n]) = - \sqrt{n} \big(\tfrac{q-\Lambda_n}{q-\alpha_n} - \E[\tfrac{q-\Lambda_n}{q-\alpha_n}]\big) \\
    \label{asnormi2} & =  \sqrt n \big(\Lambda_n-\E[\Lambda_n]\big) \tfrac{1}{q-\alpha_n} 
    - \sqrt n \big(\tfrac 1 {q-\alpha_n} - \E[\tfrac 1 {q-\alpha_n}]\big)\E[q-\Lambda_n] -
    \sqrt n \Cov(\Lambda_n,\tfrac 1 {q-\alpha_n}) \\
    %& = \sqrt n(\Lambda_n-\E[\Lambda_n]) \tfrac{1}{q-\alpha_n} + \sqrt n (\alpha_n - \E[\alpha_n] + O(...))\E[\Lambda_n] + \sqrt n \Cov(\Lambda_n,\tfrac 1 {q-\alpha_n})\\
    \label{asnormi3} & = \sqrt n \big(\Lambda_n  - \kappa\, \alpha_n - \E[\Lambda_n - \kappa \alpha_n] \big)/(q-\alpha) + o_P(1)\,,
\end{align}
for some constant \(\kappa >0\,,\)
where the last expression in \eqref{asnormi2} is shown to converge to \(0\,.\) Noting that, in general, the second term in \eqref{asnormi2} does not vanish for \(n\to \infty\,,\) the idea is to use a Taylor approximation of \(1/(q-\alpha_n)\) to obtain \eqref{asnormi3}. Then, since \(\Lambda_n-\kappa \alpha_n\) is a linear combination of statistics of the form \(\xi_n\,,\) we derive (under slight regularity conditions due to the Taylor approximation) asymptotic normality of \(T_n\) from \(\xi_n\) using a modification of the local limit theorem \citep[Theorem 3.4]{chatterjee2008} for nearest neighbour statistics to several subgraphs. A key element in our proof is a sequential conditioning argument to show that $\liminf_{n \to \infty} n \var(\alpha_n) > 0$ (see the proof of Lemma \ref{AN.Lemma:AsympVar}\eqref{Lemma:AsympVar2}(ii)). We are not aware of an extension of this reasoning to the estimator \(\overline{T}_n\) of our permutation invariant version \(\overline{T}\,.\)

\item For the Taylor approximation in \eqref{asnormi3}, we use the assumption that the moments of order \(2+\delta_2\) for \(\frac{\alpha_n - \E[\alpha_n]}{\sqrt{\var(\alpha_n)}}\) are uniformly bounded, see Proposition \ref{AN.Lemma:TaylorExp}.
The moment assumptions in \eqref{AN.Thm:AN.Cond} imply uniform integrability of \(\left(\frac{\Lambda_n - \E[\Lambda_n]}{\sqrt{\var(\Lambda_n)}}\right)^2\) and \(\left(\frac{\alpha_n - \E[\alpha_n]}{\sqrt{\var(\alpha_n)}}\right)^2\), 
%(see \citep[Theorem 3.5]{Billingsley-1999})
which is used in the proofs of Lemmas \ref{AN.Lemma:TaylorExp:Cor} and \ref{AN.Lemma:LimitsKappaBeta2}.
%These conditions are essential in order to derive results for the quotient occurring in the representation \(T_n = 1 - \frac{q - \Lambda_n}{q-\alpha_n}\).
The correlation condition \eqref{AN.Thm:AN.Cond0} ensures that the limiting variance of \(\sqrt{n} (T_n - \E[T_n])\) does not vanish,
i.e., $\liminf_{n \to \infty} n \, \var(T_n) > 0$, see Lemma \ref{AN.Lemma:AsympVar.KappaBeta}\eqref{AN.Lemma:AsympVar.KappaBeta.Eq1}.
Simulations indicate that all these regularity assumptions are generally fulfilled. However, it is not clear how to extend the asymptotic normality for \(\xi_n\) in \citep{han2022limit} to our estimator \(T_n\) for output dimension \(q>1\) without the conditions \eqref{AN.Thm:AN.Cond0} and \eqref{AN.Thm:AN.Cond}.

\item Apart from special cases, the limiting variance of \(\sqrt{n} (T_n - \E[T_n])\) is generally unknown and must be estimated, for example via the bootstrap procedure.
Although the bootstrap method generally fails for Chatterjee's rank correlation, as demonstrated by \citep{han2023boot}, it is shown by \citep{dette2023boot} that an $m$ out of $n$ bootstrap procedure leads to a consistent estimator of the limiting variance whenever asymptotic normality for \(\sqrt{n} (\xi_n - \E[\xi_n])\) is achieved.
Simulations indicate that the latter method can also be employed for estimating the asymptotic variance of \(\sqrt{n} (T_n - \E[T_n])\), noting that an extension of \citep[Theorem 1]{dette2023boot} to our setting for output dimension \(q>1\) is not straightforward. 
\item As a consequence of \eqref{Tcond-uncond}, the conditional version of Azadkia and Chatterjee's rank correlation can be estimated through the unconditional version by
\begin{align}\label{eqestcondchatt}
    \xi_n(Y,\XX|\ZZ) = 1 - \frac{1-\xi_n(Y,(\XX,\ZZ))}{1-\xi_n(Y,\ZZ)}\,,
\end{align}
where the nearest neighbor-based estimator for the left-hand side is given in \citep[Section 2]{chatterjee2021}.
Due to \eqref{repT_n}, the estimator \(T_n\) has a similar form so that asymptotic normality for \(\xi_n(Y,\XX|\ZZ)\) might be shown in the same way as for \(T_n\,.\) However, the crucial difference is that we use in the proof of Theorem \ref{AN.Thm:AN} for the inequality in \eqref{AN.Lemma:TaylorExp:Proof.1} that the denominator in \eqref{repT_n} is bounded by positive constants, see \eqref{boundalpha_n}. In contrast, the denominator of \eqref{eqestcondchatt} is only lower bounded by \(0\) and \(\xi_n(Y,\XX|\ZZ)\) may attain arbitrarily large negative values, see also Figure \ref{Fig.Sim} in the Supplementary Material.
\end{enumerate}
\end{remark}

%%%%%%%%%%%%%%%%%%%%%%%%%%%%%%%%%%%%%%%%%%%%%%%%%%%%%%%%%%%%%%%%%%%%%%%%%%%%%%%%%%%%%%%%
%%%%%%%%%%%%%%%%%%%%%%%%%%%%%%%%%%%%%%%%%%%%%%%%%%%%%%%%%%%%%%%%%%%%%%%%%%%%%%%%%%%%%%%%

\subsection{Rate of convergence}\label{secroc}

Under some sensitivity assumptions on the conditional distributions, 
%due to \citep{chatterjee2021}, 
the asymptotic bias of \(T_n\) can be controlled adopting to the ideas in \citep[Theorem 4.1]{chatterjee2021} and \citep[Proposition 1.1]{han2022limit}.
The first assumption will comprise a local Lipschitz condition for the conditional distributions. The second is an exponential boundedness condition for tail probabilities. We refer to \citep[Chapter 4 and Proposition 4.2]{chatterjee2021} for a detailed discussion of the assumptions.
\begin{assumption}\label{Assumption.1a}
    For all \(i\in \{1,\ldots,q\}\,,\) there exist real numbers \(\gamma_i,\gamma_i',C_i,C_i'\geq 0\) such that 
    for any %subset \(S\subseteq \{1,\ldots,p\}\) with \(|S|\leq 1/\delta^i+2\,,\) any 
    \(\xx,\xx'\in \R^{p}\,,\) any \(\yy,\yy'\in \R^{i-1}\,,\) and any \(t\in \R\,,\)
    \begin{align*}
        &|P(Y_i\geq t \mid \XX =\xx, (Y_{i-1},\ldots,Y_1)=\yy) - P(Y_i\geq t \mid \XX =\xx', (Y_{i-1},\ldots,Y_1)=\yy') | \\
        & \leq C_i\left( 1+ \lVert (\xx,\yy) \rVert^{\gamma_i} + \lVert (\xx',\yy') \rVert^{\gamma_i}\right) \lVert (\xx,\yy)-(\xx',\yy')\rVert \quad \text{and}\\
        &|P(Y_i\geq t \mid (Y_{i-1},\ldots,Y_1)=\yy) - P(Y_i\geq t \mid (Y_{i-1},\ldots,Y_1)=\yy') | \\
        & \leq C_i'\big( 1+ \lVert \yy \rVert^{\gamma_i'} + \lVert \yy' \rVert^{\gamma_i'}\big) \lVert \yy-\yy'\rVert\,.
    \end{align*}
\end{assumption} 
\begin{assumption}\label{Assumption.2a}
    For all \(i\in \{1,\ldots,q\}\,,\) there exist \(\delta_{i},D_{i}>0\) such that 
    %for any subset \(S\subseteq\{1,\ldots,p\}\) with \(|S|\leq 1/\delta^i+2\) and 
    for any \(t>0\,,\) 
    \(P(\lVert (\XX,Y_{i-1},\dots,Y_1) \rVert\geq t)\leq D_{i} e^{- \delta_{i} t}\,.\)
\end{assumption}
Recall that \(p,q\geq 1\) are the dimensions of \(\XX\) and \(\YY\,,\) respectively. 
\begin{proposition}[Asymptotic bias]\label{propasybias}~\\
Assume that \((\XX,\YY)\) has a continuous distribution function.
    Then, under Assumptions \ref{Assumption.1a} and \ref{Assumption.2a}, we have for \(\gamma:= \max_i \{\gamma_i,\gamma_i'\}\) and \(d:= p+q\) that
    \begin{align}
        \left| \E[T_n] - T \right| = O\Big(\frac{(\log n)^{d+\gamma + \1_{d=2}}}{n^{1/(d-1)}}\Big)\,.
    \end{align}
\end{proposition}
A similar results holds true for \(\overline{T}_n\,.\)
\begin{remark}\label{RemPower}
    It is known that \(\xi_n\) and thus \(T_n\) suffer from weak power and are inefficient when testing on independence. As studied by \citep{deb2020}, \citep{shi2021} and \citep{Lin-Han-2023}, the power of \(\xi_n\) can be boosted by considering \(k\)-nearest neighbors instead of one-nearest neighbors. Simulations indicate that these results carry over to our extension \(T_n\,.\) 
    However, we emphasize that \(T\) is constructed to measure the degree of functional dependence of \(\YY\) on \(\XX\) rather than to test on (conditional) independence. We discuss various properties of \(T_n\) in comparison with other existing measures in Subsection \ref{seccomparison}.
\end{remark}

%%%%%%%%%%%%%%%%%%%%%%%%%%%%%%%%%%%%%%%%%%%%%%%%%%%%%%%%%%%%%%%%%%%%%%%%%%%%%%%%%%%%%%%%
%%%%%%%%%%%%%%%%%%%%%%%%%%%%%%%%%%%%%%%%%%%%%%%%%%%%%%%%%%%%%%%%%%%%%%%%%%%%%%%%%%%%%%%%

\subsection{Comparison with KPC and Distance Correlation} \label{seccomparison}

Our proposed extension of Azadkia and Chatterjee's rank correlation to multi-output measures satisfies various properties that motivate to use it for a multivariate variable selection, see Section \ref{secappl}.
%i.e., to choose those components of \(\XX\) that are most important to model \(\YY\,.\)
Similar and competing measures from the literature are the kernel partial correlation (KPC) \(\rho^2\) studied in \citep{deb2020b} and the distance correlation (dCor) \(\cD\) in \citep{Szekely-2007}. We refer to Table  \ref{tabpropkpct} for an overview of various properties.

\begin{table}[h!]
\centering
\scalebox{0.90}{
\begin{tabular}{|l||c|c|c|c|}
\hline
\textbf{Comparison of \(\rho^2\), \(T\), and \(\overline{T}\)} & \textbf{dCor} & \textbf{KPC} & \multicolumn{2}{c|}{\textbf{didec}} \\ \hline
   Property  &  \(\mathcal{D}\) &  \(\rho^2\)  & \(T\) & \(\overline{T}\) \\ \hline\hline
   Population version in \([0,1]\) & \cmark  & \cmark   & \cmark  &   \cmark \\
   %Estimator in \([0,1]\) \textcolor{blue}{anders formulieren: Efficient estimation?} & \cmark & \xmark ?  & \xmark   &   \xmark \\ \hline
Characterization of independence & \cmark   & \cmark   & \cmark  &   \cmark \\ 
Characterization of perfect dependence & \xmark  & \cmark & \cmark  &   \cmark   \\ \hline
Information gain inequality      & \xmark    & \cmark            & \cmark           & \cmark           \\
Characterization of conditional independence & \cmark & \cmark     & \cmark     & \cmark     \\ \hline
Data processing inequality & \xmark & \cmark     & \cmark     & \cmark     \\ 
Self-equitability & \xmark & \cmark     & \cmark     & \cmark     \\ \hline
Closed-form expression for multivariate normal distributions  & \cmark (\(d=2\)) & \xmark  & \cmark & \cmark \\ \hline
Invariance of \(\XX\) under bijective transformations & \xmark   & \cmark & \cmark & \cmark \\ 
Invariance of \(\YY\) under translations & \cmark & \cmark & \cmark & \cmark \\
Invariance of \(\YY\) under orthogonal transformations & \cmark & \cmark & \xmark & \xmark \\ 
Invariance of \(\YY\) under permutations & \cmark  & \cmark & \xmark & \cmark \\ 
Invariance of \(\YY\) under strictly increasing transformations & \xmark  & \xmark & \cmark & \cmark \\ \hline
Estimator computable in \(O(n \log n)\) time & \xmark  & \cmark & \cmark & \cmark \\ 
Statistically efficient estimation & \cmark  & \xmark & \xmark & \xmark \\ 
Strongly consistent estimator & \cmark & \cmark & \cmark & \cmark \\ 
Asymptotically normal estimator & ?  & ? & \cmark & ? \\ \hline
\end{tabular}}
\vspace{5mm}
\caption{Overview of properties for the (conditional) distance correlation (dCor) in \citep{Szekely-2007,Wang-2015}, the kernel partial correlation (KPC) in \citep{deb2020b} and our measures \(T\) and \(\overline{T}\) of directed dependence (didec).}
\label{tabpropkpct}
\end{table}

The KPC is defined in terms of the maximum mean discrepancy between two probability distributions, i.e., through a distance metric for distributions depending on a kernel such as the Gaussian kernel, the Laplace kernel or a linear kernel. There is both freedom in the choice of the kernel and in various tuning parameters, which may be an advantage but may also cause problems as we discuss in Example \ref{exIGinequ}.
The KPC is a measure of predictability that satisfies the information gain inequality (P\ref{prop.IGI}) and is a able to characterize conditional independence (P\ref{prop.CI}), see \citep[Theorem 1]{deb2020b}.
Hence, it describes the degree of predictability of \(\YY\) through \(\XX\) and is well-suited for a multi-output variable selection.

The dCor and its conditional version in \citep{Wang-2015} are defined through a distance between characteristic functions. They are designed primarily for testing (conditional) independence between multivariate random vectors and admit a statistically efficient estimation. However, in general, the distance correlation does not describe the strength of dependence because, as a consequence of \citep[Theorem 3]{Szekely-2007}, it does neither satisfy an information gain inequality nor it is able to characterize perfect dependence of \(\YY\) on \(\XX\,.\) %Hence, dCor is rather unsuitable for a general variable selection method.

Our proposed measure \(T\) satisfies all the important properties for a model-free multi-output variable selection method, i.e., \(T\) is a measure of predictability that additionally satisfies (P\ref{prop.IGI}) and (P\ref{prop.CI}). 
Using the construction in \eqref{defmdm2}, every single-output measure of predictability with these desired properties (P\ref{prop.IGI}) and (P\ref{prop.CI}) can in principle be extended to similar multi-output measures. The significant advantage of \(T\) (and KPC) is that it can be computed in \(O(n \log n)\) time, which is crucial for a fast variable selection. As a drawback, it is known that \(\xi_n\) and thus \(T_n\) (and also the estimator of \(\rho^2\)) are statistically inefficient. However, as our practical results show, it is worth making this trade-off in favor of a fast computation time, see also the discussion in \citep[Section 5]{chatterjee2021} and Remark \ref{RemPower}.

In contrast to KPC, our measure \(T\) is a merely rank-based quantity (hence margin-free), does not depend on any tuning parameters, and it has a simple expression without any regularity assumptions. 
%The only inevitable condition is that the components of \(\YY\) are non-degenerate. 
In the following example, we show that a wrong choice of kernels and tuning parameters for \(\rho^2\) can lead to severe problems in variable detection.

\begin{example}[Information gain]\label{exIGinequ}~~\\
For \(\alpha>0\,,\) consider independent and standard normal predictors $X_1,X_2,X_3$ and two responses $Y_1 = X_1 + X_2 + N(0,1)$ and $Y_2 = X_1 + \alpha \, X_3 + N(0,1)$ with independent, standard normal errors.
Then both \(Y_1\) and \(Y_2\) depend on \(X_1\) but only \(Y_2\) depends on \(X_3\) with impact increasing in \(\alpha\,.\) 
We estimate the (normalized) information gain 
\begin{align}
\label{infgain}
\begin{split}
    & \frac{T(\YY , (X_1,X_2,X_3)) - T(\YY , (X_1,X_2))}{1 - T(\YY , (X_1,X_2))}\,,
  \\ 
  & \frac{\rho^2(\YY, (X_1,X_2,X_3)) - \rho^2(\YY, (X_1,X_2))}{1 - \rho^2(\YY, (X_1,X_2))} \,,
\end{split}
\end{align}
obtained by adding $X_3$. For a better comparability, we normalize the information gains with respect to the maximal possible information gain when including \(X_3\,.\) Both expressions in \eqref{infgain} are \(1\) if and only if \(\YY\) is a function of \((X_1,X_2,X_3)\), and they are \(0\) if and only if \(\YY\) and \(X_3\) are conditionally independent given \((X_1,X_2)\,,\) which both is clearly not the case in our setting.
%Note that the information gain in \eqref{infgain} has a simple closed-form expression due to Proposition 2.7. 
Surprisingly, as can be seen in Figure \ref{Fig.Sim3}, $\rho^2$ with standard kernel \texttt{rbfdot(1)} has difficulties recognizing the information gain and decreases again towards zero as the influence of $X_3$ increases from $\alpha = 7$. This is not the case with our measure $T$. 
%Consequently, MFOCI clearly outperforms KFOCI in such situations.
Interestingly for large values of $\alpha$, $\rho^2$ with kernels 
\texttt{rbfdot(1/(2*stats::median(stats::dist($Y$))$^2$))} and \texttt{vanilladot()} achieves values close to $1$, which appears to be a too high value in view of the only moderate influence of the variables $X_1$ and $X_2$ on $Y_1$. 
In both situations, the scale dependence of $\rho^2$ becomes visible.
\end{example}

\begin{figure}[t!]
  \centering
  \includegraphics[width=1\textwidth]{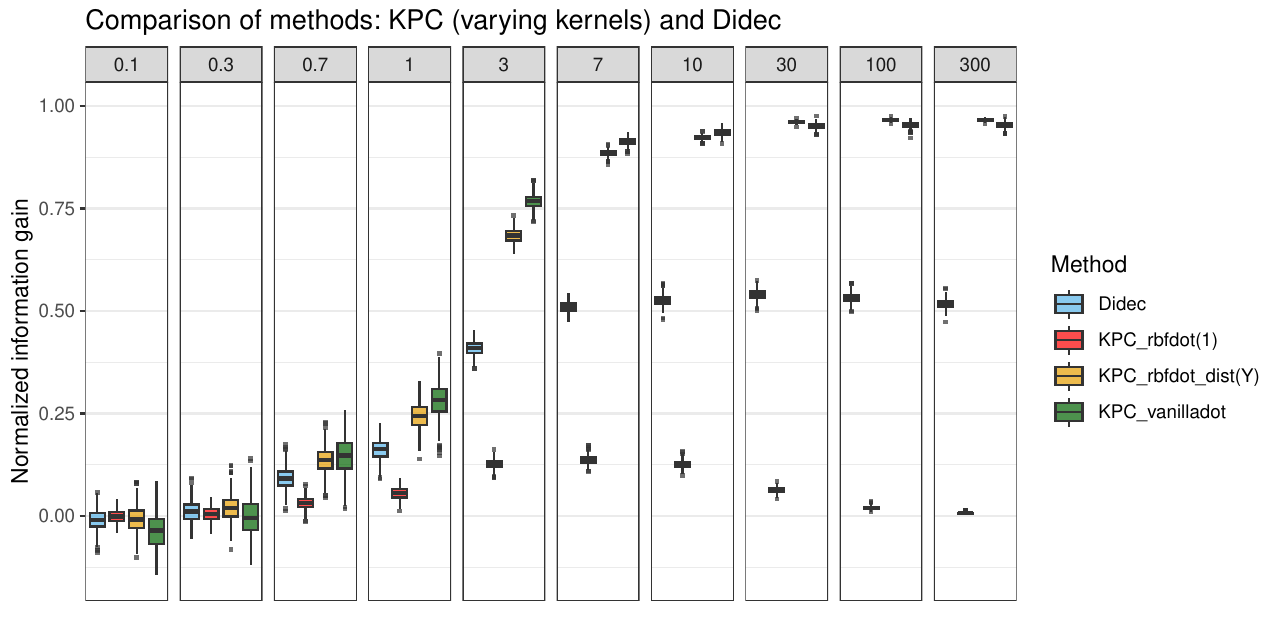}
  \caption{Boxplots comparing the $500$ obtained normalized information gains in \eqref{infgain} for $\rho^2$ estimated via R function \texttt{KMAc} (R package KPC) with those for $T$ estimated via  R function \texttt{didec} (R package didec). 
  Sample size is $1,000$ and \(\alpha\) is varying over $\alpha \in \{0.1, 0.3, 0.7, 1, 3, 7, 10, 30, 100, 300\}$ from left to right.}
  \label{Fig.Sim3}
\end{figure}

%%%%%%%%%%%%%%%%%%%%%%%%%%%%%%%%%%%%%%%%%%%%%%%%%%%%%%%%%%%%%%%%%%%%%%%%%%%%%%%%%%%%%%%%
%%%%%%%%%%%%%%%%%%%%%%%%%%%%%%%%%%%%%%%%%%%%%%%%%%%%%%%%%%%%%%%%%%%%%%%%%%%%%%%%%%%%%%%%

\section{MFOCI: Multivariate Feature Ordering by Conditional Independence} \label{RDE.Sect.FS}\label{secappl}

In the literature, numerous variable selection methods for a single output variable \(Y\) (i.e., \(q=1\)) are studied. Model-dependent methods are mostly based on linear or additive models, see e.g. \citep{Candes-2007,Tibshirani-2004,Fan-2001,Friedman-1991,George-1993,Miller-1990,Ravikumar-2009,
Yuan-2006,Zou-2005} and \citep{Chandrashekar-2014,Dash-1997,
Hastie-2009,Venkatesh-2019} for an overview; model-free methods rely on random forests \citep{Breiman-2001,Breiman-2017}, mutual information \citep{Battiti-1994,Vergara-2014}, or measures of predictability \citep{chatterjee2021,deb2020b}. 
However, up to our knowledge, there is rather little literature on feature selection methods that are applicable to multivariate response variables (i.e., for \(q > 1\)). In the class of linear methods, the lasso allows an extension to multiple output data \citep{Tibshirani-1996,Tibshirani-2011}, while distance correlation variable selection in \citep{Borboudakis-2019} and the kernel feature ordering by conditional independence in \citep{deb2020b} are more general or model-free methods. 

%%%%%%%%%%%%%%%%%%%%%%%%%%%%%%%%%%%%%%%%%%%%%%%%%%%%%%%%%%%%%%%%%%%%%%%%%%%%%%%%%%%%%%%%
\subsection{Variable Selection Method MFOCI}

Since \(T\) and \(\overline{T}\) are measures of predictability that satisfy the information gain inequality, characterize conditional independence between multivariate random vectors (Theorem \ref{theT4} and Corollary \ref{corppivv}) and can be estimated in almost linear time (Theorem \ref{theT.Consistency} and Remark \ref{remconst}), they are suitable for a new subset selection method for predicting multivariate responses.
We adapt to \citep[Chapter 5]{chatterjee2021} and extend their feature selection method called FOCI (feature ordering by conditional independence) from \(q=1\) to arbitrary output dimension \(q\in \N\) which we denote MFOCI (multivariate FOCI) and works as follows: 
For the vector \(\YY = (Y_1,\dots,Y_q)\) of \(q \geq 1\) response variables and the vector \(\XX = (X_1,\dots,X_p)\) of \(p \geq 1\) predictor variables, consider i.i.d. copies \((\XX_1,\YY_1), \dots, (\XX_n,\YY_n)\) of $(\XX,\YY)$.
First, denote by \(j_1\) the index \(j\) maximizing \(T_n(\YY,X_j)\).
Now, assume that after \(k\) steps MFOCI has chosen the variables \(X_{j_1}, \dots, X_{j_k}\) 
and denote by \(j_{k+1}\) the index \(j \in \{1,\dots,p\}\backslash \{j_1,\dots,j_k\}\) maximizing \(T_n(\YY,(X_{j_1}, \dots, X_{j_k}, X_{j}))\).
The algorithm stops with the first index \(k\) for which 
\(T_n(\YY,(X_{j_1}, \dots, X_{j_k}, X_{j_{k+1}})) \leq T_n(\YY,(X_{j_1}, \dots, X_{j_k}))\), i.e., with the first index for which the degree of predictability no longer increases when adding further predictor variables.
Finally, denote by \(\hat{S} := \{j_1, \dots,j_k\}\) the subset selected by the above algorithm.
If the stopping criterion is not fulfilled for any \(k\) we set 
\(\hat{S} := \{1, \dots,p\}\).
If \(T_n(\YY,X_{j_1}) \leq 0\) the set \(\hat{S}\) is chosen to be empty.

A subset \(S\subseteq \{1,\ldots,p\}\) is called \emph{sufficient} if \(\YY\) and \(\XX_{S^c}:=(X_j)_{j\in S^c}\) are conditionally independent given \(\XX_S:=(X_j)_{j\in S}\), where \(S^c:=\{1,\ldots,p\}\setminus S\,.\)
By adapting to \citep[Chapter 6]{chatterjee2021}, 
we now discuss consistency of MFOCI for the case when, for every \(i \in \{1,\dots,q\}\), 
there exist \(\varepsilon_i > 0\) such that for any insufficient subset \(S\) there is some \(j \notin S\) with 
\begin{align}\label{Assumption.0}
    %T(Y_i \, , \, (\XX_{S \cup \{j\}},Y_{i-1},\dots,Y_1)) 
    %& \geq T(Y_i \, , \, (\XX_{S},Y_{i-1},\dots,Y_1)) + \delta^i
    \lefteqn{\int_{\mathbb{R}} {\rm var} (P(Y_i \geq y \, | \, (\XX_{S \cup \{j\}},Y_{i-1},\dots,Y_1))) \; \mathrm{d} P^{Y_i}(y)}
    \\
    & \geq & \int_{\mathbb{R}} {\rm var} (P(Y_i \geq y \, | \, (\XX_{S},Y_{i-1},\dots,Y_1))) \; \mathrm{d} P^{Y_i}(y) + \varepsilon_i\,. \notag
\end{align}

We make use of the following weak regularity assumptions similar to Assumptions \ref{Assumption.1a} and \ref{Assumption.2a}.
\begin{assumption}\label{Assumption.1}
    For all \(i\in \{1,\ldots,q\}\,,\) there exist real numbers \(\gamma_i,C_i\geq 0\) such that for any \(S\subseteq \{1,\ldots,p\}\) with \(|S|\leq 1/\varepsilon_i+2\,,\) any \(\xx,\xx'\in \R^{|S|}\,,\) any \(\yy\in \R^{i-1}\,,\) and any \(t\in \R\,,\)
    \begin{align*}
        &|P(Y_i\geq t \mid \XX_S =\xx, (Y_{i-1},\ldots,Y_1)=\yy) - P(Y_i\geq t \mid \XX_S =\xx', (Y_{i-1},\ldots,Y_1)=\yy) | \\
        & \leq C^i\left( 1+ \lVert (\xx,\yy) \rVert^{\gamma_i} + \lVert (\xx',\yy) \rVert^{\gamma_i}\right) \lVert \xx-\xx'\rVert\,.
    \end{align*}
\end{assumption}

\begin{assumption}\label{Assumption.2}
    For all \(i\in \{1,\ldots,q\}\,,\) there exist \(\delta_i,D_i > 0\) such that for any subset \(S\subseteq\{1,\ldots,p\}\) with \(|S|\leq 1/\varepsilon_i+2\) and for any \(t>0\,,\) 
    \(P(\lVert (\XX_S,Y_{i-1},\dots,Y_1) \rVert\geq t)\leq D_i e^{- \delta_i t}\,.\)
\end{assumption}

As an immediate consequence of \citep[Theorem 6.1]{chatterjee2021}
and the inequality 
\begin{align*}
    P\left(\cap_{i=1}^q E_i\right) 
    & \geq \max \left\{ \sum_{i=1}^{q} P(E_i) - (q-1), 0 \right\} 
\end{align*}
for events \(E_1, \dots, E_q\),
the following result states that the subset of selected predictor variables via MFOCI is sufficient with high probability.

\begin{proposition}[Consistency] \label{FS.Consistency}~~\\
    Suppose that \(\varepsilon_1,\ldots,\varepsilon_q>0\) and that Assumptions \ref{Assumption.1} and \ref{Assumption.2} are fulfilled. Let \(\hat{S}\) be the subset selected by MFOCI with a sample size of \(n\,.\) Then, there exist \(L_1,L_2,L_3>0\) depending only on \(\gamma_i,C_i,\delta_i,D_i\) and \(\varepsilon_i\,,\) \(i\in \{1,\ldots,q\},\) such that 
    \begin{align}
    %P(\hat{S} \text{ is sufficient})\geq 1 - \sum_{i=1}^q L_1^i (p+i-1)^{L_2^i} e^{-L_3^i n}\,.\\
    P(\hat{S} \text{ is sufficient})\geq 1 -  L_1 (p+q)^{L_2} e^{-L_3 n}\,.
    \end{align}
\end{proposition}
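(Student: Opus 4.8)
The plan is to reduce ``$\hat S$ is sufficient'' to $q$ coordinatewise conditional-independence statements, to bound the failure probability of each by the concentration machinery underlying Chatterjee's FOCI consistency theorem, and to recombine the bounds through the displayed inequality.

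First I would invoke the chain rule for conditional independence: a subset $S$ is sufficient, i.e.\ $\YY\perp\XX_{S^c}\mid\XX_S$, if and only if for every $i\in\{1,\dots,q\}$ one has $Y_i\perp\XX_{S^c}\mid(\XX_S,Y_{i-1},\dots,Y_1)$. Hence it is enough to produce events $E_1,\dots,E_q$ with $\bigcap_{i=1}^q E_i\subseteq\{\hat S\text{ is sufficient}\}$ and $P(E_i^c)\le L_1^i(p+q)^{L_2^i}e^{-L_3^i n}$, the constants depending only on $C^i,\beta^i,C_1^i,C_2^i,\delta^i$; the displayed inequality then yields $P(\hat S\text{ is sufficient})\ge\sum_{i=1}^q P(E_i)-(q-1)=1-\sum_{i=1}^q P(E_i^c)$, and since $q\le p+q$ the sum over $i$ is absorbed into the polynomial factor, with $L_3:=\min_i L_3^i$.

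To build the $E_i$, let $\delta>0$ be a common lower bound for the gaps in condition~\eqref{Assumption.0} once it is rewritten, upon dividing through by the fixed denominators of $\xi$ in \eqref{Tuniv}, as: for any insufficient $S$ and any $i$ there is $j\notin S$ with $\xi(Y_i\mid(\XX_{S\cup\{j\}},Y_{i-1},\dots,Y_1))\ge\xi(Y_i\mid(\XX_S,Y_{i-1},\dots,Y_1))+\delta$. Write $\Lambda_n(A):=\sum_{i=1}^q\xi_n(Y_i\mid(\XX_A,Y_{i-1},\dots,Y_1))$ and let $\Lambda(A)$ be its population analogue. Two observations are then key. (a) Because the denominator $q-\alpha_n$ of $T_n$ does not involve the predictors and is positive on the event used below, the MFOCI greedy and stopping rules coincide with those obtained by replacing $T_n$ by $\Lambda_n(\cdot)$; and, by the information gain inequality for $\xi$ (so that each summand of $\Lambda$ is nondecreasing under enlargement of the conditioning set) together with the rewritten condition~\eqref{Assumption.0}, whenever the current subset is insufficient some predictor's addition raises $\Lambda(\cdot)$ by at least $\delta$. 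Since $0\le\Lambda\le q$, this forces an a priori bound $|\hat S|\le m$ with $m=m(q,\delta)$. (b) I would let $E_i$ be the event that $|\xi_n(Y_i\mid(\XX_S,Y_{i-1},\dots,Y_1))-\xi(Y_i\mid(\XX_S,Y_{i-1},\dots,Y_1))|<\delta/(4q)$ for every $S\subseteq\{1,\dots,p\}$ with $|S|\le m$; under Assumptions~\ref{Assumption.1} and \ref{Assumption.2} for the response $Y_i$ with conditioning vector $(Y_{i-1},\dots,Y_1)$, the uniform concentration bound for $\xi_n$ established in the proof of \cite[Theorem~6.1]{chatterjee2021} (see also \cite[Proposition~4.2]{chatterjee2021}) gives exactly the required estimate for $P(E_i^c)$.

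It then remains to verify $\bigcap_i E_i\subseteq\{\hat S\text{ is sufficient}\}$. On $\bigcap_i E_i$ all the estimates lie within $\delta/(4q)$ of their population values over subsets of size $\le m$, hence $|\Lambda_n(A)-\Lambda(A)|<\delta/4$ there for all such $A$; consequently a step that would raise $\Lambda$ by $\ge\delta$ raises $\Lambda_n$ by $\ge\delta/2>0$, so MFOCI cannot terminate at an insufficient subset, and by the step bound it must terminate within $m$ steps, leaving a sufficient $\hat S$ (the two degenerate branches are harmless on this event: by condition~\eqref{Assumption.0} with $S=\emptyset$ the empty set is returned only when $\YY\perp\XX$, and $\hat S=\{1,\dots,p\}$ is trivially sufficient). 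The main obstacle — and the reason the statement is not literally a corollary of \cite[Theorem~6.1]{chatterjee2021} — is that MFOCI is a \emph{joint} greedy procedure driven by the aggregate $\Lambda_n$ rather than by $q$ separate FOCI runs, so one must re-run the deterministic part of that theorem's argument with the objective $\Lambda_n$; this is precisely why the concentration tolerance is taken of order $\delta/q$ and the bound on $|\hat S|$ is phrased through $q$ and $\delta$. With this bookkeeping in place, the remaining estimates are routine.
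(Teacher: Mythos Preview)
Your proof is correct and takes essentially the same approach as the paper, which declares the result an immediate consequence of \cite[Theorem~6.1]{chatterjee2021} together with the displayed Bonferroni-type inequality $P(\cap_{i} E_i)\ge\sum_i P(E_i)-(q-1)$. You have supplied the details the paper omits---notably the observation that MFOCI's greedy and stopping rules are governed by the aggregate numerator $\Lambda_n$ (the denominator $q-\alpha_n$ being predictor-free and positive), which is precisely what lets the deterministic part of the FOCI argument be re-run with the joint objective.
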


Appendix \ref{RDE.Subsect.FS} in the Supplementary Material verifies that MFOCI is plausible in the sense that it chooses a small number of variables that include, in particular, the most important variables for the individual feature selections.

In the sequel, we use simulated and real-world data to demonstrate the relevance of \(T\) and \(\overline{T}\) in forward feature selection extracting the most important variables for predicting a vector \(\YY\) of response variables.

%%%%%%%%%%%%%%%%%%%%%%%%%%%%%%%%%%%%%%%%%%%%%%%%%%%%%%%%%%%%%%%%%%%%%%%%%%%%%%%%%%%%%%%%
%%%%%%%%%%%%%%%%%%%%%%%%%%%%%%%%%%%%%%%%%%%%%%%%%%%%%%%%%%%%%%%%%%%%%%%%%%%%%%%%%%%%%%%%

\subsection{Comparison with Related Variable Selection Methods} \label{RDE.Subsect.FSII}
%Analysis of Seoul weather data set: 

\begin{table}[t]
\small
\centering
\caption{Multivariate linear models (LM), generalized additive models (GAM), and non-linear models (nLM) used for the comparative simulation study in Subsection \ref{RDE.Subsect.FSII}.}
\label{Fig.Models}
\scalebox{1.00}{
\begin{tabular}{l||l|l}
LM1 
& \(Y_1 = 3X_1+2X_2-X_3 + \varepsilon_1\) 
& \(Y_2 = - \tfrac 1 3 X_1 - \tfrac 1 2 X_2 + X_3 + \varepsilon_2\)
\\
LM2 
& \(Y_1 = 3X_1 -4X_3 +\varepsilon_1\) 
& \(Y_2 = -X_1 + \tfrac 3 4 X_2 +\varepsilon_2\)
\\
LM3 
& \(Y_1 = 3X_1 +2X_2 +\varepsilon_1\) 
& \(Y_2 = X_3 +\varepsilon_2\)
\\
GAM1
& \(Y_1=\sin(X_1) + \cos(X_2) + e^{X_3} +\varepsilon_1\) 
& \(Y_2=X_1X_2+\sin(X_1 X_3)+\varepsilon_2\)
\\
GAM2 
& \(Y_1=\sin(X_1) + 2\cos(X_2) + e^{X_3} +\varepsilon_1\) 
& \(Y_2=X_1+2\sin(X_1 X_3)+\varepsilon_2\)
\\
GAM3
& \(Y_1=\sin(X_1) + 1.5\cos(X_2) + e^{X_3} +\varepsilon_1\) 
& \(Y_2=X_1+2\sin(X_1 X_3)+\varepsilon_2\)
\\
nLM1
& \(Y_1 = \frac{2 \log(X_1^2+X_2^4)}{\cos(X_1)+\sin(X_3)} + t_1\) 
& \(Y_2= |X_1+V|^{\sin(X_2-X_3)}\)
\\
nLM2
& \(Y_1 = \frac{2 \log(X_1^2+X_1^4)}{\cos(X_1)+\sin(X_3)} + t_1\) 
& \(Y_2= |X_1+V|^{\sin(X_1-X_2)}\)
\end{tabular}}

\bigskip
\scalebox{1}{
\begin{tabular}{l||l}
LM4
&  \(Y_i= \sum_{k=1}^{11-i} X_k + \varepsilon_i\) for \(i\in \{1,\ldots,10\}\)
\\
nLM3
& \(Y_1 = X_1 + \varepsilon_1\,,\) ~ \(Y_i= X_i/(1+\sin(Y_{i-1})) + \varepsilon_i\) for \(i\in \{2,\ldots,10\}\)
\end{tabular}}
\end{table}
\begin{table}[t!]
\small
\centering
\caption{Performance of the variable selection algorithms for a sample of size \(n=200\,,\) for \(q=2\) output variables (i.e., \(Y_1,Y_2\)) depending in the respective model on at most \(3\) out of \(p=10\) predictor variables. The reported numbers are: Proportion of times \(\{X_1,X_2,X_3\}\) is selected possibly with other variables // proportion of times exactly \(\{X_1,X_2,X_3\}\) is selected // average number of variables selected. %Model-free methods with a particularly good performance are highlighted in bold.
}
\label{Fig.SimFS1}
\scalebox{0.85}{
\begin{tabular}{c||c|c|c|c|c}
\makecell[l]{\(q=2\) \\ \(p=10\)}
& \makecell[c]{MFOCI}
& \makecell[c]{KFOCI \\ default kernel}
& \makecell[c]{KFOCI \\ kernel  \texttt{rbfdot(1)} }
& \makecell[c]{dcorVS}
& \makecell[c]{Lasso}
%& \makecell[l]{\phantom{00}}{\(\overline{T^2_n}\)}
\\
\hline
\hline
LM1
& 1.00 / 0.52 // 3.52
& 1.00 / 0.99 // 3.01
& 1.00 / 1.00 // 3.00
& 1.00 / 0.85 // 3.15
& 1.00 / 1.00 // 3.00
\\
LM2
& { 0.89 / 0.49 // 3.32}
& 0.08 / 0.08 // 2.08
& 0.00 / 0.00 // 2.00
& 0.95 / 0.84 // 3.08
& 1.00 / 1.00 // 3.00
\\
LM3
& 1.00 / 0.48 // 3.60
& 0.93 / 0.93 // 2.93
& 0.76 / 0.76 // 2.76
& 1.00 / 0.84 // 3.18
& 1.00 / 1.00 // 3.00
\\
\hline
GAM1
& 0.91 / 0.27 // 3.82
& 0.94 / 0.74 // 3.14
& 0.99 / 0.99 // 2.99
& 0.53 / 0.40 // 2.79
& 0.00 / 0.00 // 1.88
\\ 
GAM2
& { 0.92 / 0.39 // 3.63}
& 0.86 / 0.82 // 2.91
& 0.90 / 0.90 // 2.90
& 0.94 / 0.72 // 3.17
& 0.01 / 0.01 // 2.03
\\
GAM3
&  0.78 / 0.36 // 3.40
& 0.47 / 0.43 // 2.56
& 0.38 / 0.38 // 2.38
& 0.76 / 0.56 // 3.03
& 0.01 / 0.01 // 2.03
\\ 
\hline
nLM1
& 0.87 / 0.57 // 3.14
&  0.90 / 0.80 // 3.02
& 0.97 / 0.91 // 3.03
& 0.00 / 0.00 // 1.54
& 0.92 / 0.00 // 9.73
\\ 
nLM2
&  0.91 / 0.77 // 3.09 
& 0.04 / 0.04 // 1.99
& 0.00 / 0.00 // 2.00
& 0.00 / 0.00 // 1.76
& 0.91 / 0.00 // 9.70
\\
\end{tabular}}
\end{table}

\begin{table}[t!]
\small
\centering
\caption{Performance of the variable selection algorithms for a sample of size \(n=200\,,\) for \(q=10\) output variables (i.e., \(Y_1,\ldots,Y_{10}\)) depending in the respective model on at most \(10\) out of \(p=25\) predictor variables. The reported numbers are: Proportion of times \(3\) / \(5\) / \(8\) / all \(10\) variables are correctly selected possibly with other variables // average number of variables selected. %Model-free methods with a particularly good performance are highlighted in bold.
}
\label{Fig.SimFS2}
\scalebox{0.85}{
\begin{tabular}{c||c|c|c|c|c}
\makecell[l]{\(q=10\) \\ \(p=25\)}
& \makecell[c]{MFOCI}
& \makecell[c]{KFOCI \\  default kernel}
& \makecell[c]{KFOCI \\ kernel  \texttt{rbfdot(1)} }
& \makecell[c]{dcorVS}
& \makecell[c]{Lasso}
%& \makecell[l]{\phantom{00}}{\(\overline{T^2_n}\)}
\\
\hline
\hline
LM4
& \makecell[c]{ 0.85  / 0.65 / 0.17 / \\  0.00 // 6.91}
& \makecell[c]{0.46 / 0.24 / 0.01 / \\ 0.00 // 4.53 }
& \makecell[c]{1.00 / 1.00 / 0.83 / \\ 0.04 // 8.28 }
& \makecell[c]{1.00 / 1.00 / 1.00 / \\ 0.83 // 10.33 }
& \makecell[c]{1.00 / 1.00 / 1.00 / \\ 1.00 // 10.00 }
\\[10pt]
nLM3
& \makecell[c]{{ 0.99 / 0.93 / 0.52 /} \\ { 0.08 // 9.10 }}
& \makecell[c]{0.54 / 0.14 / 0.01 / \\ 0.00 // 5.53 }
& \makecell[c]{0.48 / 0.29 / 0.01 / \\ 0.00 // 5.26 }
& \makecell[c]{0.01 / 0.00 / 0.00 / \\ 0.00 // 1.46 }
& \makecell[c]{1.00 / 1.00 / 1.00 / \\ 1.00 // 25.00 }
\\
\end{tabular}}
\end{table}

We compare MFOCI with the feature selection algorithm \emph{kernel feature ordering by conditional independence} (KFOCI) based on KPC (see Subsection \ref{seccomparison}) in \citep{deb2020b},
with the distance correlation variable selection method dcorVS in \citep{Borboudakis-2019},
and with the Lasso (see \citep{Tibshirani-1996,Tibshirani-2011}) which are all applicable to multi-output data. 

\bigskip\noindent
\textit{Simulation Study:}
As illustrative examples, we consider the multivariate linear models (LM), generalized additive models (GAM), and non-linear models (nLM) summarized in Table \ref{Fig.Models} for independent random variables \(X_1,\ldots,X_p\sim N(0,1)\,,\) \(U_1,\ldots,U_p\sim U(0,1)\,,\) \(\varepsilon_1,\ldots,\varepsilon_q\sim N(0,1)\,,\) \(t_1\) Student-t distributed with \(\nu=1\) degree of freedom, and \(V
%,V_1,\ldots,V_q
\sim U(0,1)\,.\) From these random variables, we generate \(n=200\) samples and determine \(Y_1,\ldots,Y_q\) depending only on a few of the predictor variables \(X_1,\ldots,X_p\) and \(U_1,\ldots,U_p\,,\) respectively, with a normally, Student-t or uniformly distributed error term.

The performances of the feature selection algorithms are presented in Table \ref{Fig.SimFS1} for \(p=10\) predictor variables and \(q=2\) response variables and in Table \ref{Fig.SimFS2} for \(p=25\) predictor variables and \(q=10\) response variables. In both cases, \((Y_1,\ldots,Y_q)\) depend only on a small number of input variables. The reported numbers show the proportion of times where the algorithm selects the correct variables or exactly the correct variables. Further, the last number in each cell is the average of variables selected by the respective algorithm when it stops. For determining the proportions and averages the algorithms run \(100\) times.\\
As to be expected, Lasso works very well in linear models but is not useful elsewhere. The variable selection method dcorVS (version \texttt{dcor.fbed}) based on the distance correlation is also useful in generalized additive models but fails to work in the non-linear settings.
In contrast, MFOCI and KFOCI generally perform quite well in all models considered where we applied for KFOCI both the default kernel and the kernel \texttt{rbfdot(1)} implemented in  \texttt{R}. However, in cases where not all output variables depend on the same predictor variables (see models LM3, GAM3 and, in particular, LM2, nLM2, LM4, nLM4) MFOCI clearly outperforms KFOCI, see the discussion in Example \ref{exIGinequ}.
For \(q=2\,,\) we used for MFOCI the estimator \(\overline{T}_n\) in \eqref{estTnqq} for the permutation invariant version \(\overline{T}\) defined in \eqref{defmdmav}. For \(q=10\,,\) we used a version where we averaged over all \(10\) decreasing resp. increasing permutations, instead of averaging over all \(10!\) permutations; cf. Remark \ref{remconst}.

\bigskip \noindent
\textit{Real-World Data Example:} %\label{RDE.Subsect.FSIII}
We use real-world data to compare our forward feature selection method MFOCI 
with KFOCI \citep{deb2020b}, dcorVS \citep{Borboudakis-2019} and the Lasso \citep{Tibshirani-1996,Tibshirani-2011};
see also Section \ref{RDE.Subsect.FSIII.App} in the Supplementary Material.

\begin{table}[t!]
\small
\centering
\caption{The relevant variables to predict (AMT, AP) selected via MFOCI,
KFOCI, dcorVS (including variable ranking in brackets) and Lasso with MSPEs for each response variable; see Subsection \ref{RDE.Subsect.FSII} for details.
The variables selected via MFOCI to predict (AMT,AP) are marked in red color.}
\label{Fig.SeoulWeatherII}
\scalebox{0.95}{
\begin{tabular}{c||l|l|l|l|l}
\makecell[l]{Variables to predict \\ (AMT, AP)}
& \makecell[l]{via MFOCI}
& \makecell[l]{via KFOCI, \\ default kernel}
& \makecell[l]{via KFOCI, \\ kernel \texttt{rbfdot(1)}}
& \makecell[l]{via dcorVS}
& \makecell[l]{via Lasso}
\\
\hline
\hline
& (1) \textcolor{foxred}{MTWaQ}
& (4) \textcolor{foxred}{MTWaQ}
& (1) \textcolor{foxred}{MTWaQ}
& (3) \textcolor{foxred}{MTWaQ}
& \textcolor{foxred}{MTWaQ}
\\
& (3) \textcolor{foxred}{MTCQ}
& (3) \textcolor{foxred}{MTCQ}
& 
& 
& \textcolor{foxred}{MTCQ}
\\
&
& 
& 
& 
& MTWeQ
\\
& (2) \textcolor{foxred}{PWeQ}
& (1) \textcolor{foxred}{PWeQ}
& (2) \textcolor{foxred}{PWeQ}
& (1) \textcolor{foxred}{PWeQ}
& \textcolor{foxred}{PWeQ}
\\
& (4) \textcolor{foxred}{PDQ}
& (2) \textcolor{foxred}{PDQ}
& (5) \textcolor{foxred}{PDQ}
& (2) \textcolor{foxred}{PDQ}
& \textcolor{foxred}{PDQ}
\\
& 
& 
& (4) PWaQ
& (5) PWaQ
& PWaQ
\\ 
& 
& 
& (3) PCQ
& (4) PCQ
& PCQ
\\
\hline
\hline
\makecell[c]{MSPE for AMT}
& \multicolumn{2}{c|}{\makecell[c]{151}}
%& \makecell[c]{246}
& \multicolumn{2}{c|}{\makecell[c]{616}}
& \makecell[c]{178}
\\
\hline
\makecell[c]{MSPE for AP}
& \multicolumn{2}{c|}{\makecell[c]{13265}}
& \multicolumn{2}{c|}{\makecell[c]{13725}}
& \makecell[c]{12738}
\end{tabular}}
\end{table}

We consider the data set of bioclimatic variables for $n=1862$ locations homogeneously distributed over the global landmass from CHELSEA \citep{karger2017, karger2018} 
and analyze the influence of a set of thermal and precipitation-related variables (see Table \ref{Fig.Climate1} in the Supplementary Material) on the pair \emph{Annual Mean Temperature} (AMT) and \emph{Annual Precipitation} (AP). 
Due to Table \ref{Fig.SeoulWeatherII},
the procedure through Lasso ends with 7 predictor variables,
the procedures via KFOCI (where we apply the kernel \texttt{rbfdot(1)}) and via dcorVS (where we apply the methods \texttt{dcor.fed}) end with 5 predictor variables,
and both MFOCI via \(T\) and KFOCI applying the default kernel end with the same 4 predictor variables (even though the order of selected variables differs).
Note that KFOCI is used here with the default number of nearest neighbors.
For each subset of selected variables, the (cross-validated) mean squared prediction errors (MSPE) based on a random forest are calculated using the \texttt{R}-package \emph{MultivariateRandomForest}.\footnote{When compared to the corresponding MSPE (for response AMT) of 151 for the variables selected via MFOCI and KFOCI (default kernel) in Table \ref{Fig.SeoulWeatherII}, an MSPE of 178 for the variables selected by Lasso seems a little too high. We suspect that this is due to a sensitivity of the \texttt{R}-package \emph{MultivariateRandomForest} to the order of the predictor variables.}

To conclude, MFOCI achieves similar prediction errors as the other methods, 
but with a considerably smaller number of selected variables than Lasso (and BVCQR in Section \ref{RDE.Subsect.FSIII.App}) resulting in a significant reduction in complexity.
MFOCI and KFOCI perform comparably well; however, the variable selection via KFOCI exhibits a sensitivity to the kernel used.

%%%%%%%%%%%%%%%%%%%%%%%%%%%%%%%%%%%%%%%%%%%%%%%%%%%%%%%%%%%%%%%%%%%%%%%%%%%%%%%%%%%%%%%%%%%%%%%%%%%%%
%%%%%%%%%%%%%%%%%%%%%%%%%%%%%%%%%%%%%%%%%%%%%%%%%%%%%%%%%%%%%%%%%%%%%%%%%%%%%%%%%%%%%%%%%%%%%%%%%%%%%
%\clearpage
%\begin{appendices}
%\section*{Appendix}

%\section{Proofs}

\section{Proofs from Section \ref{secmainres}} \label{App.Proofs}

Recall that we refer to \(\YY\) as the vector of response variables which is always assumed to have non-degenerate components, i.e., for all \(i\in \{1,\ldots,q\}\,,\) the distribution of \(Y_i\) does not follow a one-point distribution.

%%%%%%%%%%%%%%%%%%%%%%%%%%%%%%%%%%%%%%%%%%%%%%%%%%%%%%%%%%%%%%%%%%%%%%%%%%%%%%%%%%%%%%%%
%\subsubsection{Proof of Theorem \ref{theT}}

\begin{proof}[Proof of Theorem \ref{theT}.]
(A\ref{prop1}): For the measure $\xi$ defined by \eqref{Tuniv}, we first observe that
\begin{align*}
  0
  \leq \xi(Y_i,(Y_{i-1},\ldots,Y_1)) 
  \leq \xi(Y_i,(\XX,Y_{i-1},\ldots,Y_1))
  \leq 1 ~~~\text{for all } i\in \{1,\ldots,q\}\,,
\end{align*}
because \(\xi\) satisfies axioms (A\ref{prop1}) - (A\ref{prop3}) and the information gain inequality (P\ref{prop.IGI}); see \citep[Lemma 11.2]{chatterjee2021}. This implies \(T(\YY,\XX)\in [0,1]\). 
%which proves axiom (A\ref{prop1}). %\eqref{theT1}.
\\
(A\ref{prop2}): %\eqref{theT2}:
From \eqref{defmdm} we obtain that \(T(\YY,\XX)=0\) if and only if
\begin{align}\label{protheT1}
    \sum_{i=1}^q \underbrace{\left[\xi(Y_i,(\XX,Y_{i-1},\ldots,Y_1)) - \xi(Y_i,(Y_{i-1},\ldots,Y_1))\right]}_{\geq 0} =0 \,,
\end{align}
where each sumand is non-negative due to the information gain inequality (P\ref{prop.IGI}). 
Hence \eqref{protheT1} is equivalent to
\begin{align}\label{protheT2}
    \xi(Y_i,(\XX,Y_{i-1},\ldots,Y_1)) = \xi(Y_i,(Y_{i-1},\ldots,Y_1)) ~~~\text{for all } i \in \{1,\ldots,q\}\,.
\end{align}
Due to the characterization of conditional independence, see \citep[Lemma 11.2]{chatterjee2021}, \eqref{protheT2} is equivalent to 
\begin{align*}
\begin{cases}
&  Y_1 \text{ is independent of } \XX\,,\\
&  Y_2 \text{ is conditionally independent of } \XX \text{ given } Y_1\\
&~~~ \vdots\\
& Y_q \text{ is conditionally independent of } \XX \text{ given } (Y_1,\ldots,Y_{q-1})\,, 
\end{cases}
\end{align*}
which in turn is equivalent to \(\YY\) being independent of \(\XX\) (see, e.g., \citep[Proposition 6.8]{Kallenberg-2002}).
\\
(A\ref{prop3}): %\eqref{theT3}:
From \eqref{defmdm} we further obtain that \(T(\YY,\XX)=1\) if and only if
\begin{align}\label{protheT4}
    \sum_{i=1}^q \big[ \xi(Y_i,(\XX,Y_{i-1},\ldots,Y_1)) - \xi(Y_i,(Y_{i-1},\ldots,Y_1))\big]  
    = \sum_{i=1}^q \big[1 - \xi(Y_i,(Y_{i-1},\ldots,Y_1))\big] \,.
\end{align}
Since \(\xi(Y_i,(\XX,Y_{i-1},\ldots,Y_1))\in [0,1]\) for all \(i\in \{1,\ldots,q\}\,,\) \eqref{protheT4} is equivalent to
\begin{align*}
    \xi(Y_i,(\XX,Y_{i-1},\ldots,Y_1)) = 1 ~~~\text{for all } i\in \{1,\ldots,q\}\,,
\end{align*}
which exactly means by the characterization of perfect dependence that
\begin{align}\label{protheT7}
\begin{cases}
  Y_1 &= g_1(\XX)\,,\\
  Y_2 &= g_2(\XX,Y_1)=g_2(\XX,g_1(\XX))\,,\\
   & \vdots\\
  Y_q &= g_q(\XX,Y_{q-1},\ldots,Y_1) = g_q(\XX,g_{q-1}(\XX,g_{q-2}(\XX,\ldots),\ldots,g_1(\XX)),\ldots,g_1(\XX))
\end{cases}
\end{align}
for some measurable functions \(g_i\colon \R^{p+i-1}\to \R\,,\) \(i\in \{1,\ldots,q\}\,,\) 
which equivalently means that \(\YY\) is a function of \(\XX\,.\)
This proves the axioms.

We now verify properties (P\ref{prop.IGI}) and (P\ref{prop.CI}).
Since the denominator \(\sum_{i=1}^q \left[1-\xi(Y_i,(Y_{i-1},\ldots,Y_1)\right]\) is strictly positive, 
the information gain inequality (P\ref{prop.IGI}) for $T$ immediately follows from the respective property of $\xi$ (see \citep[Lemma 11.2]{chatterjee2021}).
\\
To prove that \(T\) characterizes conditional independence, recall that conditional independence of \(\YY\) and \(\ZZ\) given \(\XX\) is equivalent to
\begin{align}\label{proofCIg}
\begin{cases}
& Y_1 \text{ and } \ZZ \text{ are conditionally independent given } \XX\,,\\
& Y_2 \text{ and } \ZZ \text{ are conditionally independent given } (\XX,Y_1)\,,\\
& ~~~\vdots\\
&Y_q \text{ and } \ZZ \text{ are conditionally independent given } (\XX,Y_1,\ldots,Y_{q-1})\,,
\end{cases}
\end{align}
see, e.g., \citep[Proposition 6.8]{Kallenberg-2002}.
Due to the fact that $\xi$ characterizes conditional independence (see \citep[Lemma 11.2]{chatterjee2021}),
the system \eqref{proofCIg} is equivalent to
\begin{align*}
  \xi(Y_i,(\XX,Y_{i-1},\ldots,Y_1)) 
  & = \xi(Y_i,(\XX,\ZZ,Y_{i-1},\ldots,Y_1)) ~~~\text{for all } i\in \{1,\ldots,q\}\,.
\end{align*}
By virtue of the information gain inequality, this in turn is equivalent to \(T(\YY,\XX) = T(\YY,(\XX,\ZZ))\), which proves the assertion.
%\hfill \qedsymbol
\end{proof}

%%%%%%%%%%%%%%%%%%%%%%%%%%%%%%%%%%%%%%%%%%%%%%%%%%%%%%%%%%%%%%%%%%%%%%%%%%%%%%%%%%%%%%%%
%\subsection{Proof of Corollary \ref{cor.T.DPI}}
\begin{proof}[Proof of Corollary \ref{cor.T.DPI}.]
Let \(\YY\) and \(\ZZ\) be conditionally independent given \(\XX\).
Then the information gain inequality and the characterization of conditional independence (see Theorem \ref{theT4}) yield
$$ 
  T(\YY,\ZZ) 
  \leq T(\YY,(\XX,\ZZ))
    =  T(\YY,\XX)
$$
which proves the data processing inequality.
The second inequality is immediate from the fact that \(\YY\) and \({\bf h}(\XX)\) are conditionally independent given \(\XX\).
%\hfill \qedsymbol
\end{proof}

%%%%%%%%%%%%%%%%%%%%%%%%%%%%%%%%%%%%%%%%%%%%%%%%%%%%%%%%%%%%%%%%%%%%%%%%%%%%%%%%%%%%%%%%
%\subsection{Proof of Corollary \ref{cor.T.SE}}
\begin{proof}[Proof of Corollary \ref{cor.T.SE}.]
Let \(\YY\) and \(\XX\) be conditionally independent given \({\bf h}(\XX)\).
Then the data processing inequality in Corollary \ref{cor.T.DPI} implies
$$ 
  T(\YY,{\bf h}(\XX)) 
  \leq T(\YY,\XX)
  \leq T(\YY,{\bf h}(\XX))   
$$
which proves self-equitability.
%\hfill \qedsymbol
\end{proof}

%%%%%%%%%%%%%%%%%%%%%%%%%%%%%%%%%%%%%%%%%%%%%%%%%%%%%%%%%%%%%%%%%%%%%%%%%%%%%%%%%%%%%%%%
%\subsection{Proof of Corollary \ref{Cor.T.DistTrans}:}
\begin{proof}[Proof of Proposition \ref{Cor.T.DistTrans}.]
For \(k\in \{1,\ldots,p\}\,,\) it is straightforward to verify that \(\xi(X_k,F_{X_k}(X_k)) = 1\,.\) Hence, there exists some measurable function \(f_k\) such that 
\(X_k = f_k(F_{X_k}(X_k))\) almost surely.
From the data processing inequality in Corollary \ref{cor.T.DPI} we then conclude that 
\begin{align*}
  T \big(\YY, (F_{X_1}(X_1), \dots, F_{X_p}(X_p))\big)
  & \leq T \big(\YY,\XX\big)
  \\
  &   =  T \big(\YY, (f_1(F_{X_1}(X_1)), \dots, f_p(F_{X_p}(X_p)))\big)
  \\
  & \leq T \big(\YY, (F_{X_1}(X_1), \dots, F_{X_p}(X_p))\big)\,.
\end{align*}
This proves invariance of \(T\) with respect to the predictor variables.

For the second part, recall that every random variable \(Y\) fulfills \(Y \eqd F_{Y}^{-1} \circ F_{Y} \circ Y\), see \citep[Lemma A.1(ii)]{Ansari-2021}.
From the definition of \(\xi\) we conclude that
\begin{align*}
  \xi(F_Y(Y),\XX)
  & = \frac{\int_{[0,1]} {\rm var} (P(F_Y(Y) \geq u \, | \, \XX)) \; \mathrm{d} P^{F_Y \circ Y}(u)}
					{\int_{[0,1]} {\rm var} (\mathds{1}_{\{F_Y(Y) \geq u\}}) \; \mathrm{d} P^{F_Y \circ Y}(u)}
  \\
  & = \frac{\int_{\mathbb{R}} {\rm var} (P(Y \geq y \, | \, \XX)) \; \mathrm{d} P^{F_{Y}^{-1} \circ F_{Y} \circ Y}(y)}
					{\int_{\mathbb{R}} {\rm var} (\mathds{1}_{\{Y \geq y\}}) \; \mathrm{d} P^{F_{Y}^{-1} \circ F_{Y} \circ Y}(y)}
  \\
  & = \frac{\int_{\mathbb{R}} {\rm var} (P(Y \geq y \, | \, \XX)) \; \mathrm{d} P^{Y}(y)}
					{\int_{\mathbb{R}} {\rm var} (\mathds{1}_{\{Y \geq y\}}) \; \mathrm{d} P^{Y}(y)}
    = \xi(Y,\XX)\,.
\end{align*}
Hence, the invariance of \(T\) with respect to the vector of response variables follows from its definition \eqref{defmdm} using also the invariance of \(T\) with respect to the predictor variables. 
%\hfill \qedsymbol
\end{proof}

%%%%%%%%%%%%%%%%%%%%%%%%%%%%%%%%%%%%%%%%%%%%%%%%%%%%%%%%%%%%%%%%%%%%%%%%%%%%%%%%%%%%%%%%
%\subsection{Proof of Proposition \ref{lemrepT}}
\begin{proof}[Proof of Proposition \ref{lemrepT}.]
Since \(Y\) is non-degenerate, the constants \(a\) and \(b\) are positive. It follows that 
\begin{align*}
\int_{\R} \Var(P(Y\geq y\mid \XX))\de P^Y(y) 
&= \int_{\R} \left[\E\left( P(Y\geq y\mid \XX)^2\right) - P(Y\geq y)^2 \right] \de P^Y(y)\\
&= \int_{\R} \left[\E \left( (1-\lim_{z\uparrow y}F_{Y|\XX}(z))^2 \right) -(1-\lim_{z\uparrow y}F_Y(z))^2\right] \de P^Y(y)\\
&= \int_{\R} \left[\E\left(\lim_{z\uparrow y}F_{Y|\XX}(z)^2\right)-\lim_{z\uparrow y} F_Y(z)^2 \right]\de P^Y(y)\\
&= \int_{\R}\lim_{z\uparrow y}\psi_{Y|\XX}(z,z) \de P^Y(y) - \frac b a\,,
\end{align*}
where the last two identities follow from the monotone convergence theorem and from the definition of \(\psi_{Y|\XX}\) in \eqref{deftrafpsi}. Hence, we obtain
\begin{align*}
  \xi(Y,\XX) 
  = \frac{\int_{\R}\Var(P(Y\geq y \mid \XX)) \de P^Y(y)}{\int_{\R}\Var(\1_{\{Y\geq y\}})\de P^Y(y)} 
  = a \int_{\R} \lim_{z\uparrow y}\psi_{Y|\XX}(z,z) \de P^Y(y) - b\,,
\end{align*}
which proves the assertion.
%\hfill \qedsymbol
\end{proof}

%%%%%%%%%%%%%%%%%%%%%%%%%%%%%%%%%%%%%%%%%%%%%%%%%%%%%%%%%%%%%%%%%%%%%%%%%%%%%%%%%%%%%%%%
%\subsection{Proof of Proposition \ref{propChatformmGau}}
\begin{proof}[Proof of Proposition \ref{propChatformmGau}.]
First assume that \(\sigma_Y = 1\,.\)
If \(\Sigma_{21} = \Sigma_{12}^T\) is the null matrix, then \(\rho = 0\) and the formula in \eqref{eqpropChatformmGau} yields \(\xi(Y,\XX) = 0\,,\) which is the correct value since \(\Sigma_{12}\) being the null matrix characterizes independence in the multivariate normal model due to Proposition \ref{propExtcasemn}\eqref{propExtcasemn1}.\\
If \(\Sigma_{21}\) is not the null matrix (and thus also \(\Sigma_{11}\) is not the null matrix),
consider the linear transformation \(S:= A \XX\) for \(A:=\frac{\Sigma_{21}\Sigma_{11}^{-}}{\Sigma_{21}\Sigma_{11}^{-}\Sigma_{12}}\,,\) noting that the denominator is positive. Then \((S,Y)\) is bivariate normal with zero mean and covariance matrix \(\Sigma = \left(\begin{smallmatrix} 
  A\Sigma_{11}A^T & A\Sigma_{12} \\
  \Sigma_{21}A^T & 1  \end{smallmatrix}\right).\) 
  Denoting by \(\cS\) the row space of \(\Sigma_{11}\), it follows for all \(\xx\in \cS\) (and thus for \(P^\XX\)-almost all \(\xx\in \R^p\)) and for \(s := A\xx\) that
    \begin{align}\label{eqpropChatformmGau1}
    \begin{split}
        (Y\mid S = s ) &\sim N\bigg(\frac{\Sigma_{21} A^T}{A\Sigma_{11} A^T}\, s,\, 1-\frac{\Sigma_{21} A^T A \Sigma_{12}}{A \Sigma_{11} A^T}\bigg)\\
        &= N(\Sigma_{21}\Sigma_{11}^{-} \xx,\, 1-\Sigma_{21} \Sigma_{11}^{-} \Sigma_{12}) \sim (Y\mid \XX = \xx)\,,
    \end{split}
    \end{align}
    cf. \citep[Corollary 5]{Cambanis-1981}.
    Hence, for all \(y\in \R\,,\) we have \(F_{Y|S=s}(y) = F_{Y|\XX=\xx}(y)\) for \(P^\XX\)-almost all \(\xx\in \R^p\,.\) This implies 
    \begin{align}\label{eqpropChatformmGau2}
        \Var(P(Y\geq y | S)) = \Var(P(Y\geq y|\XX)) \quad \text{for all } y\in \R\,.
    \end{align}
    From \citep[Example 4]{sfx2022phi}, we know for \((S,Y)\) bivariate normal with correlation \(\rho\) that \(\xi(Y,S) = \frac 3 \pi \arcsin\left(\tfrac{1+\rho^2}{2}\right)-\tfrac 1 2 \,.\) Hence, the result follows from \eqref{eqpropChatformmGau2} using that the correlation of \((S,Y)\) is \(\rho = \sqrt{\Sigma_{21} \Sigma_{11}^{-} \Sigma_{12}}\,.\) For the case \(\sigma_Y > 0\), consider the matrix \(\Sigma' =  \Sigma / \sigma_Y^2\) and use scale invariance of \(\xi\,.\) 
%\hfill \qedsymbol
\end{proof}

%\subsection{Proof of Proposition \ref{propExtcasemn}:}

\begin{proof}[Proof of Proposition \ref{propExtcasemn}.]
\eqref{propExtcasemn1}: Due to Theorem \ref{theT}, \(\XX\) and \(\YY\) are independent if and only if \(T(\YY,\XX)=0\,.\) 
Hence, the assertion follows from the well-known property of multivariate normal distributions that \(\XX\) and \(\YY\) are independent if and only if \(\Sigma_{12}\) is the null matrix, see, e.g., \citep[Corollary 2 in Section 2.3]{Fang-1990}.\\
\eqref{propExtcasemn2}:
Consider the decomposition \({\bm{\mu}}=({\bm{\mu}}_1,{\bm{\mu}}_2)\,,\) \({\bm{\mu}}_1\in \R^p\,,\) \({\bm{\mu}}_2\in \R^q\), and define \(k:=\rank(\Sigma)-\rank(\Sigma_{11})\geq 0\,.\) Then, it is well-known that
\begin{align}\label{eqthenormodextcas2}
    (\YY \mid \XX=\xx) & \sim N({\bm{\mu}}_{\xx},\Sigma^*)
\end{align}
with a stochastic representation
\begin{align}\label{repprofsell}
    (\YY \mid \XX=\xx) \eqd {\bm{\mu}}_{\xx} + \ZZ
\end{align}
for \({\bm{\mu}}_{\xx}={\bm{\mu}}_2+(\xx-{\bm{\mu}}_1)\Sigma_{11}^-\Sigma_{12}\) and \(\Sigma^*:=\Sigma_{22}-\Sigma_{21}\Sigma_{11}^-\Sigma_{12}\,,\) where \(\rank(\Sigma^*)=k\) and where \(\ZZ\) is a \(q\)-dimensional \(N(\bf{0},\Sigma^*)\)-distributed random vector that does not depend on \(\xx\,.\)
Here \(A^-\) denotes a generalized inverse of a symmetric matrix \(A\) with positive rank. It follows by the characterization of perfect dependence (see Theorem \ref{theT}) that
\begin{align*}
&&T(\YY,\XX) &=1 \\
&\Longleftrightarrow  &\YY &= {\bf f}(\XX) ~~~\text{a.s.} \\
&\Longleftrightarrow &\YY &= {\bm{\mu}}_2 + (\XX-{\bm{\mu}}_1) \Sigma_{11}^-\Sigma_{12} ~~~ \text{a.s.} \\
&\Longleftrightarrow & k&=0\\
&\Longleftrightarrow &\rank(\Sigma)&=\rank(\Sigma_{11})\,,
\end{align*}
where the second equivalence follows with \eqref{repprofsell}. For the third equality, we observe that \(\ZZ=\bm{0}\) almost surely if and only if \(\rank(\Sigma^*)=0\,.\) Finally, the last equality holds true by the definition of \(k\,.\)
%\hfill \qedsymbol
\end{proof}

%%%%%%%%%%%%%%%%%%%%%%%%%%%%%%%%%%%%%%%%%%%%%%%%%%%%%%%%%%%%%%%%%%%%%%%%%%%%%%%%%%%%%%%
%\clearpage
%\subsection{Proofs from Section \ref{secconasy}} \label{App.Proofs2}

%%%%%%%%%%%%%%%%%%%%%%%%%%%%%%%%%%%%%%%%%%%%%%%%%%%%%%%%%%%%%%%%%%%%%%%%%%%%%%%%%%%%%%%%
\section{Proof of Theorem \ref{AN.Thm:AN} and Proposition \ref{propasybias}} \label{App.Sect.AN}

For the proof of our main result, Theorem \ref{AN.Thm:AN}, we need a series of intermediate steps.
To this end, consider a $(p+q)$-dimensional random vector $\ZZ := (\XX,\YY)$
with i.i.d. copies \(\ZZ_l  := (\XX_l,\YY_l)\), \(l \in \{1,\dots,n\}\).
%with the property that \(\|\ZZ_1-\ZZ_2\|\) is a continuous random variable.
For \(n \in \N\), recall and define 
\begin{align}\label{defkappnalphan}
    \Lambda_n
    & = \sum_{i=1}^{q} \underbrace{\xi_n(Y_i \, , \, (\XX,Y_{i-1},\dots,Y_{1}))}_{:= \xi_{n,i}}\,,
    & \alpha_n 
    & = \sum_{i=2}^q \xi_n(Y_i,(Y_{i-1},\ldots,Y_1))\,,
    \\
    \nonumber \Lambda
    & := \sum_{i=1}^{q} \xi(Y_i \, , \, (\XX,Y_{i-1},\dots,Y_{1}))\,, \notag
    & \alpha   
    &:= \sum_{i=2}^q \xi(Y_i,(Y_{i-1},\ldots,Y_1))\,,
\end{align}
noting that \(\alpha_n = \alpha = 0\) for \(q=1\,.\)
Then 
\begin{align}\label{AN.Rep:T}
  T_n 
  & = 1 - \frac{q - \Lambda_n}{q - \alpha_n}\,,
\end{align}
and, due to Theorem \ref{theT.Consistency},
\(\lim_{n\to \infty} \Lambda_n = \Lambda\) and 
\(\lim_{n\to \infty} \alpha_n = \alpha\), each convergence \(P\)-almost surely.
We prove the desired asymptotic normality 
\begin{align}\label{AN.Aim}
    \frac{T_n - \E[T_n]}{\sqrt{\var(T_n)}} \stackrel{d}{\longrightarrow} N(0,1) 
\end{align}
at the end of this section by combining the below intermediate steps:
The basic idea is to extend the asymptotic normality result for \(\xi_n\) in \citep{han2022limit} to \(T_n\) using a modification of the nearest neighbour-based normal approximation in  \citep[Theorem 3.4.]{chatterjee2008}. It is straightforward to show that both \(\Lambda_n\) and \(\alpha_n\) behave asymptotically normal. However, a direct application of these results to \(T_n\) is not an option,  
%First, it is important to note that a direct application of \citep[Theorem 3.4.]{chatterjee2008} and the results in \citep{han2022limit} to Eq. \refeq{AN.Aim} is not an option, 
as while the first term on the right-hand side of 
\begin{align}\label{AN.Rep:T2}
  \frac{T_n - \E[T_n]}{\sqrt{\var(T_n)}}
  & = \frac{\Lambda_n - \E[\Lambda_n]}{\sqrt{\var(\Lambda_n)}} \, 
      \frac{\sqrt{n \, \var( \Lambda_n)}}{\sqrt{n \, \var(T_n)}} \, 
      \frac{1}{q-\alpha_n} 
      + \frac{\sqrt{n} \, \left( \E\left[\frac{q-\Lambda_n}{q-\alpha_n} \right] - \frac{\E[q-\Lambda_n]}{q-\alpha_n} \right)}{\sqrt{n \, \var(T_n)}}
\end{align}
converges (under some regularity conditions) to a normal distribution, the second term does not converge to $0$ in general.
For this reason, we decompose the above expression into
\begin{align}\label{AN.Rep:T3}
  \frac{T_n - \E[T_n]}{\sqrt{\var(T_n)}}
  & = \frac{\Lambda_n - \E[\Lambda_n]}{\sqrt{\var(\Lambda_n)}} \, 
      \frac{\sqrt{n \, \var( \Lambda_n)}}{\sqrt{n \, \var(T_n)}} \, 
      \frac{1}{q-\alpha_n}  
  \\
  & \qquad
    - \frac{\frac{1}{q-\alpha_n} - \E\left[ \frac{1}{q-\alpha_n}\right]}{\sqrt{\var\left(\frac{1}{q-\alpha_n}\right)}}\, 
      \frac{\sqrt{\var\left(\frac{1}{q-\alpha_n}\right)}}{\sqrt{\var(T_n)}} \,
      \E\left[ q - \Lambda_n \right]
    + \frac{\sqrt{n} \, \cov \left(\frac{1}{q-\alpha_n}, q-\Lambda_n \right)}{\sqrt{n \, \var(T_n)}}  
      \notag
\end{align}
where the last term on the right-hand side is shown to converge to $0$. 
However, the second term is not analytically tractable due to the quotient structure of $\frac{1}{q-\alpha_n}$.
We therefore replace the existing product/quotient structure visible in Eq. \eqref{AN.Rep:T} and \eqref{AN.Rep:T3} with a linear structure and define 
\begin{align} \label{AN.Def:Mu}
  \mu_n := \Lambda_n - \kappa \cdot \alpha_n
\end{align}
where $\kappa := \frac{q - \Lambda}{q - \alpha} \geq 0$.
Then,
\begin{align} \label{AN.Rep:Mu}
  \frac{\mu_n - \E [\mu_n]}{\sqrt{\var(\mu_n)}}
  & = \frac{\Lambda_n - \E[\Lambda_n]}{\sqrt{\var(\Lambda_n)}} \, 
      \frac{\sqrt{n \, \var( \Lambda_n)}}{\sqrt{n \, \var(\mu_n)}} \, 
      - \frac{\alpha_n - \E[\alpha_n]}{\sqrt{\var(\alpha_n)}}\, 
      \frac{\sqrt{\var(\alpha_n)}}{\sqrt{\var(\mu_n)}} \,
      \kappa
\end{align}
mimics Eq. \eqref{AN.Rep:T3} in that the right-hand side of Eq. \eqref{AN.Rep:Mu} and the first two terms in Eq. \eqref{AN.Rep:T3} are closely related.

\begin{enumerate}
\item 
In the first step (Proposition \ref{AN.Lemma:AN.KappaBeta}), we show
\begin{align} \label{AN.Def:Mu1}
  \frac{\mu_n - \E [\mu_n]}{\sqrt{\var(\mu_n)}} \stackrel{d}{\longrightarrow} N(0,1) 
\end{align} 
using a modification of \citep[Theorem 3.4.]{chatterjee2008} and the results in \citep{han2022limit}.

\item 
In Proposition \ref{AN.Lemma:TaylorExp}, we use a Taylor expansion to obtain a transition from $\alpha_n$, which occurs in Eq. \eqref{AN.Rep:Mu}, to $\frac{1}{q-\alpha_n}$, which occurs in Eq. \eqref{AN.Rep:T3}.

\item 
Finally, a series of intermediate results are proven that draw a path from \eqref{AN.Def:Mu1} to \eqref{AN.Aim}.
\end{enumerate}

In order to simplify the above mentioned transition we define
\begin{align}\label{defbeta_nkappa_n}
  \beta_n  
  &:= \frac{q-\alpha}{q-\alpha_n}\,,
  & \kappa_n 
  &:= \frac{q - \Lambda_n}{q - \alpha}\,.
\end{align}
Then $T_n = 1 - \beta_n \cdot \kappa_n$\,,
and, due to Theorem \ref{theT.Consistency},
\(\lim_{n\to \infty} \beta_n = 1\) and \(\lim_{n\to \infty} \kappa_n = \frac{q-\Lambda}{q-\alpha} = \kappa\), each convergence \(P\)-almost surely.
If, additionally, \(\YY\) is not perfectly dependent on \(\XX\), 
then \( \Lambda < q\) and hence \(\kappa = \lim_{n \to \infty} \kappa_n >0\).
Using the just introduced notation, Eq. \eqref{AN.Rep:T3} and Eq. \eqref{AN.Rep:Mu} can be rewritten as
\begin{align}\label{AN.Rep:T3.2}
  \lefteqn{\frac{T_n - \E[T_n]}{\sqrt{\var(T_n)}}} 
  \\
  & = - \, \frac{\kappa_n - \E[\kappa_n]}{\sqrt{\var(\kappa_n)}} \, 
      \frac{\sqrt{n \, \var( \kappa_n)}}{\sqrt{n \, \var(\beta_n \kappa_n)}} \, 
      \beta_n 
      - \frac{\beta_n - \E[\beta_n]}{\sqrt{\var(\beta_n)}}\, 
      \frac{\sqrt{n \, \var(\beta_n)}}{\sqrt{n \, \var(\beta_n \kappa_n)}} \,
      \E[\kappa_n]
      + \frac{\sqrt{n} \, \cov (\beta_n, \kappa_n)}{\sqrt{n \, \var(\beta_n \kappa_n)}}\,,
      \notag
\end{align}
and 
\begin{align} \label{AN.Rep:Mu.2}
  \lefteqn{\frac{\mu_n - \E [\mu_n]}{\sqrt{\var(\mu_n)}}}
  \\
  & = - \, \frac{\kappa_n - \E[\kappa_n]}{\sqrt{\var(\kappa_n)}} \, 
      \frac{\sqrt{n \, \var( \kappa_n)}}{\sqrt{n \, \var(\kappa_n - \kappa \cdot 1/\beta_n)}} \, \cdot 1
      + \frac{1/\beta_n - \E[1/\beta_n]}{\sqrt{\var(1/\beta_n)}}\, 
      \frac{\sqrt{\var(1/\beta_n)}}{\sqrt{\var(\kappa_n - \kappa \cdot 1/\beta_n)}} \,
      \kappa\,.
      \notag
\end{align}
As we will see, the second term in \eqref{AN.Rep:T3.2} behaves similar to the second term \eqref{AN.Rep:Mu.2}, which further elucidates the relationship between $\tfrac{T_n - \E[T_n]}{\sqrt{\var(T_n)}}$ and $\tfrac{\mu_n - \E [\mu_n]}{\sqrt{\var(\mu_n)}}$.

\bigskip\noindent
{\bf Step 1: Asymptotic normality of $\tfrac{\mu_n - \E [\mu_n]}{\sqrt{\var(\mu_n)}}$.}
Similar to the proof of \citep[Theorem 1.1]{han2022limit}, define the H{\'a}jek representations
\begin{align} \label{AN.Def:Hajek}
    \Lambda_n^\ast
    & := \sum_{i=1}^{q} \underbrace{\frac{6n}{n^2-1} \, \left( \sum_{l=1}^n F_{Y_i} (Y_{i,l} \wedge Y_{i,N_i(l)}) 
         + \sum_{l=1}^n g_{i} (Y_{i,l}) \right)}_{:= \xi^{\ast}_{n,i}}\,,
    \\
    \alpha_n^\ast
    & := \sum_{i=2}^{q} \underbrace{\frac{6n}{n^2-1} \, \left( \sum_{l=1}^n F_{Y_i} (Y_{i,l} \wedge Y_{i,M_i(l)}) 
         + \sum_{l=1}^n h_{i} (Y_{i,l}) \right)}_{=: \alpha^{\ast}_{n,i}}\,, \notag
\end{align}
and 
\begin{align}\label{AN.Def:Hajek.Mu} 
    \mu_n^\ast
    & := \Lambda_n^\ast - \kappa \, \alpha_n^\ast\,,
\end{align}
where \(N_i(l)\) represents the index of the nearest neighbor of \((\XX_l,Y_{i-1,l},\ldots,Y_{1,l})\) and where \(M_i(l)\) represents the index of the nearest neighbor of \((Y_{i-1,l},\ldots,Y_{1,l})\,.\)
This leads to the following analogues of \citep[Theorem 1.2 and Proposition 1.2]{han2022limit}.

\begin{lemma}[H{\'a}jek representations]~~\label{AN.Lemma:HajekRep}
Assume \(F_\ZZ\) to be fixed and continuous.
Then
\begin{enumerate}[(i)]
\item \(\lim_{n \to \infty} n \, \var(\Lambda_n - \Lambda_n^\ast) = 0\).
\item \(\lim_{n \to \infty} n \, \var(\alpha_n - \alpha_n^\ast) = 0\).
\item \(\lim_{n \to \infty} n \, \var(\mu_n - \mu_n^\ast) = 0\).
\end{enumerate}
\end{lemma}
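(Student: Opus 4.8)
The plan is to exploit that $\Lambda_n-\Lambda_n^\ast$ and $\alpha_n-\alpha_n^\ast$ are each \emph{finite} sums of differences between Azadkia \& Chatterjee's estimator $\xi_n$ applied to a single continuous response variable and its corresponding H\'ajek projection. Thus the single-response result of Lin \& Han \cite[Theorem 1.2]{han2022limit} should apply termwise, and the resulting $L^2$-bounds can be aggregated by the triangle inequality for the standard-deviation seminorm. No new probabilistic input beyond \cite{han2022limit} is needed.

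First I would fix $i\in\{1,\dots,q\}$ and verify the hypotheses of \cite[Theorem 1.2]{han2022limit} for $\xi_{n,i}=\xi_n(Y_i\mid(\XX,Y_{i-1},\dots,Y_1))$. Since $F_{\ZZ}=F_{(\XX,\YY)}$ is continuous, the joint law of $(Y_i,(\XX,Y_{i-1},\dots,Y_1))$ is continuous; in particular $Y_i$ has a continuous marginal distribution function, and ties among nearest neighbours of the points $(\XX_l,Y_{i-1,l},\dots,Y_{1,l})$ occur with probability zero. Moreover $\xi_{n,i}$ is exactly their graph-based statistic for the pair consisting of the response $Y_i$ and the predictor vector $(\XX,Y_{i-1},\dots,Y_1)\in\R^{p+i-1}$, and $\xi^\ast_{n,i}$ is precisely its H\'ajek projection as defined in \eqref{AN.Def:Hajek}. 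Their theorem then yields $\lim_{n\to\infty} n\,\var(\xi_{n,i}-\xi^\ast_{n,i})=0$. The same argument applied to the pair $(Y_i,(Y_{i-1},\dots,Y_1))$ for $i\in\{2,\dots,q\}$ gives $\lim_{n\to\infty} n\,\var(\xi_n(Y_i\mid(Y_{i-1},\dots,Y_1))-\alpha^\ast_{n,i})=0$.

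Next I would aggregate, using that $W\mapsto\sqrt{\var(W)}$ is a seminorm (Minkowski's inequality). Writing $\Lambda_n-\Lambda_n^\ast=\sum_{i=1}^q(\xi_{n,i}-\xi^\ast_{n,i})$,
\begin{align*}
  \sqrt{n\,\var(\Lambda_n-\Lambda_n^\ast)}\ \le\ \sum_{i=1}^q \sqrt{n\,\var(\xi_{n,i}-\xi^\ast_{n,i})}\ \longrightarrow\ 0,
\end{align*}
which proves (i); part (ii) follows identically with the sum running over $i\in\{2,\dots,q\}$. For (iii), recall that $\kappa=\tfrac{q-\Lambda}{q-\alpha}\ge 0$ is a deterministic constant (it is built from the almost-sure limits $\Lambda,\alpha$), so $\mu_n-\mu_n^\ast=(\Lambda_n-\Lambda_n^\ast)-\kappa\,(\alpha_n-\alpha_n^\ast)$, and the triangle inequality gives
\begin{align*}
  \sqrt{n\,\var(\mu_n-\mu_n^\ast)}\ \le\ \sqrt{n\,\var(\Lambda_n-\Lambda_n^\ast)}+\kappa\,\sqrt{n\,\var(\alpha_n-\alpha_n^\ast)}\ \longrightarrow\ 0
\end{align*}
by (i) and (ii).

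The only genuine obstacle is the bookkeeping of Step 1: one must be sure that Lin \& Han's single-response theorem applies verbatim to each summand, i.e.\ that conditioning on the $(p+i-1)$-dimensional continuous vector $(\XX,Y_{i-1},\dots,Y_1)$ is within the scope of their hypotheses (it is — they allow an arbitrary fixed predictor distribution), and that continuity of $F_{(\XX,\YY)}$ alone suffices to inherit continuity of all the marginals and joint laws that enter the H\'ajek projections (it does). I also expect that the pure variance statements (i)--(iii) require \emph{no} non-perfect-dependence assumption; such assumptions are needed only later, to keep the limiting variance of $\sqrt n(T_n-\E[T_n])$ non-degenerate. Once the termwise estimate is in hand, the aggregation step is entirely routine.
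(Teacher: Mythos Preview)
Your proof is correct and follows essentially the same approach as the paper: both apply \cite[Theorem 1.2]{han2022limit} termwise to obtain $n\,\var(\xi_{n,i}-\xi^\ast_{n,i})\to 0$, then aggregate. The only cosmetic difference is that the paper expands the variance as a double sum of covariances and bounds each via Cauchy--Schwarz, whereas you use the equivalent (and slightly cleaner) triangle inequality for the standard-deviation seminorm.
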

\begin{proof}
Due to Cauchy-Schwarz inequality 
\begin{align*}
    n \, \var(\Lambda_n - \Lambda_n^\ast)
    &   =  \sum_{i_1=1}^{q} \sum_{i_2=1}^{q} 
           n \, \cov \left( \xi_{n,i_1} - \xi^{\ast}_{n,i_1},  \xi_{n,i_2} - \xi^{\ast}_{n,i_2} \right)
    \\
    & \leq %n \sum_{i_1=1}^{q} \sum_{i_2=1}^{q} 
           %\sqrt{\var \big( \xi_{n,i_1} - \xi^{\ast}_{n,i_1} \big)} 
           %\sqrt{\var \big( \xi_{n,i_2} - \xi^{\ast}_{n,i_2} \big)}
    %\\
    %&   = 
          \sum_{i_1=1}^{q} \sum_{i_2=1}^{q} 
          \underbrace{\sqrt{n \, \var \big( \xi_{n,i_1} - \xi^{\ast}_{n,i_1} \big)}}_{\to \, 0} \, 
          \underbrace{\sqrt{n \, \var \big( \xi_{n,i_2} - \xi^{\ast}_{n,i_2} \big)}}_{\to \, 0}\,,
\end{align*}
where convergence follows from \citep[Theorem 1.2]{han2022limit}.
Hence, \(\lim_{n \to \infty} n \, \var(\Lambda_n - \Lambda_n^\ast) =  0\).
This proves the first assertion, and the second one follows by a similar argument. 
Combining both convergences yields
\begin{align*}
    \lefteqn{n \, \var(\mu_n - \mu_n^\ast)}
    \\
    &   =  n \, \var \left( (\Lambda_n - \Lambda_n^\ast) - \kappa (\alpha_n - \alpha_n^\ast) \right)
    \\
    &   =  n \, \var (\Lambda_n - \Lambda_n^\ast) 
           + \kappa^2 \, n \, \var (\alpha_n - \alpha_n^\ast)
           - 2 \, \kappa \, n \, \cov (\Lambda_n - \Lambda_n^\ast,\alpha_n - \alpha_n^\ast)
    \\
    & \leq \underbrace{n \, \var (\Lambda_n - \Lambda_n^\ast)}_{\to \, 0} 
           + \kappa^2 \,\underbrace{n \, \var (\alpha_n - \alpha_n^\ast)}_{\to \, 0}
           + 2 \, \kappa \, \underbrace{\sqrt{n \, \var (\Lambda_n - \Lambda_n^\ast)}}_{\to \, 0}  \, \underbrace{\sqrt{n \, \var(\alpha_n - \alpha_n^\ast)}}_{\to \, 0} \,,
\end{align*}   
which proves the assertion.
\end{proof}

\begin{lemma}[Asymptotic variance I]~~\label{AN.Lemma:AsympVar}
Assume \(F_\ZZ\) to be fixed and continuous. 
Then
\begin{enumerate}[(1)]
\item \label{Lemma:AsympVar1} 
\begin{enumerate}[(i)]
\item $\limsup_{n \to \infty} n \var(\Lambda_n) < \infty$, 
\item $\limsup_{n \to \infty} n \var(\alpha_n) < \infty$, 
\item $\limsup_{n \to \infty} n \var(\mu_n) < \infty$.
\end{enumerate}
\item \label{Lemma:AsympVar2} 
\begin{enumerate}[(i)]
\item If \(\YY\) is not perfectly dependent on \(\XX\,,\) then $\liminf_{n \to \infty} n \var(\Lambda_n) > 0$.
\item If there exists some \(i \in \{2,\dots,q\}\) such that \(Y_i\) is not perfectly dependent on \(\{Y_1,\dots,Y_{i-1}\}\), then $\liminf_{n \to \infty} n \var(\alpha_n) > 0$.
\item If \(\YY\) is not perfectly dependent on \(\XX\,,\) there exists some \(i \in \{2,\dots,q\}\) such that \(Y_i\) is not perfectly dependent on \(\{Y_1,\dots,Y_{i-1}\}\) and $\limsup_{n \to \infty} \Cor(\Lambda_n,\alpha_n) < 1$,
then $\liminf_{n \to \infty} n \, \var(\mu_n) > 0$.
\end{enumerate}
\end{enumerate}
\end{lemma}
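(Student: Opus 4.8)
\emph{The plan} is to reduce everything to the H\'ajek representations and then import the coordinatewise limit theory of \cite{han2022limit}. By Lemma \ref{AN.Lemma:HajekRep} we have $n\var(\Lambda_n-\Lambda_n^\ast)\to 0$, $n\var(\alpha_n-\alpha_n^\ast)\to 0$ and $n\var(\mu_n-\mu_n^\ast)\to 0$, so the triangle inequality in $L^2$ shows that $n\var(\Lambda_n)$ and $n\var(\Lambda_n^\ast)$ share the same $\liminf$ and $\limsup$, and similarly for $\alpha_n,\mu_n$; hence it suffices to prove both parts for the starred quantities. The per--coordinate input is \cite[Theorem~1.2 and Proposition~1.2]{han2022limit}: for each $i$, $n\var(\xi^\ast_{n,i})\to\sigma_i^2$ with $0\le\sigma_i^2<\infty$, where $\sigma_i^2$ equals the variance of an explicit centered influence function $\phi_i$ of the observation $(\XX_1,Y_{1,1},\dots,Y_{i,1})$, with $\sigma_i^2>0$ exactly when $Y_i$ is not perfectly dependent on its conditioning vector and $\sigma_i^2=0$ (hence $\phi_i\equiv 0$) when it is. For Part (1) I would then invoke Cauchy--Schwarz, exactly as in the proof of Lemma \ref{AN.Lemma:HajekRep}:
\[
  n\var(\Lambda_n^\ast)=\sum_{i_1=1}^q\sum_{i_2=1}^q n\cov(\xi^\ast_{n,i_1},\xi^\ast_{n,i_2})\ \le\ \Bigl(\textstyle\sum_{i=1}^q\sqrt{n\var(\xi^\ast_{n,i})}\Bigr)^{2}\ \longrightarrow\ \Bigl(\textstyle\sum_{i=1}^q\sigma_i\Bigr)^{2}<\infty,
\]
and likewise for $\alpha_n^\ast$; since $\mu_n^\ast=\Lambda_n^\ast-\kappa\alpha_n^\ast$ this gives $n\var(\mu_n^\ast)\le 2\,n\var(\Lambda_n^\ast)+2\kappa^2 n\var(\alpha_n^\ast)$, which is bounded, proving (1)(i)--(iii).

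For the lower bounds (2)(i)--(ii) I would, under the hypothesis of (2)(i), choose $i^\ast$ \emph{maximal} with $Y_{i^\ast}$ not perfectly dependent on $(\XX,Y_1,\dots,Y_{i^\ast-1})$, so $\sigma_{i^\ast}^2>0$; for every $i>i^\ast$ the variable $Y_i$ is a.s.\ a function of $(\XX,Y_1,\dots,Y_{i-1})$, hence inductively of $(\XX,Y_1,\dots,Y_{i^\ast})$, so $\phi_i\equiv 0$. Next I would bound $n\var(\Lambda_n^\ast)$ from below by the variance of its H\'ajek projection onto sums $\sum_l f(\ZZ_l)$, which is variance--reducing and yields $n\var(\Lambda_n^\ast)\ge n^2\var\big(\E[\Lambda_n^\ast\mid\ZZ_1]\big)$; writing $\Lambda_n^\ast=\tfrac{6n}{n^2-1}\sum_l W_{n,l}$ with bounded $W_{n,l}$, the centered conditional expectation equals $\tfrac{6n}{n^2-1}$ times the centered net influence $\Phi_n=\sum_{i=1}^q\Phi_n^{(i)}$ (the $l=1$ contribution plus the contributions, each of size $O(1/n)$, of the indices $l\ne 1$ for which $\ZZ_1$ is the relevant nearest neighbour), and $\Phi_n\to\sum_{i=1}^{i^\ast}\phi_i$ in $L^2$; this produces $\liminf_n n\var(\Lambda_n)\ge 36\,\var\big(\sum_{i=1}^{i^\ast}\phi_i(\ZZ_1)\big)$. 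It then remains to show this variance is positive: conditioning on $\mathcal H:=\sigma(\XX_1,Y_{1,1},\dots,Y_{i^\ast-1,1})$ makes each $\phi_i$ with $i<i^\ast$ $\mathcal H$--measurable and turns $\phi_{i^\ast}$ into a function of $Y_{i^\ast,1}$ alone, so the variance is at least $\E\big[\var(\phi_{i^\ast}(\ZZ_1)\mid\mathcal H)\big]$, which is strictly positive because, by the explicit form of $\phi_{i^\ast}$ in \cite{han2022limit}, $\phi_{i^\ast}$ genuinely depends on $Y_{i^\ast,1}$ through the conditional law $F_{Y_{i^\ast}\mid\XX,Y_{<i^\ast}}$, and this law is non--degenerate precisely because $Y_{i^\ast}$ is not perfectly dependent on $(\XX,Y_{<i^\ast})$. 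Part (2)(ii) follows the same way after deleting $\XX$ and choosing $i^\ast\in\{2,\dots,q\}$ maximal with $Y_{i^\ast}$ not perfectly dependent on $(Y_1,\dots,Y_{i^\ast-1})$.

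For (2)(iii) I would set $a_n:=n\var(\Lambda_n)$, $b_n:=n\var(\alpha_n)$, $\rho_n:=\Cor(\Lambda_n,\alpha_n)$, noting $\kappa>0$ because $\Lambda<q$ (non--perfect dependence of $\YY$ on $\XX$) and $\alpha\le q-1<q$, and then exploit the identity $n\var(\mu_n)=a_n+\kappa^2 b_n-2\kappa\rho_n\sqrt{a_nb_n}=(\sqrt{a_n}-\kappa\rho_n\sqrt{b_n})^2+\kappa^2 b_n(1-\rho_n^2)$ together with $\liminf a_n>0$, $\liminf b_n>0$ from (2)(i)--(ii), $\limsup a_n,\limsup b_n<\infty$ from (1), and $\limsup\rho_n<1$: along a subsequence attaining $\liminf_n n\var(\mu_n)$ I pass to a further subsequence with $a_n\to a_\infty>0$, $b_n\to b_\infty>0$, $\rho_n\to\rho_\infty<1$, and the limit $(\sqrt{a_\infty}-\kappa\rho_\infty\sqrt{b_\infty})^2+\kappa^2 b_\infty(1-\rho_\infty^2)$ is strictly positive since $\rho_\infty<1$ forces at least one of the two non--negative terms to be positive. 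The genuinely delicate step is the positivity in (2)(i)--(ii): ruling out that the non--degenerate influence function of the imperfect coordinate $i^\ast$ is exactly annihilated by those of the other coordinates. This is what the ``maximal $i^\ast$'' choice is for, since then the coordinates above $i^\ast$ are perfectly dependent and contribute a vanishing influence function, so after conditioning on the lower coordinates only $\phi_{i^\ast}$ survives; everything else is the reduction (Lemma \ref{AN.Lemma:HajekRep}), Cauchy--Schwarz, or the elementary algebra of the last display.
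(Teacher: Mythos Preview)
Your Part~(1) and Part~(2)(iii) match the paper's arguments, with cosmetic differences: the paper works directly with $\xi_{n,i}$ (not $\xi^\ast_{n,i}$) and invokes \cite[Proposition~1.2]{han2022limit} for $\limsup_n n\var(\xi_{n,i})<\infty$, so your preliminary reduction via Lemma~\ref{AN.Lemma:HajekRep} is unnecessary; and in (2)(iii) the paper uses the decomposition $(\sqrt{a_n}-\kappa\sqrt{b_n})^2+2\kappa\sqrt{a_nb_n}(1-\rho_n)$ and bounds the $\liminf$ directly, without passing to a subsequence.

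The substantive difference is in Part~(2)(i)--(ii). The paper does \emph{not} use single-observation H\'ajek projections or influence functions at all. Instead, it conditions on the \emph{full sample} of the conditioning variables: with $i^\ast$ the maximal non-perfect-dependence index,
\[
  n\var(\Lambda_n)\ \ge\ n\,\E\Bigl[\var\Bigl(\Lambda_n\ \Big|\ \bigl(\XX_l,Y_{i^\ast-1,l},\dots,Y_{1,l}\bigr)_{l=1}^n\Bigr)\Bigr]\ =\ \E\Bigl[\,n\var\bigl(\xi_{n,i^\ast}\mid\cdots\bigr)\Bigr],
\]
because each $\xi_{n,\ell}$ with $\ell<i^\ast$ is a function of the conditioning sample (hence conditionally constant), while each $\xi_{n,k}$ with $k>i^\ast$ is almost surely constant by \cite[Remark~1.2]{han2022limit}. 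This reduces the problem to the single-coordinate lower bound already established in the proof of \cite[Proposition~1.2]{han2022limit}, and no influence-function analysis is needed. Your route is plausible in principle but leans on more moving parts: the explicit form of $\phi_i$ from \cite{han2022limit}, the $L^2$ convergence $\Phi_n\to\sum_{i\le i^\ast}\phi_i$ (which you only sketch), and the claim that $\E[\var(\phi_{i^\ast}\mid\mathcal H)]>0$, which requires verifying from the explicit formula that $\phi_{i^\ast}$ is not $\mathcal H$-measurable. None of these is free, whereas the paper's conditioning trick isolates $\xi_{n,i^\ast}$ in one line.
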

\begin{proof}
\eqref{Lemma:AsympVar1}:
Due to Cauchy-Schwarz inequality,
\begin{align*}
    n \, \var(\Lambda_n)
    &   =  \sum_{i_1=1}^{q} \sum_{i_2=1}^{q} n \, \cov \big( \xi_{n,i_1}, \xi_{n,i_2} \big)
      \leq \sum_{i_1=1}^{q} \sum_{i_2=1}^{q} 
           \sqrt{n \, \var \big( \xi_{n,i_1} \big)} \, 
           \sqrt{n \, \var \big( \xi_{n,i_2} \big)}\,.
\end{align*}
Now, \citep[Proposition 1.2]{han2022limit} gives $\limsup_{n \to \infty} n \, \var(\xi_{n,i}) < \infty$ for all $i \in \{1,\dots,q\}$ and thus
$\limsup_{n \to \infty} n \, \var(\Lambda_n) < \infty$.
The result for $\alpha_n$ follows by a similar argument, 
and the result for $\mu_n$ then is immediate from Eq. \eqref{AN.Def:Mu} and Cauchy-Schwarz inequality.  
\\
\eqref{Lemma:AsympVar2}:
Since \(\YY\) is not perfectly dependent on \(\XX\,,\) 
there exists some \(i \in \{1\,\ldots,q\}\) such that \(Y_i\) is not perfectly dependent on \((\XX,Y_{i-1},\ldots,Y_{1})\) (see Eq. \eqref{protheT7}).
Now, choose \[i^*:= \max\{i\mid Y_i \text{ is not perfectly dependent on } (\XX,Y_{i-1},\ldots,Y_{1})\}\,.\] Then we obtain
\begin{align*}%\label{Asympt.Var.Pos}
    n \, \var(\Lambda_n)
    & \geq n \, \E \, \big[ \var(\Lambda_n \, | \, (\XX,Y_{i^*-1},\dots,Y_{1})) \big] \notag
    \\
    &   =  n \, \E \left[ \var \left( \left( \sum_{k = i^*+1}^q \xi_{n,k} + \xi_{n,i^*} + \sum_{\ell=1}^{i^*-1} \xi_{n,\ell} \right) \mid (\XX,Y_{i^*-1},\dots,Y_{1}) \right) \right] \notag
    \\
    &   =  \E \big[ n \, \var \big( \xi_{n,i^*} \mid (\XX,Y_{i^*-1},\dots,Y_{1}) \big) \big] \,, \notag
\end{align*}
where we use for the last equality on the one hand that \(\xi_{n,\ell}\) is conditionally on \((\XX,Y_{i^*-1},\ldots,Y_{1})\) constant for all \(\ell < i^*\,.\)
On the other hand, \(\xi_{n,k}\) is almost surely constant for all \(k>i^*\) because \(Y_k\) is a measurable function of \((\XX,Y_{k-1},\ldots,Y_{1})\), see \citep[Remark 1.2]{han2022limit}.
%\textcolor{magenta}{Gibt es dafür einen Beweis? Vergleiche Proposition 1.2 (i) in \citep{han2022limit}}  
Proceeding as in the proof of \citep[Proposition 1.2]{han2022limit} gives \(\liminf_{n \to \infty} n \, \var(\Lambda_n) > 0\).
The result for $\alpha_n$ follows by a similar argument.
To finally show the result for \(\mu_n\), we first calculate
\begin{align*}
    \var(\mu_n)
    & = \var(\Lambda_n - \kappa \, \alpha_n)
      = \var(\Lambda_n) + \var(\kappa \, \alpha_n) - 2 \, \Cov(\Lambda_n, \kappa \, \alpha_n)
    \\
    & = \var(\Lambda_n) + \kappa^2 \, \var(\alpha_n) - 2 \, \kappa \, \Cor(\Lambda_n, \alpha_n) \, \sqrt{\var(\Lambda_n)} \, \sqrt{\var(\alpha_n)}
    \\
    & = \left( \sqrt{\var(\Lambda_n)} - \kappa \, \sqrt{\var(\alpha_n)} \right)^2 
        + 2 \sqrt{\var(\Lambda_n)} \, \kappa \, \sqrt{\var(\alpha_n)} \,  (1-\Cor(\Lambda_n, \alpha_n))\,,
\end{align*}
which gives
\begin{align*}
    \liminf_{n \to \infty} n \,\var(\mu_n)
    & \geq 2 \, \underbrace{\liminf_{n \to \infty} \sqrt{n \, \var(\Lambda_n)}}_{> 0} \, \kappa \, \underbrace{\liminf_{n \to \infty} \sqrt{n \, \var(\alpha_n)}}_{> 0}  \, \left( 1 - \underbrace{\limsup_{n \to \infty} \Cor(\Lambda_n, \alpha_n)}_{< 1}  \right) > 0\,.
\end{align*}
This proves the result.
\end{proof}

For vectors \(\xx \in \R^p\) and \(\yy \in \R^q\) consider the combined vector \(\zz = (\xx,\yy)\),
and for \(\xx_1, \dots, \xx_n \in \R^p\) and \(\yy_1, \dots, \yy_n \in \R^q\) define 
\([\zz]_n := (\zz_1,\dots,\zz_n)\).
For every \(i \in \{1,\dots,q\}\), further define \(\zz^{(i)} := (\xx,y_i,\dots,y_1)\) with \(\zz^{(0)} := \xx\) and \(\yy^{(i)} := (y_i,\dots,y_1)\).
Now, for \(n \in \N\) such that \(n \geq 4\) define in correspondence to Eq. \eqref{AN.Def:Hajek}
\begin{align} \label{AN.Def:Wn}
    W_n ([\zz]_n)
    %& := \frac{1}{\sqrt{n}} \sum_{i=1}^{q} \sum_{l=1}^n F_{Y_i} (y_{i,l} \wedge y_{i,N_i(l)}) 
    %     + \frac{1}{\sqrt{n}} \sum_{i=1}^{q} \sum_{l=1}^n g_{i} (y_{i,l})
    %\\
    %& \qquad - \frac{1}{\sqrt{n}} \, \kappa \sum_{i=1}^{q} \sum_{l=1}^n F_{Y_i} (y_{i,l} \wedge y_{i,M_i(l)}) 
    %     - \frac{1}{\sqrt{n}} \, \kappa \sum_{i=1}^{q} \sum_{l=1}^n h_{i} (y_{i,l})
    %\\
    &  := \frac{1}{\sqrt{n}} \sum_{i=1}^{q} \sum_{l=1}^n \underbrace{\big[ F_{Y_i} (y_{i,l} \wedge y_{i,N_i(l)}) 
           + g_{i} (y_{i,l}) \big]}_{=: e_{i,l}([\zz^{(i)}]_n)} 
    \\
    & \qquad
         - \frac{\kappa}{\sqrt{n}} \sum_{i=2}^{q} \sum_{l=1}^n \underbrace{\big[ F_{Y_i} (y_{i,l} \wedge y_{i,M_i(l)}) 
         - h_{i} (y_{i,l}) \big]}_{=: f_{i,l}([\yy^{(i)}]_n)}\,. \notag
    %\\
    %&  \, = \sum_{i=1}^{q} \underbrace{\frac{1}{\sqrt{n}} \sum_{l=1}^n \big[ F_{Y_i} %(y_{i,l} \wedge y_{i,N_i(l)}) 
    %     + g_{i} (y_{i,l}) \big]}_{=: e_{i}([\zz^{(i)}]_n)} 
    %     - \sum_{i=2}^{q} \underbrace{\frac{1}{\sqrt{n}} \sum_{l=1}^n \big[ F_{Y_i} (y_{i,l} \wedge y_{i,M_i(l)}) 
    %     - h_{i} (y_{i,l}) \big]}_{=: f_{i}([\yy^{(i)}]_n)}\,. \notag
\end{align}
Then \( e_{i,l} ([\zz^{(i)}]_n) = e_{i,l} (\zz^{(i)}_1, \dots, \zz^{(i)}_n)\) is a function of only \(\zz^{(i)}_l\) and \(\zz^{(i)}_{N_i(l)}\) 
where \(N_i(l)\) represents the index of the nearest neighbor of \(\zz_l^{(i-1)} = (\xx_l,y_{i-1,l},\dots,y_{1,l})\) in the nearest neighbor graph constructed by \([\zz^{(i-1)}]_n\,.\)
Analogously, \( f_{i,l} ([\yy^{(i)}]_n) = f_{i,l} (\yy^{(i)}_1, \dots, \yy^{(i)}_n)\) is a function of only \(\yy^{(i)}_l\) and \(\yy^{(i)}_{M_i(l)}\) 
where \(M_i(l)\) represents the index of the nearest neighbor of \(\yy_l^{(i-1)} = (y_{i-1,l},\dots,y_{1,l})\) in the nearest neighbor graph constructed by \([\yy^{(i-1)}]_n\,.\)
Notice that \(\sqrt{n} \, \mu_n^\ast = \frac{6n^2}{n^2-1} W_n ([\ZZ]_n)\),
and thus
\begin{align*}
    \frac{\mu_n^\ast - \E [\mu_n^\ast]}{\sqrt{\var(\mu_n^\ast)}}
    & = \frac{W_n ([\ZZ]_n) - \E[W_n ([\ZZ]_n)]}{\sqrt{\var(W_n ([\ZZ]_n))}}\,.
\end{align*}
%Denote by \(\delta_{W_n}\) the Kantorovich-Wasserstein distance between the laws of $$\frac{W_n ([\ZZ]_n) - \E(W_n ([\ZZ]_n)}{\sqrt{\var(W_n ([\ZZ]_n)}}$$ and the standard Gaussian. 
Corollary \ref{AN.Cor:AsymptNorm} below shows asymptotic normality of $\tfrac{\mu_n^\ast - \E [\mu_n^\ast]}{\sqrt{\var(\mu_n^\ast)}}$. 
For its proof, we modify \citep[Theorem 3.4]{chatterjee2008} so that it is applicable to the function \(W_n\) defined in Eq. \eqref{AN.Def:Wn} and hence to \(\mu_n^\ast\).
%Therefore, denote by \(\delta_{W_n}\) the Kantorovich-Wasserstein distance between the laws of $$\frac{W_n ([\ZZ]_n) - \E(W_n ([\ZZ]_n)}{\sqrt{\var(W_n ([\ZZ]_n)}}$$ and the standard Gaussian.

\begin{theorem} [Modification of Theorem 3.4 in \citep{chatterjee2008}] 
%[Modification of \citep[Theorem 3.4]{chatterjee2008}] 
\label{Thm.Modify.Chatterjee}~
Fix \(n\geq 4\,,\) \(d\geq 1\,,\) and \(k\geq 1\,.\) Suppose \(\VV_1,\ldots,\VV_n\) are i.i.d. \(\R^d\)-valued random vectors with the property that \(\lVert \VV_1-\VV_2\rVert\) is a continuous random variable. Let \(f\colon (\R^d)^n \to \R\) be a function of the form
\begin{align*}
  f(\vv_1,\ldots,\vv_n) 
  & = \frac{1}{\sqrt{n}} \sum_{I\subseteq \{1,\ldots,d\} \atop I \ne \emptyset} \sum_{\ell = 1}^n f_{I,\ell}(\vv_1,\ldots,\vv_n)\,,
\end{align*}
where, for each \(I\) and \(\ell\,,\) the function \(f_{I,\ell}\) depends only on \(\vv_\ell\) and its \(k\) nearest neighbors built by the components \(I\subseteq \{1,\ldots,d\}\,,\) \(I\ne \emptyset\,.\) Suppose, for some \(r\geq 8\,,\) %\textcolor{teal}{eher \(r\geq 12\)} 
that \(\gamma_r:= \max_{I,\ell} \E|f_{I,\ell}(\VV_1,\ldots,\VV_n)|^r\) is finite. Let \(W = f(\VV_1,\ldots,\VV_n)\) and \(\sigma^2 = \Var(W)\,.\) Then
\begin{align}
  \delta_W \leq C \frac{\alpha(d)^3k^4\gamma_r^{2/r}}{\sigma^2 n^{(r-8)/2r}}+C\frac{\alpha(d)^3 k^3 \gamma_r^{3/r}}{\sigma^3 n^{(r-6)/2r}}\,,
\end{align}
where 
\(\delta_W\) denotes the Kantorovich-Wasserstein distance between \(W\) and the standard Gaussian distribution, 
\(\alpha(d)\) is the minimum number of \(60 \degree\) cones at the origin required to cover \(\R^d\), and \(C\) is a universal constant.
\end{theorem}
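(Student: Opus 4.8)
The plan is to reduce the statement to \cite[Theorem 3.4]{chatterjee2008} by re-running its proof, the only new ingredient being the bookkeeping of dependency neighbourhoods in the presence of the outer sum over nonempty $I\subseteq\{1,\dots,d\}$. The proof of \cite[Theorem 3.4]{chatterjee2008} is deduced from Chatterjee's abstract normal-approximation theorem in \cite{chatterjee2008}, whose inputs are (a) moment control on the single-coordinate increments $\Delta_i f := f(\VV_1,\dots,\VV_n)-f(\VV_1,\dots,\VV_i',\dots,\VV_n)$, with $\VV_i'$ an independent copy of $\VV_i$, and (b) a bound on the sizes of the associated first- and second-order dependency neighbourhoods. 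We may assume $\sigma>0$ (otherwise the right-hand side is $+\infty$) and note that $r\ge 8$ makes all exponents of $n$ nonnegative. First I would observe that, since $\lVert\VV_1-\VV_2\rVert$ is a continuous random variable, almost surely all pairwise distances among the $\VV_\ell$ — hence among the coordinate projections $\VV_\ell^{(I)}$ (the subvectors indexed by $I$) used to build the $k$-nearest-neighbour graphs — are distinct, so that for every $I$ that graph is a.s.\ uniquely defined and $W$ is a.s.\ well defined.

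\textbf{The cone lemma and the dependency neighbourhoods.} Fix a nonempty $I\subseteq\{1,\dots,d\}$. The standard cone-covering argument is the combinatorial core: cover $\R^{|I|}$ by $\alpha(|I|)$ closed $60^\circ$ cones with common apex at a point $p$; if two points $x,y$ lie in the same cone with $\lVert p-x\rVert\le\lVert p-y\rVert$, then $\lVert x-y\rVert<\lVert p-y\rVert$, so within each cone only the $k$ points closest to $p$ can have $p$ among their $k$ nearest neighbours. Hence any index $i$ serves as one of the $k$ nearest neighbours of at most $k\,\alpha(|I|)\le k\,\alpha(d)$ other indices, where $\alpha(|I|)\le\alpha(d)$ since the intersection of a $60^\circ$-cone covering of $\R^d$ with a coordinate subspace is such a covering of the subspace. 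Therefore resampling $\VV_i$ changes $f_{I,\ell}$ only when $\ell=i$, or when $i$ is among the $k$ nearest neighbours of $\VV_\ell^{(I)}$ before or after the resampling — at most $1+2k\,\alpha(d)$ indices $\ell$ per $I$ — so $\Delta_i f$ equals $n^{-1/2}$ times a sum of at most $D:=(2^d-1)(1+2k\,\alpha(d))$ terms, each of the form $f_{I,\ell}(\cdot)-f_{I,\ell}(\cdot)$ with $\E\lvert f_{I,\ell}\rvert^{r}\le\gamma_r$; the second-order neighbourhood of $i$ then has size $O(D^2)$.

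\textbf{Feeding the estimates into Chatterjee's bound.} With (a) and (b) in hand the proof is a transcription of \cite[proof of Theorem 3.4]{chatterjee2008}, with the out-degree $k$ and in-degree $k\,\alpha(d)$ there replaced by $k(2^d-1)$ and $k(2^d-1)\alpha(d)$. For the third-moment contribution one uses $\lvert\Delta_i f\rvert\le n^{-1/2}D\max_{I,\ell}(\lvert f_{I,\ell}\rvert+\lvert f_{I,\ell}'\rvert)$ together with $\E[\max_{\ell}\lvert f_{I,\ell}\rvert^{3}]\le\bigl(\E\sum_\ell\lvert f_{I,\ell}\rvert^{r}\bigr)^{3/r}\le(n\gamma_r)^{3/r}$ (Jensen, valid since $3/r\le1$), giving $\sum_{i=1}^{n}\E\lvert\Delta_i f\rvert^{3}\lesssim D^{3}\gamma_r^{3/r}\,n^{-(r-6)/(2r)}$, which after division by $\sigma^{3}$ is the second term of the claimed bound. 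For the variance contribution one bounds $\Var(T)\le\E[T^{2}]$, where $T$ is Chatterjee's weighted sum over $i$ and over subsets $A\not\ni i$ of the products $\Delta_i f(\VV)\,\Delta_i f(\VV^{A})$ (with $\VV^A$ obtained from $\VV$ by resampling the coordinates in $A$); a product $\Delta_i f\cdot\Delta_j f$ contributes only when the random supports of the two increments meet, i.e.\ for $O(D^2)$ values of $j$ per $i$, and bounding each contribution in $L^2$ exactly as in \cite{chatterjee2008} (again via a $\max_\ell(\cdot)^r\le\sum_\ell(\cdot)^r$ step, which produces the $n^{4/r}$) yields the first term after division by $\sigma^2$. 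Summing the two contributions and invoking the abstract theorem of \cite{chatterjee2008} finishes the proof.

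\textbf{Main obstacle.} There is no new probabilistic idea; the real work is the combinatorial bookkeeping. The one point requiring genuine care is to verify that the factor coming from the $2^d-1$ coordinate-subset graphs is absorbed into the claimed powers of $\alpha(d)$ and the universal constant — one uses here that $\alpha(d)$ grows at least exponentially in $d$, so $2^d-1\lesssim\alpha(d)$ — rather than surfacing as an explicit $2^d$ in the bound; once this is checked, the first- and second-order dependency neighbourhoods (now unions over the $2^d-1$ graphs) are controlled by the right powers of $k$ and $\alpha(d)$, and every remaining estimate is copied verbatim from \cite[proof of Theorem 3.4]{chatterjee2008}.
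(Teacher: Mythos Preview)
Your approach is the same as the paper's: the paper's ``proof'' is a one-paragraph remark stating that \cite[Theorem~3.4]{chatterjee2008} goes through verbatim once the interaction graph $G([\vv]_n)$ is replaced by the union, over all nonempty $I\subseteq\{1,\dots,d\}$, of the individual $I$-coordinate nearest-neighbour graphs $G_I([\vv]_n)$---precisely the dependency-neighbourhood modification you describe. You supply considerably more detail (the cone-lemma degree bound per $I$, the moment estimates via $\max_\ell|f_{I,\ell}|^r\le\sum_\ell|f_{I,\ell}|^r$, and the absorption of the $2^d-1$ factor into powers of $\alpha(d)$ via $2^d-1\lesssim\alpha(d)$), none of which appears in the paper's remark.

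One small slip worth flagging: continuity of $\lVert\VV_1-\VV_2\rVert$ does \emph{not} in general imply continuity of the projected distances $\lVert\VV_1^{(I)}-\VV_2^{(I)}\rVert$ (take $\VV_i=(X_i,0)$ with $X_i$ continuous and $I=\{2\}$), so your ``hence'' in the first paragraph is unjustified as stated. In the paper's actual application this is harmless because $F_\ZZ$ is assumed jointly continuous, making all subvector distances continuous; but strictly speaking the hypothesis in the theorem statement is too weak to guarantee tie-free $I$-nearest-neighbour graphs, and neither you nor the paper addresses this.
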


\begin{remark}
Theorem \ref{Thm.Modify.Chatterjee} is a slight modification of \citep[Theorem 3.4]{chatterjee2008} noting that the nearest neighbor graph in the reference can also be constructed with respect to subcomponents of the underlying vectors.
The interaction rule in the proof has to be replaced by a modified interaction rule for \(f\) where the graph \(G([\vv]_n)\) on \(\{1,\ldots,n\}\times \{1,\ldots,n\}\) puts an edge between the nodes \(m_1\) and \(m_2\) if, for some \(I\subseteq \{1,\ldots,d\}\), the graph \(G_I([\vv]_n)\) of 
\begin{align*}
  f_I(\vv_1,\ldots,\vv_n) 
  & = \frac{1}{\sqrt{n}} \sum_{\ell = 1}^n f_{I,\ell}(\vv_1,\ldots,\vv_n)
\end{align*}
puts an edge between \(m_1\) and \(m_2\).
\end{remark}
%and some \(m\) such that \(d_{[\zz^{(I)}]_n}(m,m_1)\leq k+1\) and \(d_{[\zz^{(I)}]_n}(m,m_2)\leq k+1\,.\) \textcolor{teal}{Der zweite Satz ist zwar korrekt, aber die verwendete Notation existiert nicht. Daher würde ich den Satz umformulieren.}

Theorem \ref{Thm.Modify.Chatterjee} leads to the following result.

\begin{corollary}[Asymptotic normality of $\Lambda_n^\ast$, $\alpha_n^\ast$ and $\mu_n^\ast$]\label{AN.Cor:AsymptNorm}
Assume \(F_\ZZ\) to be fixed and continuous.
Then 
\begin{align*}
  \frac{\Lambda_n^\ast - \E[\Lambda_n^\ast]}{\sqrt{\var(\Lambda_n^\ast)}}
  %= \frac{W_n ([\ZZ]_n) - \E(W_n ([\ZZ]_n)}{\sqrt{\var(W_n ([\ZZ]_n)}} 
  \stackrel{d}{\longrightarrow} N(0,1)\,,
  \qquad 
  \frac{\alpha_n^\ast - \E[\alpha_n^\ast]}{\sqrt{\var(\alpha_n^\ast)}}
  \stackrel{d}{\longrightarrow} N(0,1)\,,
  \qquad 
  \frac{\mu_n^\ast - \E[\mu_n^\ast]}{\sqrt{\var(\mu_n^\ast)}}
  \stackrel{d}{\longrightarrow} N(0,1)\,.
\end{align*}
\end{corollary}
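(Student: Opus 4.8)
\emph{Proof strategy.} The plan is to deduce all three statements from the modified nearest-neighbour normal approximation (Theorem \ref{Thm.Modify.Chatterjee}), applied to the H{\'a}jek-type statistic $W_n$ of Eq. \eqref{AN.Def:Wn}. Recall that $\sqrt n\,\mu_n^\ast = \frac{6n^2}{n^2-1}\,W_n([\ZZ]_n)$, and that $W_n$ is, by construction, exactly of the form treated in Theorem \ref{Thm.Modify.Chatterjee}: with $d:=p+q$ and the coordinates of $\zz=(\xx,\yy)\in\R^d$ labelled in the natural way,
\begin{align*}
  W_n \;=\; \frac{1}{\sqrt n}\sum_{\emptyset\neq I\subseteq\{1,\dots,d\}}\;\sum_{\ell=1}^n f_{I,\ell}\,,
\end{align*}
where, in the notation of Eq. \eqref{AN.Def:Wn}, $f_{I,\ell}=e_{i,\ell}$ if $I=\{1,\dots,p+i-1\}$ for some $i\in\{1,\dots,q\}$, $f_{I,\ell}=-\kappa\,f_{i,\ell}$ if $I=\{p+1,\dots,p+i-1\}$ for some $i\in\{2,\dots,q\}$, and $f_{I,\ell}=0$ otherwise; each $f_{I,\ell}$ depends only on $\zz_\ell$ and its single ($k=1$) nearest neighbour computed from the coordinates in $I$. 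The representations of $\sqrt n\,\Lambda_n^\ast$ and $\sqrt n\,\alpha_n^\ast$ are obtained by keeping only the $e$-terms (formally $\kappa=0$), respectively only the $f$-terms, so it suffices to run the argument for $\mu_n^\ast$ and then repeat it verbatim.

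Next I would verify the two hypotheses of Theorem \ref{Thm.Modify.Chatterjee}. Because $F_\ZZ$ is continuous, the law of $\ZZ$ has no atoms, hence almost surely all pairwise distances among $\ZZ_1,\dots,\ZZ_n$ — and likewise among their images under projection onto any coordinate subset $I$ — are distinct; in particular the relevant distances are continuous random variables and the nearest-neighbour ties are resolved uniquely a.s. For the moment bound, observe that $F_{Y_i}$ is $[0,1]$-valued and that the functions $g_i,h_i$ in Eq. \eqref{AN.Def:Hajek} are bounded (this is how the H{\'a}jek projection for Chatterjee's coefficient is built, cf. \cite{han2022limit}); hence every summand $e_{i,\ell}$ and $f_{i,\ell}$ — and therefore every $f_{I,\ell}$ — is bounded by a deterministic constant, so $\gamma_r:=\max_{I,\ell}\E|f_{I,\ell}(\ZZ_1,\dots,\ZZ_n)|^r<\infty$ for every $r\geq 8$; I would simply fix $r=9$.

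It then remains to pin down the variance. Writing $\sigma_n^2:=\var(W_n)=\frac{(n^2-1)^2}{36\,n^4}\,n\,\var(\mu_n^\ast)$, Lemma \ref{AN.Lemma:HajekRep}(iii) gives $n\,\var(\mu_n-\mu_n^\ast)\to 0$, so $\sqrt{n\,\var(\mu_n^\ast)}$ and $\sqrt{n\,\var(\mu_n)}$ share their asymptotics; combined with Lemma \ref{AN.Lemma:AsympVar} — which, under the non-degeneracy assumptions in force (those of Theorem \ref{AN.Thm:AN}), gives $0<\liminf_n n\,\var(\mu_n)\le\limsup_n n\,\var(\mu_n)<\infty$ — this yields $0<\liminf_n\sigma_n^2\le\limsup_n\sigma_n^2<\infty$. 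Feeding $d$ fixed, $k=1$, $\gamma_r$ a fixed constant, and $\sigma_n^2$ bounded away from $0$ into Theorem \ref{Thm.Modify.Chatterjee} bounds the Kantorovich-Wasserstein distance between the centred, normalised $W_n$ — equivalently between $\frac{\mu_n^\ast-\E[\mu_n^\ast]}{\sqrt{\var(\mu_n^\ast)}}$, since the scalar factor $\frac{6n^2}{n^2-1}$ cancels under standardisation — and $N(0,1)$ by a quantity of order $n^{-(r-8)/(2r)}+n^{-(r-6)/(2r)}$, which tends to $0$. As convergence in Wasserstein distance implies convergence in distribution, $\frac{\mu_n^\ast-\E[\mu_n^\ast]}{\sqrt{\var(\mu_n^\ast)}}\stackrel{d}{\longrightarrow}N(0,1)$, and the identical reasoning — now invoking parts (i) and (ii) of Lemmas \ref{AN.Lemma:HajekRep} and \ref{AN.Lemma:AsympVar} — handles $\Lambda_n^\ast$ and $\alpha_n^\ast$.

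The main obstacle I anticipate is not any single hard estimate — Theorem \ref{Thm.Modify.Chatterjee} and the two lemmas already do the heavy lifting — but two points of care: first, checking honestly that $W_n$ fits the template of Theorem \ref{Thm.Modify.Chatterjee}, i.e., that each summand is local (own point plus one nearest neighbour, the latter determined by a \emph{subset} of coordinates) and that distinct levels $i$ correspond to distinct index sets $I$, so the reindexed double sum is legitimate; and second, the variance lower bound $\liminf_n\sigma_n^2>0$, which is the substantive input and the place where the hypotheses of Theorem \ref{AN.Thm:AN} (through Lemma \ref{AN.Lemma:AsympVar}) are indispensable. By contrast, the moment hypothesis of Theorem \ref{Thm.Modify.Chatterjee} is essentially free here, since all summands are bounded.
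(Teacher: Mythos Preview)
Your proposal is correct and follows essentially the same route as the paper: apply Theorem \ref{Thm.Modify.Chatterjee} to the statistic $W_n$ of Eq.~\eqref{AN.Def:Wn}, using that $\sqrt n\,\mu_n^\ast = \tfrac{6n^2}{n^2-1}W_n$ and that standardisation cancels the scalar factor. The paper's own proof is a one-liner to this effect; you have simply spelled out the verification of the hypotheses (locality of the summands via coordinate-subset nearest neighbours, boundedness giving $\gamma_r<\infty$, and the variance lower bound via Lemmas \ref{AN.Lemma:HajekRep} and \ref{AN.Lemma:AsympVar}) --- and you are right to flag that the lower bound $\liminf_n\sigma_n^2>0$ tacitly uses the non-degeneracy hypotheses of Theorem \ref{AN.Thm:AN}, which the corollary's statement does not list but which are in force wherever it is invoked.
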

\begin{proof}
The result for $\mu_n^\ast$ is a direct consequence of Theorem \ref{Thm.Modify.Chatterjee} and the definition of $W_n$ in \eqref{AN.Def:Wn}, which implies, in particular, the results for $\Lambda_n^\ast$ and $\alpha_n^\ast$.
%The results for $\Lambda_n^\ast$ and $\alpha_n^\ast$ follow in a similar manner \textcolor{blue}{ist ein Spezialfall vom Beweis für \(\mu_n^\ast\)}.
\end{proof}

Combining the previously obtained results yields asymptotic normality of $\tfrac{\mu_n - \E [\mu_n]}{\sqrt{\var(\mu_n)}}$ as follows.

\begin{proposition}~~\label{AN.Lemma:AN.KappaBeta}
Assume \(F_\ZZ\) to be fixed and continuous.
\begin{enumerate}[(1)]
\item 
If \(\YY\) is not perfectly dependent on \(\XX\,,\) then   
\begin{align}\label{AN.Lemma.Eq:AN.Kappa}
  \frac{\kappa_n - \E[\kappa_n]}{\sqrt{\var(\kappa_n)}} 
  & =  - \frac{\Lambda_n - \E[\Lambda_n]}{\sqrt{\var(\Lambda_n)}} 
    \stackrel{d}{\longrightarrow} N(0,1)\,.
\end{align}

\item 
If there exists some \(i \in \{2,\dots,q\}\) such that \(Y_i\) is not perfectly dependent on \(\{Y_1,\dots,Y_{i-1}\}\), then
\begin{align}\label{AN.Lemma.Eq:AN.Beta}
  \frac{1/\beta_n - \E[1/\beta_n]}{\sqrt{\var(1/\beta_n)}} 
  & = - \frac{\alpha_n - \E[\alpha_n]}{\sqrt{\var(\alpha_n)}} 
    \stackrel{d}{\longrightarrow} N(0,1)\,. 
\end{align}

\item 
If \(\YY\) is not perfectly dependent on \(\XX\), if
there exists some \(i \in \{2,\dots,q\}\) such that \(Y_i\) is not perfectly dependent on \(\{Y_1,\dots,Y_{i-1}\}\) and if
$\limsup_{n \to \infty} \Cor(\Lambda_n,\alpha_n) < 1$, 
then
\begin{align}\label{AN.Lemma.Eq:AN.Mu}
  \frac{\mu_n - \E[\mu_n]}{\sqrt{\var(\mu_n)}} 
  & \stackrel{d}{\longrightarrow} N(0,1)\,. 
\end{align}
\end{enumerate}    
\end{proposition}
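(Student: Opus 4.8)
The plan is to prove all three claims by reducing each normalized statistic to its H\'ajek surrogate from Eq.~\eqref{AN.Def:Hajek}--\eqref{AN.Def:Hajek.Mu}, invoking the central limit theorem already recorded in Corollary~\ref{AN.Cor:AsymptNorm}, and controlling the linearization error by means of Lemmas~\ref{AN.Lemma:HajekRep} and~\ref{AN.Lemma:AsympVar} together with Slutsky's theorem. First I would dispose of the two displayed identities. Since $\alpha = \sum_{i=2}^q \xi(Y_i\mid(Y_{i-1},\dots,Y_1)) \in [0,q-1]$, we have $q-\alpha\ge 1>0$, so $\kappa_n = \tfrac{q}{q-\alpha} - \tfrac{1}{q-\alpha}\Lambda_n$ and $1/\beta_n = \tfrac{q-\alpha_n}{q-\alpha}$ are affine functions of $\Lambda_n$ and $\alpha_n$ with nonzero slopes; centering and dividing by the standard deviation cancels both the additive and the multiplicative constants (the latter only up to its sign), which gives the leftmost equalities in \eqref{AN.Lemma.Eq:AN.Kappa} and \eqref{AN.Lemma.Eq:AN.Beta}. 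Because $N(0,1)$ is symmetric, it then suffices to show that the normalized versions of $\Lambda_n$, $\alpha_n$ and $\mu_n$ are asymptotically standard normal.

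The core step, which I would carry out uniformly for $G_n \in \{\Lambda_n,\alpha_n,\mu_n\}$ with H\'ajek surrogate $G_n^\ast \in \{\Lambda_n^\ast,\alpha_n^\ast,\mu_n^\ast\}$, runs as follows. Write
\[
  \frac{G_n - \E[G_n]}{\sqrt{\var(G_n)}}
  = \frac{G_n^\ast - \E[G_n^\ast]}{\sqrt{\var(G_n^\ast)}}\cdot\sqrt{\frac{\var(G_n^\ast)}{\var(G_n)}}
    + \frac{(G_n-G_n^\ast) - \E[G_n-G_n^\ast]}{\sqrt{\var(G_n)}}\,.
\]
By Lemma~\ref{AN.Lemma:AsympVar}, part (1), the quantity $n\var(G_n)$ is bounded, and by Lemma~\ref{AN.Lemma:HajekRep} we have $n\var(G_n-G_n^\ast)\to 0$; combining these with Lemma~\ref{AN.Lemma:AsympVar}, part (2)---this is exactly where the hypotheses ``$\YY$ not perfectly dependent on $\XX$'', ``some $Y_i$ not perfectly dependent on $\{Y_1,\dots,Y_{i-1}\}$'', and (for $\mu_n$) $\limsup_n\Cor(\Lambda_n,\alpha_n)<1$ are used---yields $\liminf_n n\var(G_n)>0$. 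The remainder term then has variance $\var(G_n-G_n^\ast)/\var(G_n) = \bigl(n\var(G_n-G_n^\ast)\bigr)/\bigl(n\var(G_n)\bigr)\to 0$, so it tends to $0$ in $L^2$ and hence in probability. For the variance ratio, the triangle inequality in $L^2$ gives $\sqrt{n\var(G_n^\ast)}\le \sqrt{n\var(G_n)}+\sqrt{n\var(G_n-G_n^\ast)}$, so $n\var(G_n^\ast)$ is bounded; Cauchy--Schwarz then yields $n\,\bigl|\Cov(G_n^\ast,G_n-G_n^\ast)\bigr|\le\sqrt{n\var(G_n^\ast)}\sqrt{n\var(G_n-G_n^\ast)}\to 0$, and expanding $n\var(G_n)=n\var(G_n^\ast)+n\var(G_n-G_n^\ast)+2n\Cov(G_n^\ast,G_n-G_n^\ast)$ shows $n\var(G_n)-n\var(G_n^\ast)\to 0$; dividing by $n\var(G_n)$, which stays bounded below by a positive constant, gives $\var(G_n^\ast)/\var(G_n)\to 1$. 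Finally, Corollary~\ref{AN.Cor:AsymptNorm} supplies $\tfrac{G_n^\ast-\E[G_n^\ast]}{\sqrt{\var(G_n^\ast)}}\stackrel{d}{\longrightarrow}N(0,1)$, and Slutsky's theorem combines the three ingredients to conclude $\tfrac{G_n-\E[G_n]}{\sqrt{\var(G_n)}}\stackrel{d}{\longrightarrow}N(0,1)$. Applying this with $G_n = \Lambda_n$ proves \eqref{AN.Lemma.Eq:AN.Kappa}, with $G_n=\alpha_n$ proves \eqref{AN.Lemma.Eq:AN.Beta}, and with $G_n=\mu_n$ proves \eqref{AN.Lemma.Eq:AN.Mu}.

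I expect the only genuinely delicate point to be the lower bound $\liminf_n n\var(\mu_n)>0$ in part~(3): unlike for $\Lambda_n$ and $\alpha_n$, it does not suffice that each of the two pieces has nonvanishing asymptotic variance, since $\mu_n = \Lambda_n - \kappa\alpha_n$ could in principle become asymptotically degenerate through cancellation. This is precisely why the correlation hypothesis $\limsup_n\Cor(\Lambda_n,\alpha_n)<1$ is imposed, and it is dealt with by the variance identity in the proof of Lemma~\ref{AN.Lemma:AsympVar}, part~(2)(iii). Everything else is routine once Corollary~\ref{AN.Cor:AsymptNorm} (itself a consequence of the modified local limit theorem, Theorem~\ref{Thm.Modify.Chatterjee}) and the variance estimates of Lemmas~\ref{AN.Lemma:HajekRep} and~\ref{AN.Lemma:AsympVar} are in place: the reduction to the H\'ajek surrogate together with the Slutsky argument is a standard bookkeeping exercise.
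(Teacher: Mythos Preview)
Your proposal is correct and follows essentially the same route as the paper: reduce $\Lambda_n$, $\alpha_n$, $\mu_n$ to their H\'ajek surrogates, use Lemmas~\ref{AN.Lemma:HajekRep} and~\ref{AN.Lemma:AsympVar} to show the remainder vanishes in $L^2$ and the variance ratio tends to~$1$, then apply Corollary~\ref{AN.Cor:AsymptNorm} and Slutsky. Your presentation is in fact somewhat cleaner---you treat the three cases uniformly and make the affine-transformation identities in \eqref{AN.Lemma.Eq:AN.Kappa} and \eqref{AN.Lemma.Eq:AN.Beta} explicit---but the substance is the same.
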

\begin{proof}
We first show that 
\begin{align*}
  \frac{\Lambda_n - \E[\Lambda_n]}{\sqrt{\var(\Lambda_n)}} \stackrel{d}{\longrightarrow} N(0,1)\,.
\end{align*}
By Lemma \ref{AN.Lemma:AsympVar} and Lemma \ref{AN.Lemma:HajekRep} we have
\begin{align*}
  \limsup_{n \to \infty} \; \E \left[ \left(\frac{\Lambda_n^\ast - \E(\Lambda_n^\ast)}{\sqrt{\var(\Lambda_n)}} - \frac{\Lambda_n - \E(\Lambda_n)}{\sqrt{\var(\Lambda_n)}} \right)^2 \right]
  &   =  \limsup_{n \to \infty} \; \frac{\var(\Lambda_n^\ast - \Lambda_n)}{\var(\Lambda_n)}
  \\
  & \leq \frac{\limsup_{n \to \infty} \var(\Lambda_n^\ast - \Lambda_n)}{\liminf_{n \to \infty} \var(\Lambda_n)}
         = 0
\end{align*}
and 
\begin{align*}
  \limsup_{n \to \infty} \; \left| \frac{\cov(\Lambda_n, \Lambda_n^\ast - \Lambda_n)}{\var(\Lambda_n)} \right|
  & \leq \limsup_{n \to \infty} \left( \frac{\var(\Lambda_n^\ast - \Lambda_n)}{\var(\Lambda_n)} \right)^{1/2}
          = 0\,,
\end{align*}
and thus
\begin{align*}
    \frac{\Lambda_n^\ast - \E[\Lambda_n^\ast]}{\sqrt{\var(\Lambda_n)}} - \frac{\Lambda_n - \E[\Lambda_n]}{\sqrt{\var(\Lambda_n)}} 
    & \stackrel{P}{\longrightarrow} 0 \qquad \textrm{ and }
    & \frac{\var(\Lambda_n^\ast)}{\var(\Lambda_n)}
    & \longrightarrow 1\,.
\end{align*}
Together with Corollary \ref{AN.Cor:AsymptNorm} and Slutsky's theorem, this proves \eqref{AN.Lemma.Eq:AN.Kappa}. 
The convergences in \eqref{AN.Lemma.Eq:AN.Beta} and \eqref{AN.Lemma.Eq:AN.Mu} follow by similar arguments. 
\end{proof}

\bigskip\noindent
{\bf Step 2: Taylor expansion.}
We use a Taylor expansion and a series of intermediate results that deal with the transition from $1/\beta_n = \tfrac{q-\alpha_n}{q-\alpha}$ to $\beta_n = \tfrac{q-\alpha}{q-\alpha_n}$.
We first show that \(\beta_n\) and \(1/\beta_n\) share a similar asymptotic behaviour, indicating that in many situations one can be replaced with the other.

\begin{lemma}[Uniform boundedness of \(\beta_n\) and \(T_n\)]~~\label{AN.Lemma:Finite.Beta}
%For \(\beta_n\) defined in \eqref{defbetan}, 
Assume \(F_\ZZ\) to be fixed and continuous.
Then, it holds that \(1/(3q) \leq \beta_n \leq q\) for all \(n\in \N\,.\)
In particular, \(-1-\frac{1}{3q-2} \leq T_n \leq 1\).
\end{lemma}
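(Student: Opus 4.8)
The plan is to bound the numerator $q-\alpha$ and the denominator $q-\alpha_n$ of $\beta_n=\tfrac{q-\alpha}{q-\alpha_n}$ separately, using only elementary estimates on $\xi$ and on the graph-based estimator $\xi_n$ from \eqref{def.estimator.T}. First I would recall that $\alpha=\sum_{i=2}^q\xi(Y_i\mid(Y_{i-1},\dots,Y_1))$ is a sum of $q-1$ terms, each lying in $[0,1]$ because $\xi$ satisfies (A\ref{prop1}), (A\ref{prop2}) and (A\ref{prop3}) (see \cite[Lemma 11.2]{chatterjee2021}); hence $\alpha\in[0,q-1]$ and $q-\alpha\in[1,q]$. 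In particular $q-\alpha\ge 1>0$, so $\kappa=\tfrac{q-\Lambda}{q-\alpha}$ and $\beta_n$ are meaningful once the denominator $q-\alpha_n$ is shown to be positive.

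The main step is to show $-2<\xi_n(Y\mid\XX)\le 1$ almost surely for $n\ge 2$ under the continuity assumption. Since $F_\ZZ$ is continuous, almost surely the sample values $Y_1,\dots,Y_n$ entering $\xi_n$ are pairwise distinct, so $R_1,\dots,R_n$ is a permutation of $\{1,\dots,n\}$ and $L_k=n+1-R_k$; consequently the denominator in \eqref{def.estimator.T} equals $\sum_{k=1}^n L_k(n-L_k)=\tfrac{n(n^2-1)}{6}>0$, and $\sum_{k=1}^n L_k^2=\sum_{j=1}^n j^2=\tfrac{n(n+1)(2n+1)}{6}$. Using $1\le\min\{R_k,R_{N(k)}\}\le R_k$ together with $\sum_k R_k=\tfrac{n(n+1)}{2}$, the numerator of $\xi_n$ obeys
$$n^2-\frac{n(n+1)(2n+1)}{6}\;\le\;\sum_{k=1}^n\bigl(n\min\{R_k,R_{N(k)}\}-L_k^2\bigr)\;\le\;\frac{n^2(n+1)}{2}-\frac{n(n+1)(2n+1)}{6}=\frac{n(n^2-1)}{6}.$$
Dividing through by $\tfrac{n(n^2-1)}{6}$ yields $\xi_n\le 1$ and $\xi_n\ge-\tfrac{2n-1}{n+1}>-2$. (Alternatively, $\xi_n\le 1$ may simply be quoted from \cite{chatterjee2021}.)

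Finally, I would assemble the pieces. Applying $-2<\xi_n\le 1$ to each of the $q-1$ summands of $\alpha_n=\sum_{i=2}^q\xi_n(Y_i\mid(Y_{i-1},\dots,Y_1))$ gives $\alpha_n\in(-2(q-1),\,q-1]$, hence $q-\alpha_n\in[1,\,3q-2)$; in particular $q-\alpha_n\ge 1>0$, so $\beta_n$ is well defined and positive. Combining this with $q-\alpha\in[1,q]$,
$$\frac{1}{3q}\le\frac{1}{3q-2}\le\frac{q-\alpha}{q-\alpha_n}\le\frac{q}{1}=q,$$
which is the assertion for each fixed $n\ge 2$; intersecting the underlying probability-one events over $n\ge 2$ shows the bounds hold simultaneously for all $n$, almost surely. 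For $q=1$ the statement is trivial, since then $\alpha_n=\alpha=0$ and $\beta_n=1$.

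The only real obstacle is the bookkeeping behind $\xi_n>-2$; there is no deep difficulty, but one must be careful that continuity is invoked solely to fix the value of the denominator of $\xi_n$ and to exclude ties among the response ranks — the predictor-side nearest-neighbour ties, which are broken at random, are irrelevant to these bounds.
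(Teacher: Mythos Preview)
Your proof is correct and follows essentially the same approach as the paper's: both derive the elementary bounds $-\tfrac{2n-1}{n+1}\le\xi_n\le 1$ from $1\le\min\{R_k,R_{N(k)}\}\le R_k$ after using continuity to evaluate the denominator of $\xi_n$ as $\tfrac{n(n^2-1)}{6}$, then combine these with $\alpha\in[0,q-1]$ to sandwich $\beta_n$. Your write-up is in fact slightly more careful about the $n\ge 2$ restriction and the $q=1$ edge case.
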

\begin{proof}
For \(\xi_n\) one has the trivial bounds
\begin{align*}
  \xi_n 
  &   =  \frac{6}{n^2-1} \sum_{i=1}^n \underbrace{\min\{R_i,R_{N(i)}\}}_{\leq R_i} - \frac{2n+1}{n-1} 
    \leq \frac{6 (n+1)n}{2(n+1)(n-1)} - \frac{2n+1}{n-1} = 1
\end{align*}
and
\begin{align*}
  \xi_n 
  &   =  \frac{6}{n^2-1} \sum_{i=1}^n \underbrace{\min\{R_i,R_{N(i)}\}}_{\geq 1} - \frac{2n+1}{n-1}
    \geq \frac{6n}{(n+1)(n-1)}-\frac{2n+1}{n-1} = -\frac{2 n-1}{n+1}\,.
\end{align*}
Hence, since \(\alpha\in [0,q-1]\) and \(\xi_n(Y_1,\emptyset)=0\,,\) it follows that
\begin{align}\label{boundalpha_n}
  \frac 1 {3q} \leq \frac{q-\alpha}{3q} \leq \frac{q-\alpha}{q+(q-1)\frac{2 n-1}{n+1}} 
  \leq \frac{q - \alpha}{q-\alpha_n} = \beta_n
  \leq \frac{q-\alpha}{q-(q-1)} 
  \leq q 
\end{align}
\(P\)-almost surely for all \(n\in \N\,.\) 
The remaining assertion results from straightforward calculation incorporating the proven bounds for $\xi_n$.
\end{proof}

Since \(1/\beta_n = \tfrac{q-\alpha_n}{q-\alpha}\) is a linear function of \(\alpha_n\), the following statement is immediate from Lemma \ref{AN.Lemma:AsympVar}.

\begin{lemma}[Asymptotic variance II]~~\label{AN.Lemma:AsympVar.1/beta}
Assume \(F_\ZZ\) to be fixed and continuous. Then
\begin{enumerate}[(i)]
\item 
$\limsup_{n \to \infty} n \, \var(1/\beta_n) < \infty$.
\item \label{AN.Lemma:AsympVar.1/betab}
If there exists some \(i \in \{2,\dots,q\}\) such that \(Y_i\) is not perfectly dependent on \(\{Y_1,\dots,Y_{i-1}\}\), 
        then $\liminf_{n \to \infty} n \, \var(1/\beta_n) > 0$.
\end{enumerate}
\end{lemma}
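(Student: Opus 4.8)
The plan is to exploit that $1/\beta_n = \tfrac{q-\alpha_n}{q-\alpha}$ is an \emph{affine} function of $\alpha_n$ with deterministic coefficients, so that $\var(1/\beta_n)$ is merely a fixed constant multiple of $\var(\alpha_n)$; the two assertions then reduce directly to the corresponding statements for $\alpha_n$ already established in Lemma \ref{AN.Lemma:AsympVar}.

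First I would record that $\alpha = \sum_{i=2}^q \xi(Y_i\mid (Y_{i-1},\dots,Y_1))$ is a fixed population quantity and that, since each of its $q-1$ summands lies in $[0,1]$ by axiom (A\ref{prop1}) for $\xi$ (see Theorem \ref{theT}), one has $\alpha \in [0,q-1]$ and hence $q-\alpha \in [1,q]$. In particular $q-\alpha$ is strictly positive and $\tfrac{1}{(q-\alpha)^2} \in [\tfrac{1}{q^2},1]$, so that multiplication by this constant preserves both finiteness of a $\limsup$ and strict positivity of a $\liminf$.

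Next, since $q$ and $\alpha$ are non-random, $\var(1/\beta_n) = \tfrac{1}{(q-\alpha)^2}\,\var(\alpha_n)$, and therefore $n\,\var(1/\beta_n) = \tfrac{1}{(q-\alpha)^2}\, n\,\var(\alpha_n)$ for every $n\in\N$. Assertion~(i) is then immediate from Lemma \ref{AN.Lemma:AsympVar}\eqref{Lemma:AsympVar1}(ii), which gives $\limsup_{n\to\infty} n\,\var(\alpha_n)<\infty$. For assertion~(ii), the hypothesis that some $Y_i$ with $i\in\{2,\dots,q\}$ is not perfectly dependent on $\{Y_1,\dots,Y_{i-1}\}$ is precisely the hypothesis of Lemma \ref{AN.Lemma:AsympVar}\eqref{Lemma:AsympVar2}(ii), which yields $\liminf_{n\to\infty} n\,\var(\alpha_n)>0$; multiplying by $\tfrac{1}{(q-\alpha)^2}>0$ gives $\liminf_{n\to\infty} n\,\var(1/\beta_n)>0$.

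There is essentially no obstacle here: once the affine relation between $1/\beta_n$ and $\alpha_n$ is observed, everything is inherited from Lemma \ref{AN.Lemma:AsympVar}. The only mild point requiring a word is confirming that the normalising constant $q-\alpha$ is bounded away from $0$ and bounded above, which is guaranteed by the elementary range $\alpha\in[0,q-1]$ noted above.
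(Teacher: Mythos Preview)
Your proof is correct and takes essentially the same approach as the paper: the paper simply remarks that since \(1/\beta_n = \tfrac{q-\alpha_n}{q-\alpha}\) is a linear function of \(\alpha_n\), the statement is immediate from Lemma~\ref{AN.Lemma:AsympVar}. You have fleshed out the constant-factor bookkeeping (in particular the bound \(q-\alpha\in[1,q]\)) a bit more explicitly than the paper does, but the argument is the same.
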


%\textcolor{blue}{Würde die Taylorentwicklung in den Beweis verschieben und nur \(\beta_n = C_{n,1} - C_{n,2}/\beta_n + o(1/n)\) o.ä. schreiben.}
By applying the Taylor expansion in Eq. \eqref{AN.Eq:TaylorExp} below, Proposition \ref{AN.Lemma:TaylorExp} verifies that the statements concerning $1/\beta_n$ in Proposition \ref{AN.Lemma:AN.KappaBeta} and Lemma \ref{AN.Lemma:AsympVar.1/beta} can also be formulated in terms of $\beta_n$.

\begin{proposition}[Taylor expansion]~~\label{AN.Lemma:TaylorExp}
Assume \(F_\ZZ\) to be fixed and continuous.
If 
\begin{align} \label{AN.Lemma:TaylorExp.UI}
    \sup_{n \in \mathbb{N}} \, \E \left[ \left| \frac{1/\beta_n - \E[1/\beta_n]}{\sqrt{\var(1/\beta_n)}} \right|^{2+\delta} \right] < \infty
\end{align}
for some \(\delta > 0\)
and if there exists some \(i \in \{1,\dots,q\}\) such that \(Y_i\) is not perfectly dependent on \(\{Y_1,\dots,Y_{i-1}\}\), then 
\begin{enumerate}[(i)]
\item \label{AN.Lemma:TaylorExp.UI.3} 
\(\lim_{n \to \infty} n \, \left| \var(1/\beta_n) - \var(\beta_n) \right| = 0\).
\item \label{AN.Lemma:TaylorExp.UI.4} 
$\limsup_{n \to \infty} n \, \var(\beta_n) < \infty$ and $\liminf_{n \to \infty} n \, \var(\beta_n) > 0$.
\item \label{AN.Lemma:TaylorExp.UI.5} 
\(\frac{\beta_n - \E(\beta_n)}{\sqrt{\var(\beta_n)}} \xrightarrow{d} N(0,1)\).

\item \label{AN.Lemma:TaylorExp.UI.6}  
$\tfrac{\beta_n - \E[\beta_n]}{\sqrt{\var(\beta_n)}} - \left( - \tfrac{1/\beta_n - \E[1/\beta_n]}{\sqrt{\var(1/\beta_n)}} \right) \stackrel{P}{\longrightarrow} 0$.
\end{enumerate}
\end{proposition}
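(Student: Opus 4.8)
The plan is to linearize the quotient $\beta_n=\tfrac{q-\alpha}{q-\alpha_n}$ by a second-order Taylor expansion of $t\mapsto 1/t$, exploiting that $\beta_n$ is the reciprocal of $u_n:=1/\beta_n=\tfrac{q-\alpha_n}{q-\alpha}$, an affine function of $\alpha_n$ for which the required asymptotics are already available. First I would record the elementary ingredients. By Lemma~\ref{AN.Lemma:Finite.Beta}, $1/(3q)\le\beta_n\le q$ almost surely, hence $1/q\le u_n\le 3q$ almost surely; by Theorem~\ref{theT.Consistency}, $u_n\to 1$ and $\beta_n\to 1$ $P$-a.s., so $\bar u_n:=\E[u_n]\to 1$ by bounded convergence. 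Since $u_n$ is affine in $\alpha_n$, hypothesis~\eqref{AN.Lemma:TaylorExp.UI} is equivalent to a uniform $(2+\delta)$-moment bound on $(\alpha_n-\E[\alpha_n])/\sqrt{\var(\alpha_n)}$, and Lemma~\ref{AN.Lemma:AsympVar.1/beta} gives $\limsup_n n\var(u_n)<\infty$ (so $\var(u_n)\to 0$) and, under the non-degeneracy assumption, $\liminf_n n\var(u_n)>0$.

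The core step is the expansion
\begin{align}\label{AN.Eq:TaylorExp}
  \beta_n = \frac{1}{\bar u_n} - \frac{1}{\bar u_n^{2}}\,(u_n-\bar u_n) + R_n\,,
  \qquad |R_n|\le q^{3}\,(u_n-\bar u_n)^{2}\,,
\end{align}
where the remainder bound comes from the Lagrange form of the remainder together with $u_n,\bar u_n\ge 1/q$. The hard part will be to show $n\,\E[R_n^{2}]\to 0$. Here~\eqref{AN.Lemma:TaylorExp.UI} yields $\E[\,|u_n-\bar u_n|^{2+\delta}\,]\le C\,(\var(u_n))^{(2+\delta)/2}$, and combining this with the deterministic bound $|u_n-\bar u_n|\le 3q$ (by interpolation if $\delta\le 2$, by Lyapunov's inequality if $\delta>2$) gives $\E[(u_n-\bar u_n)^{4}]\le C'\,(\var(u_n))^{1+\delta'}$ for some $\delta'>0$; hence $n\,\E[R_n^{2}]\le q^{6}\,n\,\E[(u_n-\bar u_n)^{4}]\le C'q^{6}\,(n\var(u_n))\,(\var(u_n))^{\delta'}\to 0$ because $n\var(u_n)$ is bounded and $\var(u_n)\to 0$. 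In particular $n\var(R_n)\to 0$, and $n\,|\cov(u_n,R_n)|\le\sqrt{n\var(u_n)}\,\sqrt{n\var(R_n)}\to 0$ by Cauchy--Schwarz.

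The four claims then follow with little further work. Taking variances in~\eqref{AN.Eq:TaylorExp} gives $\var(\beta_n)=\bar u_n^{-4}\var(u_n)-2\bar u_n^{-2}\cov(u_n,R_n)+\var(R_n)$, so $n\,|\var(\beta_n)-\bar u_n^{-4}\var(u_n)|\to 0$; since $\bar u_n\to 1$ and $n\var(u_n)$ is bounded, $n\,|\var(\beta_n)-\var(u_n)|\to 0$, which is~\eqref{AN.Lemma:TaylorExp.UI.3} (recall $u_n=1/\beta_n$), and~\eqref{AN.Lemma:TaylorExp.UI.4} is immediate from this together with Lemma~\ref{AN.Lemma:AsympVar.1/beta}. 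For~\eqref{AN.Lemma:TaylorExp.UI.6} I would divide $\beta_n-\E[\beta_n]=-\bar u_n^{-2}(u_n-\bar u_n)+(R_n-\E[R_n])$ by $\sqrt{\var(\beta_n)}$: the remainder part has $L^{2}$-norm $\sqrt{\var(R_n)/\var(\beta_n)}\to 0$ (using $n\var(R_n)\to 0$ and $\liminf_n n\var(\beta_n)>0$ from~\eqref{AN.Lemma:TaylorExp.UI.4}), while~\eqref{AN.Lemma:TaylorExp.UI.3} and~\eqref{AN.Lemma:TaylorExp.UI.4} give $\var(\beta_n)/\var(u_n)\to 1$, so the scalar $-\bar u_n^{-2}\sqrt{\var(u_n)/\var(\beta_n)}$ tends to $-1$ and the leading part differs from $-(u_n-\bar u_n)/\sqrt{\var(u_n)}$ by an $o(1)$ scalar times the $O_P(1)$ quantity $(u_n-\bar u_n)/\sqrt{\var(u_n)}$, hence by a term tending to $0$ in probability; since $u_n=1/\beta_n$, this is exactly~\eqref{AN.Lemma:TaylorExp.UI.6}. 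Finally,~\eqref{AN.Lemma:TaylorExp.UI.5} follows by Slutsky's theorem from~\eqref{AN.Lemma:TaylorExp.UI.6} and Proposition~\ref{AN.Lemma:AN.KappaBeta}(2), which already supplies $\tfrac{1/\beta_n-\E[1/\beta_n]}{\sqrt{\var(1/\beta_n)}}\xrightarrow{d}N(0,1)$, the sign being irrelevant by symmetry of $N(0,1)$.

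The only genuine obstacle I anticipate is the remainder estimate $n\,\E[R_n^{2}]\to 0$: the variance control available for $\alpha_n$ (equivalently $u_n$) concerns only second moments, and it must be upgraded to a fourth-moment bound, which is exactly where the uniform integrability hypothesis~\eqref{AN.Lemma:TaylorExp.UI} is used in combination with the a.s.\ two-sided bound on $\beta_n$ from Lemma~\ref{AN.Lemma:Finite.Beta}. Everything afterwards --- the variance comparisons and the transfer of asymptotic normality from $1/\beta_n$ to $\beta_n$ --- is routine bookkeeping via Slutsky's theorem.
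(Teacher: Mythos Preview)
Your proof is correct and follows the same overall strategy as the paper --- linearize $\beta_n=1/u_n$ around $\bar u_n=\E[1/\beta_n]$, control the quadratic remainder using the $(2+\delta)$-moment hypothesis, then read off \eqref{AN.Lemma:TaylorExp.UI.3}--\eqref{AN.Lemma:TaylorExp.UI.6} --- but your implementation is more direct. The paper expands $1/u_n$ as an infinite geometric series, which forces a split into the convergence region $A_n=\{|u_n-\bar u_n|<\bar u_n\}$ and its complement, with separate remainder terms $R_1(1/\beta_n)\mathds{1}_{A_n}$ and $(\beta_n-\text{linear part})\mathds{1}_{A_n^c}$ that must each be shown to vanish at rate $o(n^{-1/2})$ in $L^2$. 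You instead use the Lagrange form of the remainder, which is valid \emph{globally} thanks to the almost-sure two-sided bound $1/q\le u_n\le 3q$ from Lemma~\ref{AN.Lemma:Finite.Beta}; this gives the single pointwise estimate $|R_n|\le q^3(u_n-\bar u_n)^2$ and eliminates the case distinction. Your fourth-moment bound $\E[(u_n-\bar u_n)^4]\le C'(\var(u_n))^{1+\delta'}$ via interpolation between the $(2+\delta)$-th moment and the deterministic bound $|u_n-\bar u_n|\le 3q$ is cleaner than the paper's argument, which on $A_n$ uses $|{\cdot}|^4\le|{\cdot}|^{2+\delta}$ (valid only because the ratio is below $1$) and on $A_n^c$ uses a Markov-inequality bound on $P(A_n^c)$. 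Both routes need exactly the same ingredients --- Lemma~\ref{AN.Lemma:Finite.Beta}, Lemma~\ref{AN.Lemma:AsympVar.1/beta}, Proposition~\ref{AN.Lemma:AN.KappaBeta}(2), and hypothesis~\eqref{AN.Lemma:TaylorExp.UI} --- but yours packages them more economically.
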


\begin{proof}%[Proof of Proposition \ref{AN.Lemma:TaylorExp}]
According to Lemma \ref{AN.Lemma:Finite.Beta}, it holds that \(0 < \frac{1}{q} \leq \E[1/\beta_n] \leq 3q\).
Now, %for \(\beta_n\) in \eqref{defbeta_nkappa_n}, 
define
\begin{align*}
  A_n := \left\{ \omega \in \Omega \, : \, 
          \left| \frac{1/\beta_n(\omega) - \E[1/\beta_n]}{\E[1/\beta_n]} \right| < 1 \right\}\,.    
\end{align*}
Then, for every $\omega \in A_n$, the Taylor expansion of \(f(x) = 1/x\) at point \(\E[1/\beta_n]\) gives
\begin{align}\label{AN.Eq:TaylorExp}
  \beta_n (\omega) 
    = f(1/\beta_n(\omega)) %\, \mathds{1}_{A_n}
  &  = \frac{1}{\E[1/\beta_n]} - \frac{1/\beta_n(\omega) - \E[1/\beta_n]}{\E[1/\beta_n]^2} + R_1(1/\beta_n(\omega)) 
  \\
  & = \frac{2}{\E[1/\beta_n]} - \frac{1/\beta_n(\omega)}{\E[1/\beta_n]^2} + R_1(1/\beta_n(\omega))\,,
  \notag
\end{align}
where
\begin{align*}
  R_1(x) 
  & := \frac{1}{\E[1/\beta_n]} \,  
      \sum_{k=2}^\infty (-1)^k \, \left( \frac{x - \E[1/\beta_n]}{\E[1/\beta_n]} \right)^k \,. %\qquad \textrm{ and }
  %\\
  %R_1(1/\beta_n) \, \mathds{1}_{A_n^c}
  %& := \left( \beta_n - \frac{2}{\E[1/\beta_n]} + \frac{1/\beta_n}{\E[1/\beta_n]^2} \right) \,.
\end{align*}
This yields
\begin{align}\label{AN.Eq:TaylorExp2}
  \beta_n 
  &   =  \beta_n \, \mathds{1}_{A_n} + \beta_n \, \mathds{1}_{A_n^c} 
  \\
  &   =  \left( \frac{2}{\E[1/\beta_n]} - \frac{1/\beta_n}{\E[1/\beta_n]^2} + R_1(1/\beta_n) \right) \, \mathds{1}_{A_n} 
         + \beta_n \, \mathds{1}_{A_n^c} 
         \notag
  \\
  &   =  \left( \frac{2}{\E[1/\beta_n]} - \frac{1/\beta_n}{\E[1/\beta_n]^2}  \right)  
         + R_1(1/\beta_n) \, \mathds{1}_{A_n}
         + \left( \beta_n - \frac{2}{\E[1/\beta_n]} + \frac{1/\beta_n}{\E[1/\beta_n]^2} \right) \, \mathds{1}_{A_n^c}\,.
         \notag
\end{align}
We will make use of Eq. \eqref{AN.Eq:TaylorExp2} on several occasions.

We first show that the remainder of the Taylor expansion in Eq. \eqref{AN.Eq:TaylorExp} vanishes under the bounded moment assumption \eqref{AN.Lemma:TaylorExp.UI}:
\begin{align} \label{AN.Lemma:TaylorExp.UI.1} 
  \lim_{n \to \infty} n \, \E \left[ R_1(1/\beta_n)^2 \, \mathds{1}_{A_n} \right] 
 & = 0 \,,\\
  %\quad \textrm{and} \quad 
  \label{AN.Lemma:TaylorExp.UI.1b}
  \lim_{n \to \infty} n \, \E \left[ \left( \beta_n - \frac{2}{\E[1/\beta_n]} + \frac{1/\beta_n}{\E[1/\beta_n]^2} \right)^2 \, \mathds{1}_{A_n^c} \right] 
  &= 0 \,.
\end{align}
This implies
\begin{align} \label{AN.Lemma:TaylorExp.UI.2} 
  \lim_{n \to \infty} n \, \var \left( R_1(1/\beta_n) \, \mathds{1}_{A_n} \right) 
  = 0
  \quad \textrm{and} \quad
  \lim_{n \to \infty} n \, \var \left( \left( \beta_n - \frac{2}{\E[1/\beta_n]} + \frac{1/\beta_n}{\E[1/\beta_n]^2} \right) \, \mathds{1}_{A_n^c} \right) 
  = 0\,.
\end{align}
To show \eqref{AN.Lemma:TaylorExp.UI.1}, we have by the definition of $A_n$ that
\begin{align*}
  R_1(1/\beta_n) \, \mathds{1}_{A_n}
  & =  \mathds{1}_{A_n} \, \frac{1}{\E[1/\beta_n]} \, 
      \sum_{k=2}^\infty (-1)^k \, \left( \frac{1/\beta_n - \E[1/\beta_n]}{\E[1/\beta_n]} \right)^k 
  \\
  & =  \mathds{1}_{A_n} \, \frac{1}{\E[1/\beta_n]} \, \left( \frac{1/\beta_n - \E[1/\beta_n]}{\E[1/\beta_n]} \right)^2  \,
      \sum_{k=2}^\infty (-1)^{k-2} \, \left( \frac{1/\beta_n - \E[1/\beta_n]}{\E[1/\beta_n]} \right)^{k-2}
  \\
  & = \mathds{1}_{A_n}\, \left( \frac{1/\beta_n - \E[1/\beta_n]}{\E[1/\beta_n]} \right)^2 \, \beta_n\,.
\end{align*}
Due to Lemma \ref{AN.Lemma:Finite.Beta}, the definition of $A_n$ and the bounded moment assumption \eqref{AN.Lemma:TaylorExp.UI}, for some $\delta > 0$, we obtain
\begin{align} 
  n \, \E \left[ R_1(1/\beta_n)^2 \, \mathds{1}_{A_n} \right]
  &   =  n \, \E \left[ \mathds{1}_{A_n} \, 
        \left| \frac{1/\beta_n - \E[1/\beta_n]}{\E[1/\beta_n]} \right|^4  
  %\, \left| \frac{1/\beta_n - \E(1/\beta_n)}{\E(1/\beta_n)} \right| \, 
        \beta_n^2 \right] \notag
  \\
  \label{AN.Lemma:TaylorExp:Proof.1}& \leq q^2 \, n \,  
         \E \left[ \mathds{1}_{A_n} \, \left| \frac{1/\beta_n - \E[1/\beta_n]}{\E[1/\beta_n]} \right|^{2+\delta} \right] 
  \\
  & \leq \frac{q^2}{\E[1/\beta_n]^{2+\delta}} \,  
         \underbrace{\E \left[ \left| \frac{1/\beta_n - \E[1/\beta_n]}{\sqrt{\var(1/\beta_n)}} \right|^{2+\delta} \right]}_{< \infty} \, (n \, \var(1/\beta_n)) \, \underbrace{(\sqrt{\var(1/\beta_n)})^\delta}_{\longrightarrow 0}\,, \notag
\end{align}
where we use \(\limsup_{n\to \infty} n \Var(1/\beta_n)< \infty\) by Lemma \ref{AN.Lemma:AsympVar.1/beta}. This proves \eqref{AN.Lemma:TaylorExp.UI.1}.
Applying Markov's inequality together with Lemma \ref{AN.Lemma:AsympVar.1/beta} yields
\begin{align*}
    n \, \E [\mathds{1}_{A_n^c}]
    & = n \, P (A_n^c)
      = n \, P \left( \left| \frac{1/\beta_n - \E[1/\beta_n]}{\E[1/\beta_n]} \right| \geq 1 \right)
      \leq \frac{n \, \var(1/\beta_n)}{\E[1/\beta_n]^2}
      \leq M < \infty
\end{align*}
for some \(M \in (0,\infty)\). Hence, the convergence in \eqref{AN.Lemma:TaylorExp.UI.1b} follows from
\begin{align*}
  n \, \E \left[ \left( \beta_n - \frac{2}{\E[1/\beta_n]} + \frac{1/\beta_n}{\E[1/\beta_n]^2} \right)^2 \, \mathds{1}_{A_n^c} \right] 
  & \leq  M \, \E \left[ \left(  \underbrace{\beta_n - \frac{2}{\E[1/\beta_n]} + \frac{1/\beta_n}{\E[1/\beta_n]^2}}_{\longrightarrow 0} \right)^4 \right]^{1/2}
          \longrightarrow 0\,,
\end{align*}
where we use Hölder's inequality and the boundedness of \(\beta_n\) due to Lemma \ref{AN.Lemma:Finite.Beta}.

We now prove \eqref{AN.Lemma:TaylorExp.UI.3}. 
Because of Eq. \eqref{AN.Eq:TaylorExp2} and Cauchy-Schwarz inequality, we obtain
\begin{align*}
  \lefteqn{n \, \left| \var(1/\beta_n) 
           - \var(\beta_n) \right|}
  \\
  &   =  n \, \left| \var(1/\beta_n) 
         - \var\left( \left( \frac{2}{\E[1/\beta_n]} - \frac{1/\beta_n}{\E[1/\beta_n]^2} \right)  + R_1(1/\beta_n) \, \mathds{1}_{A_n} + \left( \beta_n - \frac{2}{\E[1/\beta_n]} + \frac{1/\beta_n}{\E[1/\beta_n]^2} \right) \, \mathds{1}_{A_n^c} \right) \right|
  \\
  & \leq n \, \left| \var(1/\beta_n) 
         - \var\left( \frac{2}{\E[1/\beta_n]} - \frac{1/\beta_n}{\E[1/\beta_n]^2} \right) \right|
         + n \, \var\left( R_1(1/\beta_n) \, \mathds{1}_{A_n} \right)
  \\
  & \qquad + n \, \var\left( \left( \beta_n - \frac{2}{\E[1/\beta_n]} + \frac{1/\beta_n}{\E[1/\beta_n]^2} \right) \, \mathds{1}_{A_n^c} \right)
           + 2n \, \left| \cov \left( \left( \frac{2}{\E[1/\beta_n]} - \frac{1/\beta_n}{\E[1/\beta_n]^2} \right), R_1(1/\beta_n) \, \mathds{1}_{A_n} \right) \right|
  \\
  & \qquad + 2n \, \left| \cov \left( \left( \frac{2}{\E[1/\beta_n]} - \frac{1/\beta_n}{\E[1/\beta_n]^2} \right), \left( \beta_n - \frac{2}{\E[1/\beta_n]} + \frac{1/\beta_n}{\E[1/\beta_n]^2} \right) \, \mathds{1}_{A_n^c} \right) \right|
  \\
  & \qquad + 2n \, \left| \cov \left( R_1(1/\beta_n) \, \mathds{1}_{A_n}, \left( \beta_n - \frac{2}{\E[1/\beta_n]} + \frac{1/\beta_n}{\E[1/\beta_n]^2} \right) \, \mathds{1}_{A_n^c} \right) \right|
  \\
  & \leq n \, \var(1/\beta_n) \left| \underbrace{1 - \frac{1}{\E[1/\beta_n]^4}}_{\longrightarrow 0} \right|
         + \underbrace{n \, \var\left( R_1(1/\beta_n) \, \mathds{1}_{A_n} \right)}_{\longrightarrow 0; \, \textrm{by } \eqref{AN.Lemma:TaylorExp.UI.2}}
  \\
  & \qquad + \underbrace{n \, \var\left( \left( \beta_n - \frac{2}{\E[1/\beta_n]} + \frac{1/\beta_n}{\E[1/\beta_n]^2} \right) \, \mathds{1}_{A_n^c} \right)}_{\longrightarrow 0; \, \textrm{by } \eqref{AN.Lemma:TaylorExp.UI.2}}
           + 2 \, \underbrace{\frac{\sqrt{n \, \var \left( 1/\beta_n \right)} \, \sqrt{n \, \var \left(R_1(1/\beta_n) \, \mathds{1}_{A_n}\right)}}{\E[1/\beta_n]^2}}_{\longrightarrow 0; \, \textrm{by } \eqref{AN.Lemma:TaylorExp.UI.2}, \textrm{ Lemma \ref{AN.Lemma:AsympVar.1/beta}}}
  \\
  & \qquad + 2 \, \underbrace{\frac{\sqrt{n \, \var \left( 1/\beta_n \right)} \, \sqrt{n \, \var \left(\left( \beta_n - \frac{2}{\E[1/\beta_n]} + \frac{1/\beta_n}{\E[1/\beta_n]^2} \right) \, \mathds{1}_{A_n^c}\right)}}{\E[1/\beta_n]^2}}_{\longrightarrow 0; \, \textrm{by } \eqref{AN.Lemma:TaylorExp.UI.2}, \textrm{ Lemma \ref{AN.Lemma:AsympVar.1/beta}}}
  \\
  & \qquad + 2 \, \underbrace{\sqrt{n \, \var \left(R_1(1/\beta_n) \, \mathds{1}_{A_n}\right)} \, \sqrt{n \, \var \left(\left( \beta_n - \frac{2}{\E[1/\beta_n]} + \frac{1/\beta_n}{\E[1/\beta_n]^2} \right) \, \mathds{1}_{A_n^c}\right)}}_{\longrightarrow 0; \textrm{by } \eqref{AN.Lemma:TaylorExp.UI.2}} \longrightarrow 0
\end{align*}
where we use \(\limsup_{n\to \infty} n \Var(1/\beta_n)< \infty\) by Lemma \ref{AN.Lemma:AsympVar.1/beta}.

Lemma \ref{AN.Lemma:AsympVar.1/beta} together with \eqref{AN.Lemma:TaylorExp.UI.3} implies \eqref{AN.Lemma:TaylorExp.UI.4}.

We now prove \eqref{AN.Lemma:TaylorExp.UI.5}. Due to
Eq. \eqref{AN.Eq:TaylorExp2} we obtain 
\begin{align} \label{AN.Lemma:TaylorExp.ID2}
  \lefteqn{\beta_n - \E[\beta_n]}
  \\
  & = \left( \frac{2}{\E[1/\beta_n]} - \frac{1/\beta_n}{\E[1/\beta_n]^2}  \right)  
         + R_1(1/\beta_n) \, \mathds{1}_{A_n}
         + \left( \beta_n - \frac{2}{\E[1/\beta_n]} + \frac{1/\beta_n}{\E[1/\beta_n]^2} \right) \, \mathds{1}_{A_n^c} \notag
  \\
  & \qquad 
         - \E \left[\left( \frac{2}{\E[1/\beta_n]} - \frac{1/\beta_n}{\E[1/\beta_n]^2}  \right)  
         + R_1(1/\beta_n) \, \mathds{1}_{A_n}
         + \left( \beta_n - \frac{2}{\E[1/\beta_n]} + \frac{1/\beta_n}{\E[1/\beta_n]^2} \right) \, \mathds{1}_{A_n^c}\right] \notag
  \\
  & =  - \left( \frac{1/\beta_n - \E[1/\beta_n]}{\E[1/\beta_n]^2}  \right)  
         + R_1(1/\beta_n) \, \mathds{1}_{A_n} - \E\left[ R_1(1/\beta_n) \, \mathds{1}_{A_n} \right] \notag
  \\
  & \qquad  
         + \left( \beta_n - \frac{2}{\E[1/\beta_n]} + \frac{1/\beta_n}{\E[1/\beta_n]^2} \right) \, \mathds{1}_{A_n^c}
         - \E\left[ \left( \beta_n - \frac{2}{\E[1/\beta_n]} + \frac{1/\beta_n}{\E[1/\beta_n]^2} \right) \, \mathds{1}_{A_n^c}\right]\,. \notag
\end{align}
Hence, Slutzky's theorem yields
\begin{align*} 
  \lefteqn{\frac{\beta_n - \E[\beta_n]}{\sqrt{\var(\beta_n)}}
   = \frac{\beta_n - \E[\beta_n]}{\sqrt{\var(1/\beta_n)}} \, \frac{\sqrt{\var(1/\beta_n)}}{\sqrt{\var(\beta_n)}}} 
  \\
  & =  \left( 
       \underbrace{- \, \frac{1}{\E[1/\beta_n]^2}}_{\longrightarrow -1} \, 
       \underbrace{\frac{1/\beta_n - \E[1/\beta_n]}{\sqrt{\var(1/\beta_n)}}}_{\stackrel{d}{\longrightarrow} N(0,1) \;, \text{ by } \eqref{AN.Lemma.Eq:AN.Beta}} 
       + \underbrace{\frac{\sqrt{n} R_1(1/\beta_n)\, \mathds{1}_{A_n}}{\sqrt{n \, \var(1/\beta_n)}}}_{\stackrel{L^2}{\longrightarrow} 0 \;, \text{ by } \eqref{AN.Lemma:TaylorExp.UI.1}, \textrm{ L. \ref{AN.Lemma:AsympVar.1/beta}}}  
       - \underbrace{\frac{\sqrt{n} \, \E \left[R_1(1/\beta_n)\, \mathds{1}_{A_n}\right]}{\sqrt{n \, \var(1/\beta_n)}}}_{\longrightarrow 0 \;, \text{ by } \eqref{AN.Lemma:TaylorExp.UI.1}, \textrm{ L. \ref{AN.Lemma:AsympVar.1/beta}}} \right) \, 
       \underbrace{\frac{\sqrt{\var(1/\beta_n)}}{\sqrt{\var(\beta_n)}}}_{\longrightarrow 1 \;, \text{ by } \eqref{AN.Lemma:TaylorExp.UI.3}} \notag
  \\
  & \qquad + \left(
       \underbrace{\frac{\sqrt{n} \, \left( \beta_n - \frac{2}{\E[1/\beta_n]} + \frac{1/\beta_n}{\E[1/\beta_n]^2} \right) \, \mathds{1}_{A_n^c}}{\sqrt{n \, \var(1/\beta_n)}}}_{\stackrel{L^2}{\longrightarrow} 0 \;, \text{ by } \eqref{AN.Lemma:TaylorExp.UI.1}, \textrm{ L. \ref{AN.Lemma:AsympVar.1/beta}}}
       - \underbrace{\frac{\sqrt{n} \, \E\left[ \left( \beta_n - \frac{2}{\E[1/\beta_n]} + \frac{1/\beta_n}{\E[1/\beta_n]^2} \right) \, \mathds{1}_{A_n^c}\right]}{\sqrt{n \, \var(1/\beta_n)}}}_{\longrightarrow 0 \;, \text{ by } \eqref{AN.Lemma:TaylorExp.UI.1}, \textrm{ L. \ref{AN.Lemma:AsympVar.1/beta}}}
             \right) \, 
       \underbrace{\frac{\sqrt{\var(1/\beta_n)}}{\sqrt{\var(\beta_n)}}}_{\longrightarrow 1 \;, \text{ by } \eqref{AN.Lemma:TaylorExp.UI.3}}
  \\
  & \stackrel{d}{\longrightarrow} N(0,1)\,.
\end{align*}
This proves \eqref{AN.Lemma:TaylorExp.UI.5}.

Finally, we prove \eqref{AN.Lemma:TaylorExp.UI.6} by showing  
\begin{align*}
  \frac{\beta_n - \E[\beta_n]}{\sqrt{\var(1/\beta_n)}} - \left( - \frac{1/\beta_n - \E[1/\beta_n]}{\sqrt{\var(1/\beta_n)}} \right)
  \stackrel{P}{\longrightarrow} 0\,,
\end{align*}
which, in combination with Eq. \eqref{AN.Lemma:TaylorExp.UI.3}, yields the assertion.
Applying Eq. \eqref{AN.Lemma:TaylorExp.ID2} first gives
\begin{align*}
  & (\beta_n - \E[\beta_n]) + (1/\beta_n - \E[1/\beta_n])
  \\
  &   =  \left( 1/\beta_n - \E[1/\beta_n] \right) \, \left( 1 - \frac{1}{\E[1/\beta_n]^2} \right) 
         + R_1(1/\beta_n) \, \mathds{1}_{A_n} 
         - \E\left[ R_1(1/\beta_n) \, \mathds{1}_{A_n} \right] 
  \\
  & \qquad  
         + \left( \beta_n - \frac{2}{\E[1/\beta_n]} + \frac{1/\beta_n}{\E[1/\beta_n]^2} \right) \, \mathds{1}_{A_n^c}
         - \E\left[ \left( \beta_n - \frac{2}{\E[1/\beta_n]} + \frac{1/\beta_n}{\E[1/\beta_n]^2} \right) \, \mathds{1}_{A_n^c}\right]\,,
\end{align*}
Hence, Slutzky’s theorem implies  
\begin{align*}
  & \frac{\beta_n - \E[\beta_n]}{\sqrt{\var(1/\beta_n)}} - \left( - \frac{1/\beta_n - \E[1/\beta_n]}{\sqrt{\var(1/\beta_n)}} \right) 
  \\
  & =  \underbrace{\frac{1/\beta_n - \E[1/\beta_n]}{\sqrt{\var(1/\beta_n)}}}_{\stackrel{d}{\longrightarrow} N(0,1) \;, \text{ by } \eqref{AN.Lemma.Eq:AN.Beta}}  \, \left( \underbrace{1 - \frac{1}{\E[1/\beta_n]^2}}_{\longrightarrow 0} \right)    
       + \underbrace{\frac{\sqrt{n} \, R_1(1/\beta_n) \, \mathds{1}_{A_n}}{\sqrt{n \, \var(1/\beta_n)}}}_{\stackrel{L^2}{\longrightarrow} 0 \;, \text{ by } \eqref{AN.Lemma:TaylorExp.UI.1}, \textrm{ L. \ref{AN.Lemma:AsympVar.1/beta}}} 
       - \underbrace{\frac{\sqrt{n} \, \E \left[R_1(1/\beta_n)\, \mathds{1}_{A_n}\right]}{\sqrt{n \, \var(1/\beta_n)}}}_{\longrightarrow 0 \;, \text{ by } \eqref{AN.Lemma:TaylorExp.UI.1}, \textrm{ L. \ref{AN.Lemma:AsympVar.1/beta}}}
  \\
  & \qquad 
       \underbrace{\frac{\sqrt{n} \, \left( \beta_n - \frac{2}{\E[1/\beta_n]} + \frac{1/\beta_n}{\E[1/\beta_n]^2} \right) \, \mathds{1}_{A_n^c}}{\sqrt{n \, \var(1/\beta_n)}}}_{\stackrel{L^2}{\longrightarrow} 0 \;, \text{ by } \eqref{AN.Lemma:TaylorExp.UI.1}, \textrm{ L. \ref{AN.Lemma:AsympVar.1/beta}}}
       - \underbrace{\frac{\sqrt{n} \, \E\left[ \left( \beta_n - \frac{2}{\E[1/\beta_n]} + \frac{1/\beta_n}{\E[1/\beta_n]^2} \right) \, \mathds{1}_{A_n^c}\right]}{\sqrt{n \, \var(1/\beta_n)}}}_{\longrightarrow 0 \;, \text{ by } \eqref{AN.Lemma:TaylorExp.UI.1}, \textrm{ L. \ref{AN.Lemma:AsympVar.1/beta}}}
       \stackrel{P}{\longrightarrow} 0,
\end{align*}
This proves the result.
\end{proof}

\bigskip\noindent
{\bf Step 3: Final path to asymptotic normality of $\tfrac{T_n - \E[T_n]}{\sqrt{\var(T_n)}}$.}
Finally, a series of intermediate results are proven that draw a path from $\tfrac{\mu_n - \E[\mu_n]}{\sqrt{\var(\mu_n)}}$ to $\tfrac{T_n - \E[T_n]}{\sqrt{\var(T_n)}}$.

Since \(\kappa_n = \frac{q - \Lambda_n}{q - \alpha}\) is a linear function of $\Lambda_n$, the following statement is immediate from Lemma \ref{AN.Lemma:AsympVar}.

\begin{lemma}[Asymptotic variance III]~~\label{AN.Lemma:AsympVar.kappa}
Assume \(F_\ZZ\) to be fixed and continuous. Then
\begin{enumerate}[(i)]
\item \label{AN.Lemma:AsympVar.kappa1} $\limsup_{n \to \infty} n \, \var(\kappa_n) < \infty$.

\item \label{AN.Lemma:AsympVar.kappa2} If \(\YY\) is not perfectly dependent on \(\XX\,,\) then $\liminf_{n \to \infty} n \, \var(\kappa_n) > 0$.
\end{enumerate}
\end{lemma}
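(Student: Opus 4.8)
The plan is to use that $\kappa_n$ is a \emph{deterministic} affine transformation of $\Lambda_n$, so that its variance is merely a fixed multiple of $\var(\Lambda_n)$, and then to invoke the already-established Lemma~\ref{AN.Lemma:AsympVar}. Concretely, since $\alpha = \sum_{i=2}^{q}\xi(Y_i\mid (Y_{i-1},\ldots,Y_1))$ does not depend on $n$, writing $\kappa_n = \tfrac{q-\Lambda_n}{q-\alpha} = \tfrac{q}{q-\alpha} - \tfrac{1}{q-\alpha}\,\Lambda_n$ immediately gives $\var(\kappa_n) = (q-\alpha)^{-2}\,\var(\Lambda_n)$ for every $n$, so the entire $n$-dependence of $\var(\kappa_n)$ is carried by $\var(\Lambda_n)$.

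The next step is to record that the factor $(q-\alpha)^{-2}$ is a fixed constant lying in a compact subinterval of $(0,\infty)$. Because $\xi$ takes values in $[0,1]$ (axioms (A1)--(A3) and \cite[Lemma~11.2]{chatterjee2021}) and $\alpha$ is a sum of $q-1$ such terms, one has $\alpha\in[0,q-1]$ and hence $q-\alpha\in[1,q]$, i.e.\ $1/q^2 \le (q-\alpha)^{-2} \le 1$. In particular the denominator is strictly positive, so the identity $\var(\kappa_n) = (q-\alpha)^{-2}\,\var(\Lambda_n)$ is well defined.

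It then only remains to transfer the two bounds from $\Lambda_n$ to $\kappa_n$. For part~(i), multiplying $n\,\var(\Lambda_n)$ by $(q-\alpha)^{-2}\le 1$ and using Lemma~\ref{AN.Lemma:AsympVar}\,\eqref{Lemma:AsympVar1}(i) yields $\limsup_{n\to\infty} n\,\var(\kappa_n) \le \limsup_{n\to\infty} n\,\var(\Lambda_n) < \infty$. For part~(ii), under the hypothesis that $\YY$ is not perfectly dependent on $\XX$, Lemma~\ref{AN.Lemma:AsympVar}\,\eqref{Lemma:AsympVar2}(i) gives $\liminf_{n\to\infty} n\,\var(\Lambda_n) > 0$, and multiplying by $(q-\alpha)^{-2}\ge 1/q^2 > 0$ gives $\liminf_{n\to\infty} n\,\var(\kappa_n) > 0$.

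There is essentially no genuine obstacle here: every step is an elementary deterministic manipulation, and all of the probabilistic content has already been packaged into Lemma~\ref{AN.Lemma:AsympVar}. The only (minor) point deserving explicit mention is the verification that $q-\alpha$ is deterministic and bounded away from $0$, which is precisely what the bound $\alpha\le q-1$ supplies; this is why the statement can fairly be called ``immediate''.
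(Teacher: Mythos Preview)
Your proposal is correct and takes essentially the same approach as the paper: the paper simply remarks that since $\kappa_n = \tfrac{q-\Lambda_n}{q-\alpha}$ is a linear (affine) function of $\Lambda_n$ with deterministic coefficient, the result is immediate from Lemma~\ref{AN.Lemma:AsympVar}. Your write-up makes the implicit bound $q-\alpha\in[1,q]$ explicit, but the argument is identical.
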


Even when interacting with \(\kappa_n\), the terms \(\beta_n\) and \(1/\beta_n\) can be replaced by each other as follows.
\begin{lemma} \label{AN.Lemma:TaylorExp:Cor}
Under the assumptions of Proposition \ref{AN.Lemma:TaylorExp}, if 
\begin{align} \label{AN.Cond.Kappa.UI}
    \sup_{n \in \mathbb{N}} \, \E \left[ \left| \frac{\kappa_n - \E[\kappa_n]}{\sqrt{\var(\kappa_n)}} \right|^{2+\delta} \right] < \infty
\end{align}
for some \(\delta > 0\) and \(\YY\) is not perfectly dependent on \(\XX\), then
\begin{enumerate}[(i)]
\item \label{AN.Eq.Var.BetaKappa}
$\lim_{n \to \infty} \left| \frac{\var(\beta_n)}{\var(\kappa_n)} - \frac{\var(1/\beta_n)}{\var(\kappa_n)} \right| = 0$.
\item \label{AN.Eq.Var.BetaKappab}
$\lim_{n \to \infty} \left| \frac{\var(\kappa_n)}{\var(\beta_n)} -  \frac{\var(\kappa_n)}{\var(1/\beta_n)} \right| = 0$.
\item \label{AN.Eq.Cov.BetaKappa}
$\lim_{n \to \infty} n \left| \cov(\kappa_n,\beta_n) - \cov(\kappa_n,-1/\beta_n) \right| = 0$.
\item \label{AN.Eq.Cov.BetaKappa2}
$\lim_{n \to \infty} \left| \cov \left( \beta_n \, \frac{\kappa_n - \E[\kappa_n]}{\sqrt{\var(\kappa_n)}}, \frac{\beta_n - \E[\beta_n]}{\sqrt{\var(\beta_n)}} \right)
    - \Cor \left(\kappa_n, \beta_n \right) \right| = 0$.
\item \label{AN.Eq.Cov.BetaKappa3}
$\lim_{n \to \infty} \left| \cov \left( \frac{\kappa_n}{\E[\kappa_n]} \, \frac{\beta_n - \E[\beta_n]}{\sqrt{\var(\beta_n)}}, 
                          \frac{\kappa_n - \E[\kappa_n]}{\sqrt{\var(\kappa_n)}} \right) \, 
               - \Cor \left( \kappa_n, \beta_n \right) \right| = 0$.
\end{enumerate}
\end{lemma}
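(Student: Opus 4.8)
The plan is to reduce all five statements to routine manipulations resting on a handful of facts that are available under the stated hypotheses, and I begin by recording those inputs. By Lemma \ref{AN.Lemma:Finite.Beta} one has $\beta_n,1/\beta_n\in[\tfrac1{3q},3q]$, and the trivial bounds $-\tfrac{2n-1}{n+1}\le\xi_n\le1$ used in its proof give $0\le\kappa_n\le 3q$; moreover $\beta_n\to1$, $1/\beta_n\to1$ and $\kappa_n\to\kappa>0$ hold $P$-a.s.\ by consistency (Theorem \ref{theT.Consistency}, using that $\YY$ is not perfectly dependent on $\XX$), so dominated convergence yields $\E[\beta_n]\to1$, $\E[1/\beta_n]\to1$, $\E[\kappa_n]\to\kappa$ and $\var(\beta_n),\var(1/\beta_n),\var(\kappa_n)\to0$. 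From Lemmas \ref{AN.Lemma:AsympVar.kappa} and \ref{AN.Lemma:AsympVar.1/beta} and Proposition \ref{AN.Lemma:TaylorExp} the three quantities $n\var(\kappa_n)$, $n\var(\beta_n)$, $n\var(1/\beta_n)$ are bounded above and bounded away from $0$, and $n\bigl|\var(1/\beta_n)-\var(\beta_n)\bigr|\to0$. Finally, \eqref{AN.Cond.Kappa.UI} says that $\{\tilde\kappa_n^2\}_n$ is uniformly integrable, where I abbreviate $\tilde\kappa_n:=\tfrac{\kappa_n-\E[\kappa_n]}{\sqrt{\var(\kappa_n)}}$ and $\tilde\beta_n:=\tfrac{\beta_n-\E[\beta_n]}{\sqrt{\var(\beta_n)}}$, and note $\E[\tilde\kappa_n]=\E[\tilde\beta_n]=0$, $\E[\tilde\beta_n^2]=1$ and $\Cor(\kappa_n,\beta_n)=\E[\tilde\kappa_n\tilde\beta_n]$.

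Items \eqref{AN.Eq.Var.BetaKappa} and \eqref{AN.Eq.Var.BetaKappab} are then immediate: $\bigl|\tfrac{\var(\beta_n)}{\var(\kappa_n)}-\tfrac{\var(1/\beta_n)}{\var(\kappa_n)}\bigr|=\tfrac{n|\var(\beta_n)-\var(1/\beta_n)|}{n\var(\kappa_n)}\to0$, and $\bigl|\tfrac{\var(\kappa_n)}{\var(\beta_n)}-\tfrac{\var(\kappa_n)}{\var(1/\beta_n)}\bigr|=\tfrac{(n\var(\kappa_n))\,(n|\var(1/\beta_n)-\var(\beta_n)|)}{(n\var(\beta_n))\,(n\var(1/\beta_n))}\to0$. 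For \eqref{AN.Eq.Cov.BetaKappa} I would write $\cov(\kappa_n,\beta_n)-\cov(\kappa_n,-1/\beta_n)=\cov(\kappa_n,\beta_n+1/\beta_n)$ and use Cauchy--Schwarz to get $n\bigl|\cov(\kappa_n,\beta_n+1/\beta_n)\bigr|\le\sqrt{n\var(\kappa_n)}\,\sqrt{n\var(\beta_n+1/\beta_n)}$; since $n\var(\kappa_n)$ is bounded it then suffices to prove $n\var(\beta_n+1/\beta_n)\to0$.

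To establish $n\var(\beta_n+1/\beta_n)\to0$ I would not use the exact identity $\beta_n+1/\beta_n-2=-(\beta_n-1)(1/\beta_n-1)$ directly — that route forces a fourth-moment/bias estimate for $\alpha_n$ of order $o(1/n)$ that is not readily available — but instead reuse the centered Taylor decomposition of $\beta_n$ at the point $\E[1/\beta_n]$ from the proof of Proposition \ref{AN.Lemma:TaylorExp}: there $(\beta_n-\E[\beta_n])+(1/\beta_n-\E[1/\beta_n])$ is exhibited as $\bigl(1-\tfrac1{\E[1/\beta_n]^2}\bigr)(1/\beta_n-\E[1/\beta_n])$ plus two centered remainder terms supported on $A_n$ and on $A_n^c$. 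The variance of the first summand, multiplied by $n$, equals $\bigl(1-\tfrac1{\E[1/\beta_n]^2}\bigr)^2\,(n\var(1/\beta_n))\to0$ because $\E[1/\beta_n]\to1$ and $n\var(1/\beta_n)$ is bounded; the $n$-scaled variances of the two remainder terms tend to $0$ exactly as shown in that proof; and the three cross terms are controlled by Cauchy--Schwarz. This gives $n\var(\beta_n+1/\beta_n)\to0$, hence \eqref{AN.Eq.Cov.BetaKappa}.

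Items \eqref{AN.Eq.Cov.BetaKappa2} and \eqref{AN.Eq.Cov.BetaKappa3} share one mechanism. Since $\tilde\beta_n$ and $\tilde\kappa_n$ are centered, $\cov\bigl(\beta_n\tilde\kappa_n,\tilde\beta_n\bigr)-\Cor(\kappa_n,\beta_n)=\E\bigl[(\beta_n-1)\tilde\kappa_n\tilde\beta_n\bigr]$, whose modulus is at most $\sqrt{\E[(\beta_n-1)^2\tilde\kappa_n^2]}$ by Cauchy--Schwarz (using $\E[\tilde\beta_n^2]=1$); because $(\beta_n-1)^2\to0$ $P$-a.s.\ and is uniformly bounded while $\{\tilde\kappa_n^2\}$ is uniformly integrable, splitting $\E[(\beta_n-1)^2\tilde\kappa_n^2]$ over $\{\tilde\kappa_n^2\le K\}$ and its complement and letting $n\to\infty$ and then $K\to\infty$ forces $\E[(\beta_n-1)^2\tilde\kappa_n^2]\to0$, which is \eqref{AN.Eq.Cov.BetaKappa2}. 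For \eqref{AN.Eq.Cov.BetaKappa3} the same computation gives $\cov\bigl(\tfrac{\kappa_n}{\E[\kappa_n]}\tilde\beta_n,\tilde\kappa_n\bigr)-\Cor(\kappa_n,\beta_n)=\E\bigl[\bigl(\tfrac{\kappa_n}{\E[\kappa_n]}-1\bigr)\tilde\kappa_n\tilde\beta_n\bigr]$, and $\tfrac{\kappa_n}{\E[\kappa_n]}-1=\tfrac{\kappa_n-\E[\kappa_n]}{\E[\kappa_n]}$ is uniformly bounded for $n$ large (as $0\le\kappa_n\le 3q$ and $\E[\kappa_n]\to\kappa>0$) and $\to0$ $P$-a.s., so the identical uniform-integrability argument closes it. The one genuinely delicate step is proving $n\var(\beta_n+1/\beta_n)\to0$ in \eqref{AN.Eq.Cov.BetaKappa}; the remainder is bookkeeping with Cauchy--Schwarz, the variance estimates already proved, and uniform integrability.
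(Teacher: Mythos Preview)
Your proof is correct and follows essentially the same route as the paper: items \eqref{AN.Eq.Var.BetaKappa}--\eqref{AN.Eq.Var.BetaKappab} are handled identically, for \eqref{AN.Eq.Cov.BetaKappa} the paper also reduces to the Taylor decomposition of $\beta_n$ from the proof of Proposition~\ref{AN.Lemma:TaylorExp} (applying it directly inside $\cov(\kappa_n,\cdot)$ rather than after your preliminary Cauchy--Schwarz step, which is cosmetic), and for \eqref{AN.Eq.Cov.BetaKappa2}--\eqref{AN.Eq.Cov.BetaKappa3} the paper likewise bounds the covariance difference via $\sqrt{\var((\beta_n-1)\tilde\kappa_n)}$ and invokes uniform integrability. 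The only minor deviation is that for \eqref{AN.Eq.Cov.BetaKappa3} the paper cites the uniform integrability coming from assumption~\eqref{AN.Lemma:TaylorExp.UI} on $1/\beta_n$ rather than~\eqref{AN.Cond.Kappa.UI}, but your choice works equally well.
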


\begin{proof}
\eqref{AN.Eq.Var.BetaKappa} is immediate from Lemma \ref{AN.Lemma:AsympVar.kappa} and Proposition \ref{AN.Lemma:TaylorExp}. 
Statement \eqref{AN.Eq.Var.BetaKappab} follows from
\begin{align*}
    \left| \frac{\var(\kappa_n)}{\var(\beta_n)} -  \frac{\var(\kappa_n)}{\var(1/\beta_n)} \right| 
    = \frac{n^2\var(\kappa_n)^2}{n\var(\beta_n) \, n \var(1/\beta_n)} \left| \frac{\var(\beta_n)}{\var(\kappa_n)} - \frac{\var(1/\beta_n)}{\var(\kappa_n)} \right| \to 0\,,
\end{align*}
using \eqref{AN.Eq.Var.BetaKappa} as well as Lemma \ref{AN.Lemma:AsympVar.kappa}\eqref{AN.Lemma:AsympVar.kappa1}, Lemma \ref{AN.Lemma:AsympVar.1/beta}\eqref{AN.Lemma:AsympVar.1/betab}, and Proposition \ref{AN.Lemma:TaylorExp}\eqref{AN.Lemma:TaylorExp.UI.4}.

Now, applying Eq. \eqref{AN.Eq:TaylorExp2} together with Cauchy-Schwarz inequality yields
\begin{align*}
  \lefteqn{n \, \left| \cov \left(\kappa_n,\beta_n\right) - \cov\left(\kappa_n,-1/\beta_n\right) \right|
      =  n \, \left| \cov \left(\kappa_n, \beta_n + 1/\beta_n \right) \right|}
  \\
  &   =  n \, \left| \cov \left(\kappa_n, 
              \left( \frac{2}{\E[1/\beta_n]} - \frac{1/\beta_n}{\E[1/\beta_n]^2}  \right)  
              + R_1(1/\beta_n) \, \mathds{1}_{A_n}
              + \left( \beta_n - \frac{2}{\E[1/\beta_n]} + \frac{1/\beta_n}{\E[1/\beta_n]^2} \right) \, \mathds{1}_{A_n^c} + 1/\beta_n
              \right) \right|
  \\
  &   =  n \, \biggl| \cov \left(\kappa_n, \frac{1}{\beta_n} \, \left(1 - \frac{1}{\E[1/\beta_n]^2}\right) \right) 
         + \cov \left(\kappa_n, R_1(1/\beta_n) \, \mathds{1}_{A_n} \right) 
  \\
  & \qquad 
         + \cov \left(\kappa_n,\left( \beta_n - \frac{2}{\E[1/\beta_n]} + \frac{1/\beta_n}{\E[1/\beta_n]^2} \right) \, \mathds{1}_{A_n^c} \right) \biggr|
  \\
  & \leq \sqrt{n \, \var \left(\kappa_n \right)} \sqrt{n \, \var \left(1/\beta_n\right)} \, \left|\underbrace{1 - \frac{1}{\E[1/\beta_n]^2}}_{\longrightarrow 0}\right|  
         + \sqrt{n \, \var \left(\kappa_n \right)}  \underbrace{\sqrt{n \, \var \left(R_1(1/\beta_n) \, \mathds{1}_{A_n}\right)}}_{\longrightarrow 0\; \textrm{ by } \eqref{AN.Lemma:TaylorExp.UI.2}}
  \\
  & \qquad + \sqrt{n \, \var \left(\kappa_n \right)}  \underbrace{\sqrt{n \, \var \left(\left( \beta_n - \frac{2}{\E[1/\beta_n]} + \frac{1/\beta_n}{\E[1/\beta_n]^2} \right) \, \mathds{1}_{A_n^c}\right)}}_{\longrightarrow 0\; \textrm{ by } \eqref{AN.Lemma:TaylorExp.UI.2}} \longrightarrow 0\,,
\end{align*}
where we use \(\limsup_{n\to \infty} n \Var(\kappa_n) < \infty\) and \(\limsup_{n\to \infty} n \Var(1/\beta_n) < \infty\) by Lemma \ref{AN.Lemma:AsympVar.kappa} and Lemma \ref{AN.Lemma:AsympVar.1/beta}.
This proves \eqref{AN.Eq.Cov.BetaKappa}.

A further use of the Cauchy-Schwarz inequality gives
\begin{align*}
  \lefteqn{\left| \cov \left( \beta_n \, \frac{\kappa_n - \E[\kappa_n]}{\sqrt{\var(\kappa_n)}}, \frac{\beta_n - \E[\beta_n]}{\sqrt{\var(\beta_n)}} \right)
    - \cov \left( \frac{\kappa_n - \E[\kappa_n]}{\sqrt{\var(\kappa_n)}}, \frac{\beta_n - \E[\beta_n]}{\sqrt{\var(\beta_n)}} \right) \right|}
  \\
  &   =  \left| \cov \left( (\beta_n-1) \, \frac{\kappa_n - \E[\kappa_n]}{\sqrt{\var(\kappa_n)}}, \frac{\beta_n - \E[\beta_n]}{\sqrt{\var(\beta_n)}} \right) \right|
  \\
  & \leq \sqrt{\var\left( \underbrace{(\beta_n-1)}_{\longrightarrow 0} \, \underbrace{\frac{\kappa_n - \E[\kappa_n]}{\sqrt{\var(\kappa_n)}}}_{\stackrel{d}{\longrightarrow} N(0,1) \; \textrm{ by } \eqref{AN.Lemma.Eq:AN.Kappa}}  \right)} \cdot \underbrace{\sqrt{\var\left( \frac{\beta_n - \E[\beta_n]}{\sqrt{\var(\beta_n)}} \right)}}_{= 1} \longrightarrow 0\,,
\end{align*}
where convergence follows from uniform integrability due to \eqref{AN.Cond.Kappa.UI} in combination with Billingsley [1999, Theorem 3.5].
This proves \eqref{AN.Eq.Cov.BetaKappa2}. Statement \eqref{AN.Eq.Cov.BetaKappa3} follows by a similar reasoning using uniform integrability as a consequence of \eqref{AN.Lemma:TaylorExp.UI}.
%and Billingsley [1999, Theorem 3.5].
\end{proof}

\begin{lemma}\label{AN.Lemma:LimitsKappaBeta2}
Under the assumptions of Lemma \ref{AN.Lemma:TaylorExp:Cor}, 
\begin{enumerate}[(i)] 
\item \label{AN.Lemma:LimitsKappaBeta2.Eq1}
\(\liminf_{n \to \infty} \E[\kappa_n] > 0\).
\item \label{AN.Lemma:LimitsKappaBeta2.Eq2}
$\lim_{n \to \infty} \left| \frac{\var(\beta_n \, \kappa_n)}{\var(\kappa_n)} 
    - \frac{\var(\mu_n)}{\var(\Lambda_n)} \right|
    = \lim_{n \to \infty} \left| \frac{\var(\beta_n \, \kappa_n)}{\var(\kappa_n)} 
    - \frac{\var(\kappa_n - \kappa \cdot 1/\beta_n)}{\var(\kappa_n)} \right|
    = 0$.
\item \label{AN.Lemma:LimitsKappaBeta2.Eq3}
$\lim_{n \to \infty} \left| \frac{\var(\beta_n \, \kappa_n)}{\var(\beta_n) \, \E[\kappa_n]^2} 
    - \frac{\var(\mu_n)}{\var(\kappa \, \alpha_n)} \right|
    = \lim_{n \to \infty} \left| \frac{\var(\beta_n \, \kappa_n)}{\var(\beta_n) \, \E[\kappa_n]^2} 
    - \frac{\var(\kappa_n - \kappa \cdot 1/\beta_n)}{\var(\kappa \cdot 1/\beta_n)} \right|
    = 0$.
\end{enumerate}
\end{lemma}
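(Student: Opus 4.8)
The plan is to reduce each of the three statements to the asymptotic variance estimates already collected (Lemmas \ref{AN.Lemma:AsympVar}, \ref{AN.Lemma:AsympVar.1/beta}, \ref{AN.Lemma:AsympVar.kappa}, and Proposition \ref{AN.Lemma:TaylorExp}) together with the Taylor‑expansion identity \eqref{AN.Eq:TaylorExp2}, exploiting that $\beta_n\to 1$ and $\kappa_n\to\kappa>0$ almost surely. For \eqref{AN.Lemma:LimitsKappaBeta2.Eq1}, recall $\kappa_n = (q-\Lambda_n)/(q-\alpha)$ with $\Lambda_n\le q$ by axiom (A\ref{prop1}) and $\alpha\in[0,q-1]$; since $\YY$ is not perfectly dependent on $\XX$ we have $\Lambda<q$, so $\E[\kappa_n]\to\kappa = (q-\Lambda)/(q-\alpha)>0$, which gives $\liminf_{n\to\infty}\E[\kappa_n]>0$. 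This is the easy part and will be dispatched in one line.

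For \eqref{AN.Lemma:LimitsKappaBeta2.Eq2} and \eqref{AN.Lemma:LimitsKappaBeta2.Eq3}, the second equality in each line (identifying $\var(\mu_n)$ with $\var(\kappa_n-\kappa/\beta_n)$, and $\var(\Lambda_n)$ with $\var(\kappa_n)$, $\var(\kappa\alpha_n)$ with $\var(\kappa/\beta_n)$) is purely a matter of the linear change of variables $\kappa_n = (q-\Lambda_n)/(q-\alpha)$, $1/\beta_n = (q-\alpha_n)/(q-\alpha)$, $\mu_n = \Lambda_n-\kappa\alpha_n$: one checks $\mu_n = (q-\alpha)(\kappa_n - \kappa/\beta_n) + \text{const}$, hence $\var(\mu_n) = (q-\alpha)^2\var(\kappa_n-\kappa/\beta_n)$, and similarly for the denominators, so the ratios coincide exactly (not just asymptotically). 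The remaining work is to show $n\,|\var(\beta_n\kappa_n) - \var(\kappa_n-\kappa/\beta_n)|\to 0$ and the analogous statement with the normalization by $\var(\beta_n)\E[\kappa_n]^2$. I would write $\beta_n\kappa_n = (1+(\beta_n-1))\kappa_n = \kappa_n + (\beta_n-1)\kappa_n$ and, using \eqref{AN.Eq:TaylorExp2} to replace $\beta_n-1$ by $-((1/\beta_n)-1)/\E[1/\beta_n]^2$ plus remainder terms whose scaled variances vanish by \eqref{AN.Lemma:TaylorExp.UI.2}, expand both $\var(\beta_n\kappa_n)$ and $\var(\kappa_n-\kappa/\beta_n)$ into sums of variances and covariances of $\kappa_n$, $1/\beta_n$, and the negligible remainders. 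The cross terms are controlled by Cauchy–Schwarz together with $\limsup_n n\var(\kappa_n)<\infty$, $\limsup_n n\var(1/\beta_n)<\infty$; the leading terms match because $\E[1/\beta_n]\to 1$ and $\kappa = \lim_n\kappa_n$, and the $\cov(\kappa_n\cdot(\beta_n-1),\,\cdot\,)$ contributions vanish after dividing by $\var(\kappa_n)$ (respectively $\var(\beta_n)\E[\kappa_n]^2$) using that $\beta_n-1\to 0$ together with the uniform integrability of $(\kappa_n-\E[\kappa_n])^2/\var(\kappa_n)$ supplied by \eqref{AN.Cond.Kappa.UI}, exactly as in the proof of Lemma \ref{AN.Lemma:TaylorExp:Cor}\eqref{AN.Eq.Cov.BetaKappa2}.

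The main obstacle is bookkeeping rather than conceptual: one must expand the products $\beta_n\kappa_n$ carefully, keep track of the $\mathds{1}_{A_n}$/$\mathds{1}_{A_n^c}$ split from \eqref{AN.Eq:TaylorExp2}, and verify that every cross‑covariance between $\kappa_n$ and a remainder term, once scaled by $n$ and divided by the appropriate variance, is bounded by a product $\sqrt{n\var(\kappa_n)}\cdot\sqrt{n\var(\text{remainder})}$ of the form $O(1)\cdot o(1)$. The normalizations in \eqref{AN.Lemma:LimitsKappaBeta2.Eq3} differ from those in \eqref{AN.Lemma:LimitsKappaBeta2.Eq2} only by the replacement of $\var(\kappa_n)$ with $\var(\beta_n)\E[\kappa_n]^2$; since $\liminf_n n\var(\beta_n)>0$ by Proposition \ref{AN.Lemma:TaylorExp}\eqref{AN.Lemma:TaylorExp.UI.4} and $\liminf_n\E[\kappa_n]>0$ by part \eqref{AN.Lemma:LimitsKappaBeta2.Eq1}, the same estimates go through after adjusting the $O(1)$ constants, and I would simply remark this at the end rather than repeating the computation.
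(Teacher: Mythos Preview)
Your proposal is correct and follows essentially the same strategy as the paper's proof. For part \eqref{AN.Lemma:LimitsKappaBeta2.Eq1} the paper uses Fatou's lemma directly ($\liminf_n \E[\kappa_n]\ge \E[\liminf_n\kappa_n]=\kappa>0$), whereas you argue via $\E[\kappa_n]\to\kappa$ using boundedness of $\kappa_n$; both work, though your citation of axiom (A\ref{prop1}) for $\Lambda_n\le q$ should instead point to the estimator bound $\xi_n\le 1$ established in Lemma \ref{AN.Lemma:Finite.Beta}. For parts \eqref{AN.Lemma:LimitsKappaBeta2.Eq2} and \eqref{AN.Lemma:LimitsKappaBeta2.Eq3}, your observation that the second equality in each line is an \emph{exact} identity under the linear change of variables is correct and clarifying. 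The remaining comparison of $\var(\beta_n\kappa_n)$ with $\var(\kappa_n-\kappa/\beta_n)$ is handled in the paper by the decomposition $\beta_n\kappa_n=\beta_n(\kappa_n-\E[\kappa_n])+\beta_n\E[\kappa_n]$ (Eq.~\eqref{AN.Lemma:LimitsKappaBeta2.Eq10}) rather than your $\beta_n\kappa_n=\kappa_n+(\beta_n-1)\kappa_n$, but both routes feed into the same Taylor expansion \eqref{AN.Eq:TaylorExp2}, the same Cauchy--Schwarz bounds, and the same uniform integrability arguments from \eqref{AN.Cond.Kappa.UI} and \eqref{AN.Lemma:TaylorExp.UI}; the bookkeeping is equivalent.
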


\begin{proof}
It is immediate from Fatou's Lemma that 
\(\liminf_{n \to \infty} \E[\kappa_n] \geq \E [\liminf_{n \to \infty} \kappa_n] = \kappa > 0\).
This proves \eqref{AN.Lemma:LimitsKappaBeta2.Eq1}.

To show \eqref{AN.Lemma:LimitsKappaBeta2.Eq2}, straightforward calculation first yields
\begin{align} \label{AN.Lemma:LimitsKappaBeta2.Eq10}
  & \frac{\var(\beta_n \, \kappa_n)}{\var(\kappa_n)} 
    =  \var \left( \beta_n \, \frac{\kappa_n - \E[\kappa_n]}{\sqrt{\var(\kappa_n)}} + \beta_n \, \frac{\E[\kappa_n]}{\sqrt{\var(\kappa_n)}} \right)
  \\
  & =  \var \left( \beta_n \, \frac{\kappa_n - \E[\kappa_n]}{\sqrt{\var(\kappa_n)}} 
                   + \frac{\beta_n - \E[\beta_n]}{\sqrt{\var(\beta_n)}}  \, \frac{\E[\kappa_n] \, \sqrt{\var(\beta_n)}}{\sqrt{\var(\kappa_n)}} 
                   + \frac{\E[\beta_n] \, \E[\kappa_n]}{\sqrt{\var(\kappa_n)}} \right) \notag
  \\
  & =  \var \left( \beta_n \, \frac{\kappa_n - \E[\kappa_n]}{\sqrt{\var(\kappa_n)}} \right)
       + \var \left( \frac{\beta_n - \E[\beta_n]}{\sqrt{\var(\beta_n)}}  \, \frac{\E[\kappa_n] \, \sqrt{\var(\beta_n)}}{\sqrt{\var(\kappa_n)}} \right) \notag
  \\
  & \qquad  + 2 \, \cov \left( \beta_n \, \frac{\kappa_n - \E[\kappa_n]}{\sqrt{\var(\kappa_n)}}, 
                          \frac{\beta_n - \E[\beta_n]}{\sqrt{\var(\beta_n)}}  \, \frac{\E[\kappa_n] \, \sqrt{\var(\beta_n)}}{\sqrt{\var(\kappa_n)}} \right) \notag
  \\
  & =  \var \left( \beta_n \, \frac{\kappa_n - \E[\kappa_n]}{\sqrt{\var(\kappa_n)}} \right)
          + \left(\frac{\E[\kappa_n] \, \sqrt{\var(\beta_n)}}{\sqrt{\var(\kappa_n)}}\right)^2 \notag
  \\
  & \qquad  + 2 \, \cov \left( \beta_n \, \frac{\kappa_n - \E[\kappa_n]}{\sqrt{\var(\kappa_n)}}, 
                          \frac{\beta_n - \E[\beta_n]}{\sqrt{\var(\beta_n)}} \right) \, \frac{\E[\kappa_n] \, \sqrt{\var(\beta_n)}}{\sqrt{\var(\kappa_n)}}\,. \notag
\end{align}
By Eq. \eqref{AN.Lemma:LimitsKappaBeta2.Eq10} we then obtain
\begin{align*}
  & \left| \frac{\var(\beta_n \, \kappa_n)}{\var(\kappa_n)} 
    - \frac{\var(\kappa_n - \kappa \cdot 1/\beta_n)}{\var(\kappa_n)} \right|
  \\
  & =  \Biggl| \Biggl( \var \left( \beta_n \, \frac{\kappa_n - \E[\kappa_n]}{\sqrt{\var(\kappa_n)}} \right)
             + \frac{\E[\kappa_n]^2 \, \var(\beta_n)}{\var(\kappa_n)}
             + 2 \, \cov \left( \beta_n \, \frac{\kappa_n - \E[\kappa_n]}{\sqrt{\var(\kappa_n)}}, 
                          \frac{\beta_n - \E[\beta_n]}{\sqrt{\var(\beta_n)}} \right) \, \frac{\E[\kappa_n] \, \sqrt{\var(\beta_n)}}{\sqrt{\var(\kappa_n)}}
                          \Biggr)
  \\
  & \qquad  - \left( 1 
               + \frac{\kappa^2 \, \var(1/\beta_n)}{\var(\kappa_n)} 
               + 2 \, \Cor \left( \kappa_n, -1/\beta_n \right) \, \frac{\kappa \, \sqrt{\var(1/\beta_n)}}{\sqrt{\var(\kappa_n)}} \right) \Biggr|    
  \\
  & \leq \left| \var \left( \underbrace{\beta_n}_{\longrightarrow 1} \, \underbrace{\frac{\kappa_n - \E[\kappa_n]}{\sqrt{\var(\kappa_n)}}}_{\stackrel{d}{\longrightarrow} N(0,1) \; \textrm{ by }\eqref{AN.Lemma.Eq:AN.Kappa}} \right) - 1 \right|
         +  \underbrace{\left| \E[\kappa_n]^2 - \kappa^2 \right|}_{\longrightarrow 0} \, \frac{n \, \var(\beta_n)}{n \, \var(\kappa_n)}
         + \kappa^2 \, \underbrace{\left| \frac{\var(\beta_n)}{\var(\kappa_n)} -  \frac{\var(1/\beta_n)}{\var(\kappa_n)} \right|}_{\longrightarrow 0\,, \textrm{ by Lemma \ref{AN.Lemma:TaylorExp:Cor}\eqref{AN.Eq.Var.BetaKappa}}}
  \\
  & \qquad   + 2 \, \underbrace{\left| \cov \left( \beta_n \, \frac{\kappa_n - \E[\kappa_n]}{\sqrt{\var(\kappa_n)}}, \frac{\beta_n - \E[\beta_n]}{\sqrt{\var(\beta_n)}} \right) \, 
               - \Cor \left( \kappa_n, \beta_n \right) \right|}_{\longrightarrow 0\,, \textrm{ by Lemma \ref{AN.Lemma:TaylorExp:Cor}}} \, \underbrace{\E[\kappa_n]}_{\longrightarrow \kappa} \frac{\sqrt{n \, \var(\beta_n)}}{\sqrt{n \, \var(\kappa_n)}}
  \\
  & \qquad   + 2 \, \left| \Cor \left( \kappa_n, \beta_n \right) \right| \, \underbrace{\left| \E[\kappa_n] 
               - \kappa \right|}_{\longrightarrow 0} \, \frac{\sqrt{n \, \var(\beta_n)}}{\sqrt{n \, \var(\kappa_n)}}
             + 2 \, \kappa \,
               \underbrace{\left| \Cor \left( \kappa_n, \beta_n \right) - \Cor \left( \kappa_n, -1/\beta_n \right) \right|}_{\longrightarrow 0; \textrm{ by } \eqref{AN.Lemma:LimitsKappaBeta2.Eq4}}
               \, \frac{\sqrt{n \, \var(\beta_n)}}{\sqrt{n \, \var(\kappa_n)}}
  \\
  & \qquad   + 2 \, \kappa \, \left| \Cor \left( \kappa_n, -1/\beta_n \right) \right|
               \underbrace{\left| \frac{\sqrt{\var(\beta_n)}}{\sqrt{\var(\kappa_n)}} - \frac{\sqrt{\var(1/\beta_n)}}{\sqrt{\var(\kappa_n)}} \right|}_{\longrightarrow 0\,,  \textrm{ by Lemma \ref{AN.Lemma:TaylorExp:Cor} \eqref{AN.Eq.Var.BetaKappa}}}
            \qquad \longrightarrow 0\,,
\end{align*}
where we use \(\liminf_{n\to \infty} n \Var(\kappa_n) > 0\) and \(\limsup_{n\to \infty} n \Var(\beta_n) < \infty\) by Lemma \ref{AN.Lemma:AsympVar.kappa} and Proposition \ref{AN.Lemma:TaylorExp}, 
and where convergence of the first term follows from Slutzky's theorem and 
uniform integrability due to \eqref{AN.Cond.Kappa.UI} in combination with \citep[Theorem 3.5]{Billingsley-1999}.
For the convergence we have also used that
\begin{align} \label{AN.Lemma:LimitsKappaBeta2.Eq4}
  & \left| \Cor \left( \kappa_n, \beta_n \right) - \Cor \left( \kappa_n, -1/\beta_n \right) \right|
    =   \left| \frac{\cov \left( \kappa_n, \beta_n \right)}{\sqrt{\var(\kappa_n)}\sqrt{\var(\beta_n)}} - \Cor \left( \kappa_n, -1/\beta_n \right) \right| 
  \\
  &  \leq \underbrace{\frac{n \, \left| \cov \left( \kappa_n, \beta_n \right) - \cov \left( \kappa_n, -1/\beta_n \right) \right|}{\sqrt{n \, \var(\kappa_n)}\sqrt{n \, \var(\beta_n)}}}_{\longrightarrow 0\,,  \textrm{ by Proposition \ref{AN.Lemma:TaylorExp} and Lemmas  \ref{AN.Lemma:AsympVar.kappa},\ref{AN.Lemma:TaylorExp:Cor}}}
          + \left| \Cor \left( \kappa_n, -1/\beta_n \right) \right| \, 
          \underbrace{\left| \frac{\sqrt{n \, \var(1/\beta_n)}}{\sqrt{n \, \var(\beta_n)}} - 1 \right|}_{\longrightarrow 0\,, \textrm{ by Proposition \ref{AN.Lemma:TaylorExp}}}
  \quad \longrightarrow 0\,. \notag
\end{align}

To prove the remaining assertion \eqref{AN.Lemma:LimitsKappaBeta2.Eq3}, 
first define
\begin{align*}
    I_{1,n} & := \var(\kappa_n \, R_1(1/\beta_n) \, \mathds{1}_{A_n})
    \\
    I_{2,n} & := \var \left( \kappa_n \, \E\left[ R_1(1/\beta_n) \, \mathds{1}_{A_n} \right] \right)
    \\
    I_{3,n} & := \var \left(\kappa_n \, \left( \beta_n - \frac{2}{\E[1/\beta_n]} + \frac{1/\beta_n}{\E[1/\beta_n]^2} \right) \, \mathds{1}_{A_n^c} \right)
    \\
    I_{4,n} & := \var \left( \kappa_n \,  \E\left[ \left( \beta_n - \frac{2}{\E[1/\beta_n]} + \frac{1/\beta_n}{\E[1/\beta_n]^2} \right) \, \mathds{1}_{A_n^c}\right] \right)
\end{align*}
Since \(|\kappa_n| = \frac{|q-\Lambda_n|}{q-\alpha} \leq \frac{2q}{q-\alpha} \leq C < \infty\) due to Lemma \ref{AN.Lemma:Finite.Beta}, it is straightforward to verify that 
$\lim_{n \to \infty} n \, I_1 = 0$, 
$\lim_{n \to \infty} n \, I_2 = 0$, 
$\lim_{n \to \infty} n \, I_3 = 0$ and $\lim_{n \to \infty} n \, I_4 = 0$,
where convergence follows from Eq. \eqref{AN.Lemma:TaylorExp.UI.1}. 
From Eq. \eqref{AN.Lemma:TaylorExp.ID2} and Cauchy-Schwarz inequality we then obtain
\begin{align*}
    \lefteqn{\left| \var \left( \frac{\kappa_n}{\E[\kappa_n]} \, \frac{\beta_n - \E[\beta_n]}{\sqrt{\var(\beta_n)}} \right) - 1 \right|}
    \\
    &   =  \Biggl| \var \Biggl( \frac{\kappa_n}{\E[\kappa_n]} \,
                               \frac{\sqrt{\var(1/\beta_n)}}{\sqrt{\var(\beta_n)}} \,
                               \frac{-1}{\E[1/\beta_n]^2} \;
                               \frac{1/\beta_n - \E[1/\beta_n]}{\sqrt{\var(1/\beta_n)}}
           + \frac{\kappa_n}{\E[\kappa_n]} \, 
             \frac{R_1(1/\beta_n) \, \mathds{1}_{A_n} - \E\left[ R_1(1/\beta_n) \, \mathds{1}_{A_n} \right]}{\sqrt{\var(\beta_n)}} 
    \\
    & \quad
           + \frac{\kappa_n}{\E[\kappa_n]} \, 
             \frac{\left( \beta_n - \frac{2}{\E[1/\beta_n]} + \frac{1/\beta_n}{\E[1/\beta_n]^2} \right) \, \mathds{1}_{A_n^c}
                 - \E\left[ \left( \beta_n - \frac{2}{\E[1/\beta_n]} + \frac{1/\beta_n}{\E[1/\beta_n]^2} \right) \, \mathds{1}_{A_n^c}\right]}{\sqrt{\var(\beta_n)}} \Biggr) - 1 \Biggr|
    \\
    & \leq \left| \underbrace{\frac{n \, \var(1/\beta_n)}{n \, \var(\beta_n)}}_{\longrightarrow 1 \,, \textrm{ by Proposition \ref{AN.Lemma:TaylorExp}}} \, 
                  \underbrace{\frac{1}{\E[1/\beta_n]^4 \, \E[\kappa_n]^2}}_{\longrightarrow 1/\kappa^2} \,
                  \var \left( \underbrace{\kappa_n}_{\longrightarrow \kappa} \, 
                              \underbrace{\frac{1/\beta_n - \E[1/\beta_n]}{\sqrt{\var(1/\beta_n)}}}_{\stackrel{d}{\longrightarrow} N(0,1)\,, \; \textrm{ by }\eqref{AN.Lemma.Eq:AN.Beta}}
           \right) - 1 \right|
           + \frac{\overbrace{\sum_{m=1}^{4} n \, I_{m,n}}^{\longrightarrow 0}}{\E[\kappa_n]^2 \, n \, \var(\beta_n)}
    \\
    & \quad + 2 \, \frac{\sqrt{\var \left( \kappa_n \, \frac{1/\beta_n - \E[1/\beta_n]}{\sqrt{\var(1/\beta_n)}} \right)} \overbrace{\sum_{m=1}^{4} \sqrt{n \, I_{m,n}}}^{\longrightarrow 0}}
                    {\E[1/\beta_n]^2 \, \E[\kappa_n]^2 \, \sqrt{n \, \var(\beta_n)}}
                    \; \underbrace{\frac{\sqrt{\var(1/\beta_n)}}{\sqrt{\var(\beta_n)}}}_{\longrightarrow 1\,, \textrm{ by Proposition \ref{AN.Lemma:TaylorExp}}}
            + 2 \, \frac{\overbrace{\sum_{\ell=1}^{4} \sum_{m=\ell+1}^{4} \sqrt{n \, I_\ell} \, \sqrt{n \, I_{m,n}}}^{\longrightarrow 0}}
                    {\E[\kappa_n]^2 \, n \, \var(\beta_n)}
      \longrightarrow 0\,,
\end{align*}
where we use \(\liminf_{n \to \infty} \E[\kappa_n] > 0\) by \eqref{AN.Lemma:LimitsKappaBeta2.Eq1} and \(\liminf_{n\to \infty} n \Var(\beta_n) > 0\) by Proposition \ref{AN.Lemma:TaylorExp}, 
and where convergence follows from Slutzky's theorem and uniform integrability due to \eqref{AN.Lemma:TaylorExp.UI} in combination with \citep[Theorem 3.5]{Billingsley-1999}.

In a similar fashion as in the proof of \eqref{AN.Lemma:LimitsKappaBeta2.Eq2}, we finally obtain 
\begin{align*}
  & \left| \frac{\var(\beta_n \, \kappa_n)}{\var(\beta_n) \, \E[\kappa_n]^2} 
    - \frac{\var(\kappa_n - \kappa \cdot 1/\beta_n)}{\var(\kappa \cdot 1/\beta_n)} \right|
  \\
  & = \left| \var \left( \frac{\kappa_n}{\E[\kappa_n]} \, \frac{\beta_n - \E[\beta_n]}{\sqrt{\var(\beta_n)}} + \frac{\kappa_n}{\E[\kappa_n]} \, \frac{\E[\beta_n]}{\sqrt{\var(\beta_n)}} \right)
    - \frac{\var(\kappa_n) + \var(\kappa \cdot 1/\beta_n) - 2 \, \cov(\kappa_n, \kappa \cdot 1/\beta_n)}{\var(\kappa \cdot 1/\beta_n)} \right|
  \\
  & =  \Biggl| \Biggl( \var \left( \frac{\kappa_n}{\E[\kappa_n]} \, \frac{\beta_n - \E[\beta_n]}{\sqrt{\var(\beta_n)}} \right)
             + \frac{\E[\beta_n]^2}{\E[\kappa_n]^2} \, \frac{\var(\kappa_n)}{\var(\beta_n)}
  \\
  & \qquad   + 2 \, \cov \left( \frac{\kappa_n}{\E[\kappa_n]} \, \frac{\beta_n - \E[\beta_n]}{\sqrt{\var(\beta_n)}}, 
                          \frac{\kappa_n - \E[\kappa_n]}{\sqrt{\var(\kappa_n)}} \right) \, \frac{\E[\beta_n]}{\E[\kappa_n]} \, \frac{\sqrt{\var(\kappa_n)}}{\sqrt{\var(\beta_n)}} \Biggr)
  \\
  & \qquad  - \left( 1 
            + \frac{\var(\kappa_n)}{\kappa^2 \, \var(1/\beta_n)} 
            + 2 \, \Cor \left( \kappa_n, -1/\beta_n \right) \, \frac{\sqrt{\var(\kappa_n)}}{ \kappa \,\sqrt{\var(1/\beta_n)}} \right) \Biggr|    
  \\
  & \leq \underbrace{\left| \var \left( \frac{\kappa_n}{\E[\kappa_n]} \, \frac{\beta_n - \E[\beta_n]}{\sqrt{\var(\beta_n)}} \right) - 1 \right|}_{\longrightarrow 0}
         + \underbrace{\left| \frac{\E[\beta_n]^2}{\E[\kappa_n]^2} - \frac{1}{\kappa^2} \right|}_{\longrightarrow 0} \, \frac{n \, \var(\kappa_n)}{n \, \var(\beta_n)}
         + \frac{1}{\kappa^2} \, \underbrace{\left| \frac{\var(\kappa_n)}{\var(\beta_n)} - \frac{\var(\kappa_n)}{\var(1/\beta_n)} \right|}_{\longrightarrow 0\,,  \textrm{ by Lemma \ref{AN.Lemma:TaylorExp:Cor}}}
  \\
  & \qquad   + 2 \, \underbrace{\left| \cov \left( \frac{\kappa_n}{\E[\kappa_n]} \, \frac{\beta_n - \E[\beta_n]}{\sqrt{\var(\beta_n)}}, 
                          \frac{\kappa_n - \E[\kappa_n]}{\sqrt{\var(\kappa_n)}} \right) \, 
               - \Cor \left( \kappa_n, \beta_n \right) \right|}_{\longrightarrow 0\,,  \textrm{ by Lemma \ref{AN.Lemma:TaylorExp:Cor}}}
               \, \frac{\E[\beta_n]}{\E[\kappa_n]} \, \frac{\sqrt{n \, \var(\kappa_n)}}{\sqrt{n \, \var(\beta_n)}}
  \\
  & \qquad   + 2 \, \underbrace{\left| \Cor \left( \kappa_n, \beta_n \right) \, 
               - \Cor \left( \kappa_n, -1/\beta_n \right) \right|}_{\longrightarrow 0\,, \textrm{ by } \eqref{AN.Lemma:LimitsKappaBeta2.Eq4}} 
               \, \frac{\E[\beta_n]}{\E[\kappa_n]} \, \frac{\sqrt{\var(\kappa_n)}}{\sqrt{\var(\beta_n)}}
  \\
  & \qquad   + 2 \, \left| \Cor \left( \kappa_n, -1/\beta_n \right) \right|
                 \, \underbrace{\left| \frac{\E[\beta_n]}{\E[\kappa_n]} \, \frac{\sqrt{\var(\kappa_n)}}{\sqrt{\var(\beta_n)}} - \frac{\sqrt{\var(\kappa_n)}}{ \kappa \,\sqrt{\var(1/\beta_n)}} \right|}_{\longrightarrow 0\,,  \textrm{ by Lemma \ref{AN.Lemma:TaylorExp:Cor}}}
  \qquad \longrightarrow 0\,,
\end{align*}
where we use \(\liminf_{n \to \infty} \E[\kappa_n] > 0\) by \eqref{AN.Lemma:LimitsKappaBeta2.Eq1}, \(\limsup_{n\to \infty} n \Var(\kappa_n) < \infty\) and \(\liminf_{n\to \infty} n \Var(\beta_n) > 0\) by Lemma \ref{AN.Lemma:AsympVar.kappa} and Proposition \ref{AN.Lemma:TaylorExp}.
\end{proof}

\begin{lemma}~~\label{AN.Lemma:AsympVar.KappaBeta}
Under the assumptions of Lemma \ref{AN.Lemma:TaylorExp:Cor} and if, additionally, \linebreak $\limsup_{n \to \infty} \Cor(\Lambda_n,\alpha_n) < 1$, then
\begin{enumerate}[(i)]
\item \label{AN.Lemma:AsympVar.KappaBeta.Eq1} 
$\liminf_{n \to \infty} n \, \var(T_n) 
 = \liminf_{n \to \infty} n \, \var(\beta_n \, \kappa_n) > 0$.

\item \label{AN.Lemma:AsympVar.KappaBeta.Eq2}
$\lim_{n \to \infty} \left| \frac{\var(\kappa_n)}{\var(\beta_n \, \kappa_n)} 
    - \frac{\var(\Lambda_n)}{\var(\mu_n)} \right|
    = \lim_{n \to \infty} \left| \frac{\var(\kappa_n)}{\var(\beta_n \, \kappa_n)} 
    - \frac{\var(\kappa_n)}{\var(\kappa_n - \kappa \cdot 1/\beta_n)} \right|
    = 0$.
\item \label{AN.Lemma:AsympVar.KappaBeta.Eq3}
$\lim_{n \to \infty} \left| \frac{\var(\beta_n) \, \E[\kappa_n]^2}{\var(\beta_n \, \kappa_n)} 
    - \frac{\var(\kappa \, \alpha_n)}{\var(\mu_n)} \right|
    = \lim_{n \to \infty} \left| \frac{\var(\beta_n) \, \E[\kappa_n]^2}{\var(\beta_n \, \kappa_n)}
    - \frac{\var(\kappa \cdot 1/\beta_n)}{\var(\kappa_n - \kappa \cdot 1/\beta_n)} \right|
    = 0$.
\end{enumerate}
\end{lemma}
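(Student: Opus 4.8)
The plan is to first reduce all three claims to statements about the pair $\beta_n\kappa_n$, $\kappa_n-\kappa\cdot 1/\beta_n$, then to prove \eqref{AN.Lemma:AsympVar.KappaBeta.Eq1} by feeding Lemma \ref{AN.Lemma:LimitsKappaBeta2}\eqref{AN.Lemma:LimitsKappaBeta2.Eq2} the variance bounds already available, and finally to obtain \eqref{AN.Lemma:AsympVar.KappaBeta.Eq2}--\eqref{AN.Lemma:AsympVar.KappaBeta.Eq3} by \emph{inverting} the two difference-of-ratios estimates of Lemma \ref{AN.Lemma:LimitsKappaBeta2}.

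First I would record the exact algebraic relations. Since $T_n=1-\beta_n\kappa_n$ one has $\var(T_n)=\var(\beta_n\kappa_n)$, which already gives the first equality in \eqref{AN.Lemma:AsympVar.KappaBeta.Eq1}. Moreover $\kappa_n$, $\kappa\cdot 1/\beta_n$ and $\kappa_n-\kappa\cdot 1/\beta_n$ are affine images of $\Lambda_n$, $\alpha_n$ and $\mu_n$ sharing the slope $\pm 1/(q-\alpha)$ --- indeed $\kappa_n-\kappa\cdot 1/\beta_n=(q(1-\kappa)-\mu_n)/(q-\alpha)$ --- so $\var(\kappa_n)=\var(\Lambda_n)/(q-\alpha)^2$, $\var(\kappa\cdot 1/\beta_n)=\var(\kappa\alpha_n)/(q-\alpha)^2$ and $\var(\kappa_n-\kappa\cdot 1/\beta_n)=\var(\mu_n)/(q-\alpha)^2$. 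In particular the second equalities in \eqref{AN.Lemma:AsympVar.KappaBeta.Eq2} and \eqref{AN.Lemma:AsympVar.KappaBeta.Eq3} are exact identities; and combining the last relation with Lemma \ref{AN.Lemma:AsympVar}\eqref{Lemma:AsympVar1}--\eqref{Lemma:AsympVar2} (whose hypotheses --- $\YY$ not perfectly dependent on $\XX$, some $Y_i$ not perfectly dependent on $\{Y_1,\dots,Y_{i-1}\}$, and $\limsup_n\Cor(\Lambda_n,\alpha_n)<1$ --- are exactly those in force here) shows that $n\var(\kappa_n-\kappa\cdot 1/\beta_n)$ is bounded above and away from $0$; likewise $\limsup_n n\var(\kappa_n)<\infty$ by Lemma \ref{AN.Lemma:AsympVar.kappa}\eqref{AN.Lemma:AsympVar.kappa1}.

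For \eqref{AN.Lemma:AsympVar.KappaBeta.Eq1} I would multiply the vanishing quantity of Lemma \ref{AN.Lemma:LimitsKappaBeta2}\eqref{AN.Lemma:LimitsKappaBeta2.Eq2}, namely $\bigl|\var(\beta_n\kappa_n)/\var(\kappa_n)-\var(\kappa_n-\kappa\cdot 1/\beta_n)/\var(\kappa_n)\bigr|$, by the bounded sequence $n\var(\kappa_n)$ to obtain $n\bigl|\var(\beta_n\kappa_n)-\var(\kappa_n-\kappa\cdot 1/\beta_n)\bigr|\to 0$. Hence $\liminf_n n\var(\beta_n\kappa_n)=\liminf_n n\var(\kappa_n-\kappa\cdot 1/\beta_n)=\liminf_n n\var(\mu_n)/(q-\alpha)^2>0$, and analogously $\limsup_n n\var(\beta_n\kappa_n)<\infty$; with $\var(T_n)=\var(\beta_n\kappa_n)$ this is \eqref{AN.Lemma:AsympVar.KappaBeta.Eq1}. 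I regard this as the crux: it is the step that turns the earlier difference-of-ratios bounds into genuine reciprocal estimates, and it rests on the positive lower bound for $n\var(\mu_n)$ from Lemma \ref{AN.Lemma:AsympVar}\eqref{Lemma:AsympVar2}, where the correlation hypothesis $\limsup_n\Cor(\Lambda_n,\alpha_n)<1$ is precisely what prevents the limiting variance from degenerating.

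For \eqref{AN.Lemma:AsympVar.KappaBeta.Eq2}--\eqref{AN.Lemma:AsympVar.KappaBeta.Eq3} I would use the elementary identity $\tfrac{b}{a}-\tfrac{d}{c}=-\tfrac{bd}{ac}\bigl(\tfrac{a}{b}-\tfrac{c}{d}\bigr)$ together with the two-sided bounds on $n\var(\beta_n\kappa_n)$ and $n\var(\kappa_n-\kappa\cdot 1/\beta_n)$ just established. For \eqref{AN.Lemma:AsympVar.KappaBeta.Eq2}, taking $a=\var(\beta_n\kappa_n)$, $c=\var(\kappa_n-\kappa\cdot 1/\beta_n)$ and $b=d=\var(\kappa_n)$, the prefactor $\tfrac{\var(\kappa_n)^2}{\var(\beta_n\kappa_n)\,\var(\kappa_n-\kappa\cdot 1/\beta_n)}=\tfrac{(n\var(\kappa_n))^2}{(n\var(\beta_n\kappa_n))(n\var(\kappa_n-\kappa\cdot 1/\beta_n))}$ is bounded while $|a/b-c/d|\to 0$ by Lemma \ref{AN.Lemma:LimitsKappaBeta2}\eqref{AN.Lemma:LimitsKappaBeta2.Eq2}; the first equality follows, and the second is the exact identity above. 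For \eqref{AN.Lemma:AsympVar.KappaBeta.Eq3}, taking $a=\var(\beta_n\kappa_n)$, $b=\var(\beta_n)\,\E[\kappa_n]^2$, $c=\var(\kappa_n-\kappa\cdot 1/\beta_n)$, $d=\var(\kappa\cdot 1/\beta_n)$, one has $|a/b-c/d|\to 0$ by Lemma \ref{AN.Lemma:LimitsKappaBeta2}\eqref{AN.Lemma:LimitsKappaBeta2.Eq3}, and $\tfrac{bd}{ac}=\tfrac{(nb)(nd)}{(na)(nc)}$ is bounded because $nb=n\var(\beta_n)\,\E[\kappa_n]^2$ is bounded (Proposition \ref{AN.Lemma:TaylorExp}\eqref{AN.Lemma:TaylorExp.UI.4} together with $\E[\kappa_n]\to\kappa$), $nd=\kappa^2\,n\var(1/\beta_n)$ is bounded (Lemma \ref{AN.Lemma:AsympVar.1/beta}), and $na$, $nc$ are bounded away from $0$; this gives the first equality, the second again being exact. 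The only genuine obstacle throughout is bookkeeping: at each occurrence one must keep track of whether the variance in question, once multiplied by $n$, is bounded above, bounded below, or both, and invoke the matching earlier lemma accordingly.
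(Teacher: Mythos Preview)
Your proof is correct and follows essentially the same route as the paper: you derive $n|\var(\beta_n\kappa_n)-\var(\kappa_n-\kappa\cdot 1/\beta_n)|\to 0$ from Lemma \ref{AN.Lemma:LimitsKappaBeta2}\eqref{AN.Lemma:LimitsKappaBeta2.Eq2} and the boundedness of $n\var(\kappa_n)$ to get \eqref{AN.Lemma:AsympVar.KappaBeta.Eq1}, then invert the ratio estimates of Lemma \ref{AN.Lemma:LimitsKappaBeta2} via the identity $\tfrac{b}{a}-\tfrac{d}{c}=-\tfrac{bd}{ac}(\tfrac{a}{b}-\tfrac{c}{d})$ with bounded prefactors, which is exactly what the paper does (writing out \eqref{AN.Lemma:AsympVar.KappaBeta.Eq2} explicitly and deferring \eqref{AN.Lemma:AsympVar.KappaBeta.Eq3} to ``a similar argument''). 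Your observation that the second equalities in \eqref{AN.Lemma:AsympVar.KappaBeta.Eq2}--\eqref{AN.Lemma:AsympVar.KappaBeta.Eq3} are exact affine identities is also how the paper silently treats them.
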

\begin{proof}
Assertion \eqref{AN.Lemma:AsympVar.KappaBeta.Eq1} is immediate from 
$\liminf_{n \to \infty} n \, \var(\mu_n) > 0$ due to Lemma \ref{AN.Lemma:AsympVar}\eqref{Lemma:AsympVar2} and 
\begin{align*}
    n \, \var(\beta_n \, \kappa_n)
    & = n \, \var(\kappa_n - \kappa \cdot 1/\beta_n) + n \, \left(\var(\beta_n \, \kappa_n) - \var(\kappa_n - \kappa \cdot 1/\beta_n) \right)
    \\
    & = \frac{n \, \var(\mu_n)}{(q-\alpha)^2} + n \, \left(\var(\beta_n \, \kappa_n) - \var(\kappa_n - \kappa \cdot 1/\beta_n) \right)
    \\
    & = \frac{n \, \var(\mu_n)}{(q-\alpha)^2} + \frac{\var(\beta_n \, \kappa_n) - \var(\kappa_n - \kappa \cdot 1/\beta_n)}{\var(\kappa_n)} \; (n \, \var(\kappa_n))
\end{align*}
in combination with Lemma \ref{AN.Lemma:LimitsKappaBeta2} \eqref{AN.Lemma:LimitsKappaBeta2.Eq2} and Lemma \ref{AN.Lemma:AsympVar.kappa} \eqref{AN.Lemma:AsympVar.kappa1}.

Statement \eqref{AN.Lemma:AsympVar.KappaBeta.Eq2} now follows from 
\begin{align*}
  \lefteqn{\left| \frac{\var(\kappa_n)}{\var(\beta_n \, \kappa_n)} 
    - \frac{\var(\kappa_n)}{\var(\kappa_n - \kappa \cdot 1/\beta_n)} \right|}
  \\
  & =  \frac{n^2 \, \var(\kappa_n)^2}{n \, \var(\beta_n \, \kappa_n) \, n \, \var(\kappa_n - \kappa \cdot 1/\beta_n)} \; 
       \left| \frac{\var(\beta_n \, \kappa_n)}{\var(\kappa_n)} 
    - \frac{\var(\kappa_n - \kappa \cdot 1/\beta_n)}{\var(\kappa_n)} \right|
  \\
  & =  \frac{n \, \var(\kappa_n) \, n \, \var(\Lambda_n)}{n \, \var(\beta_n \, \kappa_n) \, n \, \var(\mu_n)} \; 
       \left| \frac{\var(\beta_n \, \kappa_n)}{\var(\kappa_n)} 
    - \frac{\var(\kappa_n - \kappa \cdot 1/\beta_n)}{\var(\kappa_n)} \right|
    \longrightarrow 0\,,
\end{align*}
using \eqref{AN.Lemma:AsympVar.KappaBeta.Eq1} as well as Lemma \ref{AN.Lemma:AsympVar}, Lemma \ref{AN.Lemma:AsympVar.kappa}\eqref{AN.Lemma:AsympVar.kappa1}, and Lemma \ref{AN.Lemma:LimitsKappaBeta2}\eqref{AN.Lemma:LimitsKappaBeta2.Eq2}.
Statement \eqref{AN.Lemma:AsympVar.KappaBeta.Eq3} follows by a similar argument. 
\end{proof}

We are now in the position to prove Theorem \ref{AN.Thm:AN} and hence asymptotic normality in \eqref{AN.Aim}.
We use the approach presented at the beginning of this section and show how Eq. \eqref{AN.Rep:Mu.2} can be converted into Eq. \eqref{AN.Rep:T3.2} using the results presented above.

\begin{proof}[Proof of Theorem \ref{AN.Thm:AN}:]
Asymptotic normality of \((\mu_n - \E[\mu_n])/\sqrt{\var(\mu_n)}\) due to \eqref{AN.Lemma.Eq:AN.Mu} implies that the right hand side of Eq. \eqref{AN.Rep:Mu.2}, i.e., 
\begin{align*}
  %\frac{\mu_n - \E[\mu_n]}{\sqrt{\var(\mu_n)}} 
  %& = 
  - \, \frac{\kappa_n - \E\left[\kappa_n\right]}{\sqrt{\var\left(\kappa_n\right)}} \, 
           \frac{\sqrt{\var\left(\kappa_n\right)}}{\sqrt{\var\left(\kappa_n - \kappa \cdot 1/\beta_n\right)}}
      + \, \frac{1/\beta_n - \E[1/\beta_n]}{\sqrt{\var(1/\beta_n)}}  \, \frac{\sqrt{\var\left(1/\beta_n\right)}}{\sqrt{\var\left(\kappa_n - \kappa \cdot 1/\beta_n\right)}} \, \kappa \,,
\end{align*}
weakly converges to a standard normal distribution. Due to Lemma \ref{AN.Lemma:AsympVar.KappaBeta}, also 
\begin{align*}
  \frac{\kappa_n - \E\left[\kappa_n\right]}{\sqrt{\var\left(\kappa_n\right)}} \, 
  \frac{\sqrt{\var\left(\kappa_n\right)}}{\sqrt{\var\left(\beta_n \, \kappa_n\right)}}
   + \, \left( - \frac{1/\beta_n - \E[1/\beta_n]}{\sqrt{\var(1/\beta_n)}} \right) \, 
        \frac{\E[\kappa_n] \, \sqrt{\var\left(\beta_n\right)}}{\sqrt{\var\left(\beta_n \, \kappa_n\right)}}
\end{align*}
weakly converges to a standard normal distribution. 
Finally, since 
$$
  \limsup_{n \to \infty} \frac{\sqrt{\var\left(\beta_n\right)}}{\sqrt{\var\left(\beta_n \, \kappa_n\right)}} < \infty
  \qquad \textrm{and} \qquad 
  \limsup_{n \to \infty} \frac{\sqrt{\var\left(\kappa_n \right)}}{\sqrt{\var\left(\beta_n \, \kappa_n \right)}} < \infty
$$
due to Proposition \ref{AN.Lemma:TaylorExp}\eqref{AN.Lemma:TaylorExp.UI.4} and Lemmas \ref{AN.Lemma:AsympVar.kappa}\eqref{AN.Lemma:AsympVar.kappa1} and \ref{AN.Lemma:AsympVar.KappaBeta}\eqref{AN.Lemma:AsympVar.KappaBeta.Eq1}  we obtain from
Proposition \ref{AN.Lemma:TaylorExp}\eqref{AN.Lemma:TaylorExp.UI.6} that also
\begin{align*}
  \frac{\kappa_n - \E\left[\kappa_n\right]}{\sqrt{\var\left(\kappa_n\right)}} \, 
  \frac{\sqrt{\var\left(\kappa_n\right)}}{\sqrt{\var\left(\beta_n \, \kappa_n\right)}}
   + \, \frac{\beta_n - \E[\beta_n]}{\sqrt{\var(\beta_n)}} \, 
        \frac{\E[\kappa_n] \, \sqrt{\var\left(\beta_n\right)}}{\sqrt{\var\left(\beta_n \, \kappa_n\right)}}
\end{align*}
is asymptotically standard normal. It follows that
\begin{align*}
  & \frac{\kappa_n - \E\left[\kappa_n\right]}{\sqrt{\var\left(\kappa_n\right)}} \, \beta_n \, 
    \frac{\sqrt{\var\left(\kappa_n\right)}}{\sqrt{\var\left(\beta_n \kappa_n\right)}}
   + \, \frac{\beta_n - \E[\beta_n]}{\sqrt{\var(\beta_n)}} \, 
        \frac{\E[\kappa_n] \, \sqrt{\var\left(\beta_n\right)}}{\sqrt{\var\left(\beta_n \kappa_n\right)}}
  \\
  & = - \, \frac{T_n - \E[T_n]}{\sqrt{\var(T_n)}} 
      + \frac{\Cov\left(\beta_n, \kappa_n\right)}{\sqrt{\var\left(\beta_n \kappa_n \right)}}
  \\
  & = - \, \frac{T_n - \E[T_n]}{\sqrt{\var(T_n)}} 
      + \Cor\left(\beta_n, \kappa_n\right) \, 
        \frac{\sqrt{\var\left(\kappa_n \right)}}{\sqrt{\var\left(\beta_n \kappa_n \right)}} \, \underbrace{\sqrt{\var\left(\beta_n\right)}}_{\longrightarrow 0}
\end{align*}
weakly converges to a standard normal distribution,
where the first equality is given by Eq. \eqref{AN.Rep:T3.2}.
This proves the assertion.
\end{proof}

%%%%%%%%%%%%%%%%%%%%%%%%%%%%%%%%%%%%%%%%%%%%%%%%%%%%%%%%%%%%%%%%%%%%%%%%%%%%%%%%%%%%%%%%
%\subsection{Proof of Proposition \ref{propasybias}:}

\begin{proof}[Proof of Proposition \ref{propasybias}.]
    By applying the notation used for proving Theorem \ref{AN.Thm:AN},
    for \(\beta_n = (q-\alpha)/(q-\alpha_n)\) and  \(\kappa_n = (q - \Lambda_n)/(q - \alpha)\) defined in \eqref{defbeta_nkappa_n} and \(\Lambda_n\) and \(\alpha_n\) given by \eqref{defkappnalphan}, and for \(\beta := 1\) and \(\kappa = (q-\Lambda)/(q-\alpha)\,,\) we have
    \begin{align}\label{eqpropasybias1}
         \E[T_n] - T = \beta \kappa - \E[\beta_n \kappa_n] = (\beta-\E[\beta_n]) \kappa + (\kappa-\E[\kappa_n]) \E[\beta_n] - \Cov(\beta_n,\kappa_n)\,.
    \end{align}
    It holds that \(\kappa\in [0,q]\) and, due to Lemma \ref{AN.Lemma:Finite.Beta}, \(\beta_n \in [1/(3q),q]\,.\) 
     Since \(\beta_n \to \beta = 1\) almost surely and, due to  Lemma \ref{AN.Lemma:AsympVar.kappa}, \(\Var(\kappa_n) = O(1/n)\,,\) we have for \(q\geq 2\) that
    \begin{align}\label{eqpropasybias2}
        \Cov(\beta_n,\kappa_n) = \sqrt{\Var(\beta_n)}\sqrt{\Var(\kappa_n)} \Cor(\beta_n,\kappa_n) = O(n^{-1/2})\,.
    \end{align}
    For \(q=1\,,\) note that \(\beta_n = 1\) and thus \(\Cov(\beta_n,\kappa_n) = 0\) for all \(n\,.\)
    Using Assumptions \ref{Assumption.1a} and \ref{Assumption.2a}, we obtain from \citep[Proposition 1.1]{han2022limit} for \(i\in \{1,\ldots,q-1\}\) that
    \begin{align}\label{eqpropasybias3}
        \left|\,\E[\xi_n(Y_i,(\XX,Y_{i-1},\ldots,Y_1))] - \xi(Y_i,(\XX,Y_{i-1},\ldots,Y_1))\, \right| &= O\Big(\frac{(\log n)^{p+i+\gamma_i+\1_{p+i=2}}}{n^{1/(p+i-1)}}\Big)\,,\\
        \label{eqpropasybias4}
        \left|\,\E[\xi_n(Y_i,(Y_{i-1},\ldots,Y_1))] - \xi(Y_i,(Y_{i-1},\ldots,Y_1)) \,\right| &= O\Big(\frac{(\log n)^{i+\gamma_i'+\1_{i=2}}}{n^{1/(i-1)}}\Big)\,.
    \end{align}
    For the second term on the right hand side of \eqref{eqpropasybias1}, it follows that 
    \begin{align}\label{eqpropasybias5}
       \left| \kappa - \E[\kappa_n] \right| 
       %\leq \frac{1}{q-\alpha} \sum_{i=1}^q \left|\xi -\E[\xi_n]\right| 
       = O\Big( \frac{(\log n)^{d+\gamma + \1_{d=2}}}{n^{1/(d-1)}}\Big)
    \end{align}
    For the first term on the right hand side of \eqref{eqpropasybias1}, we have
    \begin{align}\label{eqpropasybias5b}
        \left|\beta - \E[\beta_n] \right| = (q-\alpha) \left| \E\Big[\frac{1}{q-\alpha} - \frac{1}{q-\alpha_n}\Big] \right| = \left| \E\Big[ \frac{\alpha_n - \alpha}{q-\alpha_n}\Big]\right| \,.
    \end{align}
    Due to \eqref{boundalpha_n}, we know that \(1/(3q) \leq 1/(q-\alpha_n) \leq 1\) \(P\)-almost surely. Hence, noting that the case \(q=1\) is trivial, \eqref{eqpropasybias5b} implies for \(q\geq 2\) and \(\gamma'= \max_i\{\gamma_i'\}\) that
    \begin{align}\label{eqpropasybias6}
        |\beta - \E[\beta_n]| \approx  \left|\alpha - \E[\alpha_n]\right| = O\Big(\frac{(\log n)^{q+\gamma' }}{n^{1/(q-1)}} \Big)\,,
    \end{align}
    where \(\approx\) indicates the same rate of convergence.
    Combining \eqref{eqpropasybias1}, \eqref{eqpropasybias2}, \eqref{eqpropasybias5} and \eqref{eqpropasybias6} yields the assertion.
    %\hfill \qedsymbol
\end{proof}

%%%%%%%%%%%%%%%%%%%%%%%%%%%%%%%%%%%%%%%%%%%%%%%%%%%%%%%%%%%%%%%%%%%%%%%%%%%%%%%%%%%%%%%%
%%%%%%%%%%%%%%%%%%%%%%%%%%%%%%%%%%%%%%%%%%%%%%%%%%%%%%%%%%%%%%%%%%%%%%%%%%%%%%%%%%%%%%%%
%\clearpage

%%%%%%%%%%%%%%%%%%%%%%%%%%%%%%%%%%%%%%%%%%%%%%%%%%%%%%%%%%%%%%%%%%%%%%%%%%%%%%%%%%%%%%%%

%%%%%%%%%%%%%%%%%%%%%%%%%%%%%%%%%%%%%%%%%%%%%%%%%%%%%%%%%%%%%%%%%%%%%%%%%%%%%%%%%%%%%%%%%%%%%%%%%%%%%%
%%%%%%%%%%%%%%%%%%%%%%%%%%%%%%%%%%%%%%%% Literaturverzeichnis %%%%%%%%%%%%%%%%%%%%%%%%%%%%%%%%%%%%%%%%
%%%%%%%%%%%%%%%%%%%%%%%%%%%%%%%%%%%%%%%%%%%%%%%%%%%%%%%%%%%%%%%%%%%%%%%%%%%%%%%%%%%%%%%%%%%%%%%%%%%%%%

%%%%%%%%%%%%%%%%%%%%%%%%%%%%%%%%%%%%%%%%%%%%%%%%%%%%%%%%%%%%%%%%%%%%%%%%%%%%%%%%%%%%%%%%
\section{Discussion}\label{secdisc}

As a direct extension of Azadkia and Chatterjee's rank correlation to multi-response variables, we have introduced the quantity \(T\) that satisfies all axioms of a measure of predictability and, additionally, fulfills an information gain inequality, characterizes conditional independence, is self-equitable, and satisfies a data-processing inequality. Further, we have proposed a model-free, strongly consistent, merely rank-based estimator for \(T\) that can be computed in almost linear time and we proved its asymptotically normality. As a powerful application of the measure \(T\) and its estimator, we have obtained a new model-free variable selection method for multi-outcome data, which works without any tuning parameters and outperforms competing methods in various scenarios.
Further, the measure \(T\) supports a wide range of applications concerning the strength of dependence among groups of random variables; see \citep{sfx2024cluster} for a variable clustering of random variables and see Section \ref{secnetworks} in the Supplementary Material for identifying networks based on $T$ via graphs.

%%%%%%%%%%%%%%%%%%%%%%%%%%%%%%%%%%%%%%%%%%%%%%%%%%%%%%%%%%%%%%%%%%%%%%%%%%%%%%%%%%%%%%%%
\section*{Acknowledgement}
Both authors gratefully acknowledge the support of the Austrian Science Fund (FWF) project {P 36155-N} \emph{ReDim: Quantifying Dependence via Dimension Reduction}
and the support of the WISS 2025 project 'IDA-lab Salzburg' (20204-WISS/225/197-2019 and 20102-F1901166-KZP).

%\bibliographystyle{chicago}
%\bibliography{SF_bib}

\addcontentsline{toc}{section}{Bibliography}

%%%%%%%%%%%%%%%%%%%%%%%%%%%%%%%%%%%%%%%%%%%%%%%%%%%%%%%%%%%%%%%%%%%%%%%%%%
%%%%%%%%%%%%%%%%%%%%%%%%%%%%%%%%%%%%%%%%%%%%%%%%%%%%%%%%%%%%%%%%%%%%%%%%%%
%%%%%%%%%%%%%%%%%%%%%%%%%%%%%%%%%%%%%%%%%%%%%%%%%%%%%%%%%%%%%%%%%%%%%%%%%%
%%%%%%%%%%%%%%%%%%%%%%%%%%%%%%%%%%%%%%%%%%%%%%%%%%%%%%%%%%%%%%%%%%%%%%%%%%

\newpage

\begin{appendices}
\section*{Supplementary Material}

\section{Additional Material for Section \ref{secmainres}}
\label{App.Add.SecMainRes}

\subsection{Invariance Properties of \(T\)}

As shown in Corollary \ref{Cor.T.DistTrans}, \(T(\YY,\XX)\) is invariant with respect to the transformation of the random variables by their individual distribution functions.
We now make use of the data processing inequality and highlight further important invariance properties of \(T(\YY,\XX)\) concerning the distributions of \(\XX\) and \(\YY\).
Therefore, let \(\VV=(V_1,\ldots,V_p)\) be a random vector (on \((\Omega,\cA,P)\) which is assumed to be non-atomic) that is independent of \(\XX = (X_1,\ldots,X_p)\) and uniformly on \((0,1)^p\) distributed. Then
the \emph{multivariate distributional transform} (also known as \emph{generalized Rosenblatt transform}) \(\tau_{\XX} (\XX,\VV)\) of \(\XX\) is defined by  
\begin{align*}
    \tau_{\XX} (\xx,\boldsymbol{\lambda})
    := \big( F_1(x_1,\lambda_1), F_2(x_2,\lambda_2|x_1) \dots, F_p(x_p,\lambda_p|x_1,\dots,x_{p-1}) \big)
\end{align*}
for all \(\xx=(x_1,\ldots,x_p) \in \mathbb{R}^p\) and all \(\boldsymbol{\lambda}=(\lambda_1,\ldots,\lambda_p) \in [0,1]^p\), where
\begin{eqnarray*}
    F_1(x_1,\lambda_1) 
    & := & P(X_1 < x_1) + \lambda_1 \, P(X_1 = x_1)
    \\
    F_k(x_k,\lambda_k|x_1,\dots,x_{k-1}) 
    & := & P(X_k < x_k | X_1=x_1, \dots, X_{k-1}=x_{k-1}) 
    \\
    &    & + \lambda_k \, P(X_k = x_k | X_1=x_1, \dots, X_{k-1}=x_{k-1})\,, \quad k\in \{2,\ldots,p\}\,. 
\end{eqnarray*}
For \(p=1\) and for a random variable \(X\) with continuous distribution function \(F_X\,,\) the distributional transform \(\tau_X(X,V)\) simplifies to \(F_X(X)\,,\) which is uniformly on \((0,1)\) distributed.
\\
As an inverse transformation of \(\tau_\XX\,,\) for a random vector \(\UU=(U_1,\ldots,U_p)\) uniformly on \((0,1)^p\) distributed, the \emph{multivariate quantile transform} \(q_\XX (\UU):=(\xi_1,\ldots,\xi_p)\) is defined by 
\begin{align}
\begin{split}
    \xi_1 &:= F_{X_1}^{-1}(U_1)\,, \label{MQT1}  \\
    \xi_k &:= F^{-1}_{X_k|X_{k-1}=\xi_{k-1},\ldots,X_1=\xi_1}(U_k)~~~ \text{for all } k\in \{2,\ldots,p\}
\end{split}
\end{align}
where \(F_{W|\ZZ=\zz}\) denotes the conditional distribution function of \(W\) given \(\ZZ=\zz\), and
\(F_{Z}^{-1}\) denotes the generalized inverse function of \(F_{Z}\), i.e., 
\(F_{Z}^{-1}(u) := \inf\{z \in \mathbb{R} \, : \, F_{Z}(z) \geq u \}\).

According to \citep{Arjas-1978, OBrien-1975, Ru-1981, Ru-2013},
\begin{align}
\tau_\XX(\XX,\VV) &\text{ is a random vector that is uniformly on } (0,1)^p \text{ distributed,}\\
 \label{eqpropqt} q_\XX(\UU) &\text{ is a random vector with distribution function } F_\XX,
\end{align}
and the multivariate quantile transform is inverse to the multivariate distributional transform,
i.e., 
\begin{align}\label{eqpropqt2}
  \XX = q_\XX \big( \tau_{\XX} (\XX,\VV) \big) \qquad P\text{-almost surely.}
\end{align}

The following result extends the distributional invariance of \(T\) presented in Corollary \ref{Cor.T.DistTrans} and shows that the value \(T(\YY,\XX)\) also remains unchanged when replacing the predictor variables \(X_1, \dots, X_p\) by their individual distributional transforms, i.e., the predictor variables can be replaced by their individual ranks (with ties broken at random). 
Further, the value \(T(\YY,\XX)\) even remains unchanged when replacing the vector of predictor variables \(\XX\) by its multivariate distributional transform and hence by a vector of independent and identically distributed random variables.
Interestingly, and in contrast to the situation described above, it turns out that \(T(\YY,\XX)\) is not invariant with respect to the (individual) distributional transforms of \(\YY\).

\begin{corollary}[Distributional invariance II]\label{Cor.T.PIT}~~
The map \(T\) defined by \eqref{defmdm} fulfills
\begin{enumerate}[(i)]
\item 
\(T(\YY,\XX) = T (\YY,\tau_{\XX} (\XX,\VV))\),
\item
\(T(\YY,\XX) = T \big(\YY, (\tau_{X_1} (X_1,V_1), \dots, \tau_{X_p} (X_p,V_p))\big)\).
\end{enumerate}
However,  
\(T(\YY,\XX) \neq T (\tau_{\YY} (\YY,\VV),\XX)\) and \(T(\YY,\XX) \neq T ((\tau_{Y_1}(Y_1,V_1),\ldots,\tau_{Y_q}(Y_q,V_q)),\XX)\), in general.
\end{corollary}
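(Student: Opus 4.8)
The plan is to handle the three claims separately. For the invariance under the multivariate distributional transform (claim (i)), the key observation is that the map $\tau_{\XX}(\cdot,\VV)$ and the multivariate quantile transform $q_{\XX}$ are mutually inverse in the sense of \eqref{eqpropqt2}, i.e., $\XX = q_{\XX}(\tau_{\XX}(\XX,\VV))$ $P$-almost surely, and by \eqref{eqpropqt} the vector $q_{\XX}(\UU)$ has distribution $F_{\XX}$ whenever $\UU$ is uniform on $(0,1)^p$. The subtlety is that $\tau_{\XX}(\XX,\VV)$ is \emph{not} a deterministic function of $\XX$ alone; it uses the auxiliary randomization $\VV$, which is independent of $\XX$. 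To apply the data processing inequality from Corollary \ref{cor.T.DPI} I would therefore enlarge the conditioning vector: set $\widetilde{\XX} := (\XX,\VV)$. Since $\VV$ is independent of $(\XX,\YY)$, adding it changes nothing, so $T(\YY|\XX) = T(\YY|(\XX,\VV))$ follows from the characterization of conditional independence in Theorem \ref{theT4} (as $\YY$ and $\VV$ are conditionally independent given $\XX$). Now $\tau_{\XX}(\XX,\VV)$ is a measurable function of $(\XX,\VV)$, so the data processing inequality gives $T(\YY|\tau_{\XX}(\XX,\VV)) \le T(\YY|(\XX,\VV)) = T(\YY|\XX)$. For the reverse inequality, \eqref{eqpropqt2} says $\XX = q_{\XX}(\tau_{\XX}(\XX,\VV))$ almost surely, so $\XX$ is itself a measurable function of $\tau_{\XX}(\XX,\VV)$, and the data processing inequality applied in the other direction yields $T(\YY|\XX) \le T(\YY|\tau_{\XX}(\XX,\VV))$. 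Combining the two inequalities proves (i).

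Claim (ii) is the componentwise version and follows by exactly the same sandwich argument: on the one hand $(\tau_{X_1}(X_1,V_1),\dots,\tau_{X_p}(X_p,V_p))$ is a measurable function of $(\XX,\VV)$, so $T(\YY|(\tau_{X_1}(X_1,V_1),\dots,\tau_{X_p}(X_p,V_p))) \le T(\YY|(\XX,\VV)) = T(\YY|\XX)$; on the other hand each $X_k$ can be recovered almost surely from $\tau_{X_k}(X_k,V_k)$ via $X_k = F_{X_k}^{-1}(\tau_{X_k}(X_k,V_k))$ (this is the one-dimensional instance of the quantile/distributional transform identity, valid with or without atoms), hence $\XX$ is a measurable function of $(\tau_{X_1}(X_1,V_1),\dots,\tau_{X_p}(X_p,V_p))$ and the data processing inequality gives the reverse inequality. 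Thus (ii) holds.

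For the final, negative part---that $T(\YY|\XX) \ne T(\tau_{\YY}(\YY,\VV)|\XX)$ and $T(\YY|\XX) \ne T((\tau_{Y_1}(Y_1,V_1),\dots,\tau_{Y_q}(Y_q,V_q))|\XX)$ in general---the plan is simply to exhibit a counterexample. The cleanest route is to recall that the denominator of $T$ in \eqref{defmdm} involves $\xi(Y_i|(Y_{i-1},\dots,Y_1))$, which measures the functional dependence \emph{among} the response components; replacing $\YY$ by a randomized transform destroys or alters this internal dependence while the numerator terms may react differently, so the ratio changes. Concretely I would take $q=2$ with $Y_2$ a deterministic (non-monotone) function of $Y_1$, e.g. $Y_1 \sim U(0,1)$ discretized or $Y_1$ Bernoulli and $Y_2 = \mathds{1}_{\{Y_1 = 0\}}$, together with a predictor $\XX$ for which the numerator does not vanish; then $\xi(Y_2|Y_1) = 1$ so the denominator of $T(\YY|\XX)$ loses its $i=2$ contribution, whereas after applying $\tau_{Y_1}$ the randomization breaks ties and $\tau_{Y_2}(Y_2,V_2)$ need no longer be a function of $\tau_{Y_1}(Y_1,V_1)$, changing the denominator and hence the value of $T$. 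The main obstacle here is bookkeeping: one must pick the joint law of $(\XX,Y_1,Y_2)$ so that \emph{both} the numerator and denominator can be computed in closed form (ideally reducing to the one-dimensional $\xi$ via Proposition \ref{lemrepT} or the multivariate normal formula in Proposition \ref{propChatformmGau} is not available here because of the atoms, so a small discrete model is preferable), and so that the two $T$-values are manifestly unequal. I would verify the arithmetic on the smallest possible discrete model---say $Y_1$ uniform on $\{0,1,2\}$, $Y_2 = \mathds{1}_{\{Y_1 \ge 1\}}$, and $X$ chosen so that $(X,Y_1)$ has a tractable copula---and report the two distinct numerical values; this is exactly the kind of concrete small-case computation that is deferred to the Supplementary Material in the paper's style.
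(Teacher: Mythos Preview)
Your arguments for (i) and (ii) are correct and coincide with the paper's proof: the paper also sandwiches via the data processing inequality, using $\XX = q_{\XX}(\tau_{\XX}(\XX,\VV))$ from \eqref{eqpropqt2} in one direction and the fact that $\tau_{\XX}(\XX,\VV)$ is a function of $(\XX,\VV)$ together with conditional independence of $\YY$ and $\VV$ given $\XX$ in the other. Your write-up is in fact slightly more explicit about the role of $\VV$ than the paper's.

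For the negative part, however, you are working much harder than necessary, and your sketch is not yet a proof. The paper's counterexample is entirely one-dimensional ($q=1$): take $X$ with $P(X=0)=P(X=1)=\tfrac{1}{2}$ and set $Y=X$, so that $T(Y|X)=1$. Then $\tau_{Y}(Y,V)$ is uniform on $(0,1)$ and therefore cannot be almost surely a measurable function of the two-valued $X$; by axiom (A\ref{prop3}) this forces $T(\tau_{Y}(Y,V)|X)<1$. No computation of $\xi$-values is needed, and the same example simultaneously refutes both the multivariate and the componentwise transform on the response side. Your $q=2$ strategy of perturbing the denominator via internal dependence among $Y_1,Y_2$ could in principle be made to work, but it requires tracking how numerator and denominator shift in tandem, which you correctly flag as unresolved bookkeeping; the paper sidesteps all of this by exploiting the characterization of perfect dependence directly.
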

\begin{proof}
From Eq. \eqref{eqpropqt2} and the data processing inequality in Corollary \ref{cor.T.DPI} we conclude that 
\begin{align*}
  T(\YY,\XX)
    =  T\big(\YY \,, \, q_\XX \big( \tau_{\XX} (\XX,\VV) \big)\big)
  \leq T\big(\YY \,, \, \tau_{\XX} (\XX,\VV)\big)
  \leq T(\YY,\XX)\,,
\end{align*}
hence \(T(\YY,\XX) = T (\YY,\tau_{\XX} (\XX,\VV))\).
The same reasoning yields
\begin{align*}
  T(\YY,\XX) 
  = T \big(\YY, (\tau_{{X_1}} (X_1,V_1), \dots, \tau_{{X_p}} (X_p,V_p))\big)\,,
\end{align*}
where \(\tau_{{X_k}} (X_k,V_k)\), \(k \in \{1,\ldots,p\}\), denote the (individual) univariate distributional transforms.
This proves the first part of Corollary \ref{Cor.T.PIT}.
\\
We now show \(T(\YY,\XX) \neq T (\tau_{\YY} (\YY,\VV),\XX)\), in general.
To this end, consider the random variable \(X\) with \(P(X=0) = 0.5 = P(X=1)\)
and assume that \(Y=X\) which gives \(T(Y,X)=1\).
Since \(\tau_{Y} (Y,V)\) is a uniformly on \((0,1)\) distributed random variable, 
it can not be a measurable function of \(X\) implying \(T(\tau_{Y} (Y,V),X) < 1 = T(Y,X)\).
\end{proof}

Another important property of \(T(\YY,\XX)\) is its invariance under strictly increasing and bijective transformations of \(Y_i\) and under bijective transformations of \(\XX\) as follows.

\begin{theorem}[Invariance under (strictly increasing) bijective transformations]\label{theT6}~~\\
Let \(\bm{g}=(g_1,\ldots,g_q)\) be a vector of strictly increasing and bijective transformations \linebreak \(g_i\colon \R\to \R\), \(i\in \{1,\ldots,q\}\),
and let \(\bm{h}\) be a bijective transformation \(\R^p \to \R^p\). 
Then \(T(\bm{g}(\YY),\bm{h}(\XX)) = T(\YY,\XX)\,.\)
\end{theorem}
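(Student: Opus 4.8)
The proof splits into two pieces: invariance under the bijection $\mathbf{h}$ on the predictor side, and invariance under the componentwise strictly increasing bijections $g_i$ on the response side. Both reduce to the data processing inequality (Corollary~\ref{cor.T.DPI}) together with the observation that $\xi$ is unchanged under strictly increasing transformations of the single response variable.

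\textbf{Predictor side.} First I would handle $\mathbf{h}$. Since $\mathbf{h}\colon\R^p\to\R^p$ is bijective, it is in particular injective, so there is a measurable left inverse on its range, i.e.\ a measurable map $\mathbf{h}^{-1}$ with $\mathbf{h}^{-1}(\mathbf{h}(\XX))=\XX$ almost surely. (Strictly: one needs $\mathbf{h}^{-1}$ measurable; if $\mathbf{h}$ is merely a measurable bijection this follows from Lusin--Souslin, and the paper's convention of ``bijective transformation'' should be read this way---I would state this as the one delicate point below.) Then $\YY$ and $\XX$ are conditionally independent given $\mathbf{h}(\XX)$ (they are $\sigma(\mathbf{h}(\XX))$-measurably equal), so Corollary~\ref{cor.T.DPI} gives $T(\YY|\mathbf{h}(\XX))\le T(\YY|\XX)$; and $\XX = \mathbf{h}^{-1}(\mathbf{h}(\XX))$ together with \eqref{Eq.DPI} gives $T(\YY|\XX)=T(\YY|\mathbf{h}^{-1}(\mathbf{h}(\XX)))\le T(\YY|\mathbf{h}(\XX))$. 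Combining, $T(\YY|\mathbf{h}(\XX))=T(\YY|\XX)$. This is exactly the argument already used in the proof of Corollary~\ref{Cor.T.DistTrans}, so I would simply invoke that pattern.

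\textbf{Response side.} For the $g_i$, I cannot use the data processing inequality directly, so instead I would work with the defining formula \eqref{defmdm} and show each $\xi$-term is unchanged. The key lemma is: for a single strictly increasing bijection $g\colon\R\to\R$ and any random vector $\WW$, $\xi(g(Y)|\WW)=\xi(Y|\WW)$. This follows from \eqref{Tuniv} by the change of variables $y\mapsto g(y)$: $\{g(Y)\ge y\}=\{Y\ge g^{-1}(y)\}$, so $P(g(Y)\ge y\mid\WW)=P(Y\ge g^{-1}(y)\mid\WW)$, hence $\mathrm{var}(P(g(Y)\ge y\mid\WW))=\mathrm{var}(P(Y\ge g^{-1}(y)\mid\WW))$, and $P^{g(Y)}=P^Y\circ g^{-1}$; substituting $y=g(y')$ turns both numerator and denominator into the corresponding integrals for $Y$. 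Now apply this with $\WW=(\mathbf{h}(\XX),g_{i-1}(Y_{i-1}),\dots,g_1(Y_1))$ and with $\WW=(g_{i-1}(Y_{i-1}),\dots,g_1(Y_1))$: since each $g_j$ ($j<i$) is a bijection, $\sigma(g_{i-1}(Y_{i-1}),\dots,g_1(Y_1))=\sigma(Y_{i-1},\dots,Y_1)$, so conditioning on the transformed past is the same as conditioning on the original past. Hence
\[
\xi(g_i(Y_i)\mid(\mathbf{h}(\XX),g_{i-1}(Y_{i-1}),\dots,g_1(Y_1))) = \xi(Y_i\mid(\mathbf{h}(\XX),Y_{i-1},\dots,Y_1)) = \xi(Y_i\mid(\XX,Y_{i-1},\dots,Y_1)),
\]
where the last equality is the predictor-side invariance already established (applied with the fixed past $Y_{i-1},\dots,Y_1$ adjoined to $\XX$, so one uses invariance under the bijection $(\xx,\yy)\mapsto(\mathbf{h}(\xx),\yy)$ of $\R^{p+i-1}$). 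Likewise $\xi(g_i(Y_i)\mid(g_{i-1}(Y_{i-1}),\dots,g_1(Y_1)))=\xi(Y_i\mid(Y_{i-1},\dots,Y_1))$. Plugging these equalities, term by term, into the numerator and denominator of \eqref{defmdm} for $T(\mathbf{g}(\YY)|\mathbf{h}(\XX))$ yields exactly \eqref{defmdm} for $T(\YY|\XX)$. Note the non-degeneracy hypothesis is preserved since $g_i$ is a bijection, so all quantities remain well-defined.

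\textbf{Main obstacle.} The only genuinely delicate point is the measurability of the inverses: one needs that a measurable bijection of $\R^p$ (resp.\ a strictly increasing bijection of $\R$---which is automatically a homeomorphism, so there the inverse is trivially continuous) has a measurable inverse, in order to apply \eqref{Eq.DPI} in the reverse direction. For the $g_i$ this is immediate; for $\mathbf{h}$ one invokes the Lusin--Souslin theorem (a Borel-measurable injection between Polish spaces has Borel-measurable inverse on its image). I would flag this once and otherwise treat the inverses as given. Everything else is the data-processing-inequality sandwich plus the elementary change-of-variables identity for $\xi$.
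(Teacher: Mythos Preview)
Your proof is correct and follows essentially the same two-step decomposition as the paper: predictor-side invariance first, then response-side. The paper's argument is terser---for the predictor side it simply notes that $\sigma(\XX)=\sigma(\mathbf{h}(\XX))$ when $\mathbf{h}$ is bijective (so conditional expectations in \eqref{Tuniv} are literally unchanged), rather than running the data-processing-inequality sandwich you use; and for the response side it appeals to ``invariance of the ranges under strictly increasing transformations'' together with the predictor-side invariance already established, which is exactly your change-of-variables computation $\{g(Y)\ge y\}=\{Y\ge g^{-1}(y)\}$ combined with $\sigma(g_j(Y_j):j<i)=\sigma(Y_j:j<i)$. Your flagging of the Lusin--Souslin point for measurability of $\mathbf{h}^{-1}$ is apt and applies equally to the paper's $\sigma$-algebra argument; on the response side no such subtlety arises since a strictly increasing bijection $\R\to\R$ is a homeomorphism.
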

\begin{proof}
The invariance of $\xi(\,\cdot\,,\XX)$ and $T(\,\cdot\,,\XX)$ w.r.t. a bijective function $\bm{h}$ of $\XX$ follows from the definitions of $\xi$ and $T$ because the \(\sigma\)-algebras generated by \(\XX\) and \(\bm{h}(\XX)\) coincide.
The invariance of $\xi(Y_i,\,\cdot\,)$ and $T(\YY,\,\cdot\,)$ w.r.t. strictly increasing and bijective functions $g_i$ of $Y_i$ now follows from the invariance of the ranges under strictly increasing transformations and the above mentioned invariance w.r.t. to bijective transformations of the response variables. 
\end{proof}

Theorem \ref{theT6} implies invariance of $T$ under permutations within the conditioning vector $\XX$.
The following Proposition \ref{ppropinvper} and Corollary \ref{theT7ax} also provide sufficient conditions on the underlying dependence structure for the invariance of $T$ under permutations within the response vector $\YY$ using the notion of copulas:
A \(d\)-copula is a \(d\)-variate distribution function on the unit cube \([0,1]^d\) with uniform univariate marginal distributions. 
Denote by \(\Ran(F):=\{F(x) \,,\, x\in \R\}\) the range of a univariate distribution function \(F\) 
and denote by \(\overline{A}\) the closure of a set \(A\subset \R\,.\)
Due to Sklar's Theorem, the joint distribution function of \((\XX,\YY)\) can be decomposed into its univariate marginal distribution functions and 
a \((p+q)\)-copula $C_{\XX,\YY}$ such that
\begin{align}\label{eqsklar}
    F_{\XX,\YY}(\xx,\yy) = C_{\XX,\YY}(F_{X_1}(x_1),\ldots,F_{X_p}(x_p),F_{Y_1}(y_1),\ldots,F_{Y_q}(y_q))
\end{align}
for all \((\xx,\yy)=(x_1,\ldots,x_p,y_1,\ldots,y_q)\in \R^{p+q}\,,\) where \(C_{\XX,\YY}\) is uniquely determined on \(\Ran(F_{X_1})\times \cdots \times \Ran(F_{X_p})\times \Ran(F_{Y_1})\times \cdots \times  \Ran(F_{Y_q})\);
for more background on copulas and Sklar's Theorem we refer to \citep{fdsempi2016,Nelsen-2006}.

\begin{proposition}[Invariance under permutations]\label{ppropinvper}~~
Assume that 
\(C_{\XX,\YY_{\boldsymbol{\sigma}}}=C_{\XX,\YY}\) for all \(\boldsymbol{\sigma}\in S_q\) and
\(\overline{\Ran(F_{Y_1})} = \cdots = \overline{\Ran(F_{Y_q})}\,.\)
Then \(T(\YY_\sigma,\XX)=T(\YY,\XX)\) for all \(\boldsymbol{\sigma}\in S_q\,.\)
\end{proposition}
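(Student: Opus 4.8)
The plan is to reduce the claimed permutation invariance to a distributional identity and then invoke the distributional invariance of $T$ recorded in Proposition \ref{Cor.T.DistTrans}, together with the elementary fact that $T(\YY\,|\,\XX)$ is a functional of the joint law of $(\XX,\YY)$, so that $(\XX,\YY)\eqd(\XX',\YY')$ implies $T(\YY\,|\,\XX)=T(\YY'\,|\,\XX')$. Applying Proposition \ref{Cor.T.DistTrans} once to $\YY$ and once to $\YY_{\boldsymbol\sigma}$, and noting $\mathbf{F}_{\YY_{\boldsymbol\sigma}}(\YY_{\boldsymbol\sigma})=(\mathbf{F}_{\YY}(\YY))_{\boldsymbol\sigma}$, we obtain
\begin{align*}
  T(\YY\,|\,\XX) &= T\big(\mathbf{F}_{\YY}(\YY)\,\big|\,\mathbf{F}_{\XX}(\XX)\big)\,, \\
  T(\YY_{\boldsymbol\sigma}\,|\,\XX) &= T\big((\mathbf{F}_{\YY}(\YY))_{\boldsymbol\sigma}\,\big|\,\mathbf{F}_{\XX}(\XX)\big)\,,
\end{align*}
so it suffices to prove the distributional identity
\[
  \big(\mathbf{F}_{\XX}(\XX),\,(\mathbf{F}_{\YY}(\YY))_{\boldsymbol\sigma}\big)\;\eqd\;\big(\mathbf{F}_{\XX}(\XX),\,\mathbf{F}_{\YY}(\YY)\big)\,.
\]

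For this identity I would compare the two distribution functions directly. The hypothesis $\overline{\Ran(F_{Y_1})}=\dots=\overline{\Ran(F_{Y_q})}$ ensures that $F_{Y_1}(Y_1),\dots,F_{Y_q}(Y_q)$ share a common one-dimensional law (the law of $F_Y(Y)$ is determined by $\overline{\Ran(F_Y)}$), so that permuting the components of $\mathbf{F}_{\YY}(\YY)$ does not alter the marginals and the ``copula grid'' $\prod_{j=1}^{p}\Ran(F_{X_j})\times\prod_{i=1}^{q}\Ran(F_{Y_i})$ is common to both sides. On this grid, with $u_j=F_{X_j}(x_j)$ and $v_i=F_{Y_{\sigma_i}}(y_i)$, monotonicity of the one-dimensional distribution functions gives $\{F_{X_j}(X_j)\le u_j\}=\{X_j\le x_j\}$ and $\{F_{Y_{\sigma_i}}(Y_{\sigma_i})\le v_i\}=\{Y_{\sigma_i}\le y_i\}$ up to $P$-null sets, whence by Sklar's theorem \eqref{eqsklar}
\begin{align*}
  P\big(\mathbf{F}_{\XX}(\XX)\le\uu,\,(\mathbf{F}_{\YY}(\YY))_{\boldsymbol\sigma}\le\vv\big)
  & = F_{\XX,\YY_{\boldsymbol\sigma}}(\xx,\yy)
    = C_{\XX,\YY_{\boldsymbol\sigma}}(\uu,\vv) \\
  & = C_{\XX,\YY}(\uu,\vv)
    = P\big(\mathbf{F}_{\XX}(\XX)\le\uu,\,\mathbf{F}_{\YY}(\YY)\le\vv\big)\,,
\end{align*}
the decisive middle equality being exactly the hypothesis $C_{\XX,\YY_{\boldsymbol\sigma}}=C_{\XX,\YY}$. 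Since the two distribution functions agree on a set determining the law, the two random vectors have the same distribution, which yields the identity and hence the proposition.

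The only genuinely delicate point is the bookkeeping for non-continuous marginals --- the a.s.\ identifications $\{F_Z(Z)\le F_Z(z)\}=\{Z\le z\}$, the fact that the law of $F_Z(Z)$ is governed by $\overline{\Ran(F_Z)}$, and the circumstance that a copula is pinned down only on the product of the ranges of its margins. In the continuous case everything is transparent: $\mathbf{F}_{\XX}(\XX)$ is uniformly distributed on $(0,1)^p$, $\mathbf{F}_{\YY}(\YY)$ on $(0,1)^q$, and both $(\mathbf{F}_{\XX}(\XX),\mathbf{F}_{\YY}(\YY))$ and $(\mathbf{F}_{\XX}(\XX),(\mathbf{F}_{\YY}(\YY))_{\boldsymbol\sigma})$ have distribution function $C_{\XX,\YY}=C_{\XX,\YY_{\boldsymbol\sigma}}$. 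An essentially equivalent alternative, bypassing Proposition \ref{Cor.T.DistTrans}, runs term by term: by Proposition \ref{lemrepT} each $\xi$ entering \eqref{defmdm} depends on its arguments only through the relevant copula and the ranges of the variables involved; the hypothesis, which says precisely that $C_{\XX,\YY}$ is symmetric in its last $q$ coordinates and hence that the copula of $(\XX,Y_{j_1},\dots,Y_{j_k})$ depends only on $k$, then forces $\xi(Y_{\sigma_i}\,|\,(\XX,Y_{\sigma_{i-1}},\dots,Y_{\sigma_1}))=\xi(Y_i\,|\,(\XX,Y_{i-1},\dots,Y_1))$ and likewise $\xi(Y_{\sigma_i}\,|\,(Y_{\sigma_{i-1}},\dots,Y_{\sigma_1}))=\xi(Y_i\,|\,(Y_{i-1},\dots,Y_1))$ for every $i$, leaving both numerator and denominator of $T$ in \eqref{defmdm} unchanged.
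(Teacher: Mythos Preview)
Your alternative route in the final paragraph---arguing term by term via Proposition~\ref{lemrepT} that each $\xi$ entering \eqref{defmdm} depends only on the relevant copula and on the closures of the marginal ranges---is essentially the paper's proof (the paper also invokes Theorem~\ref{theT6} to pass from the marginal distribution functions to the closures of their ranges, but the skeleton is the same).

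Your primary route via Proposition~\ref{Cor.T.DistTrans} and the distributional identity $\big(\mathbf{F}_{\XX}(\XX),(\mathbf{F}_{\YY}(\YY))_{\boldsymbol\sigma}\big)\eqd\big(\mathbf{F}_{\XX}(\XX),\mathbf{F}_{\YY}(\YY)\big)$ is genuinely different and attractive, and it is fully correct when the marginals are continuous. In the general case, however, the assertion that ``the copula grid $\prod_j\Ran(F_{X_j})\times\prod_i\Ran(F_{Y_i})$ is common to both sides'' is not justified: the hypothesis gives equality of the \emph{closures} $\overline{\Ran(F_{Y_i})}$, not of the ranges themselves, and these can differ precisely by left-limit values $F_{Y_i}(y-)$ at jump points, which may vary with $i$. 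Your chain of equalities then breaks at the last step, since with $v_i=F_{Y_{\sigma_i}}(y_i)\in\Ran(F_{Y_{\sigma_i}})$ you would need $v_i\in\Ran(F_{Y_i})$ to conclude $C_{\XX,\YY}(\uu,\vv)=P\big(\mathbf{F}_{\XX}(\XX)\le\uu,\mathbf{F}_{\YY}(\YY)\le\vv\big)$. The gap is repairable---for instance by noting that right-continuous non-decreasing marginal transforms preserve Sklar copulas, so $C_{\XX,\YY}$ is a copula for $(\mathbf{F}_{\XX}(\XX),\mathbf{F}_{\YY}(\YY))$ and $C_{\XX,\YY_{\boldsymbol\sigma}}=C_{\XX,\YY}$ is one for the permuted vector; combined with your correct observation that the marginals coincide (the law of $F_Y(Y)$ being determined by $\overline{\Ran(F_Y)}$), Sklar then gives equality in law---but this extra step is required beyond what you wrote.
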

\begin{proof}%[Proof of Proposition \ref{ppropinvper}:]
Due to Proposition \ref{lemrepT}, \(\xi(Y_i,(\XX,Y_{i-1},\ldots,Y_1))\) only depends on the conditional distribution function \(F_{Y_i|(\XX,Y_{i-1},\ldots,Y_1)}\,,\) on the distribution \(P^{\XX,Y_{i-1},\ldots,Y_1}\) and on \(\Ran(F_{Y_i})\). 
As a consequence of Sklar's Theorem (see Eq. \eqref{eqsklar}), the conditional distribution depends only on the copula \(C_{\XX,Y_i,\ldots,Y_1}\) and on the marginal distribution functions \(F_{Y_i},\ldots,F_{Y_1}\) as well as \(F_{X_1},\ldots,F_{X_p}\).
By the invariance properties of \(T\) and \(\xi\) (see Theorem \ref{theT6}), it follows that
\(\xi(Y_i,(\XX,Y_{i-1},\ldots,Y_1))\) only depends on \(C_{\XX,Y_i,\ldots,Y_1}\) and \(\overline{\Ran(F_{Y_i})},\ldots,\overline{\Ran(F_{Y_1})}\) as well as on \(\overline{\Ran(F_{X_1})},\ldots,\overline{\Ran(F_{X_p})}\,.\) 
The assertion now follows from the assumptions.
\end{proof}

A \(d\)-variate random vector \(\WW=(W_1,\ldots,W_d)\) is said to be \emph{exchangeable} if \(\WW \eqd \WW_{\boldsymbol{\sigma}}\) for all \({\boldsymbol{\sigma}}\in S_d\,,\) 
where '\,\(\eqd\)\,' denotes equality in distribution.
The following corollary is an immediate consequence of the previous result.

\begin{corollary}[Invariance under exchangeability]\label{theT7ax}~~
Assume that the random vector \((\XX,\YY)\) is exchangeable. 
Then \(T(\YY_{\boldsymbol{\sigma}},\XX)=T(\YY,\XX)\) for all \({\boldsymbol{\sigma}}\in S_q\,.\)
\end{corollary}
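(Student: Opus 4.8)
The plan is to deduce the statement directly from Proposition \ref{ppropinvper} by checking that exchangeability of $(\XX,\YY)$ supplies both of its hypotheses. Fix $\boldsymbol{\sigma} = (\sigma_1,\dots,\sigma_q) \in S_q$ and regard it as the permutation of the $p+q$ coordinates of $(\XX,\YY)$ that fixes the first $p$ entries and rearranges the last $q$ entries according to $\boldsymbol{\sigma}$; the image of $(\XX,\YY)$ under this coordinate permutation is precisely $(\XX,\YY_{\boldsymbol{\sigma}})$. By the definition of exchangeability, $(\XX,\YY_{\boldsymbol{\sigma}}) \eqd (\XX,\YY)$, and this is what I would use as the sole input.

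First I would record the consequence for the marginals: specializing to a transposition shows $Y_i \eqd Y_j$ for all $i,j$, hence $F_{Y_1} = \cdots = F_{Y_q}$ and in particular $\overline{\Ran(F_{Y_1})} = \cdots = \overline{\Ran(F_{Y_q})}$, which is the second hypothesis of Proposition \ref{ppropinvper}. Next I would verify the copula hypothesis: since $(\XX,\YY_{\boldsymbol{\sigma}})$ and $(\XX,\YY)$ have the same joint distribution function and, by the previous sentence, the same vector of univariate marginals (the $(p+i)$-th marginal of $(\XX,\YY_{\boldsymbol{\sigma}})$ is $F_{Y_{\sigma_i}} = F_{Y_i}$), Sklar's Theorem in the form of Eq. \eqref{eqsklar}, together with its uniqueness statement on the product of the ranges of the marginals, shows that the two vectors admit a common copula, i.e. we may take $C_{\XX,\YY_{\boldsymbol{\sigma}}} = C_{\XX,\YY}$ for every $\boldsymbol{\sigma} \in S_q$. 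With both hypotheses of Proposition \ref{ppropinvper} in hand, that proposition yields $T(\YY_{\boldsymbol{\sigma}}|\XX) = T(\YY|\XX)$ for all $\boldsymbol{\sigma} \in S_q$, which is the claim.

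There is no serious obstacle here; the corollary really is immediate once Proposition \ref{ppropinvper} is available. The one point requiring a little care is the passage from equality in distribution to equality of copulas, because Sklar's Theorem pins down the copula only on the product of the ranges of the marginals. This is harmless for us: the quantities $\xi(Y_i|(\XX,Y_{i-1},\dots,Y_1))$ entering the definition of $T$ depend on the copula only through the closures of those ranges (cf. the reasoning in the proof of Proposition \ref{ppropinvper} via Proposition \ref{lemrepT}), and the relevant marginals of $(\XX,\YY)$ and $(\XX,\YY_{\boldsymbol{\sigma}})$ coincide, so the residual ambiguity in the choice of copula is irrelevant.
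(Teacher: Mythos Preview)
Your proposal is correct and follows exactly the approach the paper intends: the paper merely states that the corollary ``is an immediate consequence of the previous result'' (Proposition \ref{ppropinvper}), and you have spelled out precisely why exchangeability of $(\XX,\YY)$ delivers both hypotheses of that proposition. Your additional care regarding the uniqueness of the copula only on the product of ranges is well placed but, as you note, ultimately inconsequential here since the marginals themselves coincide.
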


%%%%%%%%%%%%%%%%%%%%%%%%%%%%%%%%%%%%%%%%%%%%%%%%%%%%%%%%%%%%%%%%%%%%%%%%%%%%%%%%%%%%%%%%
\subsection{Special Cases and Closed-Form Expressions}

\bigskip
For some special cases concerning independence, conditional independence, and perfect dependence of the response variables, the measures \(T\) and its permutation invariant version $\overline{T}$ given by \eqref{defmdmav} attain a simplified form as follows.

\begin{remark}[Special cases regarding the dependence structure of $\YY$]
\label{theT7} \leavevmode
\begin{enumerate}[(i)]
    \item \label{theT71}  If \(Y_1,\ldots,Y_q\) are independent, then 
    \begin{align}\label{eqtheT71}
        T(\YY,\XX) 
        = \frac{1}{q} \sum_{i=1}^q \xi(Y_i , (\XX,Y_{i-1},\dots,Y_{1}))\,.
    \end{align}

    \item \label{theT71b}  If \(Y_1,\ldots,Y_q\) are independent and conditionally independent given $\XX$, then 
    \begin{align}\label{eqtheT71b}
        T(\YY,\XX) 
        = \frac{1}{q} \sum_{i=1}^q \xi(Y_i , \XX) 
        = \overline{T}(\YY,\XX) 
        = T^\Sigma(\YY,\XX)\,.
    \end{align}
    
    \item \label{theT72} 
    If, for \(j > i\), \(Y_j\) is perfectly dependent on \(Y_i\)
    then
    \begin{align}\label{eqtheT72}
        T(\YY,\XX) = T(\YY_{-j},\XX)\,.
    \end{align}
    where \(\YY_{-j}\) denotes the vector of response variables excluding variable \(Y_j\).
\end{enumerate}
\end{remark}
Example \ref{Ex.Cube} below illustrates the difference between Statements \eqref{theT71} and \eqref{theT71b} in the above Remark \ref{theT7} .  
Also note that Eq. \eqref{eqtheT72} is not invariant under permutations of the components of \(\YY\,,\) i.e., if \(Y_1\) is perfectly dependent on \(Y_2\,,\) then \(T((Y_1,Y_2),\XX) \neq T(Y_2,\XX)\) in general.

The next example shows that \(T^\Sigma\) defined in \eqref{TSigma} is not a measure of predictability.

\begin{example} \label{Ex.Cube}
Consider the random vector $(X,Y_1,Y_2)$ whose mass is distributed uniformly within the four cubes
\begin{eqnarray*}
  \big(0,\tfrac{1}{2}\big) \times \big(0,\tfrac{1}{2}\big) \times \big(0,\tfrac{1}{2}\big)
	& \quad & \big(\tfrac{1}{2},1\big) \times \big(\tfrac{1}{2},1\big) \times \big(0,\tfrac{1}{2}\big)
	\\
	\big(0,\tfrac{1}{2}\big) \times \big(\tfrac{1}{2},1\big) \times \big(\tfrac{1}{2},1\big)
	& \quad & \big(\tfrac{1}{2},1\big) \times \big(0,\tfrac{1}{2}\big) \times \big(\tfrac{1}{2},1\big)
\end{eqnarray*}
and has no mass outside these cubes; cf. \citep[Example 3.4.]{sfx2021vine}. 
Then $Y_1$ and $Y_2$ are independent but not conditionally independent given \(X\).
Additionally, \(X\) and \(Y_1\) as well as \(X\) and \(Y_2\) are independent, and hence 
\begin{equation*}
  T((Y_1,Y_2),X)
  = \frac{T(Y_1,X) + T(Y_2 , (X,Y_{1}))}{2} 
  = \frac{T(Y_2 , (X,Y_{1}))}{2}  
  = \frac{1}{4}
  > 0 
  = T^\Sigma((Y_1,Y_2),X).
\end{equation*}
 Consequently, since \(T\) satisfies axiom (A\ref{prop2}), \(T^\Sigma\) does not satisfy axiom (A\ref{prop2}) and thus it is not a measure of predictability.
\end{example}

%\begin{comment}

In the special case the random vector \((\XX,\YY) \sim N({\bf 0},\Sigma)\) exhibits an equicorrelated structure, the results presented in Proposition \ref{propChatformmGau} become more explicit.

\begin{example}\label{exlimcasmnd}
Consider the specific case where the covariance matrix \(\Sigma\) has the decomposition given by
    \begin{align}\label{eqdecsig5}
        \Sigma_{11}=\begin{pmatrix}
            1 & \rho_X & \cdots & \rho_X \\
            \rho_X & 1 & \ddots & \vdots\\
            \vdots & \ddots & \ddots & \rho_X \\
            \rho_X & \cdots & \rho_X & 1
        \end{pmatrix}\,, ~
        \Sigma_{21}=\begin{pmatrix}
            \rho_{XY} & \cdots & \rho_{XY} \\
            \vdots & \ddots & \vdots \\
            \rho_{XY} & \cdots & \rho_{XY} 
        \end{pmatrix}\,,~
         \Sigma_{22}=\begin{pmatrix}
            1 & \rho_Y & \cdots & \rho_Y \\
            \rho_Y & 1 & \ddots & \vdots\\
            \vdots & \ddots & \ddots & \rho_Y \\
            \rho_Y & \cdots & \rho_Y & 1
        \end{pmatrix}
    \end{align}
for some correlation parameters \(\rho_X,\rho_{XY},\rho_Y\in [-1,1]\,.\) For \(p,q>1\,,\) elementary calculations show that \(\Sigma\) is positive semi-definite if and only if 
\(\rho_X \in [-1/(p-1),1]\,,\) \(\rho_Y \in [-1/(q-1),1]\) and
\begin{align}\label{assposdefsigg}  
  \rho_{XY}^2 
  & \leq \frac{1+(p-1)\rho_X}{p}\frac{1+(q-1)\rho_Y}{q}\,. 
\end{align}
Due to Proposition \ref{propExtcasemn}\eqref{propExtcasemn2} we know that \(T(\YY,\XX)=1\) if and only if \(\rank(\Sigma)=\rank(\Sigma_{11})\,.\) The latter is by \eqref{assposdefsigg} equivalent to 
\begin{align}\label{eqsiggh}
   \rho_{XY}^2 = \frac{1+(p-1)\rho_X}{p}\frac{1+(q-1)\rho_Y}{q} ~~~ \text{and} ~~~ \rho_Y=1\,,
\end{align}
%\(\rho_{XY}^2 \leq \frac{1+(p-1)\rho_X}{p}\frac{1+(q-1)\rho_Y}{q}\,,\)
noting that, since \(\Sigma_{21}\) is assumed to be constant, there is no choice for \(\rho_Y\) other than \(1\) such that \(\rank(\Sigma)=\rank(\Sigma_{11})\,.\)
\\
Due to Proposition \ref{propChatformmGau}, straightforward calculations yield
    \begin{align}
        \xi(Y_i,(\XX,Y_{i-1},\ldots,Y_1)) 
        &\phantom{:}= \frac{3}{\pi} \arcsin\left(\frac{1+\rho^*(p,i)}{2}\right)-\frac 1 2\,,\\
        \xi(Y_i,(Y_{i-1},\ldots,Y_1)) &\phantom{:}= \frac{3}{\pi} \arcsin\left(\frac{1+\rho^*(i)}{2}\right) - \frac 1 2 ~~~~~~~~~~~~~~ \text{for } i\in \{2,\ldots,q\}\,,\\
        \label{exnnn}   \text{where } \rho^*(p,i) &:= \begin{cases}
        \frac{p\rho_{XY}^2}{1+(p-1)\rho_X} & \text{for } i=1\,,\\
            %\frac{p\,\rho_{XY}^2}{(1+(p-1)\rho_X)-p\,\rho_{XY}^2} (1-\rho_Y)^2 + \rho_Y^2 &\text{for } i\in \{2,\ldots,q\}\,,
            \frac{(1+(p-1)\rho_X)(i-1)\rho_Y^2-p(i\rho_Y-1)\rho_{XY}^2}{(1+(p-1)\rho_X)(1+(i-2)\rho_Y)-p(i-1)\rho_{XY}^2}
            &\text{for } i\in \{2,\ldots,q\}\,,
        \end{cases} \\
        \label{ennn2}\text{and } \rho^*(i)&:= \frac{(i-1)\rho_Y^2}{1+(i-2)\rho_Y} \,.
    \end{align}
We further observe that 
\begin{enumerate}[(i)]
\item 
\(T(\YY,\XX)\in [0,1]\) because \(\rho^*(p,1)\in [0,1]\) and \(0\leq \rho^*(i)\leq \rho^*(p,i)\leq 1\) for all \(i\in \{2,\ldots,q\}\) and thus \(T(Y_1,\XX)\in [0,1]\) and \(0 \leq T(Y_i,(Y_{i-1},\ldots,Y_i)) \leq\) 
\linebreak \(T(Y_i,(\XX,Y_{i-1},\ldots,Y_i))\leq 1\) for all \(i\in \{2,\ldots,q\}\,.\) 
\item 
\(T(\YY,\XX)=0\) 
%if and only if \(T(Y_i,(\XX,Y_{i-1},\ldots,Y_1))= T(Y_i,(Y_{i-1},\ldots,Y_1))\) for all \(i\in \{1,\ldots,q\}\) 
if and only if \(\rho^*(p,1)=0\) and \(\rho^*(p,i)=\rho^*(i)\) for all \(i\in \{2,\ldots,q\}\) if and only if \(\rho_{XY}=0\), i.e., \(\Sigma_{12}\) is the null matrix. 
%and \(\XX\) and \(\YY\) are independent.
\item \(T(\YY,\XX)=1\) 
%if and only if \(T(Y_i,(\XX,Y_{i-1},\ldots,Y_1)) =1\) for all \(i\in \{1,\ldots,q\}\) 
if and only if \(\rho^*(p,i)=1\) for all \(i\in \{1,\ldots,q\}\) if and only if \(\rho_{XY}^2 = \frac{1+(p-1)\rho_X}{p}\) for \(i=1\) and 
        \(\rho_{XY}^2 = \frac{1+(p-1)\rho_X}{p}\frac{1+(i-1)\rho_Y}{i}
                      = \rho_{XY}^2 \frac{1+(i-1)\rho_Y}{i}\) for all \(i\in \{2,\ldots,q\}\)
if and only if, due to \eqref{eqsiggh}, \(\rank(\Sigma)=\rank(\Sigma_{11})\,.\)  
\end{enumerate}
\end{example}

%\end{comment}

%%%%%%%%%%%%%%%%%%%%%%%%%%%%%%%%%%%%%%%%%%%%%%%%%%%%%%%%%%%%%%%%%%%%%%%%%%%%%%%%%%%%%%%%
%%%%%%%%%%%%%%%%%%%%%%%%%%%%%%%%%%%%%%%%%%%%%%%%%%%%%%%%%%%%%%%%%%%%%%%%%%%%%%%%%%%%%%%%
%\clearpage

\section{Additional Material for Section \ref{secconasy}}
\label{App.Add.Secconasy}

%%%%%%%%%%%%%%%%%%%%%%%%%%%%%%%%%%%%%%%%%%%%%%%%%%%%%%%%%%%%%%%%%%%%%%%%%%%%%%%%%%%%%%%%
\subsection{Simulation Study in Multivariate Normal Models}

We illustrate the small and moderate sample performance of our estimator $T_n$ 
in the case where the random vector $(\XX,\YY)$ follows a multivariate normal distribution according to Example \ref{exlimcasmnd} with
\begin{enumerate}[(i)]
    \item $p=5$ predictor and $q=2$ response variables and with correlation parameters $\rho_X=0.5$, $\rho_Y=0.2$, and $\rho_{XY}=0.5$, where \(T(\YY,\XX) \approx 0.2712\), and 
    \item $p=2$ predictor and $q=4$ response variables and with correlation parameters $\rho_X=0.25$, $\rho_Y=0.75$, and $\rho_{XY}=0.5$, where \(T(\YY,\XX) \approx 0.1054\),
\end{enumerate}
respectively.
To test the performance of the estimator \(T_n\) in different settings, samples of size $n \in \{20; 50; 100; 200; 500; 1,000; 5,000; 10,000; 50,000\}$ are generated and then $T_n$ is calculated.
These steps are repeated $R=1,000$ times.
Fig. \ref{Fig.Gauss.Sim} depicts the estimates of $T_n$ for different samples sizes and relates it to the true dependence value (dashed red line).
\\
As can be observed from Figure \ref{Fig.Gauss.Sim} (and as expected), the estimates converge rather fast to the true values.
Notice that the estimator \(\overline{T}_n\) performs comparably to \(T_n\).

\begin{figure}[t!]
		\centering
  \caption{Boxplots summarizing the $1,000$ obtained estimates for $T_n$.
		Samples of size $n$ are drawn from a multivariate normal distribution 
        with $5$ predictor and $2$ response variables (left panel) and with $2$ predictor and $4$ response variables (right panel).}
		\label{Fig.Gauss.Sim}
		\includegraphics[width=1.0\textwidth]{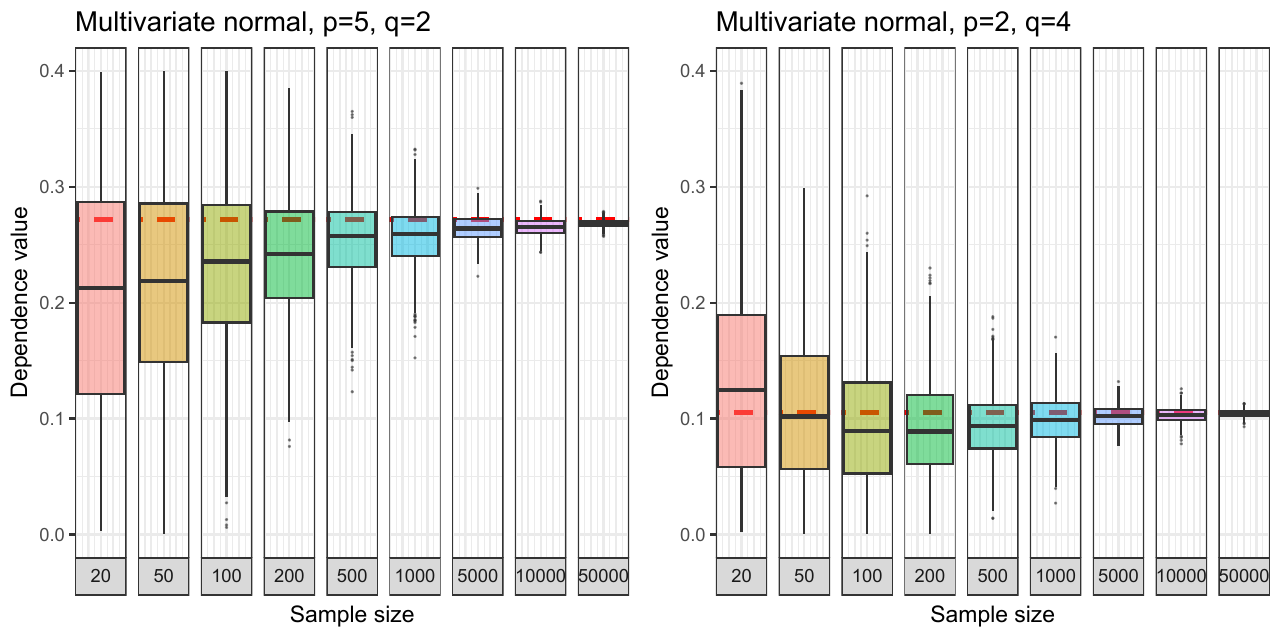}
\end{figure}

%%%%%%%%%%%%%%%%%%%%%%%%%%%%%%%%%%%%%%%%%%%%%%%%%%%%%%%%%%%%%%%%%%%%%%%%%%%%%%%%%%%%%%%%
\subsection{Comparison of \(T\) with static convex combinations \(\kappa^\alpha\)}

We consider a sample drawn from random variables $X \sim N(0,1)$, $Y_1 \sim N(0,1)$ and $Y_2 = Y_1 + N(0,\sigma^2)$ with sample size $10,000$ and \(\sigma >0\,.\) By construction, the response vector \(\YY=(Y_1,Y_2)\) is independent from \(X\) and thus \(T(\YY,X) = 0 = \kappa^{\boldsymbol{\alpha}}(\YY,X)\) for all weights \({\boldsymbol{\alpha}}\,,\) where \(\kappa^{\boldsymbol{\alpha}}\) is defined in \eqref{KappaAverage2}. The dependence structure among \(\YY\) increases with decreasing \({\boldsymbol{\alpha}}\,.\) As Figure \ref{Fig.Sim} illustrates, the nearest neighbor-based plug-in estimator fails to be useful for \(\kappa^{\boldsymbol{\alpha}}\) with deterministic \({\boldsymbol{\alpha}}\,.\)

\begin{figure}[thb]
  \centering
  \includegraphics[width=0.9\textwidth]{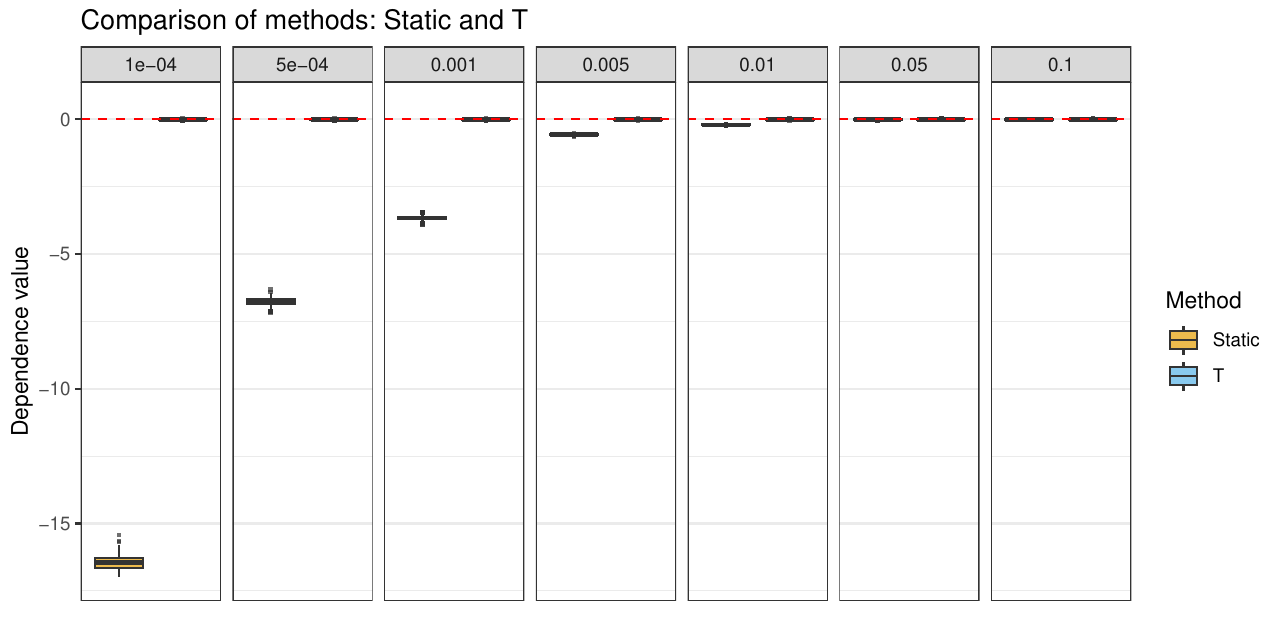}
  \caption{Boxplots for varying $\sigma \in \{0.0001, 0.0005, 0.001, 0.005, 0.01, 0.05, 0.1\}$ from left to right comparing the $1,000$ obtained dependence values of the static convex combination \(\kappa^{\boldsymbol{\alpha}}((Y_1,Y_2),X)\) in \eqref{KappaAverage2} with fixed weights $(\alpha_1,\alpha_2)=(0.5,0.5)$ estimated via R function \texttt{codec} (R package FOCI) with those of $T((Y_1,Y_2),X)$ in \eqref{defmdm} estimated via  R function \texttt{didec} (R package didec). 
   Since $(Y_1,Y_2)$ is independent of $X$, the true dependence value equals $0$ (depicted by the red dashed line).}
  \label{Fig.Sim}
\end{figure}

%%%%%%%%%%%%%%%%%%%%%%%%%%%%%%%%%%%%%%%%%%%%%%%%%%%%%%%%%%%%%%%%%%%%%%%%%%%%%%%%%%%%%%%%
%\clearpage
\section{Additional Material for Section \ref{secappl}}\label{App.Add.secappl}

\subsection{Plausibility of Multivariate Feature Selection} \label{RDE.Subsect.FS}

Exemplarily, we evaluate bioclimatic data to illustrate that MFOCI is plausible in the sense that 
%the variables selected by MFOCI are also highly relevant for the individual feature selections.
it chooses a small number of variables that include, in particular, the most important variables for the individual feature selections.
%Another application to a medical data set, which addresses the extent of Parkinson's disease, is presented and analyzed in Section \ref{RDE.Subsect.FS.App}.

\bigskip\noindent
\textit{Analysis of global climate data}\label{secglcldata}
\\
We revisit the the global climate data set from Subsection \ref{RDE.Subsect.FSII} and analyze the influence of a set of  thermal and precipitation-related variables (see Table \ref{Fig.Climate1})
on the pair \emph{Annual Mean Temperature} (AMT) and \emph{Annual Precipitation} (AP).
By applying the coefficient $T$ we first perform a forward feature selection and identify those variables that best predict the response vector (AMT,AP) (= variables $(Y_1,Y_2)$). Then, we compare the outcome with the forward feature selections that refer to the individual response variables.
First, note that the output variables AMT and AP exhibit some positive dependence (their Pearson correlation is $0.52$ and
their Spearman's rank correlation equals $0.61$).
Second, recall that our variable selection method requires neither knowledge of the marginal distributions nor knowledge of the dependence structure between or among the predictor and response variables.

\begin{table}[t!]
\small
\centering
\caption{Thermal and precipitation-related variables used as predictor variables;
see Subsection \ref{RDE.Subsect.FS} for details.}
\label{Fig.Climate1}
\begin{tabular}{l|l||l|l}
MTWeQ & Mean Temperature of Wettest Quarter
& PTWeQ & Precipitation of Wettest Quarter
\\
MTDQ & Mean Temperature of Driest Quarter
& PTDQ & Precipitation of Driest Quarter
\\
MTWaQ & Mean Temperature of Warmest Quarter 
& PTWaQ & Precipitation of Warmest Quarter 
\\
MTCQ & Mean Temperature of Coldest Quarter
& PTCQ & Precipitation of Coldest Quarter
\end{tabular}
\end{table}

Table \ref{Fig.Climate2} depicts the order of the via MFOCI selected variables based on the estimated values for $T$.
There, the values in line $k$ indicate the estimated values for $T(\YY,(X_1,\dots,X_k))$ where $X_1,\dots,X_k$ are the variables in lines $1$ to $k$. 
For the prediction of the response vector \((Y_1,Y_2)=\) (AMT, AP),
MFOCI selects the four variables \{MTWaQ, PWeQ, MTCQ, PDQ\} 
(at this point it is worth mentioning that both the feature selection referring to the permuted vector \((Y_2,Y_1)=\) (AP, AMT) and the feature selection based on \(\overline{T}\) identify the same four relevant variables.).
For the individual prediction of the response variable AMT and AP, respectively, 
MFOCI selects the variables \{MTWaQ, MTCQ, MTWeQ\} and \{PWeQ, PDQ, PCQ\}, respectively.
Remarkably, from this perspective, 
the chosen predictor variables of the multivariate feature selection for \((Y_1,Y_2)=\) (AMT,AP) are a proper subset of the union of the relevant predictor variables of the respective individual feature selections.
In this regard, our multi-output variable selection method is plausible.
\begin{table}[t!]
\small
\centering
\caption{Results of the forward feature selections based on the coefficient \(T\) to identify those variables that best predict AMT and AP and (AMT,AP), respectively;
see Subsection \ref{RDE.Subsect.FS} for details.
The variables selected via MFOCI to predict (AMT,AP) are marked in red color.
}
\label{Fig.Climate3}\label{Fig.Climate2}
\begin{tabular}{c||lc||lc|lc}
Position 
& \makecell[l]{Variables to \\ predict (AMT,AP)}
& $T_n$
& \makecell[l]{Variables to \\ predict AMT}
& $T_n$
& \makecell[l]{Variables to \\ predict AP}
& $T_n$
\\ 
\hline
\hline
1 
& \textcolor{foxred}{MTWaQ}
& 0.64
& \textcolor{foxred}{MTWaQ}
& 0.84
& \textcolor{foxred}{PWeQ}
& 0.80
\\
2 
& \textcolor{foxred}{PWeQ}
& 0.84
& \textcolor{foxred}{MTCQ} 
& 0.97
& \textcolor{foxred}{PDQ}
& 0.92
\\
3 
& \textcolor{foxred}{MTCQ}
& 0.89
& MTWeQ
& 0.98
& PCQ
& 0.93
\\
4 
& \textcolor{foxred}{PDQ}
& 0.91
& %MTDQ
& %0.97
& %PWaQ
& %0.93
\\
%5 
%& PCQ
%& 0.91
%& \textcolor{foxred}{PDQ}
%& 0.96
%& \textcolor{foxred}{MTCQ}
%& 0.92
%\\
%6 
%& MTWeQ
%& 0.91
%& PCQ
%& 0.96
%& MTWeQ
%& 0.92
%\\
%7 
%& PWaQ
%& 0.90
%& PWaQ
%& 0.95
%& \textcolor{foxred}{MTWaQ}
%& 0.91
%\\
%8 
%& MTDQ
%& 0.90
%& \textcolor{foxred}{PWeQ}
%& 0.94
%& MTDQ
%& 0.90
%\\
\end{tabular}
\end{table}

We observe from Table \ref{Fig.Climate2} that $T$ might fulfill some kind of reversed information gain inequality in the response part, i.e., adding response variables might lower predictability.
In general, however, such behaviour cannot be inferred, see Example \ref{Ex.Cube} where $T((Y_1,Y_2),X) = 1/4 > 0 = \max\{T(Y_1,X),T(Y_2,X)\}$.

\bigskip
As a second real-world example, we now evaluate medical data to once again illustrate that MFOCI is plausible in the sense that it chooses a small number of variables that include, in particular, the most important variables for the individual feature selections.

\bigskip\noindent
\textit{Predicting the extent of Parkinson's disease}\label{RDE.Subsect.FS.App}
\\
As illustrative example for feature selection in medicine, we determine the most important variables for predicting two UPDRS scores---the motor as well as the total UPDRS score---which are assessment tools used to evaluate the extent of Parkinson's disease in patients. The data set\footnote{UCI machine learning repository \citep{dua2019}; to download the data use 
\url{https://archive.ics.uci.edu/ml/datasets/Parkinsons\%2BTelemonitoring}. We excluded the data 'test\_time' and rounded the data 'motor\_UPDRS' and 'total\_UPDRS' to whole numbers.}
consists of \(n=5875\) observations including the two response variables (motor and total UPDRS score) as well as the predictor variables age, sex, and several data concerning shimmer and jitter which are related to the voice of the patient. While shimmer measures fluctuations in amplitude, jitter indicates fluctuations in pitch. Common symptoms of Parkinson’s disease include speaking softly and difficulty maintaining pitch. Therefore, measurements of both shimmer and jitter can be used to detect Parkinson's disease and thus the voice data can be useful for predicting the UPDRS scores.
Note that the response variables Motor UPDRS score and Total UPDRS score are strongly dependent---the data yield Spearman's correlation of \(0.95\) and  Kendall's correlation of \(0.85\,.\)

\begin{table}[t!]
%set.seed(2508)
\small
\centering
\caption{Results of the forward feature selections based on the coefficient \(T\) to identify those variables that best predict Motor UPDRS score, Total UPDRS score, as well as both Motor UPDRS score and Total UPDRS score, respectively;
see Section \ref{RDE.Subsect.FS.App} for details.
The variables selected via MFOCI to predict (Motor UPDRS score,Total UPDRS score) are marked in red color.
%%%%%%%%%%%%%%%%%%%%%%%%%%%%%%%
% set.seed(10)
%%%%%%%%%%%%%%%%%%%%%%%%%%%%%%%
}
\label{Fig.Park}
\begin{tabular}{c||lc||lc|lc}
Position 
& \makecell[l]{Variables to predict \\ Motor and Total \\ UPDRS score}
& $T_n$
& \makecell[l]{Variables to \\ predict Motor \\ UPDRS score}
& $T_n$
& \makecell[l]{Variables to \\ predict Total \\ UPDRS score}
& $T_n$
\\ 
\hline
\hline
1 
& \textcolor{foxred}{age}
& 0.5316
& \textcolor{foxred}{age}
& 0.4935
& \textcolor{foxred}{age}
& 0.5154
\\
2 
& \textcolor{foxred}{sex}
& 0.6759
& \textcolor{foxred}{sex}
& 0.6604
& \textcolor{foxred}{sex}
& 0.6711
\\
3 
& \textcolor{foxred}{DFA}
& 0.7433
& \textcolor{foxred}{DFA}
& 0.7383
& \textcolor{foxred}{DFA}
& 0.7413
\\
4 
& \textcolor{foxred}{Shimmer.APQ11}
& 0.7668
& \textcolor{foxred}{RPDE}
& 0.7581
& \textcolor{foxred}{RPDE}
& 0.7668
\\
5 
& \textcolor{foxred}{RPDE}
& 0.7707
& Shimmer.dB
& 0.7651
& Shimmer.dB 
& 0.7745
\\
6 
& \textcolor{foxred}{Shimmer.DDA}
& 0.7783
& \textcolor{foxred}{Shimmer.DDA}
& 0.7693
& \textcolor{foxred}{Shimmer.DDA}
& 0.7760
\\
7 
& \textcolor{foxred}{NHR}
& 0.7801
& \textcolor{foxred}{Shimmer.APQ5}
& 0.7696
& \textcolor{foxred}{Shimmer.APQ5}
& 0.7780
\\
8 
& \textcolor{foxred}{Shimmer}
& 0.7822
& \textcolor{foxred}{Shimmer.APQ11}
& 0.7703
& \textcolor{foxred}{Jitter.RAP}
& 0.7781
\\
9 
& \textcolor{foxred}{Shimmer.APQ5}
& 0.7834
& 
& 
& 
& 
\\
10
& \textcolor{foxred}{Jitter.Abs.}
& 0.7838
& 
& 
& 
& 
\\
11
& \textcolor{foxred}{Jitter.RAP}
& 0.7840
& 
& 
& 
& 
\end{tabular}
\end{table}

From Table \ref{Fig.Park}, we observe that MFOCI selects 11 variables for predicting both UPDRS scores of Parkinson patients jointly, while 8 variables are selected for predicting each score individually. 
The most important variables for jointly and individually predicting the UPDRS scores of Parkinson patients are age, sex, and DFA (the signal fractal scaling exponent), making MFOCI plausible in this case as well. The order of the seven most important variables are the same when predicting the individual scores. However, our feature selection recommends a different order from the fourth position on when predicting the scores jointly.
Interestingly, from the fourth or fifth variable on, the values of \(T_n\) increase only slightly. Since \(T\) characterizes conditional independence, and a small increase is associated with only slightly greater squared variability of the conditional distribution functions, one could argue that in all cases a total of 4 or 5 of the 18 characteristics are sufficient to predict one or both of the variables. 
If we compare the estimates \(T_n\) for the different scenarios, we find that they are very similar which can be explained by the strong positive dependence of the individual scores.

%%%%%%%%%%%%%%%%%%%%%%%%%%%%%%%%%%%%%%%%%%%%%%%%%%%%%%%%%%%%%%%%%%%%%%%%%%%%%%%%%%%%%%%%
\subsection{Multivariate Feature Selection Comparison---Real-World Data Examples}\label{RDE.Subsect.FSIII.App}

Complementing the comparison of MFOCI with KFOCI and Lasso given in Section \ref{RDE.Subsect.FSII}, we now present a comparison of MFOCI also with the bivariate vine copula based quantile regression (BVCQR, in short) proposed by \citet[Section 6]{czado2022}, which allows for a dependence-based feature selection.

%In Section \ref{RDE.Subsect.FSII}, we compare our forward feature selection method MFOCI with KFOCI \citep{deb2020b} and the Lasso \citep{Tibshirani-1996,Tibshirani-2011} based on the climate data set from the previous section. In the following example, we also present a comparison with the bivariate vine copula based quantile regression (BVCQR, in short) proposed by \citep{czado2022}.

\bigskip\noindent
\textit{Predicting daily weather variables}
\\
The underlying data consist of the Seoul weather data set\footnote{UCI machine learning repository \citep{dua2019}; to download the data use 
\url{https://archive.ics.uci.edu/ml/datasets/Bias+correction+of+numerical+prediction+model+temperature+forecast}.}
containing daily observations of two response variables, 
\emph{NextMin: daily minimum air temperature} for the next day and \emph{NextMax: daily maximum air temperature} for the next day, and 13 predictor variables from June 30 to August 30 during 2013-2017 of weather station \emph{central Seoul} (sample size \(n=307\)).
All the variables in this data set exhibit quite high dependencies;
for instance, the Pearson correlation between the response variables NextMin and NextMax is \(0.65\).
\\
In order to achieve comparability with the feature ranking computed in \citep{czado2022}, we are compelled to ignore for the moment the temporal dependence between daily measurements.
Then the feature selection procedure via 
BVCQR ends with 11 predictor variables,
KFOCI (kernel \texttt{rbfdot(1)} \& default kernel, both with default number of nearest neighbours) ends with 6 predictor variables,
Lasso ends with 5 predictor variables,
while MFOCI via \(\overline{T}\) and KFOCI (kernel \texttt{rbfdot(1)} with number of nearest neighbours = 1) end with (different) subsets of no more than 4 variables.
%, resulting in a significant reduction in complexity.
For each subset of selected variables we calculate the (cross validated) mean squared prediction error (MSPE) based on a random forest using \texttt{R}-package \emph{MultivariateRandomForest}. Table \ref{Fig.SeoulWeather} depicts the subsets of chosen predictor variables and lists the MSPEs for each response.

%set.seed(2508)
\begin{table}[t!]
\small
\centering
\caption{Chosen predictor variables to predict (NextMax, NextMin) selected via MFOCI, BVCQR, KFOCI and Lasso with MSPEs for each response variable; see Section \ref{RDE.Subsect.FSIII.App} for details.
The variables selected via MFOCI to predict (NextMax, NextMin) are marked in red color.}
\label{Fig.SeoulWeather}
\begin{tabular}{c||l|l|l|l|l}
\makecell[l]{Variables \\ to predict \\ (NextMax, \\ NextMin)}
& \makecell[l]{Feature \\ selection \\ via MFOCI}
& \makecell[l]{Feature \\ selection \\ via BVCQR}
& \makecell[l]{Feature \\ selection \\ via KFOCI, \\ kernel \texttt{rbfdot(1)} \\ \& default kernel}
& \makecell[l]{Feature selection \\ via KFOCI, \\ kernel \\ \texttt{rbfdot(1)} \\ (Knn = 1)}
& \makecell[l]{Feature \\ selection \\ via Lasso}
%& \makecell[l]{\phantom{00}}{\(\overline{T_n}\)}
\\
\hline
\hline
& \textcolor{foxred}{LDAPS\_Tmin}
& \textcolor{foxred}{LDAPS\_Tmin}
& \textcolor{foxred}{LDAPS\_Tmin}
& \textcolor{foxred}{LDAPS\_Tmin}
& \textcolor{foxred}{LDAPS\_Tmax}
\\
& \textcolor{foxred}{LDAPS\_Tmax}
& \textcolor{foxred}{LDAPS\_Tmax}
& \textcolor{foxred}{LDAPS\_Tmax}
& \textcolor{foxred}{LDAPS\_Tmax}
& \textcolor{foxred}{LDAPS\_Tmin}
\\
& \textcolor{foxred}{LDAPS\_CC3}
& LDAPS\_RHmax
& \textcolor{foxred}{LDAPS\_CC1}
& LDAPS\_RHmin
& \textcolor{foxred}{LDAPS\_CC1}
\\
& \textcolor{foxred}{LDAPS\_CC1}
& LDAPS\_WS
& LDAPS\_CC2
& LDAPS\_CC2
& LDAPS\_CC2
\\ 
& 
& Present\_Tmin
& LDAPS\_CC4
& 
& Present\_Tmax
\\
& 
& \textcolor{foxred}{LDAPS\_CC1}
& \textcolor{foxred}{LDAPS\_CC3}
& 
& 
\\ 
& 
& Present\_Tmax
& 
&
& 
\\
& 
& LDAPS\_LH
& 
&
& 
\\
& 
& \textcolor{foxred}{LDAPS\_CC3}
& 
&
& 
\\
& 
& LDAPS\_RHmin
& 
&
& 
\\
& 
& LDAPS\_CC4
& 
&
& 
\\
\hline
\hline
\makecell[c]{MSPE \\ NextMax}
& \makecell[c]{2.55} % MFOCI \
& \makecell[c]{2.40} % Vine
& \makecell[c]{2.57} % # kernlab::rbfdot(1), Knn default + kern default, Knn default
& \makecell[c]{2.81} % \texttt{rbfdot(1)}, KNN = 1
& \makecell[c]{2.63} % Lasso
\\
\hline
\makecell[c]{MSPE \\ NextMin}
& \makecell[c]{1.09} % MFOCI
& \makecell[c]{1.07} % Vine
& \makecell[c]{1.03} % # kernlab::rbfdot(1), Knn default + kern default, Knn default
& \makecell[c]{1.05} % \texttt{rbfdot(1)}, KNN = 1
& \makecell[c]{1.03} % Lasso
\end{tabular}
\end{table}

%%%%%%%%%%%%%%%%%%%%%%%%%%%%%%%%%%%%%%%%%%%%%%%%%%%%%%%%%%%%%%%%%%%%%%%%%%%%%%%%%%%%%%%%
\subsection{Identifying Networks}\label{secnetworks}

Since \(T\) is capable of measuring the strength of (directed) dependence between random vectors of different dimensions, there exist numerous ways for identifying and visualizing networks between variables.
A common and very popular option is to group or cluster the variables according to their similarity, even though the quantification of similarity can be very different.
In \citep{sfx2024cluster}, the authors introduce a hierarchical variable clustering method based on the measure \(\overline{T}\) and hence based on the predictive strength between random vectors. Note that recently developed methods for clustering random variables (see, e.g., \citep{Durante2017, Kojadinovic2004, sfx2021diss}) differ from the well-studied clustering of data, see, e.g., \citep{Duran-1974,Gan-2021} for an overview of cluster analyses of data.

{\begin{figure}
\centering
\caption{Interconnectedness of the three largest banks in the US, Europe and Asia and Pacific, as well as connectedness with the banks Citigroup and Deutsche Bank measured by \(T\,;\) see Subsection \ref{secnetworks} for details.
}
\label{figintcon}
\begin{tikzpicture}[scale=1]

\begin{scope}[shift={(0,0)}]
\tikzstyle{every node} = []
\node (cg) at (-1, 1.2) [shape=circle,draw] {CG};
\node (db) at (1, 1.2) [shape=circle,draw] {DB};
\node (us) at +(-4,3) [shape=circle,draw] {\bf US};
\node (eu) at +(4,3) [shape=circle,draw] {\bf EU};
\node (ap) at +(0,-2) [shape=circle,draw] {\bf AP};
\draw [->,>={angle 45}] (cg) -- (us) node[sloped, pos=0.2, above]{\(0.28\)};
\draw [->,>={angle 45}] (us) .. controls (-2.5,1.7) .. (cg) node[sloped, pos=0.8, below]{\(0.58\)};
\draw [->,>={angle 45}] (eu) -- (cg) node[sloped, pos=0.6, above]{\(0.29\)};
\draw [->,>={angle 45}] (cg) .. controls (.5,2.5) ..  (eu) node[sloped, pos=0.5, above]{\(0.14\)};
\draw [->,>={angle 45}] (cg) -- (ap) node[sloped, pos=0.4, above]{\(0.05\)};
\draw [->,>={angle 45}] (ap) .. controls (-0.8,-.4) .. (cg) node[sloped, pos=0.7, below]{\(0.15\)};
\draw [->,>={angle 45}] (us) --  (db) node[sloped, pos=0.6, above]{\(0.29\)};
\draw [->,>={angle 45}] (db) .. controls (-.5,2.5) ..  (us) node[sloped, pos=0.5, above]{\(0.14\)};
\draw [->,>={angle 45}] (eu) .. controls (2.5,1.7) .. (db) node[sloped, pos=0.8, below]{\(0.46\)};
\draw [->,>={angle 45}] (db) -- (eu) node[sloped, pos=0.2, above]{\(0.21\)};
\draw [->,>={angle 45}] (db) -- (ap) node[sloped, pos=0.4, above]{\(0.05\)};
\draw [->,>={angle 45}] (ap) .. controls (0.8,-.4) .. (db) node[sloped, pos=0.7, below]{\(0.12\)};
\draw [->,>={angle 45}] (us) .. controls (-1,4) and (1,4) .. (eu) node[sloped, pos=0.5, above]{\(0.18\)};
\draw [->,>={angle 45}] (eu) .. controls (2.7,-0.2) and (1.5,-1) .. (ap) node[sloped, pos=0.5, below]{\(0.08\)};
\draw [->,>={angle 45}] (us) .. controls (-2.7,-0.2) and (-1.5,-1) .. (ap) node[sloped, pos=0.5, below]{\(0.07\)};
\draw [->,>={angle 45}] (eu) -- (us) node[sloped, pos=0.5, above]{\(0.14\)};
\draw [->,>={angle 45}] (ap) -- (eu) node[sloped, pos=0.5, below]{\(0.10\)};
\draw [->,>={angle 45}] (ap) -- (us) node[sloped, pos=0.5, below]{\(0.05\)};
\end{scope}
\end{tikzpicture}
\end{figure}}

Another very appealing approach for identifying networks is to use directed graphs for visualizing the strength of directed dependence between (groups of) random variables.
As an example from finance, below we analyze and illustrate the interconnectedness of banks.
\\
We consider the interconnectedness of the \(3\) largest banks in each of the U.S. (US), Europe (EU) and Asia and Pacific (AP), and compare further their connectedness with the \(4\)th largest banks in the US and Europe, which are the Citigroup (CG) and Deutsche Bank (DB), respectively.
The three largest banks of the U.S. comprise JP Morgan (JPM), Bank of America (BAC), and Wells Fargo (WFC). The three largest banks of Europe comprise HSBC, BNP Paribas (BNP), and Cr\'{e}dit Agricole (CAG), and the three largest banks of Asia and Pacific comprise the Industrial and Commercial Bank of China (ICBC), the China Construction Bank Corporation (CCB), and the Agricultural Bank of China (ABC).
\\
For revealing the interconnectedness of the banks, we estimate their predictability via \(T\) from a sample of daily log-returns of the Banks' stock data (in USD) over a time period from April 7, 2011 to December 14, 2022. We assume that the log-returns are i.i.d., which is a standard assumption for stock data, see, e.g., \citep{McNeil-2015}. Figure \ref{figintcon} depicts the values of \(T_n\) for the above described interrelations. We oberserve that
\begin{itemize}
    \item The three largest U.S. banks have a greater influence on the other banks than vice versa.
    \item There is little dependence of the largest U.S. and European banks on the largest Asian banks.
    \item The dependence of the fourth largest U.S. bank (CG) on the three largest U.S. banks is relatively large. Similarly, the fourth-largest European bank is quite dependent on the three largest European banks. Further, dependence of individual banks on Asian banks should not be neglected.
\end{itemize}
We mention that there is high pairwise correlation between the log-returns of the largest U.S., the largest European, and the largest Asian banks, respectively.  For example, Spearman's rank correlation of log-returns among the largest U.S. banks ranges from 0.76 to 0.85. 
Recall that our proposed rank-based measure \(T\) can also measure the influence between multiple input and output variables.

%%%%%%%%%%%%%%%%%%%%%%%%%%%%%%%%%%%%%%%%%%%%%%%%%%%%%%%%%%%%%%%%%%%%%%%%%%%%%%%%%%%%%%%%
%\clearpage
\section{Geometric Interpretation} 
\label{Sec.GeoInterp.}

According to the definitions in \eqref{Tuniv} and \eqref{defmdm}, the quantity \(T\) measures the quadratic variability of conditional distributions and thus relate conditional distributions to unconditional distributions in the \(L^2\) sense. Hence, their properties can be elegantly visualized in a Hilbert space setting.
In the first part of this section, 
we present a geometric interpretation of the most important properties of Azadkia \& Chatterjee's rank correlation coefficient \(T\) (\(q=1\)), namely axioms (A\ref{prop1}) to (A\ref{prop3}),
the information gain inequality (P\ref{prop.IGI}) and the characterization of conditional independence (P\ref{prop.CI}). 
In the second part, further insights into the sequential construction principle underlying \(T\) (\(q > 1\)) follow.

\bigskip
As setting, we consider the Hilbert space \(L^2(\Omega)\) of square-integrable random variables with associated norm \(\lVert Z \rVert_2  := \lVert Z \rVert_{L^2}:= \E [Z^2]\,.\) 
Then 
$$
   T(Y,\XX) 
   = \frac{\int_{\R} \Var(\E[\1_{\{Y\geq y\}}|\XX]) \de P^Y(y)}
           {\int_{\R} \Var(\1_{\{Y\geq y\}}) \de P^Y(y)}
   = \frac{\int_{\R} \lVert\E[\1_F\circ Y|\XX]-\E[\1_F\circ Y]\rVert_2^2 \de P^Y(y)}
           {\int_{\R} \lVert\1_F\circ Y-\E[\1_F\circ Y]\rVert_2^2 \de P^Y(y)}
$$
with  \(F = [y,\infty)\), $y \in \R$,
i.e., $T$ relates the squared distance 
$\lVert\E[\1_F\circ Y|\XX]-\E[\1_F\circ Y]\rVert_2^2$ to the squared distance $\lVert\1_F\circ Y-\E[\1_F\circ Y]\rVert_2^2$.

%%%%%%%%%%%%%%%%%%%%%%%%%%%%%%%%%%%%%%%%%%%%%%%%%%%%%%%%%%%%%%%%%%%%%%%%%%%%%%%%%%%%%%%%
\subsection{The case $p=2$ and $q=1$} \label{Geom.Sub.1}

We choose one response variable \(Y\) and two predictor variables \(\XX=(X_1,X_2).\) 
From Figure \ref{Figgeomint} we observe the following fundamental properties of \(T\,:\)

\begin{figure} 
\begin{tikzpicture}[scale=0.925]
\coordinate (E) at (0,0); % expected value of Y
\coordinate (Y) at (5*1.5,6*1.5); %value of Y
\coordinate (Y0) at (5*1.65,6*1.65); %determined the length of the axis generated by Y
\coordinate (Y12) at (6.6*1.5,1.5*1.5); %conditional expectation of Y given X_1 and X_2
\coordinate (Y1) at (5*1.5,0); %conditional expectation of Y given X_1
\coordinate (Y2) at (3.5*1.5,2*1.5); %conditional expectation of Y given X_2
\coordinate (X1) at (9*1.5,0); %determines length of the axis generated by X_1
\coordinate (X2) at (8*1.5,4*1.5); %determines length of the axis generated by X_2
\coordinate (X12) at (7.5*1.5,3*1.5); %determines length of the axis generated by X_12

\fill[fill=gray!10] (E)--(X1)--(X2);
    \draw[ black, thick] (E) -- (X1);
    \draw[ black, thick] (E) -- (Y);
    \draw[ black, thick] (E) -- (X2);
\filldraw[black] (0,0) circle (2pt) node[anchor=north]{\(\E[\1_F \circ Y]\)};
\draw(X1) node[below]{$\cL^2(\sigma(X_1))$};
\draw(X12) node[below]{$\cL^2(\sigma(X_1,X_2))$};
\draw[dashed, black] (Y) -- (Y12);
\draw[dotted, black] (Y12) -- (Y1);
\draw[black] (Y12) -- (E);
\draw[dashed, black] (Y) -- (Y1);
\draw[dashed, black] (E) -- (Y12) node [midway, above, sloped] (TextNode) {\(\E[\1_F \circ Y|(X_1,X_2)]-\E[\1_F \circ Y]\)};
\draw[dashed, black] (E) -- (Y1) node [midway, below, sloped] (TextNode) {\(\E[\1_F \circ Y|X_1]-\E[\1_F \circ Y]\)};
\draw[dashed, black] (E) -- (Y) node [midway, above, sloped] (TextNode) {\(\1_F \circ Y-\E[\1_F \circ Y]\)};

\filldraw[black] (Y) circle (2pt) node[anchor=south]{\(\1_F\circ Y\)};
\filldraw[black] (Y1) circle (2pt) node[anchor=north]{\(\E[\1_F\circ Y|X_1]\)};
%\filldraw[black] (Y2) circle (2pt) node[anchor=east]{\(\E[\1_F \circ Y|X_2]\)};
\filldraw[black] (Y12) circle (2pt) node[anchor=west]{\(\E[\1_F \circ Y|(X_1,X_2)]\)};

%\pic[draw,fill=yellow,fill opacity=0.3,angle radius=3mm,"$\cdot$" opacity=1]
      %{angle=A--O--B};
\draw (5*1.4,0) arc[start angle=180, end angle=90, radius=0.5];
\filldraw[black] (5*1.466,5*0.033) circle (0.5pt);
\draw (6.6*1.425,1.5*1.425) arc[start angle=180, end angle=110, radius=0.5];
\filldraw[black] (6.6*1.466,1.5*1.57) circle (0.5pt);
\draw (5*1.6,0) arc[start angle=0, end angle=45, radius=0.5];
\filldraw[black] (5*1.565,5*0.025) circle (0.5pt);
\end{tikzpicture}
    \caption{Projection of the random variable \(\1_F\circ Y\) onto the plane spanned by \(\XX=(X_1,X_2)\) as well as projections of \(\1_F\circ Y\) and \(\E[\1_F\circ Y|\XX]\) onto the line spanned by \(X_1\,,\) see Section \ref{Geom.Sub.1} for interpretations regarding \(T(Y,\XX)\,.\)
    Lengths of vectors are measured w.r.t. the \(\lVert \cdot \rVert_{L^2}\)-norm of the Hilbert space \(L^2(\Omega)\,.\)}
    \label{Figgeomint}
\end{figure}

\begin{enumerate}
\item[(A1)] 
Since $0\leq \lVert\E[\1_F\circ Y|\XX]-\E[\1_F \circ Y]\rVert_2 \leq \lVert \1_F\circ Y-\E[\1_F\circ Y]\rVert_2 $
for all \(F = [y,\infty)\), $y \in \R$, we immediately obtain \(0\leq T(Y,\XX)\leq 1\). 

\item[(A2)]  
\(T(Y,\XX)=0\) if and only if 
$ 0
%=\var\left(\E[\1_F\circ Y|\XX]\right)
=\lVert \E[\1_F\circ Y|\XX]-\E[\1_F \circ Y]\rVert_2^2$ 
for all  \(F = [y,\infty)\), $y \in \R$, which is equivalent to 
$\E[\1_F\circ Y|\XX]=\E[\1_F \circ Y]$ almost surely (i.e., the orthogonal projection of \(\1_F\circ Y\) onto the plane spanned by \(\XX\) is \(\E[\1_F \circ Y])\) for all  \(F = [y,\infty)\), $y \in \R$, which means that  $\XX$ and $Y$ are independent.

\item[(A3)]
 \(T(Y,\XX)=1\) if and only if 
$\lVert\E[\1_F\circ Y|\XX]-\E[\1_F\circ Y]\rVert_2 = \lVert\1_F\circ Y-\E[\1_F\circ Y]\rVert_2$
for all  \(F = [y,\infty)\), $y \in \R$, which is, by Pythagorean theorem, equivalent to 
$0 
= \lVert\E[\1_F\circ Y|\XX]-\1_F\circ Y\rVert_2,$
i.e., $\1_F\circ Y \in \cL^2(\sigma(\XX))$ for all  \(F = [y,\infty)\), $y \in \R$, meaning that $Y$ is perfectly dependent on $\XX$.

\item[(P1)]
\emph{Information gain inequality}: We observe from Figure \ref{Figgeomint} that
\begin{align*}
   \lVert\E[\1_F\circ Y|X_1]-\E[\1_F\circ Y]\rVert_2 \leq  \lVert\E[\1_F\circ Y|(X_1,X_2)]-\E[\1_F\circ Y]\rVert_2 
\end{align*}
for all  \(F = [y,\infty)\), $y \in \R$, implying
\begin{eqnarray*}
    T(Y,X_1) 
    &   =  & \frac{\int_{\R} \lVert\E[\1_F\circ Y|X_1]-\E[\1_F\circ Y]\rVert_2^2 \de P^Y(y)}
           {\int_{\R} \lVert\1_F\circ Y-\E[\1_F\circ Y]\rVert_2^2 \de P^Y(y)} 
    \\
    & \leq & \frac{\int_{\R} \lVert\E[\1_F\circ Y|(X_1,X_2)]-\E[\1_F\circ Y]\rVert_2^2 \de P^Y(y)}
           {\int_{\R} \lVert\1_F\circ Y-\E[\1_F\circ Y]\rVert_2^2 \de P^Y(y)} 
    \\
    &   =  & T(Y,(X_1,X_2))\,.
\end{eqnarray*}

\item[(P2)] 
\emph{Characterization of conditional independence}: 
It holds that 
$T(Y,X_1) = T(Y,(X_1,X_2))$ if and only if 
$\lVert\E[\1_F\circ Y|X_1]-\E[\1_F\circ Y]\rVert_2 = \lVert\E[\1_F\circ Y|(X_1,X_2)]-\E[\1_F\circ Y]\rVert_2$
for all  \(F = [y,\infty)\), $y \in \R$, which is, by Pythagorean theorem for conditional expectations, equivalent to 
%$0 = \E[(\E[\1_F\circ Y|X_1] - \E[\1_F\circ Y|(X_1,X_2)])^2]$
$0=\lVert\E[\1_F\circ Y|X_1]-\E[\1_F\circ Y|(X_1,X_2)]\rVert_2$,
i.e., $\E[\1_F\circ Y|(X_1,X_2)] = \E[\1_F\circ Y|X_1]$ almost surely for all \(F = [y,\infty)\), $y \in \R$, 
meaning that $Y$ and $X_2$ are conditionally independent given $X_1$.
\end{enumerate}

%%%%%%%%%%%%%%%%%%%%%%%%%%%%%%%%%%%%%%%%%%%%%%%%%%%%%%%%%%%%%%%%%%%%%%%%%%%%%%%%%%%%%%%%
\subsection{The case $p=1$ and $q=2$} \label{Geom.Sub.2}

For a geometric interpretation of the individual summands of the multivariate measure of predictability \(T\), we choose two response variables \(\YY=(Y_1,Y_2)\) and one predictor variable \(X\).
Then
\begin{align}\label{eqtqg2}
  T (\YY,X)
  = \frac{T(Y_1,X)}{2 - T(Y_2,Y_{1})}
    + \frac{T(Y_2,(X,Y_1)) - T(Y_2,Y_{1})}{2 - T(Y_2,Y_{1})} \,.
\end{align}
From Figure \ref{Figgeomint2} and the Pythagorean theorem for conditional expectations, the nominator of the second term transforms to
\begin{eqnarray*}
  \lefteqn{T(Y_2,(X,Y_1)) - T(Y_2,Y_{1})}
  \\
  & = & \frac{\int_{\R} \Var(\E[\1_{F} \circ Y_2|(X,Y_1)]) - \Var(\E[\1_{F} \circ Y_2|Y_1]) \de P^{Y_2}(y)}
           {\int_{\R} \Var(\1_{F} \circ Y_2) \de P^{Y_2}(y)}
  \\
  & = & \frac{\int_{\R} \lVert\E[\1_F\circ Y_2|(X,Y_1)]-\E[\1_F\circ Y_2]\rVert_2^2 
             - \lVert\E[\1_F\circ Y_2|Y_1]-\E[\1_F\circ Y_2]\rVert_2^2  \de P^{Y_2}(y)}
           {\int_{\R} \lVert\1_F\circ Y_2-\E[\1_F\circ Y_2]\rVert_2^2 \de P^{Y_2}(y)}
  \\
  & = & \frac{\int_{\R} \lVert\E[\1_F\circ Y_2|(X,Y_1)]-\E[\1_F\circ Y_2|Y_1]\rVert_2^2  \de P^{Y_2}(y)}
           {\int_{\R} \lVert\1_F\circ Y_2-\E[\1_F\circ Y_2]\rVert_2^2 \de P^{Y_2}(y)}
\end{eqnarray*}
for \(F = [y,\infty)\,.\) 
Thus, $T (\YY,X)$ is a combination of the value $T(Y_1,X)$ and the averaged and normalized squared distance between $\E[\1_F\circ Y_2|(X,Y_1)]$ and $\E[\1_F\circ Y_2|Y_1]$.

\begin{figure} 
\begin{tikzpicture}[scale=0.925]
\coordinate (E) at (0,0); % expected value of Y
\coordinate (Y) at (5*1.5,6*1.5); %value of Y
\coordinate (Y0) at (5*1.65,6*1.65); %determined the length of the axis generated by Y
\coordinate (Y12) at (6.6*1.5,1.5*1.5); %conditional expectation of Y given X_1 and X_2
\coordinate (Y1) at (5*1.5,0); %conditional expectation of Y given X_1
\coordinate (Y2) at (3.5*1.5,2*1.5); %conditional expectation of Y given X_2
\coordinate (X1) at (9*1.5,0); %determines length of the axis generated by X_1
\coordinate (X2) at (8*1.5,4*1.5); %determines length of the axis generated by X_2
\coordinate (X12) at (7.5*1.5,3*1.5); %determines length of the axis generated by X_12

\fill[fill=gray!10] (E)--(X1)--(X2);
    \draw[ black, thick] (E) -- (X1);
    \draw[ black, thick] (E) -- (Y);
    \draw[ black, thick] (E) -- (X2);
\filldraw[black] (0,0) circle (2pt) node[anchor=north]{\(\E[\1_F \circ Y_2]\)};
\draw(X1) node[below]{$\cL^2(\sigma(Y_1))$};
\draw(X12) node[below]{$\cL^2(\sigma(X,Y_1))$};
\draw[dashed, black] (Y) -- (Y12);
\draw[dotted, black] (Y12) -- (Y1);

\draw[black] (Y12) -- (E);
\draw[dashed, black] (Y) -- (Y1);

\draw[dashed, black] (E) -- (Y12) node [midway, above, sloped] (TextNode) {\(\E[\1_F \circ Y_2|(X,Y_1)]-\E[\1_F \circ Y_2]\)};
\draw[dashed, black] (E) -- (Y1) node [midway, below, sloped] (TextNode) {\(\E[\1_F \circ Y_2|Y_1]-\E[\1_F \circ Y_2]\)};
\draw[dashed, black] (E) -- (Y) node [midway, above, sloped] (TextNode) {\(\1_F \circ Y_2-\E[\1_F \circ Y_2]\)};

\filldraw[black] (Y) circle (2pt) node[anchor=south]{\(\1_F\circ Y_2\)};
\filldraw[black] (Y1) circle (2pt) node[anchor=north]{\(\E[\1_F\circ Y_2|Y_1]\)};
\filldraw[black] (Y12) circle (2pt) node[anchor=west]{\(\E[\1_F \circ Y_2|(X,Y_1)]\)};

\draw (5*1.4,0) arc[start angle=180, end angle=90, radius=0.5];
\filldraw[black] (5*1.466,5*0.033) circle (0.5pt);
\draw (6.6*1.425,1.5*1.425) arc[start angle=180, end angle=110, radius=0.5];
\filldraw[black] (6.6*1.466,1.5*1.57) circle (0.5pt);
\draw (5*1.6,0) arc[start angle=0, end angle=45, radius=0.5];
\filldraw[black] (5*1.565,5*0.025) circle (0.5pt);
\end{tikzpicture}
    \caption{Illustration of the value $T(Y_2,(X,Y_1)) - T(Y_2,Y_{1})$ in \eqref{eqtqg2} for interpreting \(T((Y_1,Y_2),X)\,,\) see Subsection \ref{Geom.Sub.2}. 
    Lengths of vectors are measured w.r.t. the \(\lVert \cdot \rVert_{L^2}\)-norm of the Hilbert space \(L^2(\Omega)\,.\)}
    \label{Figgeomint2}
\end{figure}

\end{appendices}

\end{document}